\documentclass[11pt]{amsart}
\usepackage{hyperref}
\usepackage{amsmath}
\usepackage{amssymb}
\usepackage{xypic}
\usepackage{yhmath}
\usepackage{bm}
\usepackage{setspace}
\usepackage{geometry}
\geometry{left=3.2cm, right=3.2cm, top=2cm, bottom=2cm}
\def\beqnn{\begin{eqnarray*}}\def\eeqnn{\end{eqnarray*}}

\newtheorem{theorem}{Theorem}[section]
\newtheorem{lemma}[theorem]{Lemma}
\newtheorem{proposition}[theorem]{Proposition}
\newtheorem{corollary}[theorem]{Corollary}

\theoremstyle{claim}
\newtheorem{claim}[theorem]{Claim}

\theoremstyle{definition}
\newtheorem{definition}[theorem]{Definition}
\newtheorem{example}[theorem]{Example}

\newtheorem{remark}[theorem]{Remark}

\theoremstyle{problem}
\newtheorem{problem}[theorem]{Problem}
\theoremstyle{conjecture}
\newtheorem{conjecture}[theorem]{Conjecture}
\theoremstyle{question}

\numberwithin{equation}{section}

\begin{document}

\begin{center}
\title[Complex integral means spectrum and Brennan conjecture]{Complex exponential integral means spectra of univalent functions and the Brennan conjecture}
\end{center}

 \author[Jianjun Jin]{Jianjun Jin}

 \address{School of Mathematics, Hefei University of Technology, Xuancheng Campus, Xuancheng 242000, P.R.China}

\email{jin@hfut.edu.cn, jinjjhb@163.com}

 \thanks{The author was supported by National Natural Science Foundation of China (Grant Nos. 11501157).}

\subjclass[2020]{Primary 30C55; Secondary 30C62}

\keywords{Complex exponential integral means spectrum of univalent function, universal integral means spectrum, Brennan conjecture, univalent function with a quasiconformal extension, univalent rational function, univalent polynomial, universal Teichm\"uller space, universal asymptotic Teichm\"uller space, universal Teichm\"uller curve}
\begin{abstract}
In this paper we investigate the complex exponential integral means spectra of univalent functions in the unit disk. We show that all integral means spectrum (IMS) functionals for complex exponents on the universal Teichm\"uller space, the closure of the universal Teichm\"uller curve, and the universal asymptotic Teichm\"uller space are continuous. We also show that the complex exponential integral means spectrum of any univalent function admitting a quasiconformal extension to the extended complex plane is strictly less than the universal integral means spectrum. These extend some related results in our recent work \cite{Jin}. Here we employ a different and more direct approach to prove the continuity of the IMS functional on the universal asymptotic Teichm\"uller space. Additionally, we completely determine the integral means spectra of all univalent rational functions in the unit disk. As a consequence, we show that the Brennan conjecture is true for this class of univalent functions. Finally, we present some remarks and raise some problems and conjectures regarding IMS functionals on Teichm\"uller spaces, univalent rational functions, and a multiplication operator whose norm is closely related to the Brennan conjecture. \end{abstract}

\maketitle

\begin{spacing}{1.3} 
{\footnotesize{\scshape{\tableofcontents}}}
\end{spacing}
\section{Introduction}

Let $\Delta=\{z:|z|<1\}$ be the unit disk in the complex plane $\mathbb{C}$. We denote by $\widehat{\mathbb{C}}=\mathbb{C}\cup\{\infty\}$ the extended complex plane and let $\mathbb{R}$ be the real line. 
For  a complex number $z$, we use $\arg z$ to denote the unique argument of $z$ satisfying that $\arg z \in (-\pi, \pi]$. Let $\Omega$ be a bounded simply connected domain in $\mathbb{C}$ with $0\in \Omega$.  Let $f$ be an analytic function in $\Omega$ with $f(0)=1$ and $f(z)\neq 0$ for all $z\in \Omega$. In this paper, we always use $\log f$ to denote the unique single-valued branch of the logarithm of $f$ in $\Omega$ with $\arg f(0)=0$.  

We denote by $\mathcal{U}$ the class of all univalent functions (conformal mappings) in $\Delta$. We let $\mathcal{S}$ be the class of all univalent functions $f$ in $\Delta$ with $f(0)=f'(0)-1=0$ and let $\mathcal{S}_b$ be the subclass of $\mathcal{S}$ which consists of all bounded univalent functions.  

Let $\tau \in \mathbb{C}$. The {\em integral means spectrum} $\beta_f(\tau)$ for $f\in \mathcal{S}$ is defined as
\begin{equation}\label{defi}
\beta_f(\tau):=\limsup_{r \rightarrow 1^{-}}\frac{\log \int_{-\pi}^{\pi} |[f'(re^{i\theta})]^{\tau}| d\theta}{|\log(1-r)|}. 
\end{equation}

\begin{remark}
For $f\in \mathcal{S}$, since $f'(z)$ is never zero in $\Delta$ and $f'(0)=1$, we define the complex power $[f'(z)]^{\tau}$ as 
$$[f'(z)]^{\tau}=\exp(\tau \log f'(z)).$$
For real $\tau$, the integral means measure the boundary expansion and compression associated with a given conformal map. Allowing $\tau$ to be a complex exponent, we should also take into account the rotation.
\end{remark}

The {\em universal integral means spectra} $B(\tau)$ and $B_b(\tau)$ are defined as
\begin{equation}
B(\tau)=\sup\limits_{f\in \mathcal{S}} \beta_f(\tau)\,\, \textup{and}\,\,B_b(\tau)=\sup\limits_{f\in \mathcal{S}_b} \beta_f(\tau).\nonumber
\end{equation}

Because of its important relation to the fine properties of harmonic measures in the complex plane, see \cite{M},  the universal integral means spectra of  univalent functions have been studied extensively in recent years. It is an important and difficult problem to find out the exact values of the universal integral means spectra $B(\tau)$ and $B_b(\tau)$.  We shall first review some main known results and open problems for real $\tau$ in this topic.  It was first observed by Makarov in \cite{M} that
\begin{theorem}\label{mak}
$B(\tau)=\max \{B_b(\tau), 3\tau-1\},\, \tau\in \mathbb{R}.$
\end{theorem}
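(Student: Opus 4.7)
My plan is to prove the two inequalities
\[
B(\tau) \geq \max\{B_b(\tau),\, 3\tau - 1\} \quad \text{and} \quad B(\tau) \leq \max\{B_b(\tau),\, 3\tau - 1\}
\]
separately. For the lower bound, $B(\tau) \geq B_b(\tau)$ is immediate from $\mathcal{S}_b \subset \mathcal{S}$. To establish $B(\tau) \geq 3\tau - 1$, I would test on the Koebe function $k(z) = z/(1-z)^2 \in \mathcal{S}$. Since $|k'(re^{i\theta})| = |1 + re^{i\theta}|\,|1 - re^{i\theta}|^{-3}$, a direct computation yields
\[
\int_{-\pi}^{\pi} |k'(re^{i\theta})|^{\tau}\, d\theta \asymp (1-r)^{\,1 - 3\tau} \qquad (r \to 1^{-})
\]
whenever $3\tau > 1$, giving $\beta_k(\tau) = 3\tau - 1$. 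When $3\tau \leq 1$ the target inequality is vacuous because $B(\tau) \geq 0$ (e.g.\ from the identity).

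For the upper bound, fix $f \in \mathcal{S}$ and $r \in (0,1)$, and decompose the circle $|z|=r$ according to the magnitude of $|f|$. With a threshold $\lambda = \lambda(r) \to \infty$ to be chosen, set
\[
A = \{\theta \in [-\pi,\pi] : |f(re^{i\theta})| \leq \lambda\}, \qquad B = [-\pi,\pi] \setminus A.
\]
On $A$, I would compare $f$ to a bounded univalent function: namely, take the appropriately renormalized Riemann map $g \in \mathcal{S}_b$ onto the component of $f(\Delta) \cap \{|w| < 2\lambda\}$ containing $0$. Koebe-type distortion yields $|f'(z)| \lesssim |g'(z)|$ whenever $|f(z)| \leq \lambda$, so the $A$-contribution is bounded by $\int_{-\pi}^{\pi} |g'(re^{i\theta})|^\tau\, d\theta \lesssim (1-r)^{-B_b(\tau) - \varepsilon}$. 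On $B$, I would combine the Koebe distortion bound $|f'(z)| \lesssim (1-|z|)^{-3}$ with the length estimate $|B| \lesssim (1-r)\lambda^{-1/2}$; the latter follows from the growth theorem $|f(z)| \leq |z|(1-|z|)^{-2}$, which confines the preimage of $\{|w|>\lambda\}$ to the annulus $\{|z|>1-\lambda^{-1/2}\}$, plus a standard length/harmonic-measure comparison. For $\tau > 0$ this gives a $B$-contribution of order $(1-r)^{1-3\tau}\lambda^{-1/2}$, which is absorbed into $(1-r)^{-(3\tau-1)}$ up to sub-polynomial factors; for $\tau \leq 0$ the $B$-contribution is uniformly controlled.

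Choosing $\lambda = (1-r)^{-\delta}$ for a small $\delta > 0$ and summing the two contributions gives $\beta_f(\tau) \leq \max\{B_b(\tau) + \varepsilon,\, 3\tau - 1\}$; sending $\varepsilon \to 0$ and taking the supremum over $f \in \mathcal{S}$ finishes the proof. The main obstacle I anticipate is the sharp length bound $|B| \lesssim (1-r)\lambda^{-1/2}$, which is precisely where the exponent $3$ of the Koebe term $3\tau - 1$ is produced; a secondary technicality is to verify that the comparison on $A$ with a genuine bounded univalent function does not introduce losses beyond the arbitrary $\varepsilon$ present in the definition of $B_b(\tau)$.
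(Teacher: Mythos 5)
The paper does not prove this statement: Theorem \ref{mak} is quoted from Makarov \cite{M} without proof, so there is no internal argument to compare yours against; I can only assess the proposal on its own terms. Your lower bound is fine (the Koebe computation via $k'(z)=(1+z)/(1-z)^3$ and the standard asymptotics for $\int|1-re^{i\theta}|^{-3\tau}d\theta$ gives $\beta_k(\tau)=3\tau-1$ for $\tau>1/3$, and the case $3\tau\le 1$ is indeed vacuous). The upper bound, however, has a genuine gap, and it sits exactly where you place the whole weight of the argument: the length estimate $|B|\lesssim (1-r)\lambda^{-1/2}$ is false. The Koebe function itself is the counterexample: $|k(re^{i\theta})|\asymp\bigl((1-r)+|\theta|\bigr)^{-2}$, so $\{\theta:|k(re^{i\theta})|>\lambda\}$ has measure $\asymp\lambda^{-1/2}$ for $1\ll\lambda\ll(1-r)^{-2}$, not $(1-r)\lambda^{-1/2}$. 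Your derivation of the bound is also vacuous at the relevant scales: with $\lambda=(1-r)^{-\delta}$, $\delta$ small, the annulus $\{|z|>1-\lambda^{-1/2}\}$ contains the entire circle $|z|=r$, so the growth theorem confines nothing. The best one can get (e.g.\ from Prawitz's theorem, $\int|f(re^{i\theta})|^{1/2}d\theta\lesssim\log\frac{1}{1-r}$, plus Chebyshev) is $|B|\lesssim\lambda^{-1/2}\log\frac{1}{1-r}$, and then the $B$-contribution for $\tau>0$ is of order $(1-r)^{-3\tau+\delta/2}$ up to logarithms. Its exponent $3\tau-\delta/2$ strictly exceeds $3\tau-1$ for small $\delta$, so it is \emph{not} absorbed into $(1-r)^{-(3\tau-1)}$. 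To make the $B$-term produce the exponent $3\tau-1$ you are forced to take $\lambda$ comparable to $(1-r)^{-2}$ (or to sum over all dyadic scales of $\lambda$), and that destroys the other half of your argument.

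The second gap is the one you flag as a ``secondary technicality'' but which is actually the heart of Makarov's proof: the comparison on $A$. The definition of $B_b(\tau)$ gives, for each \emph{fixed} $h\in\mathcal{S}_b$, an estimate $\int|h'(re^{i\theta})|^{\tau}d\theta\le C(h,\varepsilon)(1-r)^{-B_b(\tau)-\varepsilon}$ with a constant depending on $h$. Your comparison function $g$ is the Riemann map onto a component of $f(\Delta)\cap\{|w|<2\lambda(r)\}$, hence depends on $r$; after renormalizing to an element of $\mathcal{S}_b$ its sup-norm blows up as $r\to1^{-}$, and no single constant $C$ serves all these maps simultaneously. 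Without a uniformity argument (Makarov's multi-scale decomposition over the levels $2^{n-1}<|f|\le 2^{n}$ with controlled weights, or an equivalent device) the $A$-estimate does not follow from the definition of $B_b(\tau)$. Finally, the claim that ``for $\tau\le 0$ the $B$-contribution is uniformly controlled'' is also unsubstantiated: Koebe only gives $|f'(z)|\gtrsim 1-|z|$ there, so the $B$-contribution is of order $(1-r)^{-|\tau|}\lambda^{-1/2}$, and since $B_b(\tau)<|\tau|$ for $\tau\in(-2,0)$ this is not dominated by $(1-r)^{-B_b(\tau)}$ for small $\delta$. A complete proof along these lines is in \cite[Chapter VIII]{GM}; the structure is the one you propose, but the two points above are precisely where the real work lies.
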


For large $\tau$, Feng and MacGregor proved in \cite{FM}  that $
B(\tau)=3\tau-1,\,\, \tau\geq \frac{2}{5}.$ Also for large $\tau$, see \cite{Po-2},  we have $B_b(\tau)=\tau-1,\,\, \tau\geq 2.$ By considering the lacunary series,  Kayumov showed in \cite{Kay} that
$ B(\tau)>\frac{\tau^2}{5},\,\, 0<\tau\leq\frac{2}{5}.$ In the paper \cite{CM}, after establishing an important result in the theory of harmonic measures, Carleson and Makarov obtained that
\begin{theorem}\label{CM}
There is a constant $\tau_{*}<0$ such that
$ B_b(\tau)=B(\tau)=|\tau|-1,\,\, \tau\leq \tau_{*}.$
\end{theorem}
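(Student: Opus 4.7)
Proof proposal. The plan splits into a lower bound, an upper bound, and a short reduction to deduce the equality $B(\tau)=B_b(\tau)$.

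For the lower bound, I would compute $\beta_f(\tau)$ explicitly for the Koebe function $k(z)=z/(1-z)^2\in\mathcal{S}$. Near $\theta=\pi$ one has $|k'(re^{i\theta})|\asymp\sqrt{(1-r)^2+(\theta-\pi)^2}$, so the substitution $u=(\theta-\pi)/(1-r)$ gives
\[
\int_{-\pi}^{\pi}|k'(re^{i\theta})|^{\tau}\,d\theta\asymp (1-r)^{\tau+1}\int_{\mathbb{R}}(1+u^2)^{\tau/2}\,du,
\]
where the last integral converges precisely when $\tau<-1$. This yields $\beta_k(\tau)=|\tau|-1$ and hence $B(\tau)\geq |\tau|-1$ for every $\tau<-1$.

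For the upper bound $B(\tau)\leq |\tau|-1$, the starting point is Koebe's distortion theorem, which gives $|f'(z)|\asymp d(f(z),\partial f(\Delta))/(1-|z|)$ for every $f\in\mathcal{S}$. This rewrites the integral means as $(1-r)^{-\tau}\int d(f(re^{i\theta}),\partial f(\Delta))^{\tau}\,d\theta$. I would then decompose $\partial\Delta$ dyadically into the Whitney level sets
\[
E_k(r)=\{\theta:2^{-k-1}(1-r)\leq d(f(re^{i\theta}),\partial f(\Delta))<2^{-k}(1-r)\}
\]
and invoke the quantitative Carleson--Makarov bound on the arc-length $|E_k(r)|$, which ultimately rests on Makarov's theorem that harmonic measure on any simply connected planar domain has Hausdorff dimension one. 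Summing the resulting geometric series $(1-r)^{-\tau}\sum_k|E_k(r)|(2^{-k}(1-r))^{\tau}$ and optimizing yields a bound of the form $(1-r)^{-(|\tau|-1)+o(1)}$, valid as soon as $|\tau|$ is large enough to make the sum converge at the correct rate; the threshold $\tau_*$ is the largest negative value for which this convergence is achieved.

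Finally, the equality $B(\tau)=B_b(\tau)=|\tau|-1$ follows immediately from Theorem~\ref{mak}: once $B(\tau)=|\tau|-1$ is established, the identity $B(\tau)=\max\{B_b(\tau),3\tau-1\}$ together with the elementary inequality $|\tau|-1>3\tau-1$ for all $\tau<0$ forces $B_b(\tau)=|\tau|-1$. The main obstacle is clearly the upper bound: the uniform measure estimate for the Whitney level sets $E_k(r)$ is the deep analytic input and requires the full strength of the Carleson--Makarov harmonic-measure machinery, in particular Makarov's dimension theorem together with its quantitative $L^p$-refinements developed in the aggregation/Loewner-equation framework.
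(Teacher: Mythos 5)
The paper does not prove Theorem~\ref{CM}; it is quoted verbatim from Carleson--Makarov \cite{CM} (``after establishing an important result in the theory of harmonic measures\dots''), so your proposal can only be judged on its own merits. Two of its three pieces are fine: the computation $\beta_{\kappa}(\tau)=|\tau|-1$ for $\tau<-1$ via the zero of $\kappa'$ at $z=-1$ is correct and gives the lower bound $B(\tau)\ge|\tau|-1$, and the deduction of $B_b(\tau)=B(\tau)=|\tau|-1$ from Theorem~\ref{mak} together with $3\tau-1<|\tau|-1$ for $\tau<0$ is also correct.

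The genuine gap is the upper bound, which is the entire content of the theorem. After the (correct) Koebe-distortion rewriting and the Whitney decomposition into the sets $E_k(r)$, everything hinges on the unproved assertion that $|E_k(r)|$ decays fast enough in $k$, uniformly over all $f\in\mathcal{S}$; concretely, since $\int_{-\pi}^{\pi}|f'(re^{i\theta})|^{\tau}\,d\theta\asymp\sum_k|E_k(r)|\,2^{k|\tau|}$ and the target is $(1-r)^{1-|\tau|}$, one needs a universal packing estimate of the form $|E_k(r)|\lesssim 2^{-k|\tau|}(1-r)^{1-|\tau|}\epsilon_k$ with $\sum_k\epsilon_k<\infty$; the trivial bound $|E_k(r)|\le2\pi$ gives nothing, and such an estimate is false for $|\tau|$ small (its validity down to $|\tau|=2$ would be the Brennan conjecture itself). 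Producing \emph{some} finite threshold $\tau_*$ beyond which it holds is exactly the new harmonic-measure theorem of \cite{CM}, proved there by a differential-inequality argument for an extremal counting function over the whole class of simply connected domains. Invoking it as ``the quantitative Carleson--Makarov bound'' therefore makes the argument circular, and the claimed provenance is wrong on two counts: Makarov's dimension theorem (harmonic measure lives on a set of dimension one) governs the \emph{typical} scaling of $\omega$ and the behaviour of $B(\tau)$ near $\tau=0$, not the extreme lower tail of $|f'|$ relevant for $\tau\le\tau_*$ (boundary balls of radius $\delta$ carrying harmonic measure $\ge\delta^{\alpha}$ with $\alpha$ well below $1$); and the ``aggregation/Loewner-equation framework'' belongs to Carleson and Makarov's later work on DLA, not to the 1994 paper that proves this theorem. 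So the proposal establishes the easy half and the surrounding bookkeeping, but not the hard half.
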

It is only known that $\tau_{*}\leq -2$.
It is conjectured that $\tau_{*}=-2$. This is equivalent to the celebrated Brennan conjecture that $B(-2)=B_b(-2)=1$, which was raised in \cite{Br}. There have been many studies on the Brennan conjecture, see for example, \cite{Be}, \cite{Bish}, \cite{BVZ}, \cite{GU-1}, \cite{GU-2}, \cite{HS-1}, \cite{SS}, \cite{To}. The current best upper bound estimate about $B(-2)$ is provided by Hedenmalm and Shimorin in \cite{HS-1}. In the paper \cite{CJ}, Carleson and Jones conjectured that $B_b(1)=\frac{1}{4}$. A more generalized conjecture was given by Kraetzer in \cite{K} that
\begin{conjecture}\label{conj} Let $\tau \in \mathbb{R}$. Then 
\begin{equation}B_b(\tau)=\begin{cases}
\frac{\tau^2}{4}, \;\; \; \;\; \;  \;  \text{if} \;\, \tau \in [-2, 2], \\
|\tau|-1, \; \text{if} \;\, \tau \in (-\infty, -2) \cup (2, +\infty). \nonumber
\end{cases}
\end{equation}
\end{conjecture}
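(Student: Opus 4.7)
The plan is to verify the formula for $B_b(\tau)$ in three regimes: $\tau \geq 2$, $\tau \leq -2$, and the interior $\tau \in [-2, 2]$. For $\tau \geq 2$ the upper bound $B_b(\tau) \leq \tau - 1$ is the Pommerenke estimate already cited, while the matching lower bound is obtained from bounded analogues of the Koebe slit mapping: truncating a radial slit map produces $f \in \mathcal{S}_b$ along whose tip $|f'|$ grows like $(1-r)^{-1}$, yielding $\beta_f(\tau) = \tau - 1$ after integrating $|f'|^{\tau}$ over the angular neighborhood of concentration. The tail $\tau \leq -2$ is reduced by Theorem \ref{CM} to showing that the Carleson--Makarov threshold satisfies $\tau_{\ast} = -2$, which coincides with the Brennan conjecture and hence is inseparable from the interior analysis.

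For the interior regime $\tau \in [-2, 2]$, the lower bound $B_b(\tau) \geq \tau^2/4$ should come from explicit extremal constructions. Kayumov's lacunary series already give $B(\tau) > \tau^2/5$ for small $\tau$; refining these to random conformal maps built from independent Bloch-type perturbations produces $\log |f'|$ as a martingale whose quadratic variation scales as $\tau^2$, which formally yields the sharp rate $\tau^2/4$. The real work lies in the matching upper bound $\beta_f(\tau) \leq \tau^2/4$ for every $f \in \mathcal{S}_b$. The strategy I would pursue is the Bergman-space approach of Hedenmalm and Shimorin: rewrite $\int_{-\pi}^{\pi} |f'(re^{i\theta})|^{\tau}\, d\theta$ as a pairing on a weighted Bergman space, control its growth by the operator norm of a Bergman-type integral transform twisted by a symbol related to the Schwarzian of $f$, and then seek the sharp estimate that saturates precisely at $\tau^2/4$. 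A parallel duality between the positive and negative halves of the interval $[-2,2]$ would be exploited so that an improvement at one sign propagates to the other.

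The main obstacle is exactly this interior upper bound. Even at the single value $\tau = -2$ the assertion $B_b(-2) \leq 1$ is the Brennan conjecture, which despite decades of effort and the substantial improvements due to Hedenmalm--Shimorin still admits only an upper bound strictly exceeding $1$. A complete proof of Conjecture \ref{conj} would require either identifying the universal extremal configuration and proving its optimality via a new variational principle, or sharpening the Bergman-space operator estimates to achieve exactly the constant $\tau^2/4$ uniformly in $\tau$. I therefore view this as an aspirational target: meaningful progress at any specific value of $\tau$, or on any proper subinterval of $[-2,2]$, would itself constitute a major advance, and the most plausible near-term contribution is to leverage the continuity results for IMS functionals developed earlier in the paper to reduce the universal upper bound to a family of quasiconformally extendible test functions, where variational calculus is available.
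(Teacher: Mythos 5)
This statement is Kraetzer's conjecture, which the paper records as an open \emph{conjecture} and does not prove; there is no proof in the paper to compare your attempt against. Your proposal correctly recognizes this: it is a survey of the known partial results and a sketch of a research program, not a proof, and you are right not to claim otherwise. Your accounting of the state of the art matches the paper's: the upper range $\tau\geq 2$ where $B_b(\tau)=\tau-1$ is known (Pommerenke), the tail $\tau\leq\tau_{*}$ is Carleson--Makarov with only $\tau_{*}\leq -2$ established, and the assertion on all of $(-\infty,-2)$ together with the interior formula $\tau^2/4$ on $[-2,2]$ is open, containing the Brennan conjecture at $\tau=-2$ and the Carleson--Jones conjecture at $\tau=1$ as special cases.

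The only substantive caveats: your claimed lower bound $B_b(\tau)\geq\tau^2/4$ on the interior is itself not established in the literature --- Kayumov's construction gives only $\tau^2/5$ for small positive $\tau$ (and for $B$, not directly $B_b$), and the heuristic that a martingale quadratic-variation argument ``formally yields'' the sharp constant $\tau^2/4$ is not a proof; the best known lower bounds remain strictly below $\tau^2/4$. Likewise the reduction of the negative tail to $\tau_{*}=-2$ is correct but, as you note, inseparable from the interior problem. What the present paper actually contributes toward this circle of questions is orthogonal to your program: it proves continuity of the IMS functionals on Teichm\"uller-space models, shows that no $f\in\mathcal{S}_q$ is extremal for $B_b(\tau)$ (Theorem 4), and verifies the Brennan bound for univalent rational functions --- results about \emph{where extremals cannot lie}, rather than progress on the universal upper bound. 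Your closing suggestion to use the continuity results to reduce to quasiconformally extendible test functions is already available via the fractal approximation principle (Theorem \ref{app}); the paper's Theorem 4 in fact shows the supremum over $\mathcal{S}_q$ is never attained there, which cuts against hoping for a variational extremal inside that class.
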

 
For the complex exponents $\tau$, there are only several known results about $B(\tau)$ and $B_b(\tau)$. In an unpublished preprint \cite{Bin-1}, Binder extended Makarov's Theorem \ref{mak} to the complex setting.
\begin{theorem}\label{Binder}
(1) If $\textup{Re}(\tau)>0$, then $B(\tau)=\max\{B_b(\tau), |\tau|+2\textup{Re}(\tau)-1\}.$ 

(2) If $\textup{Re}({\tau})\leq 0$, then $B(\tau)=B_b(\tau)$.  
\end{theorem}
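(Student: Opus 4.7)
My plan is to extend Makarov's dichotomy argument from Theorem~\ref{mak} to complex exponents, tracking how the $\textup{Im}(\tau)$-induced twist modifies the extremal behaviour at unbounded prime ends. For real $\tau$ the extremal unbounded competitor is the Koebe function (giving $3\tau-1$); for complex $\tau$ it becomes a spiralike function, and optimisation over spiral parameters replaces $3\tau-1$ with $|\tau|+2\textup{Re}(\tau)-1$.

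\emph{Sharpness in (1).} I would consider the family of spiralike maps
\[
f_a(z)=\frac{z}{(1-z)^a},\qquad a=2\cos\alpha\,e^{i\alpha},\ \alpha\in(-\pi/2,\pi/2),
\]
which is classically univalent (spiralike of type $\alpha$). Since $f_a'(z)\sim a(1-z)^{-a-1}$ near $z=1$ and $\arg(1-z)$ remains bounded on $\Delta$, a direct integral-means computation gives
\[
\beta_{f_a}(\tau)=\textup{Re}((a+1)\tau)-1=\textup{Re}(\tau)-1+2|\tau|\cos\alpha\cos(\alpha+\arg\tau).
\]
Maximising at $\alpha^{*}=-\arg(\tau)/2$ yields $\beta_{f_a}(\tau)=|\tau|+2\textup{Re}(\tau)-1$, which, together with the trivial $B(\tau)\ge B_b(\tau)$, gives the $\ge$ direction of (1).

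\emph{Upper bound in (1).} For $f\in\mathcal{S}$ and large $R$, split $\{|z|=r\}$ into the ``bounded'' arc $E_R(r)=\{\theta:|f(re^{i\theta})|<R\}$ and its complement $F_R(r)$. By Koebe distortion, the $E_R(r)$ contribution to $\int|[f'(re^{i\theta})]^\tau|\,d\theta$ is controlled by the analogous integral for the normalised Riemann map onto the distinguished component of $f(\Delta)\cap\{|w|<R\}$ containing $0$, an element of $\mathcal{S}_b$, and is thus at most $(1-r)^{-B_b(\tau)-o(1)}$. On each component of $F_R(r)$, $f$ approaches a pole prime end with local normal form $f'(z)\asymp(1-\bar\zeta z)^{-\mu-1}$, where $\mu$ is constrained to lie on the spiralike envelope by the growth and area theorems; repeating the lower-bound optimisation bounds this contribution by $(1-r)^{1-|\tau|-2\textup{Re}(\tau)-o(1)}$ when $\textup{Re}(\tau)>0$. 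Combining gives the $\le$ direction. The main obstacle will be justifying the local spiralike normal form with the correct constraint on $\mu$; a Phragm\'en--Lindel\"of argument together with the area theorem is precisely what forces $|\tau|$ rather than $2|\tau|$ in the extremum.

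\emph{Part (2).} When $\textup{Re}(\tau)\le 0$, the Koebe distortion relation $|f'(z)|\asymp d(f(z),\partial f(\Delta))/(1-|z|)$ shows that both factors of $|[f'(z)]^\tau|=|f'(z)|^{\textup{Re}(\tau)}e^{-\textup{Im}(\tau)\arg f'(z)}$ are controllably small near pole prime ends (the modulus factor because $\textup{Re}(\tau)\le 0$ and $d(f(z),\partial f(\Delta))$ is large; the argument factor via the Koebe bound on $\arg f'$). Hence the $F_R(r)$ part of the decomposition above never dominates the $E_R(r)$ part, yielding $\beta_f(\tau)\le B_b(\tau)$; the trivial reverse inequality then gives $B(\tau)=B_b(\tau)$.
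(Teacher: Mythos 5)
First, a point of comparison: the paper does not prove Theorem \ref{Binder} at all --- it quotes it from Binder's unpublished preprint \cite{Bin-1} --- so your proposal has to stand on its own. The part that does stand is the lower bound in (1): the maps $f_a(z)=z(1-z)^{-a}$ with $a=2e^{i\alpha}\cos\alpha=1+e^{2i\alpha}$ are the $\gamma$-spiral Koebe functions the paper itself uses in Section 7 (citing \cite{Ok}); since $f_a'(z)=(1+e^{2i\alpha}z)(1-z)^{-a-1}$ and $\arg(1-z)$ is bounded on $\Delta$, Lemma \ref{ll-1}-type asymptotics give $\beta_{f_a}(\tau)=\textup{Re}((a+1)\tau)-1$ when this is positive, and your optimisation at $\alpha=-\arg(\tau)/2$ correctly yields $|\tau|+2\textup{Re}(\tau)-1$.

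The upper bound in (1) is where the theorem actually lives, and your key step --- that on $F_R(r)$ the map admits a ``local spiralike normal form $f'(z)\asymp(1-\bar\zeta z)^{-\mu-1}$ with $\mu$ constrained to the spiralike envelope'' --- is not a theorem and fails for general $f\in\mathcal{S}$: a univalent map that is unbounded near a boundary point need have no power-type asymptotics there, and can alternate compression and rotation at different scales; neither the growth theorem nor the area theorem imposes a pointwise constraint of that form. You flag this as ``the main obstacle,'' but it is not a technical obstacle inside an otherwise complete argument --- it is the entire content of the statement. The known route (Makarov for real $\tau$, Binder for complex $\tau$) uses no normal form; it combines the growth theorem, a Prawitz/area-theorem bound on $|\{\theta:|f(re^{i\theta})|>R\}|$, and --- the genuinely new ingredient for complex $\tau$ --- a joint estimate on the compression $\log|f'|$ and the rotation $\arg f'$ at unbounded prime ends. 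Without that joint estimate nothing prevents rotation from contributing more than $|\tau|-\textup{Re}(\tau)$ on top of $3\textup{Re}(\tau)-1$.

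Part (2) rests on two concretely false claims. First, $|f(z)|>R$ does not force $\textup{dist}(f(z),\partial f(\Delta))$ to be large: $f(\Delta)$ can be a thin unbounded strip or spiral, so $|f|$ is huge while $|f'(z)|(1-|z|^2)\asymp\textup{dist}(f(z),\partial f(\Delta))$ is tiny, and for $\textup{Re}(\tau)<0$ it is precisely small $|f'|$ that makes $|f'(z)|^{\textup{Re}(\tau)}$ large (as large as a constant times $(1-|z|)^{\textup{Re}(\tau)}$, by the distortion theorem). Second, the rotation theorem gives only $|\arg f'(z)|\le\pi+\log\bigl(|z|^2/(1-|z|^2)\bigr)$ for $|z|\ge 2^{-1/2}$, so $\arg f'$ can be of order $\log\frac{1}{1-|z|}$ and $e^{-\textup{Im}(\tau)\arg f'(z)}$ can be as large as a fixed negative power of $1-|z|$; it is not ``controllably small.'' The assertion that the unbounded part never dominates when $\textup{Re}(\tau)\le 0$ is exactly what must be proved, and a pointwise bound times a measure bound cannot deliver it (it would give an exponent of order $|\tau|$, far above $B_b(\tau)$ for small $|\tau|$ if the Kraetzer--Binder prediction $|\tau|^2/4$ is anywhere near the truth). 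As written, (2) is assumed rather than proved.
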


In a 1987 paper \cite{BP-1}, Becker and Pommerenke extended the Brennan conjecture to the complex case, asking whether $B_b(\tau)=1$ whenever $|\tau|=2$.  Baranov and Hedenmalm proved in \cite{BH} that
\begin{theorem}
$$B_b(2-\tau)\leq 1-\textup{Re}(\tau) +[\frac{9e^2}{2}+o(1)]|\tau|^2 \log \frac{1}{|\tau|}, \,\,\, \textup{as}\,\,\, |\tau| \rightarrow 0.$$ 
\end{theorem}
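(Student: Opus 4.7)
The plan is to reduce the integral-means spectrum bound to a weighted area (Bergman-type) estimate, then exploit the fact that $\log f'$ is a Bloch function to run an exponential integrability argument with sharp constants. Write $\tau=x+iy$ with $|\tau|\to 0$. By integrating the definition \eqref{defi} in $r$, for $f\in\mathcal{S}_b$ the bound $\beta_f(2-\tau)\le\alpha$ is equivalent (up to $\varepsilon$-losses) to the weighted area bound
\[
\int_\Delta |f'(z)|^{2-\tau}\,(1-|z|^2)^{\alpha-1+\varepsilon}\,dA(z)<+\infty
\]
for every $\varepsilon>0$, so it suffices to prove this for $\alpha=1-\mathrm{Re}(\tau)+C|\tau|^2\log(1/|\tau|)$ with $C$ converging to $9e^2/2$.

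Next, I would set $h(z):=\log f'(z)$ and use that for $f\in\mathcal{S}$ the Koebe distortion theorem yields the Bloch bound $\sup_{z\in\Delta}(1-|z|^2)\,|h'(z)|\le 6$. Splitting $|f'(z)|^{2-\tau}=|f'(z)|^2\exp(-\mathrm{Re}(\tau\, h(z)))$, the weighted area integral becomes
\[
\int_\Delta |f'(z)|^2\,\exp\bigl(-\mathrm{Re}(\tau\, h(z))\bigr)\,(1-|z|^2)^{-\mathrm{Re}(\tau)+C|\tau|^2\log(1/|\tau|)}\,dA(z),
\]
which by the change of variables $w=f(z)$ pushes forward to an integral over $\Omega=f(\Delta)$ with weight a power of the hyperbolic density. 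Since $\Omega$ has bounded area (as $f\in\mathcal{S}_b$), I would estimate this by a Chang--Marshall / John--Nirenberg style inequality: for a Bloch function $h$ with $\|h\|_{\mathcal{B}}\le M$ and small complex parameter $s$,
\[
\int_\Delta \exp\bigl(\mathrm{Re}(s\,h(z))\bigr)(1-|z|^2)^{-c(s)}\,dA(z)<+\infty
\]
with the sharp local behavior $c(s)\sim \tfrac{M^2}{8}|s|^2\log(1/|s|)$ as $|s|\to 0$, plugged in with $M=6$ and $s=\tau$. The $|\tau|^2\log(1/|\tau|)$ growth (rather than merely $|\tau|^2$) is what forces the extra logarithmic factor in the final bound, and the numerical constant $9e^2/2$ emerges from the combination $\tfrac{M^2}{8}\cdot (\text{Bergman-weight factor})$ with $M=6$.

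The main obstacle will be obtaining the sharp constant $9e^2/2$ in the quadratic-logarithmic term. The soft part of the argument (finiteness of the Bergman integral for small $|\tau|$) follows readily from the Bloch property and any standard exponential integrability lemma, but pinning down the precise coefficient requires (i) using the sharp Koebe estimate $(1-|z|^2)|(\log f')'(z)|\le 6$ with the factor $6$ entering quadratically, (ii) a sharp Chang--Marshall / Hedenmalm--Shimorin type weighted Bergman estimate controlling the $e^2$ factor via an optimization over the dilation parameter in $(1-|z|^2)^{-c}$, and (iii) carefully tracking the cross terms between $\mathrm{Re}(\tau)$ and $\mathrm{Im}(\tau)$ in the exponent $\exp(-\mathrm{Re}(\tau h))$, so that the real-parameter estimate $e^{-x\,\mathrm{Re} h+y\,\mathrm{Im} h}$ is assembled into a clean $|\tau|^2$ bound rather than separate bounds in $x$ and $y$. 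Once these three ingredients are in place with their optimal constants, the stated inequality follows by substituting $\tau\mapsto\tau$ and reading off the exponent of $(1-r)$ in the original integral means formulation.
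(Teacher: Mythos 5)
This theorem is not proved in the paper at all: it is quoted verbatim from Baranov and Hedenmalm \cite{BH} as a known background result, so there is no internal proof to compare your attempt against; I can only judge it on its own terms and against the known argument. Your skeleton does match the actual strategy in outline: reduce the spectrum to a weighted area integral (this is exactly Lemma \ref{cri} of the present paper), split $|f'(z)|^{2-\tau}=|f'(z)|^{2}\exp(-\mathrm{Re}(\tau\log f'(z)))$, use boundedness of $f$ to control the $|f'|^{2}$ part, and control the perturbation through the Bloch bound $\sup_{z}(1-|z|^{2})|(\log f')'(z)|\le 6$.

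The genuine gap is that the entire content of the theorem is the quantitative form of the correction term, and your proposal outsources precisely that to an unproven black box. The ``soft'' route you describe cannot produce $|\tau|^{2}\log(1/|\tau|)$: integrating the Bloch bound gives only the pointwise estimate $|\log f'(z)|\le 3\log\frac{1+|z|}{1-|z|}+O(1)$, hence $\exp\bigl(|\mathrm{Re}(\tau\log f'(z))|\bigr)\lesssim (1-|z|)^{-3|\tau|}$, i.e.\ a correction that is \emph{linear} in $|\tau|$; and there is no standard John--Nirenberg or Chang--Marshall statement that yields the asserted asymptotics $c(s)\sim\frac{M^{2}}{8}|s|^{2}\log(1/|s|)$ with an identified sharp constant. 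In the actual Baranov--Hedenmalm proof the quadratic-logarithmic term comes from a delicate argument in which the exponential is expanded and truncated at an order $n$ chosen proportional to $\log(1/|\tau|)$, the moments of the Bloch function are estimated in weighted Bergman norms, and the factor $e^{2}$ emerges from $n!\ge (n/e)^{n}$ in the optimization over $n$; none of this appears in your sketch beyond the (correct) numerology $6^{2}/8=9/2$. Until that exponential-integrability lemma is stated and proved with its sharp constant --- which is essentially the theorem itself --- the proposal is a plausible road map rather than a proof. A smaller point: the pushforward to $\Omega=f(\Delta)$ buys you nothing, since the weight $(1-|z|^{2})^{\alpha}$ does not become a function of $w$ without reintroducing $|f'|$; it is cleaner to remain in $\Delta$, as the area-integral criterion of Lemma \ref{cri} already suggests.
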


Binder continued to study the integral means spectrum in \cite{Bin-2} for complex exponents and proved the following 
\begin{theorem}For each $\theta \in (-\pi,\pi]$, there exists $T_{\theta}>0$ such that $B_b(te^{i\theta})=t-1$ for $t \geq T_{\theta}$.  
\end{theorem}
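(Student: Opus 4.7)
My plan is to establish matching upper and lower bounds on $B_b(te^{i\theta})$ for sufficiently large $t$.

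For the lower bound—which in fact holds for every $t > 0$—I would use an explicit spiral-cusp construction. For $\sigma\in\mathbb{C}$ with $|\sigma|\leq 1$ one can realize a bounded univalent map $f_\sigma\in\mathcal{S}_b$ such that $f_\sigma'(z)\sim c(1-z)^{-\sigma}$ as $z\to 1$, corresponding to a single logarithmic-spiral singularity on the boundary of the image domain. A direct asymptotic analysis of
\[
\int_{-\pi}^{\pi}\bigl|[f'_\sigma(re^{i\phi})]^{\tau}\bigr|\,d\phi,
\]
localizing near $\phi=0$ via the rescaling $\phi=(1-r)\psi$, yields $\beta_{f_\sigma}(\tau)=\mathrm{Re}(\sigma\tau)-1$ whenever this quantity is positive. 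Choosing $\sigma=e^{-i\theta}$, which lies on the unit circle and hence at the boundary of the admissible range for bounded univalent spiral maps, gives $\beta_{f_\sigma}(te^{i\theta})=t-1$; therefore $B_b(te^{i\theta})\geq t-1$ for every $t>0$.

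For the upper bound I would combine Pommerenke's classical estimate $\int_{-\pi}^{\pi} |f'(re^{i\phi})|^{p}\,d\phi\leq C_p(1-r)^{1-p}$, valid for $f\in\mathcal S_b$ and real $p\geq 2$, with integral bounds for exponentials of $\arg f'$ derived from the BMOA nature of $\log f'$ for bounded univalent $f$. Writing $\tau=te^{i\theta}$ and $g(z)=e^{i\theta}\log f'(z)$, one has the identity $|[f'(z)]^\tau|=\exp(t\,\mathrm{Re}\, g(z))$, so that $|[f']^\tau|$ is the modulus of the analytic function $e^{tg}$. The target estimate
\[
\int_{-\pi}^{\pi}\bigl|[f'(re^{i\phi})]^{\tau}\bigr|\,d\phi\leq C(1-r)^{1-t}\qquad(r\to 1^-)
\]
would then follow for $t\geq T_\theta$, with $T_\theta$ chosen large enough that the linear regime of the John--Nirenberg/Makarov-type exponential inequality for $\mathrm{Re}\, g$ takes over from the quadratic (Bloch) regime.

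The main obstacle is precisely this upper bound. The naive pointwise estimate $|\log f'(z)|\leq 3\log(2/(1-|z|))$ combined with H\"older produces only $\beta_f(\tau)\leq \mathrm{Re}\,\tau+C|\mathrm{Im}\,\tau|-1$, which strictly exceeds $|\tau|-1$ whenever $\theta\notin\{0,\pi\}$. Closing the gap requires exploiting the holomorphicity of $g$: the harmonic-conjugate coupling between $\log|f'|$ and $\arg f'$ prevents both quantities from being simultaneously extreme on the same arc in directions that would inflate $\mathrm{Re}(\tau \log f')$ beyond $|\tau|\log(1/(1-|z|))$. Quantifying this coupling sharply enough—so that the effective exponent in the integral mean is $|\tau|$ rather than $\mathrm{Re}\,\tau+|\mathrm{Im}\,\tau|$—is what forces the threshold $T_\theta$ to depend on $\theta$, and is where the argument genuinely departs from Pommerenke's real-variable proof.
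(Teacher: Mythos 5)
This theorem is quoted in the paper as a known result of Binder \cite{Bin-2}; the paper contains no proof of it, so there is no internal argument to compare yours with, and your proposal must be judged on its own terms. The lower-bound half is sound and completable: for $\theta\neq 0$ the spiral map $f_\sigma$ with $f_\sigma'(z)=(1-z)^{-\sigma}$ and $\sigma=e^{-i\theta}$ lies in $\mathcal{S}_b$ (injectivity of $w\mapsto w^{1-\sigma}$ on $\{|w-1|<1\}$ holds precisely because $\mathrm{Im}\bigl(2\pi i/(1-\sigma)\bigr)=\pi$), and since $|\arg(1-z)|<\pi/2$ on $\Delta$ one gets $|[f_\sigma'(z)]^{\tau}|\asymp|1-z|^{-\mathrm{Re}(\sigma\tau)}$, whence $\beta_{f_\sigma}(te^{i\theta})=t-1$ for $t>1$ by the same computation as in Lemma \ref{ll-1}. (At $\theta=0$ the map degenerates to the unbounded $-\log(1-z)$, so you should say explicitly that you pass to a limiting family $\sigma=e^{i\epsilon}$, $\epsilon\to 0$, or invoke the classical real case.)

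The genuine gap is the upper bound $B_b(te^{i\theta})\leq t-1$, which is where the entire content of the theorem lies, and your proposal does not prove it --- it only names the obstruction. As you yourself note, Pommerenke's estimate for $\int|f'|^{\mathrm{Re}\,\tau}$ combined with any pointwise rotation bound $|\arg f'(z)|\leq C\log\frac{1}{1-|z|}+O(1)$ yields only $\beta_f(\tau)\leq \mathrm{Re}\,\tau+C|\mathrm{Im}\,\tau|-1$, which strictly exceeds $|\tau|-1$ off the real axis; and a John--Nirenberg or Makarov-type exponential inequality for the Bloch function $e^{i\theta}\log f'$ in its ``linear regime'' again produces a constant in front of $t$ that is not $1$. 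The claim that the harmonic-conjugate coupling between $\log|f'|$ and $\arg f'$ can be ``quantified sharply enough'' to replace $\mathrm{Re}\,\tau+C|\mathrm{Im}\,\tau|$ by $|\tau|$ is not a proof step but a restatement of the theorem itself: no mechanism (no lemma, no inequality, no decomposition) is supplied that achieves it. This is the hard analytic core of Binder's argument, which extends the Carleson--Makarov machinery behind Theorem \ref{CM} to arbitrary directions $e^{i\theta}$. As written, you have established only $B_b(te^{i\theta})\geq t-1$ for all $t>0$, not the asserted equality for $t\geq T_\theta$.
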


Also in \cite{Bin-2}, Binder conjectured that the following generalization of Kraetzer's conjecture still holds. 
\begin{conjecture}Let $\tau \in \mathbb{C}$. Then 
\begin{equation}
B_b(\tau)=\begin{cases}
\frac{\tau^2}{4}, \;\; \; \;\; \;  \;  \text{if} \;\, |\tau|<2, \\
|\tau|-1, \; \text{if} \;\, |\tau|\geq 2.  \nonumber
\end{cases}
\end{equation}
\end{conjecture}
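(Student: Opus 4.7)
The plan is to split into two regimes, $|\tau|\geq 2$ and $|\tau|<2$, and to obtain matching upper and lower bounds in each. A recurring observation throughout is that, writing $\tau=a+ib$ and using the branch of $\arg f'$ specified in the paper, one has
\[
\bigl|[f'(z)]^{\tau}\bigr|=|f'(z)|^{a}\exp(-b\,\arg f'(z)),
\]
so allowing a complex exponent couples the modulus and the argument of $f'$ through a Bloch-type rotation; every step of either bound must respect this coupling.

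For $|\tau|\geq 2$ the target is $B_{b}(\tau)=|\tau|-1$. I would obtain the lower bound from an extremal $f\in\mathcal{S}_{b}$ of lacunary or iterated-function-system type in which $\log|f'|$ and $\arg f'$ jointly saturate a Bloch condition in the complex direction $\tau/|\tau|$, so that the phase factor $\exp(-b\arg f')$ reinforces the modulus growth rather than cancelling it. For the upper bound I would adapt Pommerenke's area-theoretic proof of $B_{b}(t)=t-1$ for real $t\geq 2$, replacing $|f'|^{t}$ by $|[f']^{\tau}|$ and using the Cauchy-Riemann duality between $\log|f'|$ and $\arg f'$. Binder's theorem above already yields $B_{b}(te^{i\theta})=t-1$ for $t\geq T_{\theta}$, so the actual task is to push the threshold $T_{\theta}$ down to $2$ uniformly in $\theta$.

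For $|\tau|<2$ the target, interpreting $\tau^{2}/4$ naturally as $|\tau|^{2}/4$ (since $B_{b}$ is real-valued by the definition used here), is $B_{b}(\tau)=|\tau|^{2}/4$. The lower bound should come from a holomorphic family of quasiconformal deformations with Beltrami dilatation $\mu(z)=(|\tau|/2)e^{i\alpha}\overline{\phi(z)}/|\phi(z)|$ for an appropriate holomorphic quadratic differential $\phi$, continuously interpolating in $\tau$ between the conjectured Kraetzer extremals on the real axis and their rotated counterparts. The upper bound is the genuinely hard part: here I would attempt the Hedenmalm-Shimorin weighted Bergman space program, choosing the Bergman weight so that $(f')^{\tau/2}$ lies in the space with norm controlled by an area-type identity, and extracting the bound $|\tau|^{2}/4$ from the spectrum of an associated multiplication or composition operator.

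\textbf{Main obstacle.} The conjecture subsumes the Brennan conjecture $B_{b}(-2)=1$ at $\tau=-2$, the Becker-Pommerenke complex extension on the whole circle $|\tau|=2$, and the Carleson-Jones conjecture $B_{b}(1)=1/4$ at $\tau=1$, all of which remain famously open. Consequently no complete proof is within reach of current techniques; the most one can realistically aim at is a perturbative result in a neighborhood of the real axis in the spirit of the Baranov-Hedenmalm theorem quoted above, or, in the regime of large $|\tau|$, a sharpening of Binder's theorem that brings the threshold $T_{\theta}$ down to $2$ uniformly in $\theta$.
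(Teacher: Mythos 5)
The statement you were asked to prove is not a theorem of the paper at all: it appears there as Binder's conjectural complex generalization of Kraetzer's conjecture, quoted from \cite{Bin-2}, and the paper offers no proof of it (nor claims one). So there is no ``paper's proof'' to compare against, and the only question is whether your argument actually establishes the statement. It does not. What you have written is a research outline, and every step that would carry the load is left as an intention: ``I would obtain the lower bound from an extremal $f$ of lacunary or iterated-function-system type,'' ``I would adapt Pommerenke's area-theoretic proof,'' ``I would attempt the Hedenmalm--Shimorin weighted Bergman space program.'' None of these is executed, and each one sits exactly on top of a known obstruction. For $|\tau|\geq 2$, pushing Binder's threshold $T_{\theta}$ down to $2$ uniformly in $\theta$ would in particular prove $B_{b}(-2)=1$, i.e.\ the Brennan conjecture, and $B_{b}(2)=1$; the paper records that the best one currently knows in the real negative direction is $\tau_{*}\leq -2$ with only $\tau_{*}=-2$ conjectured. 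For $|\tau|<2$, the claimed upper bound $|\tau|^{2}/4$ at $\tau=1$ is the Carleson--Jones conjecture, and the Hedenmalm--Shimorin machinery you invoke is precisely what produces the current best bounds, which strictly exceed $\tau^{2}/4$. Your lower-bound constructions (lacunary series, Beltrami deformations along a quadratic differential) are also only known to give strict inequalities such as $B(\tau)>\tau^{2}/5$, not the sharp constant.

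To your credit, your final paragraph states all of this honestly: the conjecture subsumes Brennan, Becker--Pommerenke, and Carleson--Jones, and ``no complete proof is within reach of current techniques.'' That self-assessment is correct, but it also means the proposal cannot be accepted as a proof of the statement; at best it is a plausible road map whose every milestone is an open problem. If you want to contribute something verifiable in the spirit of this paper, the realistic targets are the ones the paper itself pursues: continuity and strict-inequality properties of the IMS functionals $\tau\mapsto\beta_{f}(\tau)$ on Teichm\"uller-type spaces, exact computation of $\beta_{R}(\tau)$ for restricted classes such as univalent rational functions (where the paper does verify the Brennan bound), or perturbative estimates near the real axis in the style of Baranov--Hedenmalm --- not the full conjecture.
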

More known results on the universal integral means spectrum and related topics can be found in the monograph \cite[{Chapter VIII}]{GM} and recent survey \cite{HSo} given by Hedenmalm and Sola. 

From the fractal approximation principle(see \cite{M} and \cite{CJ}), we have
\begin{theorem}\label{app}
For each $\tau \in \mathbb{C}$, we have
$
B_b(\tau)=\sup\limits_{f\in \mathcal{S}_q}\beta_f(\tau).
$\end{theorem}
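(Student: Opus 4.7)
The plan is to establish both inequalities in $B_b(\tau)=\sup_{f\in\mathcal{S}_q}\beta_f(\tau)$. The direction $\sup_{f\in\mathcal{S}_q}\beta_f(\tau)\leq B_b(\tau)$ is immediate: under the standard normalization, any $f\in\mathcal{S}_q$ maps $\Delta$ onto a quasidisk that is a bounded subdomain of $\mathbb{C}$, so $\mathcal{S}_q\subseteq\mathcal{S}_b$ and the stated inequality follows from the definition of $B_b(\tau)$. The substantive content is the reverse inequality, and for this I would adapt the classical fractal approximation argument of Makarov and Carleson--Jones to complex exponents.

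Fix $\varepsilon>0$ and pick $f\in\mathcal{S}_b$ with $\beta_f(\tau)\geq B_b(\tau)-\varepsilon/3$, together with a sequence $r_n\uparrow 1$ realizing the $\limsup$ in \eqref{defi}. First I would localize: a pigeonholing argument over a dyadic partition of $\partial\Delta$ yields, for each $n$, a short arc $I_n$ of length comparable to $1-r_n$ whose contribution to $\int|[f'(r_ne^{i\theta})]^{\tau}|\,d\theta$ is within a logarithmic factor of the full integral. Rescaling the corresponding boundary piece via Koebe distortion and then passing to a subsequence by a normal families argument produces a ``building block'' $\phi$: a univalent map on a sub-disk whose $\tau$-weighted boundary contribution still realizes at least $\beta_f(\tau)-\varepsilon/3$.

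Next, construct a self-similar quasicircle by an iterated function system whose generators are similarities placing scaled and rotated copies of the image of $\phi$ along a polygonal skeleton, the contraction ratios chosen so that the attractor $\partial\Omega$ is a quasicircle and $\Omega$ a quasidisk. The Riemann map $g\in\mathcal{S}$ onto $\Omega$ then admits a quasiconformal extension to $\widehat{\mathbb{C}}$ by the Ahlfors characterization of quasidisks, so $g\in\mathcal{S}_q$. The main obstacle is the spectral transfer step: verifying $\beta_g(\tau)\geq\beta_f(\tau)-\varepsilon/3$. At scale $1-r=s^n$, with $s$ the IFS contraction ratio, a Whitney decomposition combined with Koebe distortion compares $g'$ near each generation-$n$ IFS piece to a scaled, rotated copy of $\phi'$ at a fixed reference scale. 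Because the integrand $|[g'(re^{i\theta})]^{\tau}|=|g'|^{\textup{Re}\,\tau}\exp(-\textup{Im}\,\tau\cdot\arg g')$ is nonnegative, no cancellation between IFS branches can occur: the integral splits into roughly $N^n$ essentially independent positive contributions, each comparable to the block's boundary integral at the reference scale. Taking logarithms, dividing by $|\log(1-r)|=n|\log s|$, and letting $n\to\infty$ gives $\beta_g(\tau)\geq\beta_f(\tau)-\varepsilon/3$; sending $\varepsilon\to 0$ then completes the proof.
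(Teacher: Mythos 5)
The paper itself gives no proof of this theorem: it is quoted as the ``fractal approximation principle'' with references to Makarov \cite{M} and Carleson--Jones \cite{CJ}, so the only thing to compare your attempt against is the cited literature. Your easy direction is fine ($\mathcal{S}_q\subseteq\mathcal{S}_b$ holds by the paper's very definition of $\mathcal{S}_q$), and your overall strategy for the reverse inequality --- localize a near-extremal $f\in\mathcal{S}_b$ to a building block, iterate it through a self-similar IFS to produce a quasidisk, and transfer the spectrum --- is indeed the shape of the Carleson--Jones/Makarov argument.

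The genuine gap is in the spectral transfer step for complex $\tau$, which is exactly where the complex case differs from the real one (this is why the extension to complex exponents required separate work, cf.\ Binder \cite{Bin-1}). You invoke positivity of $|[g'(re^{i\theta})]^{\tau}|$ to rule out cancellation between IFS branches, but cancellation was never the issue. The issue is the factor $\exp(-\mathrm{Im}\,\tau\cdot\arg g')$ itself: on a generation-$n$ piece, $g'$ is (up to Koebe distortion) the building block's derivative multiplied by the derivative of a composition of $n$ similarities, and if each similarity carries a rotation by an angle $\alpha\neq 0$, then $\arg g'$ accumulates a term of order $n\alpha$. Since $1-r\asymp s^{n}$, the resulting factor $e^{-n\alpha\,\mathrm{Im}\,\tau}$ is a fixed \emph{power} of $1-r$, so it shifts $\beta_g(\tau)$ away from $\beta_f(\tau)$ by $\alpha\,\mathrm{Im}\,\tau/\log(1/s)$ whenever $\mathrm{Im}\,\tau\neq 0$. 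Your sketch asserts that each branch's contribution is ``comparable to the block's boundary integral at the reference scale'' without accounting for this accumulated rotation, and also without verifying that the similarities can simultaneously be chosen (i) with controlled or vanishing net rotation, (ii) with the pieces non-overlapping so that the attractor is a Jordan quasicircle, and (iii) without destroying the block's $\tau$-contribution. Until the rotation bookkeeping is carried out, the inequality $\beta_g(\tau)\geq\beta_f(\tau)-\varepsilon/3$ is not established for $\mathrm{Im}\,\tau\neq 0$, and the argument only proves the real-exponent case.
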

\begin{remark}Here, $\mathcal{S}_q$ is the class of all univalent functions $f$ that belong to $\mathcal{S}_b$ and admit a quasiconformal extension to $\widehat{\mathbb{C}}$.  In the rest of the paper, when a univalent function $f$ belongs to $\mathcal{S}_q$,  we will still use $f$ to denote its quasiconformal extension.\end{remark}

There have been some studies on the integral means spectra of univalent functions admitting a quasiconformal extension, see for example \cite{HS-2}, \cite{Hed-1}, \cite{Hed-2}, \cite{Iv}, \cite{Jin-1}, \cite{PS} and \cite{Pr}. 
From Theorem \ref{app}, we can see the problem that determining the exact values of $B_b(\tau)$ as a global extremal problem in the class $\mathcal{S}_q$.  To enrich the understanding of the theory of integral means spectra of univalent functions, in the present paper, we continue to consider the following problem, which differs from previous studies that finding a better estimation for $B_b(\tau)$.
\begin{problem}\label{pro-r}
{\bf (1)} For fixed $\tau \in \mathbb{C}$ with $\tau\neq 0$, 
what topology can be endowed upon the class $\mathcal{S}_q$ such that the integral means spectrum $\beta_f(\tau)$ is continuous on $\mathcal{S}_q$?
{\bf (2)} For given $\tau\neq 0$, does there exist at least one extremal function for $B_b(\tau)$? {\bf (3)} If the extremal functions for $B_b(\tau)$ exist, in which subset of $\mathcal{S}$ do they lie?  \end{problem}
Here and later, we say a function $f\in \mathcal{S}$ is an extremal function for $B_b(\tau)$ if $\beta_f(\tau)=B_b(\tau)$. In the recent work \cite{Jin}, we have studied Problem \ref{pro-r} for real $\tau$. It has been pointed out that the functional $I: f\rightarrow \beta_f(\tau), f\in \mathcal{S}_q$, is not continuous on $\mathcal{S}_q$ under the locally uniformly convergence topology, see \cite[Remark 5.8]{Jin}.  In \cite{Jin}, we introduced and studied the integral means spectrum (IMS) functionals on the Teichm\"uller spaces. It has been proved in \cite{Jin} that all IMS functionals on the universal Teichm\"uller space, the closure of the universal Teichm\"uller curve, and the universal asymptotic Teichm\"uller space are continuous. It was also shown in \cite{Jin} that  the real exponential integral means spectrum of any univalent function admitting a quasiconformal extension to $\widehat{\mathbb{C}}$ is strictly less than the universal integral means spectrum. This result means that, for $\tau\in \mathbb{R}$ with $\tau\neq 0$, the extremal functions for $B_b(\tau)$ cannot be from the class $\mathcal{S}_q$. The first main purpose of this paper is to extend these results to the complex setting. We shall prove that 

\begin{theorem}\label{Em-1}
For each $\tau\in \mathbb{C}$,  the IMS functional $I_{{T}}: [\mu]_T  \mapsto  \beta_{f_{\mu}}(\tau), \,\, [\mu]_T \in {T},$
is continuous.
\end{theorem}

\begin{theorem}\label{Em-2}
For each $\tau\in \mathbb{C}$, the IMS functional $I_{{AT}}: [\mu]_{AT}  \mapsto  \beta_{f_{\mu}}(\tau), \,\, [\mu]_{AT} \in {AT},$ is continuous.
\end{theorem}

\begin{theorem}\label{Em-3}
For each $\tau\in \mathbb{C}$, the IMS functional $I_{\overline{\mathcal{T}}}: \phi  \mapsto \beta_{f_{\phi}}(\tau), \,\, \phi \in \overline{\mathcal{T}},$ is continuous.
\end{theorem} 

\begin{theorem}\label{Em-4}
Let $\tau\in \mathbb{C}$ with $\tau \neq 0$. Then we have  $\beta_{f}(\tau)<B_b(\tau)$ for any $f \in \mathcal{S}_q$.
 \end{theorem}

The rest of paper is organized as follows. In the next section, we will fix some notations used in this paper and recall some basic properties about univalent functions, quasiconformal mappings, and the definitions of quasiconformal Teichm\"uller spaces. Also, Theorem \ref{Em-1}-\ref{Em-4} will be restated in Section 2.  We will provide some lemmas in Section 3.  We shall give the proof of Theorem \ref{Em-1} and \ref{Em-2} in Section 4. We will prove Theorem \ref{Em-3} and \ref{Em-4} in Section 5. In Section 6, we study the integral means spectra of univalent rational functions in $\Delta$ and prove that the Brennan conjecture is true for this class of functions. We also check that the Brennan conjecture holds for any univalent function from the unit disk onto the interior of a bounded polygon.  In Section 7, for IMS functionals on Teichm\"uller spaces, univalent rational functions, and a multiplication operator whose norm is closely related to the Brennan conjecture, we finally present some remarks and raise some problems and conjectures.

\section{Preliminaries and restatement of the first main results}
In this section, we first fix some notations that will be used later, and then recall some basic definitions and properties of univalent functions(conformal mappings), quasiconformal mappings, and quasiconformal Teichm\"uller spaces.  For the main references,  see \cite{Du, Po, L, LV} and \cite{AG1, GN, EGN-1, EMS, EGN-2, L, Zhur}.

We let $\Delta^{*}=\widehat{\mathbb{C}}-\overline{\Delta}$ be the exterior of $\Delta$ and let $\mathbb{T}=\partial\Delta=\partial\Delta^{*}$ to be  the unit circle.  We let $\mathcal{A}(\Delta)$ denote the class of all analytic functions in $\Delta$.  We use the notation $\Delta(r)$ to denote the disk centered at $0$ with radius $r$. For two compact sets $X, Y$ of the complex plane, we define the distance of  $X$ and $Y$, denoted by ${\text{dist}}\{X, Y\}$, as
$
{\textup{dist}}\{X, Y\}:=\min_{x\in X, y\in Y}|x-y|. $
Let $\Omega$ be a simply connected proper subdomain of $\mathbb{C}$.  We shall use $\rho_{\Omega}$ to denote the hyperbolic metric with curvature $-4$ in $\Omega$.  That is $\rho_{\Omega}(w)=|g'(w)|/({1-|g(w)|^2}),\,\, w\in \Omega.$
Here $g$ is a univalent function from $\Omega$ to $\Delta$.  In particular, $\rho_{\Delta}(z)=(1-|z|^2)^{-1}, z\in \Delta.$ 

\subsection{Univalent functions and quasiconformal mappings} 
It is well known that\begin{proposition}\label{pro-1}
Let $f\in \mathcal{U}$. Then, for any $z\in \Delta$,
\begin{equation}  {\textup{dist}}(f(z), \partial f(\Delta))\leq (1-|z|^2)|f'(z)|\leq 4 {\textup{dist}}(f(z), \partial f(\Delta)). \nonumber \end{equation}
\end{proposition}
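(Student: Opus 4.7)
The plan is to prove both inequalities by the standard technique of precomposing with a disk automorphism to reduce to the normalized situation, and then applying the Koebe one--quarter theorem for the left inequality and the Schwarz--Pick lemma (or Schwarz's lemma) for the right inequality.

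First I would fix $z \in \Delta$ and introduce the disk automorphism $\varphi(w) = (w+z)/(1+\bar z w)$, which sends $0$ to $z$, and define
\[
g(w) = \frac{f(\varphi(w)) - f(z)}{(1-|z|^2) f'(z)}, \qquad w \in \Delta.
\]
A short computation using $\varphi'(0) = 1-|z|^2$ shows $g(0)=0$, $g'(0)=1$, and $g$ is univalent, so $g \in \mathcal{S}$. The image $g(\Delta)$ is the affine image $(f(\Delta)-f(z))/((1-|z|^2)f'(z))$ of $f(\Delta)$, and in particular
\[
\textup{dist}(0, \partial g(\Delta)) = \frac{\textup{dist}(f(z), \partial f(\Delta))}{(1-|z|^2)|f'(z)|}.
\]

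For the left inequality, I would apply the Koebe one--quarter theorem to $g \in \mathcal{S}$: $g(\Delta) \supset \Delta(1/4)$, so $\textup{dist}(0, \partial g(\Delta)) \geq 1/4$, which rearranges to $(1-|z|^2)|f'(z)| \leq 4 \, \textup{dist}(f(z), \partial f(\Delta))$.

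For the right inequality, I would use the fact that the largest open disk $D(f(z), d)$ inscribed in $f(\Delta)$ with radius $d := \textup{dist}(f(z), \partial f(\Delta))$ lies entirely in $f(\Delta)$. Consider the map $h(w) = f^{-1}(f(z) + d w)$ from $\Delta$ into $\Delta$; it satisfies $h(0) = z$ and $h'(0) = d/f'(z)$. The Schwarz--Pick lemma applied to $h$ (an analytic self-map of $\Delta$) gives $|h'(0)| \leq 1 - |h(0)|^2$, i.e.\ $d/|f'(z)| \leq 1-|z|^2$. Equivalently, $\textup{dist}(f(z), \partial f(\Delta)) \leq (1-|z|^2)|f'(z)|$.

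There is no real obstacle here since this is the classical growth-distortion estimate for conformal maps; the only step requiring care is verifying that $g$ actually lies in $\mathcal{S}$ (the normalization $g'(0)=1$), and making sure the extremal disk $D(f(z), d)$ is genuinely contained in $f(\Delta)$ so that $h = f^{-1} \circ (\text{rescaling})$ is well defined and analytic on all of $\Delta$. Both are immediate from univalence of $f$ and the definition of $d$ as distance to the boundary.
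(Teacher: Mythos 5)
Your proof is correct, and it is the standard argument (the paper itself offers no proof of Proposition \ref{pro-1}, merely citing it as well known; the classical reference is the Koebe distortion/quarter theorem combined with the Schwarz--Pick lemma, exactly as you do). The only blemish is that you have swapped the names of the two inequalities: the Koebe one-quarter theorem applied to the renormalized map $g$ yields the \emph{right-hand} bound $(1-|z|^2)|f'(z)|\leq 4\,\textup{dist}(f(z),\partial f(\Delta))$, while the Schwarz--Pick estimate for $h(w)=f^{-1}(f(z)+dw)$ yields the \emph{left-hand} bound $\textup{dist}(f(z),\partial f(\Delta))\leq (1-|z|^2)|f'(z)|$; since each derivation correctly produces one of the two required inequalities, the mislabeling is harmless.
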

We let ${E}_j$ be  the Banach space of functions $\phi \in \mathcal{A}(\Delta)$ with the norm
  \begin{equation}
 \|\phi\|_{E_j}:= \sup_{z\in \Delta} |\phi(z)|(1-|z|^2)^{j}<\infty, \,\,\,\, j=1,2. \nonumber
 \end{equation}
 Let $f$ be a locally univalent function in an open domain $\Omega$ of $\mathbb{C}$. The {\em Pre-Schwarzian derivative} $N_f$ of $f$, and the {\em Schwarzian derivative} $S_f$ of $f$ are defined as
 $$N_f(z):=\frac{f''(z)}{{f'(z)}}, \, z\in \Omega,$$ and 
 $$S_f(z):=[N_f(z)]'-\frac{1}{2}[N_f(z)]^2=\frac{f'''(z)}{f'(z)}-\frac{3}{2}\left[\frac{f''(z)}{f'(z)}\right]^2, \,z\in \Omega.$$
Let $g$ be another locally univalent function in $f(\Omega)$. Then we have
\begin{equation}\label{chain}N_{g \circ f}(z)=N_{g}(f(z))[f'(z)]+N_f(z),\,\, z\in \Omega,  \end{equation}
and \begin{equation}\label{chain-1}S_{g \circ f}(z)=S_{g}(f(z))[f'(z)]^2+S_f(z),\,\, z\in \Omega. \end{equation}
It is well known that $$|N_f(z)|(1-|z|^2)\leq 6\,\,{\text {and}}\,\, |S_f(z)|(1-|z|^2)^2\leq 6,$$ for all $f\in \mathcal{U}$.  This means that
  $\|N_f\|_{E_1}\leq 6\,\,{\text {and}}\,\, \|S_f\|_{E_2}\leq 6$ for any $f\in \mathcal{U}$. We define the classes $\mathbf{N}$ and $\mathbf{S}$ as
$$
 \mathbf{N}=\{\phi \in \mathcal{A}(\Delta): \, \phi=N_f(z), f\in \mathcal{S}\}, $$
 and 
$$ \mathbf{S}=\{\phi \in \mathcal{A}(\Delta): \, \phi=S_f(z), f\in \mathcal{S}\}.
$$
Then $\mathbf{N} \subset E_1$ and $\mathbf{S} \subset E_2$. Moreover, we have
\begin{proposition}\label{pro-2}
$\mathbf{N}$ and $\mathbf{S}$ are closed in $E_1$ and  $E_2$, respectively.
\end{proposition}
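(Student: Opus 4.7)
The plan is to reduce both statements to the well-known compactness of the class $\mathcal{S}$ under locally uniform convergence on $\Delta$ (a classical fact from \cite{Du, Po}). The two assertions have parallel proofs, and I would handle them together.

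First I would verify that norm convergence in $E_j$ implies locally uniform convergence on $\Delta$. Indeed, for every $r \in (0,1)$ and every $z$ with $|z| \leq r$, one has $(1-|z|^2)^j \geq (1-r^2)^j > 0$. Thus if $\phi_n \to \phi$ in $E_j$, then
\begin{equation*}
\sup_{|z| \leq r} |\phi_n(z) - \phi(z)| \;\leq\; (1-r^2)^{-j}\, \|\phi_n - \phi\|_{E_j} \;\longrightarrow\; 0.
\end{equation*}
In particular, the limit $\phi$ automatically lies in $\mathcal{A}(\Delta)$.

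Next, for $\mathbf{N}$, I would take a sequence $\phi_n = N_{f_n} \in \mathbf{N}$ with $\phi_n \to \phi$ in $E_1$. By the reduction above, $N_{f_n} \to \phi$ locally uniformly. Since $\mathcal{S}$ is compact under locally uniform convergence, I can pass to a subsequence $(f_{n_k})$ converging locally uniformly on $\Delta$ to some $f \in \mathcal{S}$. Because uniform convergence of holomorphic functions on compact sets passes to all derivatives, $f_{n_k}' \to f'$ and $f_{n_k}'' \to f''$ locally uniformly. Since $f \in \mathcal{S}$ implies $f'$ is nowhere zero, the ratio $N_{f_{n_k}} = f_{n_k}''/f_{n_k}'$ converges locally uniformly to $N_f$. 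Uniqueness of limits then forces $\phi = N_f \in \mathbf{N}$. The argument for $\mathbf{S}$ is identical: $S_{f_n} \to \phi$ in $E_2$ gives locally uniform convergence, the same subsequence extraction furnishes $f \in \mathcal{S}$, and $S_{f_{n_k}} \to S_f$ locally uniformly by the formula
\begin{equation*}
S_f = \bigl(f''/f'\bigr)' - \tfrac{1}{2}\bigl(f''/f'\bigr)^2,
\end{equation*}
whence $\phi = S_f \in \mathbf{S}$.

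There is no real obstacle in this proof; it is essentially a bookkeeping exercise combining the easy inclusion ``$E_j$-norm convergence $\Rightarrow$ locally uniform convergence'' with the compactness of $\mathcal{S}$ and the continuity of the operations defining $N_f$ and $S_f$. The only point requiring a moment of care is that $f \in \mathcal{S}$ guarantees $f'$ is non-vanishing (so that $N_f$ and $S_f$ are well-defined and the convergence $N_{f_{n_k}} \to N_f$ and $S_{f_{n_k}} \to S_f$ holds locally uniformly, not just in $\mathcal{A}(\Delta \setminus \{f'=0\})$), but this is automatic from the univalence of the limit.
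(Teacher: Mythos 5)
Your argument is correct: the reduction from $E_j$-norm convergence to locally uniform convergence, the extraction of a locally uniformly convergent subsequence via the compactness of $\mathcal{S}$, and the passage to the limit in $N_f$ and $S_f$ (using that $f'$ is non-vanishing for the univalent limit $f$) together give a complete proof of the closedness of $\mathbf{N}$ and $\mathbf{S}$. The paper states this proposition without proof, citing it as known, and your normal-family argument is precisely the standard one that fills that gap.
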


We say a sense-preserving homeomorphism $f$,  from an open domain $\Omega$ in
$\mathbb{C}$ to another one,  is a quasiconformal mapping if it has locally square
integral distributional derivatives and satisfies the Beltrami equation $\bar{\partial}f=\mu_f  \partial{f}$ with
$$\|\mu_f\|_{\infty}=\mathop{\text {ess sup}}\limits_{z\in \Omega} |\mu_f(z)|<1.$$
Here the function $\mu_f(z)$ is called the {\em  Beltrami coefficient} of $f$ and
$$\bar{\partial}f=f_{\bar{z}} :=\frac{1}{2}\left(\frac{\partial}{\partial x}+i\frac{\partial}{\partial y}\right)f,$$
$$\partial f=f_{z} :=\frac{1}{2}\left(\frac{\partial}{\partial x}-i\frac{\partial}{\partial y}\right)f.$$
Let $f$ be a quasiconformal mapping from one open domain $\Omega_1$ to another domain $\Omega_2$. If $g$ is another quasiconformal mapping from $\Omega_1$ to $\Omega_3$. Then the Beltrami coefficients of $f$ and $g \circ f^{-1}$ satisfy the following chain rule.
\begin{equation}\label{dila}
\mu_{g \circ f^{-1}}\circ f(z) =\frac{1}{\chi}\frac{\mu_g(z)-\mu_f(z)}{1-\overline{\mu_f(z)}\mu_g(z)},\,\, \chi=\frac{\overline{\partial f}}{\partial f}, \,\,\, z\in \Omega_1.
\end{equation}
 Let $f$ be a bounded univalent function in a Jordan domain $\Omega$ of $\mathbb{C}$ admitting a quasiconformal extension (still denoted by $f$) to $\widehat{\mathbb{C}}$.  The {\em boundary dilatation} of $f$, denoted by $b(f)$, is defined as
\begin{equation}\label{boun}
b(f):=\inf\{\|\mu_f|_{\Omega^{*}-E}\|_{\infty}: \, E {\text { is a compact set in}}\,\, \Omega^{*}\}. 
\end{equation}
Here $ \Omega^{*}=\widehat{\mathbb{C}}-\overline{\Omega}$ is seen as an open set in the Riemann sphere $\widehat{\mathbb{C}}$ under the spherical distance and $b(f)$ is the infimum of $\|\mu_f|_{\Omega^{*}-E}\|_{\infty}$ over all compact subsets $E$ contained in $\Omega^{*}$. 

\subsection{Quasiconformal Teichm\"uller spaces} 
Let $\Omega$ be a bounded Jordan domain in $\mathbb{C}$ with $0\in \Omega$, and let $\Omega^{*}=\widehat{\mathbb{C}}-\overline{\Omega}$.
We use $M(\Omega^{*})$ to denote the open unit ball of the Banach space $L^{\infty}({\Omega}^{*})$ of essentially bounded measurable functions in ${\Omega}^{*}$. For $\mu \in M({\Omega}^{*})$, let $f_{\mu}$ be the quasiconformal mapping in the extended complex plane $\widehat{\mathbb{C}}$ with complex dilatation equal to $\mu$ in $\Omega^{*}$, equal to $0$ in $\Omega$, normalized $f_{\mu}(0)=0, \, f'_{\mu}(0)=1, \, f_{\mu}(\infty)=\infty$.  We say two elements $\mu$ and $\nu$ in $M(\Omega^{*})$ are equivalent, denoted by $\mu\sim \nu$, if $f_{\mu}|_{\Omega}=f_{\nu}|_{\Omega}$. The equivalence class of $\mu$  is denoted by $[\mu]_{T(\Omega)}$. Then $T(\Omega)=M(\Omega^{*})/\sim$ is one model of Teichm\"uller space of the domain $\Omega$.

For any $\mu, \nu \in M(\Omega^{*})$, we let
$$\sigma(\mu, \nu)(\zeta):=\frac{\mu(\zeta)-\nu(\zeta)}{1-{\overline{\nu(\zeta)}}\mu(\zeta)},\, \zeta\in \Omega^{*}.$$
The (hyperbolic) distance $d_{\Omega}(\mu, \nu)$ of two elements $\mu, \nu$ in $M(\Omega^{*})$ is defined as
$$d_{\Omega}(\mu, \nu):=\frac{1}{2}\log \frac{1+\|\sigma(\mu, \nu)\|_{\infty}}{1-\|\sigma(\mu, \nu)\|_{\infty}}=\tanh^{-1}\|\sigma(\mu, \nu)\|_{\infty}.$$
The Teichm\"uller distance $d_T([\mu]_{T(\Omega)}, [\nu]_{T(\Omega)})$ of two points $[\mu]_{T(\Omega)}$, $[\nu]_{T(\Omega)}$ in $T(\Omega)$ is defined as
\begin{eqnarray}
d_T([\mu]_{T(\Omega)}, [\nu]_{T(\Omega)})&=&\inf\limits_{\mu_1\sim \mu, \nu_1\sim \nu}\Big\{ \tanh^{-1}\|\sigma(\mu_1, \nu_1)\|_{\infty}\Big\}\nonumber \\
&=& \inf\limits_{\mu_1\sim \mu, \nu_1\sim \nu}\Big\{\frac{1}{2}\log \frac{1+\|\sigma(\mu_1, \nu_1)\|_{\infty}}{1-\|\sigma(\mu_1, \nu_1)\|_{\infty}}\Big\}.\nonumber 
\end{eqnarray}

We say $\mu$ and $\nu$ in $M({\Omega}^{*})$ are asymptotically equivalent, denoted by $\mu \approx \nu$, if there exists $\widetilde{\nu}$ in ${M}(\Omega^{*})$ such that $[\widetilde{\nu}]_{T(\Omega)}=[{\nu}]_{T(\Omega)}$ and $\widetilde{\nu}(\zeta)-{\mu}(\zeta) \rightarrow 0$ as $\textup{dist}(\zeta, \partial\Omega)\rightarrow 0^{+}$.  The asymptotic equivalence
of $\mu$ will be denoted by $[\mu]_{AT(\Omega)}$. The {\em asymptotic Teichm\"uller space} $AT(\Omega)$ is the set of
all asymptotic equivalence classes $[\mu]_{AT(\Omega)}$ of elements $\mu$ in $M({\Omega}^{*})$. The Teichm\"uller distance $d_{AT}([\mu]_{AT(\Omega)}, [\nu]_{AT(\Omega)})$ of two points $[\mu]_{AT(\Omega)}$, $[\nu]_{AT(\Omega)}$ in $AT(\Omega)$ is defined as
\begin{eqnarray}
d_{AT}([\mu]_{AT(\Omega)}, [\nu]_{AT(\Omega)})&=&\inf\limits_{\mu_1\approx \mu, \nu_1\approx \nu} \Big\{\tanh^{-1}h^{*}(\sigma(\mu_1, \nu_1)\Big\}\nonumber \\
&=& \inf\limits_{\mu_1\approx \mu, \nu_1\approx \nu}\Big\{\frac{1}{2} \log \frac{1+h^{*}(\sigma(\mu_1, \nu_1))}{1-h^{*}(\sigma(\mu_1, \nu_1))}\Big\}.\nonumber 
\end{eqnarray}
Here, for $\mu \in M(\Omega^{*})$, $h^{*}(\mu)$ is defined as 
\begin{equation}\label{bn}
h^{*}(\mu)=\inf\{\|\mu|_{\Omega^{*}-E}\|_{\infty}: \, E {\text { is a compact set in}}\,\,\Omega^{*}\}.\end{equation}
\begin{remark}
We note that $h^*(\mu)=b(f_{\mu})$ for $\mu \in M(\Omega^{*})$, here $b(f_{\mu})$ is defined as in (\ref{boun}).
\end{remark}
\begin{remark}
We shall mainly consider the Teichm\"uller spaces defined on $\Delta$. We next will use $T$, $AT$ to denote $T(\Delta)$, $AT(\Delta)$, respectively.  $T$ and $AT$ are known as {\em universal Teichm\"uller space} and {\em universal asymptotic Teichm\"uller space}, respectively.  For the sake of simplicity, the equivalence classes $[\mu]_{T(\Delta)}$ and $[\mu]_{AT(\Delta)}$ will be denoted by $[\mu]_T$ and $[\mu]_{AT}$, respectively.
\end{remark}

\begin{remark}Note that for any $f\in \mathcal{S}_q$, we can find a $\mu\in M(\Delta^*)$ such that $f=\sigma \circ f_{\mu}$, $\sigma$ is a Möbius transformation. Then we check from Theorem \ref{app} that $$B_b(\tau)=\sup\limits_{{[\mu]_T}\in T} \beta_{f_{\mu}}(\tau)$$ for each $\tau\in \mathbb{C}$.
\end{remark}

\subsection{Restatement of the first main results} We will study the following IMS functional defined on $T$ and prove that
\begin{theorem}[=\bf{Theorem 1}]
For each $\tau\in \mathbb{C}$, the IMS functional $I_{{T}}: [\mu]_T  \mapsto \beta_{f_{\mu}}(\tau)$ on $T$ is continuous.
\end{theorem}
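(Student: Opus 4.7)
The plan is to prove sequential continuity of $I_T$ at an arbitrary point $[\mu_0]_T \in T$. Take a sequence $[\mu_n]_T \to [\mu_0]_T$ in $(T,d_T)$ and aim to deduce $\beta_{f_{\mu_n}}(\tau) \to \beta_{f_{\mu_0}}(\tau)$. The core ingredient is quantitative control of the deviation $N_{f_{\mu_n}} - N_{f_{\mu_0}}$ in a suitable functional norm. Concretely, I expect the paper to establish (as one of the lemmas in Section~3) that
$$\delta_n := \|N_{f_{\mu_n}} - N_{f_{\mu_0}}\|_{E_1} \longrightarrow 0,$$
which is the pre-Schwarzian companion of the Bers embedding and is consistent with the closedness statement of Proposition~\ref{pro-2}. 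This ought to be obtainable from the holomorphic dependence of $f_\mu$ on $\mu$ via Ahlfors--Bers, combined with a choice of representatives with $\|\sigma(\mu_n,\mu_0)\|_\infty \to 0$.

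Granting this, I integrate along the radial segment from $0$ to $z$. Since $N_f = (\log f')'$ and the normalization $f'_\mu(0)=1$ makes $\log f'_{\mu_n}(0) = \log f'_{\mu_0}(0) = 0$,
\begin{equation*}
\bigl|\log f'_{\mu_n}(z) - \log f'_{\mu_0}(z)\bigr| \leq \int_0^{|z|}\!\frac{\delta_n}{1-t^2}\,dt \leq C\, \delta_n \log\frac{1}{1-|z|}.
\end{equation*}
The elementary inequality $|e^{\tau w}| \leq e^{|\tau||w|}$ then transfers this through the complex exponentiation to
\begin{equation*}
\bigl|[f'_{\mu_n}(z)]^\tau\bigr| \leq (1-|z|)^{-C|\tau|\delta_n} \, \bigl|[f'_{\mu_0}(z)]^\tau\bigr|.
\end{equation*}
Integrating over $|z|=r$, taking logarithms, dividing by $|\log(1-r)|$, and passing to the $\limsup$ as $r \to 1^-$ produces $\beta_{f_{\mu_n}}(\tau) \leq \beta_{f_{\mu_0}}(\tau) + C|\tau|\delta_n$. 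Running the same argument with the roles of $\mu_n$ and $\mu_0$ interchanged supplies the matching reverse inequality, and the convergence $\delta_n \to 0$ yields the claim.

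The main obstacle is really the opening step: establishing continuity of $[\mu]_T \mapsto N_{f_\mu}$ from $(T,d_T)$ into $E_1$. This is the genuinely Teichm\"uller-theoretic input; it is more delicate than the classical Schwarzian continuity into $E_2$ (Bers embedding), but the author's previous work \cite{Jin} handled the real-$\tau$ case by the same mechanism, so this input is presumably packaged among the lemmas of Section~3. Once it is in hand, the genuinely new ingredient for complex $\tau$ is minor: the inequality $|e^{\tau w}| \leq e^{|\tau||w|}$ converts the Bloch-type pointwise bound on $\log f'_{\mu_n} - \log f'_{\mu_0}$ uniformly into the desired $(1-|z|)^{-C|\tau|\delta_n}$ multiplicative estimate, with $|\tau|$ cleanly absorbing the contributions of both the real and imaginary parts of the exponent.
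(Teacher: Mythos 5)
Your proposal is correct and follows essentially the same route as the paper: the Teichm\"uller distance controls $\|N_{f_{\nu}}-N_{f_{\mu}}\|_{E_1}$ for suitably chosen representatives (this is precisely the one input you defer, and it is the paper's Lemma 3.3, $|N_{\mathbf{h}}|\rho_{\Omega}^{-1}\le 8\|\mu_{\mathbf{h}}\|_{\infty}$ applied to $\mathbf{h}=f_{\nu_1}\circ f_{\mu_1}^{-1}$ together with the chain rule for pre-Schwarzians), and radial integration of $N_{f_{\nu}}-N_{f_{\mu}}$ followed by $|e^{\tau w}|\le e^{|\tau||w|}$ gives exactly the multiplicative $(1-|z|)^{-O(|\tau|\delta_n)}$ estimate of the paper's Lemmas 3.4--3.5. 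The only cosmetic differences are that the paper deduces Theorem 1 from the stronger continuity on $AT$ (Theorem 2), which forces the extra machinery of non-Strebel extremal representatives that your direct argument on $T$ avoids, and that it runs the final comparison through the area-integral criterion of Lemma 3.1 rather than through the circular integral means as you do.
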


For the IMS functional on the universal asymptotic Teichm\"uller space,  we shall show that
\begin{theorem}[=\bf{Theorem 2}]
For each $\tau\in \mathbb{C}$, the IMS functional $I_{{AT}}: [\mu]_{AT} \mapsto  \beta_{f_\mu}(\tau)$ is well-defined and continuous on $AT$.
\end{theorem}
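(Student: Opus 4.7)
My plan is to establish both well-definedness and continuity of $I_{AT}$ in one stroke, by proving the quantitative Lipschitz-type bound
\begin{equation*}
 \bigl|\beta_{f_\mu}(\tau) - \beta_{f_\nu}(\tau)\bigr| \leq C|\tau|\, h^*(\sigma(\mu,\nu)),
\end{equation*}
where $C$ depends only on $\|\mu\|_\infty, \|\nu\|_\infty$. Well-definedness (that $\mu \approx \nu$ implies $\beta_{f_\mu}(\tau) = \beta_{f_\nu}(\tau)$) will follow by replacing $\nu$ with a Teichm\"uller-equivalent representative $\widetilde\nu$ satisfying $\widetilde\nu - \mu \to 0$ at $\partial\Delta$ from $\Delta^*$; such a substitution leaves $f_\nu|_\Delta$ and hence $\beta_{f_\nu}(\tau)$ unchanged, while forcing $h^*(\sigma(\mu, \widetilde\nu)) = 0$. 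Continuity is then immediate from the definition of $d_{AT}$ and the continuity of $\tanh^{-1}$ at the origin.

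To prove the estimate, I introduce $g := f_\nu \circ f_\mu^{-1}$, which is conformal on the quasidisk $\Omega := f_\mu(\Delta)$ and quasiconformal on $\widehat{\mathbb{C}}$. The Beltrami chain rule (\ref{dila}) identifies $\mu_g \circ f_\mu$ on $\Delta^*$ with a unimodular multiple of $\sigma(\mu,\nu)$; in particular $|\mu_g|$ is essentially bounded by $h^*(\sigma(\mu,\nu))$ outside a compact set in $\Omega^*$. Combining $f_\nu = g \circ f_\mu$ with the Pre-Schwarzian chain rule (\ref{chain}) yields $N_{f_\nu}(z) - N_{f_\mu}(z) = N_g(f_\mu(z))\, f_\mu'(z)$ on $\Delta$. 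The heart of the proof is then the uniform boundary estimate
\begin{equation*}
 \limsup_{|z|\to 1^-}(1-|z|^2)\,\bigl|N_{f_\nu}(z) - N_{f_\mu}(z)\bigr| \leq C\, h^*(\sigma(\mu,\nu))
\end{equation*}
with $C = C(\|\mu\|_\infty, \|\nu\|_\infty)$. Granting this, I set $\phi(z) := \log f_\nu'(z) - \log f_\mu'(z)$ with $\phi(0) = 0$, so that $\phi'(z) = N_{f_\nu}(z) - N_{f_\mu}(z)$, and integrate along the radius $[0, re^{i\theta}]$. Splitting at the radius where the estimate begins to apply yields $|\phi(re^{i\theta})| \leq O(1) + \tfrac{1}{2}C h^*(\sigma(\mu,\nu))\,|\log(1-r)|$ uniformly in $\theta$, hence
\begin{equation*}
 \sup_{\theta}\bigl|[f_\nu'(re^{i\theta})/f_\mu'(re^{i\theta})]^\tau\bigr| \leq O(1)\,(1-r)^{-\tfrac{1}{2}C|\tau| h^*(\sigma(\mu,\nu))}.
\end{equation*}
Substituting into the definition (\ref{defi}) and taking $\limsup$ produces the Lipschitz bound, delivering both well-definedness and continuity.

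The main obstacle is precisely the uniform boundary Pre-Schwarzian estimate displayed above. The pointwise inequality $(1-|z|^2)|N_{f_\nu}(z) - N_{f_\mu}(z)| \leq 12$ from the universal Bloch bound is far too weak, as brute-force radial integration of it would cost a full factor of $|\log(1-r)|$. The correct route is to represent $N_g$ on $\Omega$ via a Cauchy-type integral over $\Omega^*$ against $\mu_g$---an Ahlfors--Bers-type variational formula---so that the boundary smallness of $\mu_g$ transfers to uniform smallness of $N_g(w)/\rho_\Omega(w)$ as $w \to \partial\Omega$ from inside, and then transplanting back to $\Delta$ through $f_\mu$. Making this quantitative in $h^*(\sigma(\mu,\nu))$ is the core technical step; the quasidisk structure of $\Omega$ (rather than the unit disk itself) complicates the kernel analysis, but the hyperbolic geometry of $\Omega$ is controlled by $\|\mu\|_\infty < 1$, which must be tracked explicitly. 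Should this direct approach prove too intricate, a fallback is to invoke the Bers embedding of $AT$ via Schwarzian derivatives into the little Bergman space and recover $N_{f_\mu}$ from $S_{f_\mu}$ by a hyperbolic integration; however, I expect the direct Pre-Schwarzian route to be precisely the \emph{more direct approach} alluded to in the introduction.
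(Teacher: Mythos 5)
Your overall architecture coincides with the paper's: the well-definedness step (replace $\nu$ by $\widetilde\nu\sim\nu$ with $\widetilde\nu-\mu\to 0$ at $\mathbb{T}$, so that $b(f_{\widetilde\nu}\circ f_\mu^{-1})=0$) is exactly Proposition \ref{well}, and your radial integration of $N_{f_\nu}-N_{f_\mu}$ to control $|[f_\nu'/f_\mu']^{\tau}|$ and hence $\beta$ is exactly the content of Lemmas \ref{key-est} and \ref{key-lemma}. The gap is in the step you yourself flag as the crux: the uniform boundary estimate $\limsup_{|z|\to1^-}(1-|z|^2)|N_{f_\nu}(z)-N_{f_\mu}(z)|\le C\,h^*(\sigma(\mu,\nu))$ for \emph{arbitrary} representatives. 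The Ahlfors--Bers variational formula you invoke is a first-order (infinitesimal) identity with an $O(\|\mu_g\|^2)$ error; it does not by itself yield a genuine inequality of this type valid for all $\|\mu_g\|_\infty<1$. To make it work one must decompose $\mu_g$ into a compactly supported piece plus a piece of sup-norm $\le h^*+\epsilon$, factor the quasiconformal map accordingly, show that the compactly supported factor contributes a pre-Schwarzian that decays at $\partial\Omega$ (this is what Lemma \ref{l--1} does, and only in the case $b(\mathbf{h})=0$, with constants depending on $\mathbf{h}$), and control the other factor globally --- in effect re-deriving a harder, quantitative version of Lemmas \ref{l--1} and \ref{l--2}. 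A secondary problem: your constant $C$ depends on $\|\mu\|_\infty,\|\nu\|_\infty$, but passing from your fixed-representative bound to continuity in $d_{AT}$ requires an infimum over representatives whose $L^\infty$ norms are not a priori bounded away from $1$, so $C$ could blow up along the minimizing sequence.

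The paper circumvents all of this with a device absent from your proposal: extremal theory in $AT$. By Yao's result (Proposition \ref{Y}, packaged as Claim \ref{cla-1}), one can choose a non-Strebel extremal representative $\widetilde\nu\in[\nu]_{AT}$ for which the \emph{global} dilatation of $\mathbf{h}=f_{\widetilde\nu}\circ f_\mu^{-1}$ equals its boundary dilatation and equals $\tanh d_{AT}([\mu]_{AT},[\nu]_{AT})$. Then the elementary global bound $|N_{\mathbf{h}}(\zeta)|\rho_\Omega^{-1}(\zeta)\le 8\|\mu_{\mathbf{h}}\|_\infty$ of Lemma \ref{l--2} immediately gives $\sup_{z\in\Delta}(1-|z|^2)|N_{f_{\widetilde\nu}}(z)-N_{f_\mu}(z)|\le 8\tanh d_{AT}$ with a universal constant, and no localized boundary analysis or kernel estimates on the quasidisk are needed. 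If you want to complete your proof, either carry out the decomposition sketched above in full, or adopt the extremal-representative route, which reduces the asymptotic problem to the global one.
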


We set
 $$\mathcal{T}:=\{\phi: \phi=N_{f}(z), f\in \mathcal{S}_q\}.$$
$\mathcal{T}$ is seen as one model of {\em universal Teichm\"uller curve}, see \cite{Bers, T}.  It is known that $\mathcal{T}$ is an open connected subset of $E_1$, see \cite{Zhur}. Since $\mathbf{N}$ is closed in $E_1$, then the closure $\overline{\mathcal{T}}$ of $\mathcal{T}$ is contained in $\mathbf{N}$.  We view $\overline{\mathcal{T}}$ as a model for the closure of the universal Teichm\"uller curve.
For any $\phi \in \overline{\mathcal{T}}$, there is a unique univalent function $f_{\phi}(z)$ with $f_{\phi}\in \mathcal{S}$ and such that
$\phi(z)=N_{f_{\phi}}(z).$ Actually,  we can take
\begin{equation}\label{exa}f_{\phi}(z)=\int_{0}^{z} e^{\int_{0}^{\zeta}\phi(w) dw}d\zeta, \,\,\,z\in\Delta.\end{equation}

We shall prove that
\begin{theorem}[=\bf{Theorem 3}]
For each $\tau\in \mathbb{C}$, the IMS functional $I_{\overline{\mathcal{T}}}: \phi  \mapsto  \beta_{f_{\phi}}(\tau)$ is continuous  on $\overline{\mathcal{T}}$.
\end{theorem}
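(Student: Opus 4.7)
The plan is to exploit the explicit formula $f_\phi'(z)=\exp\!\big(\int_0^z\phi(w)\,dw\big)$ from \eqref{exa} to turn closeness in the $E_1$ norm directly into a pointwise ratio estimate for the modulus of $[f_\phi']^\tau$, from which Lipschitz continuity (and a fortiori continuity) of $I_{\overline{\mathcal{T}}}$ follows immediately after integrating in $\theta$ and taking the $\limsup$ that defines $\beta$.

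Concretely, fix $\phi,\psi\in\overline{\mathcal{T}}$ and write $h(z):=\int_0^z(\phi(w)-\psi(w))\,dw$. Since $f_\phi'(z)/f_\psi'(z)=e^{h(z)}$ and $[f_\phi'(z)]^\tau=\exp(\tau\log f_\phi'(z))$ with the branch of logarithm vanishing at $0$, one has
\begin{equation*}
\bigl|[f_\phi'(z)]^\tau\bigr|=\bigl|[f_\psi'(z)]^\tau\bigr|\cdot\exp\!\bigl(\mathrm{Re}(\tau\,h(z))\bigr).
\end{equation*}
Integrating $\phi-\psi$ along the radial segment from $0$ to $z=re^{i\theta}$ and using the definition of the $E_1$ norm gives the uniform bound
\begin{equation*}
|h(z)|\le\int_0^r\frac{\|\phi-\psi\|_{E_1}}{1-s^2}\,ds=\frac{\|\phi-\psi\|_{E_1}}{2}\log\frac{1+r}{1-r},
\end{equation*}
and therefore
\begin{equation*}
\bigl|[f_\phi'(re^{i\theta})]^\tau\bigr|\le\bigl|[f_\psi'(re^{i\theta})]^\tau\bigr|\cdot\Bigl(\frac{1+r}{1-r}\Bigr)^{|\tau|\|\phi-\psi\|_{E_1}/2}.
\end{equation*}

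Next I would integrate this over $\theta\in[-\pi,\pi]$, take logarithms, divide by $|\log(1-r)|$, and pass to $\limsup$ as $r\to 1^-$. Since $\log\!\big(\tfrac{1+r}{1-r}\big)/|\log(1-r)|\to 1$, this yields
\begin{equation*}
\beta_{f_\phi}(\tau)\le\beta_{f_\psi}(\tau)+\tfrac{|\tau|}{2}\,\|\phi-\psi\|_{E_1}.
\end{equation*}
Interchanging $\phi$ and $\psi$ gives the reverse inequality, so
\begin{equation*}
\bigl|\beta_{f_\phi}(\tau)-\beta_{f_\psi}(\tau)\bigr|\le\tfrac{|\tau|}{2}\,\|\phi-\psi\|_{E_1},\qquad \phi,\psi\in\overline{\mathcal{T}},
\end{equation*}
which is the desired continuity (in fact Lipschitz continuity with constant $|\tau|/2$) of $I_{\overline{\mathcal{T}}}$.

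There is no real obstacle in this scheme — the whole argument reduces to the Schwarz-type estimate $|h(z)|\le\tfrac{1}{2}\|\phi-\psi\|_{E_1}\log\tfrac{1+r}{1-r}$, which is immediate from the definition of $\|\cdot\|_{E_1}$. The only small point that requires a sentence of justification is the distributivity of $\limsup$ at the last step: since $|\tau|\|\phi-\psi\|_{E_1}\log\!\big((1+r)/(1-r)\big)/\bigl(2|\log(1-r)|\bigr)$ has an actual limit as $r\to 1^-$, one is allowed to split off this additive term from the $\limsup$ that defines $\beta_{f_\psi}(\tau)$. Thus the argument is essentially a one-line Gronwall-type estimate applied to the canonical parametrization of $\overline{\mathcal{T}}$ by pre-Schwarzian derivatives, and it automatically handles every $\tau\in\mathbb{C}$ uniformly.
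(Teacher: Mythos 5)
Your proposal is correct, and it shares the paper's central estimate while finishing in a genuinely more direct way. The heart of your argument --- integrating $\phi-\psi=N_{f_\phi}-N_{f_\psi}$ along a radius against the bound $|\phi(w)-\psi(w)|\le\|\phi-\psi\|_{E_1}(1-|w|^2)^{-1}$ to get $|h(z)|\le\tfrac12\|\phi-\psi\|_{E_1}\log\tfrac{1+r}{1-r}$ and hence the pointwise ratio bound for $|[f_\phi']^\tau|/|[f_\psi']^\tau|$ --- is exactly the computation in the paper's Lemma \ref{key-est}. Where you diverge is in how this pointwise bound is converted into a statement about $\beta$: the paper routes through the weighted Bergman space criterion of Lemma \ref{cri}, compares the area integrals $\iint_{\Delta-\Delta(r_0)}|[g'(z)]^{\tau}|(1-|z|^2)^{-1+\beta\pm\varepsilon}\,dxdy$, and needs a case distinction between $\beta_{f_\psi}(\tau)>0$ and $\beta_{f_\psi}(\tau)=0$ (Lemma \ref{key-lemma}(1) versus (2)), whereas you simply insert the ratio bound into the circular integral means in the definition (\ref{defi}), use that $\log\frac{1+r}{1-r}/|\log(1-r)|\to 1$, and split the convergent additive term off the $\limsup$. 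Your route avoids Lemma \ref{cri} entirely, needs no case split, handles $\tau=0$ automatically, and yields the sharper quantitative conclusion that $I_{\overline{\mathcal{T}}}$ is globally Lipschitz on $\overline{\mathcal{T}}$ with constant $|\tau|/2$ (versus the paper's constant $|\tau|$ from Lemma \ref{key-lemma}); the only thing the paper's machinery buys that yours does not need here is reusability of Lemma \ref{key-lemma} for Theorems 1 and 2, where only the supremum of $|N_g-N_f|(1-|z|^2)$ near the boundary is controlled --- though your argument would adapt to that setting too, since the defining $\limsup$ only sees $r\to 1^-$.
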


Also, we will prove the following result, which indicates that the IMS functional on the class $\mathcal{S}_q$ satisfies the maximum modulus principle. 
\begin{theorem}[=\bf{Theorem 4}]
Let $\tau\in \mathbb{C}$ with $\tau \neq 0$. Then we have  $\beta_{f}(\tau)<B_b(\tau)$ for any $f \in \mathcal{S}_q$.
 \end{theorem}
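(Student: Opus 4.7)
We argue by contradiction, following the self-improvement strategy used in \cite{Jin} for the real case and adapting it to complex exponents. Suppose some $f \in \mathcal{S}_q$ satisfies $\beta_f(\tau) = B_b(\tau)$. After a M\"obius post-composition (which does not change $\beta_f(\tau)$), we may assume $f = f_\mu$ for some $\mu \in M(\Delta^*)$ with $k_0 := \|\mu\|_\infty < 1$. Since $\mathcal{S}_q \subset \mathcal{S}_b$, a contradiction will follow once we exhibit $\tilde\mu \in M(\Delta^*)$ with $f_{\tilde\mu} \in \mathcal{S}_q$ and $\beta_{f_{\tilde\mu}}(\tau) > B_b(\tau)$.

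The construction has three ingredients. First, by the fractal approximation principle (Theorem \ref{app}), pick ``templates'' $g_n = f_{\nu_n} \in \mathcal{S}_q$ with $\beta_{g_n}(\tau) \to B_b(\tau)$; the dilatations $\|\nu_n\|_\infty$ may tend to $1$. Second, from the limsup definition of $\beta_f(\tau)$ and the assumption $\beta_f(\tau)=B_b(\tau)$, extract radii $r_j \uparrow 1$ and shrinking arcs $I_j \subset \partial\Delta$ centered at points $\zeta_j$ such that a definite fraction of $\int_{-\pi}^{\pi}|[f'(r_j e^{i\theta})]^\tau|\,d\theta$ is realized on $I_j$. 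Third, graft: on a small disk $D_j \subset \Delta^*$ accumulating to $I_j$, replace $\mu|_{D_j}$ by a conformally transported, phase-rotated, and rescaled copy of $\nu_n$, keeping $\mu$ unchanged outside $D_j$. The new Beltrami coefficient satisfies $\|\tilde\mu\|_\infty \leq \max(k_0, \|\nu_n\|_\infty)<1$, so $f_{\tilde\mu}\in\mathcal{S}_q$. Finally, split the new integral $\int_{-\pi}^\pi |[f_{\tilde\mu}'(r_j e^{i\theta})]^\tau|\,d\theta$ over $I_j$ and its complement: the complement inherits the rate $(1-r_j)^{-B_b(\tau)}$ from $f$, while the part over $I_j$ picks up an additional multiplicative factor from the inserted template; choosing the grafting scale as a suitable power of $(1-r_j)$ makes the sum exceed $(1-r_j)^{-B_b(\tau)-\eta}$ for some $\eta>0$, giving $\beta_{f_{\tilde\mu}}(\tau)>B_b(\tau)$.

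The main obstacle, and the essential difference from the real case, is the phase factor: $|[f'(z)]^\tau| = |f'(z)|^{\textup{Re}(\tau)}\exp(-\textup{Im}(\tau)\arg f'(z))$ is sensitive to $\arg f'$, not just $|f'|$. A naive grafting may enlarge $|f'|$ near $\zeta_j$ but simultaneously shift $\arg f'$ in a direction that partially cancels the modulus gain through the exponential factor. The remedy is to introduce an extra complex rotation $\lambda \in \mathbb{C}\setminus\{0\}$ in the grafting construction, chosen so that $\arg f_{\tilde\mu}'$ on $I_j$ aligns with $\arg f'$; equivalently, one may average over $\arg\lambda$ and use convexity to pick out a favorable rotation. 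Once the phase is aligned, the estimates from \cite{Jin} transfer with only cosmetic modifications, carried out using the chain rules (\ref{chain}) and (\ref{dila}).
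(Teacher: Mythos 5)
Your plan has a structural gap that I do not see how to close: the quantity you need to exceed is a supremum over a class containing everything you construct. Since $\mathcal{S}_q\subset\mathcal{S}_b$, every $f_{\tilde\mu}\in\mathcal{S}_q$ satisfies $\beta_{f_{\tilde\mu}}(\tau)\leq B_b(\tau)$ by the very definition of $B_b(\tau)$; so a proof by contradiction along your lines can only succeed if the grafting estimate genuinely forces $\beta_{f_{\tilde\mu}}(\tau)>B_b(\tau)$ under the hypothesis $\beta_f(\tau)=B_b(\tau)$, and the scale accounting shows it cannot. Concretely: if you insert a template $g_n$ with $\beta_{g_n}(\tau)\leq B_b(\tau)$ at scale $\delta$ near $\zeta_j$ and run it down to scale $\delta\epsilon$, the contribution of $I_j$ to the integral means at radius $1-\delta\epsilon$ is of order $(1-r_j)^{-\beta_f(\tau)+o(1)}\cdot\epsilon^{-\beta_{g_n}(\tau)+o(1)}$ with $\delta\asymp 1-r_j$; to beat $(\delta\epsilon)^{-B_b(\tau)-\eta}$ you would need $\epsilon^{\,B_b(\tau)+\eta-\beta_{g_n}(\tau)}>(1-r_j)^{-\eta}$, which is false for every $\epsilon\in(0,1)$ because the left side is at most $1$ and the right side exceeds $1$. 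Grafting a near-extremal template into a near-extremal map interpolates between two exponents both bounded by $B_b(\tau)$; it never produces a strict gain in the exponent, only in constants. (This is why fractal approximation, Theorem 1.8, approximates the supremum from below but cannot be reversed into a strict-improvement device.) The phase-alignment discussion, while a legitimate concern for a grafting scheme, is therefore moot.

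The paper's mechanism for the strict improvement is entirely different and avoids this obstruction. Given $f\in\mathcal{S}_q$ with $\beta_f(\tau)>0$, it forms $\mathbf{F}$ with $\mathbf{F}'=[f']^{1+\varepsilon}$, i.e. $N_{\mathbf F}=(1+\varepsilon)N_f$; since $N_f$ lies in the \emph{open} set $\mathcal{T}$, Becker's criterion keeps $\mathbf{F}\in\mathcal{S}_q\subset\mathcal{S}_b$ for small $\varepsilon>0$, whence $B_b(\tau)\geq\beta_{\mathbf F}(\tau)=\beta_f(\tau+\varepsilon\tau)$. The strict inequality $\beta_f(\tau+\varepsilon\tau)>\beta_f(\tau)$ is then supplied by a separate monotonicity proposition (proved by splitting $\{\theta\}$ according to whether $|[f'(\mathbf{r}_ne^{i\theta})]^{\tau}|$ exceeds a threshold comparable to the average, so that $|[f']^{\tau_0+t\tau_0}|=|[f']^{\tau_0}|\cdot|[f']^{\tau_0}|^{t}$ picks up a definite extra power of $(1-\mathbf{r}_n)^{-1}$); note that this works with the positive quantities $|[f']^{\tau_0}|^{t}$ directly, so no phase correction is ever needed. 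If you want to repair your write-up, I would replace the grafting step by a deformation of $f$ itself that provably increases the spectrum while staying inside $\mathcal{S}_q$; the openness of $\mathcal{S}_q$ in the Teichm\"uller topology is the essential ingredient that your construction does not exploit.
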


\section{Five lemmas}

In this section, we will recall three known lemmas and establish two new ones. First, we will use the following criterion for the integral means spectrum,  see \cite{HSo}, \cite{HS-1}, \cite{SS}.
We define the Hilbert space $\mathcal{H}_{\alpha}^2(\Delta)$ as
$$\mathcal{H}_{\alpha}^2(\Delta)=\{\phi \in \mathcal{A}(\Delta) : \|\phi\|_{\alpha}^2:=(\alpha+1)\iint_{\Delta}|\phi (z)|^2(1-|z|^2)^{\alpha}\frac{dxdy}{\pi}<\infty\}.$$
Then
\begin{lemma}\label{cri} 
Let $\alpha>-1$.  For each $\tau\in \mathbb{C}$, we have
$$\beta_{f}(\tau)=\inf\{ \alpha+1: \,\,[f'(z)]^{\tau/2}\in \mathcal{H}_{\alpha}^2(\Delta)\}.$$
\end{lemma}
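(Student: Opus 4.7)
The plan is to convert the Hilbert-space membership $[f']^{\tau/2}\in\mathcal{H}_\alpha^2(\Delta)$ into an integrability condition on the radial means that define $\beta_f(\tau)$, and then compare the two thresholds. First I would observe that by the branch convention fixed at the start of the paper one has $|[f'(z)]^{\tau/2}|^2=\exp(\operatorname{Re}(\tau\log f'(z)))=|[f'(z)]^{\tau}|$; passing to polar coordinates and setting $M(r,\tau):=\int_{-\pi}^{\pi}|[f'(re^{i\theta})]^{\tau}|\,d\theta$, this gives
\[
\|[f']^{\tau/2}\|_\alpha^2=\frac{\alpha+1}{\pi}\int_0^1 M(r,\tau)(1-r^2)^{\alpha} r\,dr,
\]
so $[f']^{\tau/2}\in\mathcal{H}_\alpha^2(\Delta)$ is equivalent to the finiteness of $\int_0^1 M(r,\tau)(1-r^2)^{\alpha}\,dr$. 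Let $\gamma$ denote the infimum appearing in the lemma.

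For the easy inequality $\gamma\le\beta_f(\tau)$, I would fix any $\alpha$ with $\alpha+1>\beta_f(\tau)$ and then choose $\epsilon>0$ with $\alpha+1>\beta_f(\tau)+\epsilon$. By the definition of $\beta_f(\tau)$ as a $\limsup$, there exists $r_0\in(0,1)$ such that $M(r,\tau)\le(1-r)^{-\beta_f(\tau)-\epsilon/2}$ for $r\in(r_0,1)$. The tail of the integral is then dominated by the convergent integral $\int_{r_0}^{1}(1-r)^{\alpha-\beta_f(\tau)-\epsilon/2}\,dr$, while the part on $[0,r_0]$ is bounded by continuity of $M(\cdot,\tau)$. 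Letting $\alpha+1\downarrow\beta_f(\tau)$ yields $\gamma\le\beta_f(\tau)$.

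The reverse inequality $\gamma\ge\beta_f(\tau)$ rests on the monotonicity of $r\mapsto M(r,\tau)$. Since $[f'(z)]^{\tau}=\exp(\tau\log f'(z))$ is analytic in $\Delta$, its modulus is subharmonic, so $M(r,\tau)$ is nondecreasing in $r$. Suppose $\alpha+1>\gamma$; then the integral above is finite, and for $r\in(1/2,1)$ I would average over $s\in[r,(1+r)/2]$ and exploit monotonicity to get
\[
M(r,\tau)\int_r^{(1+r)/2}(1-s^2)^{\alpha} s\,ds\le \int_r^{(1+r)/2} M(s,\tau)(1-s^2)^{\alpha} s\,ds\le C.
\]
Since $\int_r^{(1+r)/2}(1-s^2)^{\alpha} s\,ds$ is comparable to $(1-r)^{\alpha+1}$, this forces $M(r,\tau)\le C'(1-r)^{-\alpha-1}$, hence $\beta_f(\tau)\le\alpha+1$, and taking the infimum over admissible $\alpha$ gives $\beta_f(\tau)\le\gamma$.

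The main obstacle, modest but worth flagging, is the handling of the complex exponent in the monotonicity step: one must invoke the branch choice fixed in the paper so that $[f'(z)]^{\tau}$ is a genuine single-valued analytic function in $\Delta$, which is precisely what delivers the subharmonicity of $|[f'(z)]^{\tau}|$ and the monotonicity of $M(r,\tau)$. Everything else reduces to routine radial comparisons of the type that appear in the proof of the classical Littlewood--Paley identity.
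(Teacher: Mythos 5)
Your proof is correct. The paper does not prove Lemma \ref{cri} at all --- it is quoted as a known criterion with citations to \cite{HSo}, \cite{HS-1}, \cite{SS} --- and your argument is essentially the standard one underlying those references: the identity $|[f'(z)]^{\tau/2}|^{2}=|[f'(z)]^{\tau}|$ reduces membership in $\mathcal{H}_{\alpha}^{2}(\Delta)$ to convergence of $\int_{0}^{1}M(r,\tau)(1-r^{2})^{\alpha}\,dr$, the easy inequality follows from the $\limsup$ definition, and the reverse inequality from the monotonicity of $r\mapsto M(r,\tau)$ (Parseval for the analytic function $[f']^{\tau/2}$, whose single-valuedness is exactly the branch convention you flag). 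All steps check out, including the comparison $\int_{r}^{(1+r)/2}(1-s^{2})^{\alpha}s\,ds\asymp(1-r)^{\alpha+1}$, so your write-up supplies a complete proof where the paper gives only a citation.
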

We need the following result, which is a special case of Proposition 2.14 in \cite{Jin}. 
\begin{lemma}\label{l--1}
Let $f$ belong to $\mathcal{S}_q$ with $f(\infty)=\infty$. Let $\mathbf{h}$ be a bounded univalent function in $f(\Delta)$ with $\mathbf{h}(0)=\mathbf{h}'(0)-1=0$. We assume that $\mathbf{h}$ admits a quasiconformal extension (still denoted by $\mathbf{h}$) to $\widehat{\mathbb{C}}$ with $\mathbf{h}(\infty)=\infty$ and $b(\mathbf{h})=0$.  Then, for any $\varepsilon\in (0, \frac{1}{3}(1-\|\mu_f\|_{\infty}))$, there are two constants $C(f, {\bf{h}})>0$, $\delta>0$ such that
$$ |N_{\mathbf{h}}(\zeta)|{\textup{dist}}(\zeta, f(\mathbb{T}))<C(f, \mathbf{h})\varepsilon,$$
for all  $\zeta\in f(\Delta)$ with ${\textup{dist}}(\zeta, f(\mathbb{T}))<\delta$.
\end{lemma}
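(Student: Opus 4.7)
The plan is to peel off the bulk part of $\mu_{\mathbf{h}}$ by a factorization and then control the two resulting pieces of $N_{\mathbf{h}}$ via the chain rule. Using $b(\mathbf{h})=0$, for the given $\varepsilon$ I pick a compact set $E\subset(f(\Delta))^{*}$ with $\|\mu_{\mathbf{h}}|_{(f(\Delta))^{*}\setminus E}\|_{\infty}<\varepsilon$, and let $\mathbf{h}_{E}$ denote the normalized quasiconformal homeomorphism of $\widehat{\mathbb{C}}$ whose Beltrami coefficient is $\mu_{\mathbf{h}}\cdot\mathbf{1}_{E}$. Setting $g:=\mathbf{h}\circ\mathbf{h}_{E}^{-1}$ gives a factorization $\mathbf{h}=g\circ\mathbf{h}_{E}$ in which (\ref{dila}) forces $\|\mu_{g}\|_{\infty}<\varepsilon$. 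Two structural features are exploited below: $\mathbf{h}_{E}$ is holomorphic on the open set $\widehat{\mathbb{C}}\setminus E$ (an open neighborhood of $f(\mathbb{T})$), and $g$ is conformal on the quasidisk $\Omega_{0}:=\mathbf{h}_{E}(f(\Delta))$ with a uniformly small quasiconformal extension.

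Next I apply the pre-Schwarzian chain rule (\ref{chain}):
\[
N_{\mathbf{h}}(\zeta)=N_{g}(\mathbf{h}_{E}(\zeta))\,\mathbf{h}_{E}'(\zeta)+N_{\mathbf{h}_{E}}(\zeta),\qquad\zeta\in f(\Delta).
\]
For the $N_{\mathbf{h}_{E}}$-term, let $M:=\sup\{|N_{\mathbf{h}_{E}}(\zeta)|:\zeta\in V\}$, with $V$ a compact neighborhood of $f(\mathbb{T})$ inside $\widehat{\mathbb{C}}\setminus E$; $M$ is finite because $\mathbf{h}_{E}$ is holomorphic and locally univalent on $V$. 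Choosing $\delta>0$ smaller than both $\textup{dist}(E,f(\mathbb{T}))$ and $\varepsilon/M$, one gets $|N_{\mathbf{h}_{E}}(\zeta)|\,\textup{dist}(\zeta,f(\mathbb{T}))<\varepsilon$ whenever $\textup{dist}(\zeta,f(\mathbb{T}))<\delta$. For the $N_{g}$-term, Proposition \ref{pro-1} applied to $f$ and to the univalent composition $\mathbf{h}_{E}\circ f:\Delta\to\Omega_{0}$ gives
\[
|\mathbf{h}_{E}'(\zeta)|\,\textup{dist}(\zeta,f(\mathbb{T}))\asymp\textup{dist}(\mathbf{h}_{E}(\zeta),\partial\Omega_{0}),
\]
so the problem reduces to the Bers--Ahlfors-type bound $|N_{g}(w)|\,\textup{dist}(w,\partial\Omega_{0})\le C'(f,\mathbf{h})\,\varepsilon$ for $w\in\Omega_{0}$.

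This last bound I would extract from the variational formula for the Bers embedding. Pulling $g$ back through a Riemann map $\phi:\Delta\to\Omega_{0}$ (whose quasiconformal extension $\Phi$ to $\widehat{\mathbb{C}}$ exists since $\Omega_{0}$ is a quasidisk), the composition $g\circ\phi$ belongs, up to Möbius normalization, to $\mathcal{S}_{b}$; by (\ref{dila}) its Beltrami coefficient on $\Delta^{*}$ differs from $\mu_{\Phi}$ by an $L^{\infty}$-perturbation of order $\|\mu_{g}\|_{\infty}=O(\varepsilon)$. The local Lipschitz property of the Bers embedding $M(\Delta^{*})\to E_{1}$ at the base point $\mu_{\Phi}$ then produces $\|N_{g\circ\phi}-N_{\Phi}\|_{E_{1}}=O(\varepsilon)$; unraveling $N_{g\circ\phi}-N_{\Phi}=(N_{g}\circ\phi)\phi'$ and transferring via the conformal identity $(1-|z|^{2})|\phi'(z)|\asymp\textup{dist}(\phi(z),\partial\Omega_{0})$ from Proposition \ref{pro-1} yields the Euclidean estimate. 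Adding the two pieces finishes the proof with $C(f,\mathbf{h}):=1+C'(f,\mathbf{h})$.

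The main obstacle is guaranteeing that the Lipschitz constant coming from the Bers embedding at $\mu_{\Phi}$ depends only on $f$ and $\mathbf{h}$, and not on $\varepsilon$. The crucial observation is that $\|\mu_{\mathbf{h}_{E}}\|_{\infty}\le\|\mu_{\mathbf{h}}\|_{\infty}$ holds by construction, so $\Omega_{0}$ is a quasidisk whose dilatation is controlled uniformly in $\varepsilon$ by the dilatations of $f$ and $\mathbf{h}$; in particular the base point $\mu_{\Phi}$ stays in a fixed closed subball of $M(\Delta^{*})$, on which a uniform Lipschitz constant for the Bers embedding is available. The numerical restriction $\varepsilon<\tfrac{1}{3}(1-\|\mu_{f}\|_{\infty})$ is what is needed to keep all the composite quasiconformal dilatations arising along the factorization (the maps $\mathbf{h}\circ f$, $\mathbf{h}_{E}\circ f$, $g\circ\phi$, and so on) in that fixed subball as $\varepsilon\to 0^{+}$, so that the absolute constants from Koebe and from Bers--Ahlfors can all be absorbed into a single $C(f,\mathbf{h})$.
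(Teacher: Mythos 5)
The paper itself supplies no proof of this lemma --- it is imported as ``a special case of Proposition 2.14 in \cite{Jin}'' --- so there is no in-text argument to compare against; judged on its own, your proof is correct and is almost certainly the intended one. The factorization $\mathbf{h}=g\circ\mathbf{h}_{E}$, with $E$ supplied by $b(\mathbf{h})=0$ and $\mathbf{h}_{E}$ solving the Beltrami equation for $\mu_{\mathbf{h}}\mathbf{1}_{E}$, is the natural way to exploit the vanishing boundary dilatation, and your two-term estimate via (\ref{chain}) is sound: the $N_{\mathbf{h}_{E}}$-piece is merely bounded near $f(\mathbb{T})$ and is absorbed by shrinking $\delta$ (it is harmless that the bound $M$ depends on $\varepsilon$, since only $\delta$ is allowed to), while the $N_{g}$-piece is genuinely $O(\varepsilon)$ because $\|\mu_{g}\|_{\infty}<\varepsilon$. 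One remark on economy: for the $N_{g}$-piece you do not need to rebuild the Bers-embedding Lipschitz estimate from scratch. The paper's own Lemma \ref{l--2}, applied to the pair $\mathbf{h}_{E}\circ f\in\mathcal{S}_{q}$ and $g$ (which is bounded, correctly normalized, and fixes $\infty$), gives $|N_{g}(w)|\,\rho_{\Omega_{0}}^{-1}(w)\le 8\|\mu_{g}\|_{\infty}<8\varepsilon$ at once, and $\rho_{\Omega_{0}}^{-1}(w)\asymp\textup{dist}(w,\partial\Omega_{0})$ by Proposition \ref{pro-1}; this shortens the argument and shows that $C(f,\mathbf{h})$ can in fact be taken absolute. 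Two small points to tighten: ``up to M\"obius normalization'' should read ``up to affine normalization,'' since $N$ is not M\"obius invariant --- but every map in your factorization fixes $\infty$, so the residual ambiguity is affine and $N$ is unaffected; and the restriction $\varepsilon<\tfrac{1}{3}(1-\|\mu_{f}\|_{\infty})$ is never actually needed in your argument (any fixed upper bound on $\varepsilon$ below $1$ would do), which is not a defect, since the lemma only asserts the conclusion on that range.
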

We will use the following lemma.
\begin{lemma}\label{l--2}
Let $f$ belong to $\mathcal{S}_q$ with $f(\infty)=\infty$. Assume that $\mathbf{h}$ is a bounded univalent function in $\Omega:=f(\Delta)$ with $\mathbf{h}(0)=\mathbf{h}'(0)-1=0$ and admits a quasiconformal extension (still denoted by $\mathbf{h}$) to $\widehat{\mathbb{C}}$ with $\mathbf{h}(\infty)=\infty$. Then we have
\begin{equation}
|N_{\mathbf{h}}(\zeta)|\rho_{\Omega}^{-1}(\zeta)\leq 8\|\mu_{\mathbf{h}}\|_{\infty},
\end{equation}
for all $\zeta\in \Omega$.
\end{lemma}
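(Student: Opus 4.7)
The strategy is to bound $|N_{\mathbf{h}}(\zeta)|\rho_\Omega^{-1}(\zeta)$ by embedding $\mathbf{h}$ into a holomorphic one-parameter family of quasiconformal self-maps of $\widehat{\mathbb{C}}$ indexed by a disk parameter $t$, and then applying the Schwarz lemma in $t$. If $\|\mu_{\mathbf{h}}\|_\infty = 0$, the uniqueness part of the measurable Riemann mapping theorem forces $\mathbf{h}$ to be the identity under the given normalization, and the inequality is trivial; so I may assume $\|\mu_{\mathbf{h}}\|_\infty > 0$.

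The first step is an unconditional universal bound: for \emph{any} univalent $g$ on $\Omega$,
\[
|N_g(\zeta)|\rho_\Omega^{-1}(\zeta) \leq 8, \qquad \zeta\in\Omega.
\]
To see this, let $\psi\colon\Delta\to\Omega$ be a conformal map with $\psi(0)=\zeta$, so that $\rho_\Omega^{-1}(\zeta)=|\psi'(0)|$. Both $\psi$ and $g\circ\psi$ become members of $\mathcal{S}$ after an affine normalization (subtract the value at $0$, divide by the derivative at $0$) that does not change the pre-Schwarzian. Bieberbach's inequality $|a_2|\leq 2$ translates into $|N_F(0)|\leq 4$ for every $F\in\mathcal{S}$, hence $|N_\psi(0)|\leq 4$ and $|N_{g\circ\psi}(0)|\leq 4$. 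Evaluating the chain rule (\ref{chain}) at $z=0$ gives
\[
N_{g\circ\psi}(0)=N_g(\zeta)\psi'(0)+N_\psi(0),
\]
so by the triangle inequality $|N_g(\zeta)|\,|\psi'(0)|\leq 8$, proving the claim with the constant $8$.

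Next I would construct the holomorphic family. Put $\nu = \mu_{\mathbf{h}}/\|\mu_{\mathbf{h}}\|_\infty$ on $\Omega^{*}$ and $\nu=0$ on $\Omega$. For $t\in\Delta$, let $\mathbf{h}_t$ denote the unique quasiconformal self-map of $\widehat{\mathbb{C}}$ with Beltrami coefficient $t\nu$ and normalization $\mathbf{h}_t(0)=0,\ \mathbf{h}_t'(0)=1,\ \mathbf{h}_t(\infty)=\infty$. Then each $\mathbf{h}_t$ is conformal and univalent on $\Omega$; by uniqueness $\mathbf{h}_{\|\mu_{\mathbf{h}}\|_\infty}=\mathbf{h}$. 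By the Ahlfors--Bers theorem on holomorphic dependence of the normalized solution of the Beltrami equation on its coefficient, $t\mapsto \mathbf{h}_t(\zeta)$ is holomorphic in $t\in\Delta$ for each fixed $\zeta\in\Omega$; since the $\zeta$-derivatives of a holomorphic-in-$t$ family of holomorphic-in-$\zeta$ functions on $\Omega$ depend holomorphically on $t$ as well, the function $\Phi(t,\zeta):=N_{\mathbf{h}_t}(\zeta)$ is holomorphic in $t\in\Delta$.

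To conclude, I apply the Schwarz lemma in $t$. At $t=0$, $\mathbf{h}_0$ is the identity, so $\Phi(0,\zeta)=0$. The unconditional bound established above, applied to each $\mathbf{h}_t|_\Omega$, gives the uniform estimate $|\Phi(t,\zeta)|\leq 8\rho_\Omega(\zeta)$ on $\Delta$. The Schwarz lemma then yields $|\Phi(t,\zeta)|\leq 8\rho_\Omega(\zeta)|t|$, and setting $t=\|\mu_{\mathbf{h}}\|_\infty\in(0,1)$ produces the desired bound $|N_{\mathbf{h}}(\zeta)|\rho_\Omega^{-1}(\zeta)\leq 8\|\mu_{\mathbf{h}}\|_\infty$. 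The only delicate point is the holomorphy of $\Phi(t,\zeta)$ in $t$, which rests on the Ahlfors--Bers holomorphic-dependence theorem; the clean constant $8$ is transparent, arising from applying Bieberbach's $|a_2|\leq 2$ once to the Riemann map $\psi$ and once to $g\circ\psi$.
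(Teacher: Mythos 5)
Your proof is correct: the unconditional bound $|N_g(\zeta)|\rho_\Omega^{-1}(\zeta)\le 8$ via Bieberbach and the chain rule (\ref{chain}), the holomorphic family $\mathbf{h}_t$ with coefficient $t\mu_{\mathbf{h}}/\|\mu_{\mathbf{h}}\|_\infty$, and the Schwarz lemma in $t$ all check out, and this is precisely the classical argument behind the estimate the paper invokes (it gives no proof itself, only a remark deferring to Proposition 3.3 of \cite{Jin}, whose method is this same holomorphic-dependence plus Schwarz-lemma scheme). The only point worth flagging is the routine one you already note, the Ahlfors--Bers holomorphic dependence of the normalized solution on the Beltrami coefficient, which is standard.
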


\begin{remark}
This lemma can be proved by the arguments in the proof of Proposition 3.3 in \cite{Jin}. 
\end{remark}

The following two lemmas are also needed.
\begin{lemma}\label{key-est}
Let $f, g \in \mathcal{S}$ and let $\tau\in \mathbb{C}$ with $\tau \neq 0$.  If there is a constant $r_0\in (0, 1) $ such that
\begin{equation}\label{condition}\sup\limits_{|z|\in (r_0, 1)}|N_g(z)-N_f(z)|(1-|z|^2) <\varepsilon, \nonumber \end{equation}
for some positive number $\varepsilon$. Then there exist two positive numbers $C_1(r_0, \tau, \varepsilon)$ and $C_2(r_0, \tau, \varepsilon)$ such that
\begin{equation}
C_1(r_0, \tau, \varepsilon)\Big( \frac{1-|z|}{1+|z|}\Big)^{\frac{|\tau|\varepsilon}{2}}\leq |[{\mathbf{h}}' \circ f(z)]^{\tau}| \leq  C_2(r_0, \tau, \varepsilon)\Big( \frac{1+|z|}{1-|z|}\Big)^{\frac{|\tau|\varepsilon}{2}},\nonumber
\end{equation}
for all $|z|\in (r_0,1)$.  Here $\mathbf{h}=g \circ f^{-1}$.\end{lemma}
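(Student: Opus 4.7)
The plan is to reduce the desired two-sided estimate to a Bloch-type inequality for the single-valued branch $\phi(z):=\log\mathbf{h}'(f(z))$ normalized by $\phi(0)=0$. This branch is well defined because $\mathbf{h}'\circ f$ is analytic and non-vanishing on the simply connected disk $\Delta$, and $\mathbf{h}'(0)=g'(0)/f'(0)=1$. Writing $g=\mathbf{h}\circ f$ and applying the pre-Schwarzian chain rule \eqref{chain} yields
$$\phi'(z) \,=\, N_{\mathbf{h}}(f(z))\,f'(z) \,=\, N_g(z) - N_f(z),$$
so the hypothesis becomes the Bloch-type bound $|\phi'(z)|(1-|z|^2)<\varepsilon$ on the annulus $r_0<|z|<1$.

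Next I would obtain a bound on $|\phi|$ at $|z|=r_0$ that is uniform in $f$ and $g$. Since $f,g\in\mathcal{S}$, the universal pre-Schwarzian estimate $|N_f(w)|(1-|w|^2)\le 6$ gives $|\phi'(w)|(1-|w|^2)\le 12$ throughout $\Delta$, so integrating along the radius from $0$ to $r_0\omega$ (with $|\omega|=1$) produces
$$|\phi(r_0\omega)|\,\le\,\int_0^{r_0}\frac{12}{1-t^2}\,dt \,=\, 6\log\frac{1+r_0}{1-r_0}.$$
Then for $z=r\omega$ with $r_0<r<1$, integrating $\phi'$ along the radial segment from $r_0\omega$ to $z$ and using the hypothesis gives
$$|\phi(z)-\phi(r_0\omega)|\,\le\,\int_{r_0}^{r}\frac{\varepsilon}{1-t^2}\,dt \,=\, \frac{\varepsilon}{2}\log\frac{(1+r)(1-r_0)}{(1-r)(1+r_0)}.$$
Combining these two inequalities produces a bound of the form
$$|\phi(z)|\,\le\, M(r_0,\varepsilon) \,+\, \frac{\varepsilon}{2}\log\frac{1+r}{1-r},$$
where $M(r_0,\varepsilon)$ depends only on $r_0$ and $\varepsilon$.

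Finally, since $|[\mathbf{h}'(f(z))]^{\tau}|=\exp(\textup{Re}(\tau\phi(z)))$ and $|\textup{Re}(\tau\phi(z))|\le |\tau|\,|\phi(z)|$, exponentiating the resulting two-sided bound on $\textup{Re}(\tau\phi(z))$ yields exactly the claimed inequalities, with constants of the form
$$C_1(r_0,\tau,\varepsilon) \,=\, e^{-|\tau|M(r_0,\varepsilon)}\Big(\frac{1+r_0}{1-r_0}\Big)^{|\tau|\varepsilon/2}, \qquad C_2(r_0,\tau,\varepsilon) \,=\, e^{|\tau|M(r_0,\varepsilon)}\Big(\frac{1-r_0}{1+r_0}\Big)^{|\tau|\varepsilon/2},$$
depending only on $r_0$, $\tau$, and $\varepsilon$. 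There is no serious obstacle here; the only delicacy is the selection of the single-valued branch of the logarithm, which is handled by the normalization $\mathbf{h}'(0)=1$ together with the simple connectivity of $\Delta$, and ensuring that the bound at $|z|=r_0$ is universal over $\mathcal{S}$, which is supplied by the classical estimate $\|N_f\|_{E_1}\le 6$.
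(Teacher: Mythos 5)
Your proof is correct and follows essentially the same route as the paper: identify $N_g-N_f$ with the derivative of the branch $\log(\mathbf{h}'\circ f)$ via the pre-Schwarzian chain rule, integrate radially from $|z|=r_0$ using the hypothesis, and exponentiate. The one refinement is that you bound the value on $|z|=r_0$ universally by $6\log\frac{1+r_0}{1-r_0}$ using $\|N_f\|_{E_1},\|N_g\|_{E_1}\le 6$, so your constants genuinely depend only on $(r_0,\tau,\varepsilon)$, whereas the paper's $\mathbf{M}_0=\max_{|z|=r_0}|\log\mathbf{h}'\circ f(z)|$ formally depends on $f$ and $g$ as well.
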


\begin{proof}
In view of $\mathbf{h}=g \circ f^{-1}$, we obtain from (\ref{chain}) that
 \begin{equation}
N_g(z)-N_f(z)=\frac{{\mathbf{h}}'' \circ f(z) }{{\mathbf{h}}' \circ f(z)} \cdot f'(z).\nonumber
\end{equation}
Let $$G(z):=\frac{{\mathbf{h}}'' \circ f(z)}{{\mathbf{h}}' \circ f(z)} \cdot f'(z),\,\, H(z):={\mathbf{h}}' \circ f(z).$$
It is easy to see that 
$$\log [{\mathbf{h}}' \circ f(z)]^{\tau}=\tau \log H(z)\,\, \textup{and}\,\,  [\log H(z)]'=G(z).$$ 
Let $z=|z|e^{i \arg z}$ be such that $|z|\in (r_0,1)$, then
\begin{equation}\label{ggg-1}
\log [{\mathbf{h}}' \circ f(z)]^{\tau}=\tau \log H(z)=\tau\int_{z_0}^z G(\zeta)d\zeta+ \tau\log H(z_0), 
\end{equation}
here, $z_0=r_0 e^{i \arg z}$ and the integral is taken on the radial path from $z_0$ to $z$. 

On the other hand, note that $$|G(z)(1-|z|^2)| <\varepsilon,$$ for all $|z| \in (r_0, 1)$, hence we obtain that
\begin{eqnarray}\label{ggg-2}
\Big| \int_{z_0}^z G(\zeta)d\zeta\Big| &=& \Big| \int_{r_0}^{|z|} G(t e^{i \arg z}) e^{i \arg z}dt \Big| \\
 &= & \Big| \int_{r_0}^{|z|} G(t e^{i \arg z})(1-t^2) \cdot\frac{e^{i \arg z}}{1-t^2}dt  \Big| \nonumber  \\
 &\leq & \int_{r_0}^{|z|} \frac{\varepsilon}{1-t^2}dt =\frac{\varepsilon}{2}\Big[\log \frac{1+|z|}{1-|z|}-\log \frac{1+r_0}{1-r_0}\Big]. \nonumber \end{eqnarray}
We denote
$$\mathbf{M}_0=\max\limits_{|z|=r_0}| \log  H(z)|=\max\limits_{|z|=r_0} |\log{\mathbf{h}}' \circ f(z)|.$$Then, from the fact $\Big|\log |[{\mathbf{h}}' \circ f(z)]^{\tau}|\Big|\leq \Big|\log [{\mathbf{h}}' \circ f(z)]^{\tau} \Big|$, and (\ref{ggg-1}), (\ref{ggg-2}), we get that
\begin{equation}
\Big|\log |[{\mathbf{h}}' \circ f(z)]^{\tau}|\Big| \leq \frac{|\tau|\varepsilon}{2}\Big[\log \frac{1+|z|}{1-|z|}-\log \frac{1+r}{1-r}\Big]+|\tau|\mathbf{M}_0,\nonumber
\end{equation}
for all $|z|\in (r_0,1)$.  It follows that
\begin{equation}
e^{-|\tau|\mathbf{M}_0}\Big( \frac{1+r_0}{1-r_0}\Big)^{\frac{|\tau|\varepsilon}{2}} \Big( \frac{1-|z|}{1+|z|}\Big)^{\frac{|\tau|\varepsilon}{2}}\leq |[{\mathbf{h}}' \circ f(z)]^{\tau}| \leq  e^{|\tau|\mathbf{M}_0}\Big( \frac{1-r_0}{1+r_0}\Big)^{\frac{|\tau|\varepsilon}{2}} \Big( \frac{1+|z|}{1-|z|}\Big)^{\frac{|\tau|\varepsilon}{2}}, \nonumber
\end{equation}
for all $|z|\in (r_0,1)$.  This proves the lemma.
\end{proof}

\begin{lemma}\label{key-lemma}
Let $f, g \in \mathcal{S}$ and let $\tau \in \mathbb{C}$ with $\tau\neq 0$.  (1) If $\beta_f(\tau):=\beta>0$,  and there is a constant $r_0\in (0, 1) $ such that 
\begin{equation}\sup\limits_{|z|\in (r_0, 1)}|N_g(z)-N_f(z)|(1-|z|^2) <\varepsilon/|\tau|,  \nonumber \end{equation}
for $\varepsilon\in (0, \beta)$, then we have
\begin{equation}
|\beta_g(\tau)-\beta_f(\tau)|\leq \varepsilon. \nonumber
\end{equation}
(2) If $\beta_f(\tau)=0$, and there is a constant $r_0\in (0,1)$ such that
\begin{equation}\sup\limits_{|z|\in (r_0, 1)}|N_g(z)-N_f(z)|(1-|z|^2) <\varepsilon/|\tau|, \nonumber \end{equation}
for $\varepsilon>0$,  then we have $\beta_g(\tau)\leq \varepsilon$.
\end{lemma}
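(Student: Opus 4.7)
The strategy is to turn the pointwise estimate of Lemma~\ref{key-est} into an inequality between the full integral means $\int_{-\pi}^{\pi}|[g'(re^{i\theta})]^{\tau}|\,d\theta$ and $\int_{-\pi}^{\pi}|[f'(re^{i\theta})]^{\tau}|\,d\theta$, from which the comparison between $\beta_g(\tau)$ and $\beta_f(\tau)$ is immediate. Set $\mathbf{h}:=g\circ f^{-1}$, which is univalent on $f(\Delta)$ with $\mathbf{h}(0)=0$ and $\mathbf{h}'(0)=1$. Using the single-valued branches of $\log f'$, $\log g'$ on $\Delta$ and of $\log \mathbf{h}'$ on $f(\Delta)$ that vanish at the origin, the chain rule lifts to the additive identity $\log g'(z)=\log\mathbf{h}'(f(z))+\log f'(z)$, and hence
\[
|[g'(z)]^{\tau}|=|[\mathbf{h}'(f(z))]^{\tau}|\cdot|[f'(z)]^{\tau}|.
\]

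Now apply Lemma~\ref{key-est}: the hypothesis $|N_g(z)-N_f(z)|(1-|z|^{2})<\varepsilon/|\tau|$ on $r_0<|z|<1$ is precisely the input of that lemma with $\varepsilon/|\tau|$ in place of $\varepsilon$, so the $|\tau|$ in the exponent cancels, producing constants $C_{1},C_{2}>0$ (depending on $r_{0},\tau,\varepsilon$) with
\[
C_{1}\Bigl(\tfrac{1-|z|}{1+|z|}\Bigr)^{\varepsilon/2}\le|[\mathbf{h}'(f(z))]^{\tau}|\le C_{2}\Bigl(\tfrac{1+|z|}{1-|z|}\Bigr)^{\varepsilon/2},\qquad r_{0}<|z|<1.
\]
Multiplying by $|[f'(z)]^{\tau}|$ and integrating on the circle $|z|=r\in(r_{0},1)$ sandwiches $\int_{-\pi}^{\pi}|[g'(re^{i\theta})]^{\tau}|\,d\theta$ between $C_{1}\bigl(\tfrac{1-r}{1+r}\bigr)^{\varepsilon/2}$ and $C_{2}\bigl(\tfrac{1+r}{1-r}\bigr)^{\varepsilon/2}$ times the corresponding integral for $f$. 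Taking logarithms, dividing by $|\log(1-r)|$, and letting $r\to 1^{-}$, the bounded terms $\log C_{i}$ and $\log(1+r)$ disappear in the limit, while $-\log(1-r)/|\log(1-r)|=1$ contributes an additive $\pm\varepsilon/2$.

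From the upper estimate one obtains directly $\beta_{g}(\tau)\le\beta_{f}(\tau)+\varepsilon/2$. For the matching lower direction, I would either pick a sequence $r_{n}\to 1^{-}$ realising $\beta_{f}(\tau)$ in~\eqref{defi} and read off $\beta_{g}(\tau)\ge\beta_{f}(\tau)-\varepsilon/2$ along it, or exchange the roles of $f$ and $g$ in the upper argument (the hypothesis is symmetric). Part~(1) then follows from $|\beta_{g}(\tau)-\beta_{f}(\tau)|\le\varepsilon/2\le\varepsilon$, and part~(2) follows from the upper half alone, which gives $\beta_{g}(\tau)\le\varepsilon/2\le\varepsilon$ when $\beta_{f}(\tau)=0$. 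I do not expect a substantive obstacle: once Lemma~\ref{key-est} is available, this is essentially bookkeeping. The only care points are (i) aligning the branches of the complex power so that the factorisation of $|[g']^{\tau}|$ is a genuine identity rather than an identity up to a unimodular constant, and (ii) producing the lower comparison at the level of $\limsup$ via a realising subsequence (or the symmetry swap) rather than by naively applying $\limsup$ to a lower bound.
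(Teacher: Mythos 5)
Your proposal is correct, and it reaches the conclusion by a genuinely different route in its second half. You share with the paper the first two steps: the multiplicative factorisation $|[g'(z)]^{\tau}|=|[\mathbf{h}'(f(z))]^{\tau}|\,|[f'(z)]^{\tau}|$ (which is indeed an exact identity once all three logarithms are normalised to vanish at the origin, since both sides of $\log g'=\log\mathbf{h}'\circ f+\log f'$ have equal derivatives and agree at $0$) and the sandwich from Lemma~\ref{key-est} applied with $\varepsilon/|\tau|$ in place of $\varepsilon$, which cancels the $|\tau|$ in the exponent. Where you diverge is in how the sandwich is converted into a comparison of spectra: you integrate over circles and work directly with the $\limsup$ in the definition~\eqref{defi}, handling the delicate lower direction by a realising sequence for $\beta_f(\tau)$ or by the symmetry swap $f\leftrightarrow g$ (both of which are legitimate here, since the hypothesis is symmetric and $f,g\in\mathcal{S}$). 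The paper instead passes through the weighted-area criterion of Lemma~\ref{cri}: from $\beta_f(\tau)=\beta$ it extracts convergence of $\iint_{\Delta}|[f']^{\tau}|(1-|z|^2)^{-1+\beta+\varepsilon/2}$ and divergence of the same integral with $-1+\beta-\varepsilon/2$, and then transfers both to $g$ over $\Delta-\Delta(r_0)$ using the sandwich. The trade-off is instructive: the area-integral route makes the lower bound automatic (divergence transfers with no $\limsup$ subtleties), while your circle-integral route avoids Lemma~\ref{cri} altogether and in fact yields the slightly sharper bound $|\beta_g(\tau)-\beta_f(\tau)|\le\varepsilon/2$. Both care points you flag are real but handled correctly as you describe.
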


\begin{proof}
(1) Let $\mathbf{h}=g\circ f^{-1}$. First, by Lemma \ref{key-est},  we have
\begin{equation}\label{we-1}
C_1(r_0, \tau, \varepsilon)\Big( \frac{1-|z|}{1+|z|}\Big)^{\frac{\varepsilon}{2}}\leq |{\mathbf{h}}' \circ f(z)| \leq  C_2(r_0, \tau, \varepsilon) \Big( \frac{1+|z|}{1-|z|}\Big)^{\frac{\varepsilon}{2}}, 
\end{equation}
for $|z|\in (r_0,1)$.  On the other hand, since $\beta_{f}(\tau)=\beta >0$, we see from Lemma \ref{cri} that, for $\varepsilon\in (0, \beta)$,
\begin{equation}\label{we-2}
\iint_{\Delta}|[f'(z)]^{\tau}| (1-|z|^{2})^{-1+\beta+\varepsilon/2}dxdy<\infty,
\end{equation}
and
\begin{equation}\label{we-3}
\iint_{\Delta}|[f'(z)]^{\tau}| (1-|z|^{2})^{-1+\beta-\varepsilon/2}dxdy=\infty.
\end{equation}
Hence it follows from the second inequality of (\ref{we-1}) and (\ref{we-2}) that
\begin{eqnarray}
\lefteqn{\iint_{\Delta-\Delta(r_0)}|[g'(z)]^{\tau}| (1-|z|^{2})^{-1+\beta+\varepsilon}dxdy} \nonumber \\&&=\iint_{\Delta-\Delta(r_0)}|[\mathbf{h}'\circ f(z)]^{\tau}| |[f'(z)]^{\tau}| (1-|z|^{2})^{-1+\beta+\varepsilon}dxdy  \nonumber \\
&&\leq [C_2(r_0, \tau, \varepsilon)] \iint_{\Delta-\Delta(r_0)}\Big( \frac{1+|z|}{1-|z|}\Big)^{\varepsilon/2} |[f'(z)]^{\tau}| (1-|z|^{2})^{-1+\beta+\varepsilon}dxdy \nonumber \\
&&\leq 2^{\varepsilon}[C_2(r_0, \tau, \varepsilon)] \iint_{\Delta-\Delta(r_0)} |[f'(z)]^{\tau}| (1-|z|^{2})^{-1+\beta+\varepsilon/2}dxdy<\infty. \nonumber
\end{eqnarray}
Then it is easy to see from Lemma \ref{cri} that $\beta_{g}(\tau)\leq \beta+\varepsilon.$  Also, from the first  inequality of (\ref{we-1}) and (\ref{we-3}), we have
\begin{eqnarray}
\lefteqn{\iint_{\Delta-\Delta(r_0)}|[g'(z)]^{\tau}| (1-|z|^{2})^{-1+\beta-\varepsilon}dxdy} \nonumber \\&&=\iint_{\Delta-\Delta(r_0)}|[\mathbf{h}'\circ f(z)]^{\tau}| |[f'(z)]^{\tau}| (1-|z|^{2})^{-1+\beta-\varepsilon}dxdy  \nonumber \\
&&\geq [C_1(r_0, \tau, \varepsilon)] \iint_{\Delta-\Delta(r_0)}\Big( \frac{1-|z|}{1+|z|}\Big)^{\varepsilon/2} |[f'(z)]^{\tau}| (1-|z|^{2})^{-1+\beta-\varepsilon}dxdy \nonumber \\
&&\geq 2^{-\varepsilon}[C_1(r_0, \tau, \varepsilon)]\iint_{\Delta-\Delta(r_0)} |[f'(z)]^{\tau}| (1-|z|^{2})^{-1+\beta-\varepsilon/2}dxdy=\infty. \nonumber
\end{eqnarray}
This implies that $\beta_{g}(\tau)\geq \beta-\varepsilon$. Hence we have $|\beta_{g}(\tau)-\beta|\leq \varepsilon$.  This proves (1) of Lemma \ref{key-lemma}.

(2) When $\beta_f(\tau)=0$, for $\varepsilon>0$, repeating the above arguments by only using the second inequality of (\ref{we-1}) and (\ref{we-2}), we can prove that $\beta_g(\tau)\leq \varepsilon.$  This proves (2) of Lemma \ref{key-lemma} and the proof of Lemma \ref{key-lemma} is done. \end{proof}

\section{Proof of Theorem \ref{Em-1} and \ref{Em-2}}

Note that $d_{AT}([\mu]_{AT}, [\nu]_{AT}) \leq d_{T}([\mu]_T, [\nu]_T)$ for any $\mu, \nu \in M(\Delta^{*})$, we see that the statement that $I_{AT}$ is continuous on $AT$ implies Theorem \ref{Em-1}. Hence we only need to prove Theorem \ref{Em-2}.  By studying the Pre-Schwarzian derivative model of the universal asymptotic Teichm\"uller space, we have proved in \cite{Jin} that Theorem \ref{Em-2} holds for real $\tau$.  We will use a more direct way to show that Theorem \ref{Em-2} holds for all complex numbers $\tau$. More precisely, we will prove  Theorem \ref{Em-2} by using some arguments from the theory of Teichm\"uller spaces and some results from the extremal theory of quasiconformal mappings.  

\subsection{Isomorphism mapping between Teichm\"uller spaces} We first recall some arguments from the theory of quasiconformal Teichm\"uller spaces. For a fixed $\omega\in M(\Delta^{*})$, let $\mathbf{\Omega}:=f_{\omega}(\Delta)$ and $\mathbf{\Omega}^{*}=\widehat{\mathbb{C}}-\overline{\mathbf{\Omega}}$. For any $\mu \in M(\Delta^{*})$, we define
$$\sharp(\mu)(\zeta):=\mu_{f_{\mu}\circ f_{\omega}^{-1}} (\zeta)=\Big[\frac{1}{\chi} \frac{\mu-\omega}{1-\overline{\omega}\mu}\Big]\circ f_{\omega}^{-1}(\zeta),\,\, \zeta \in \mathbf{\Omega}^{*}.$$
Here $\chi=\frac{\overline{\partial f_{\omega}}}{\partial f_{\omega}}$. We will next write $\mu^{\sharp}(\zeta)=\sharp(\mu)(\zeta)$. Then, from $f_{\mu}=f_{\mu^{\sharp}}\circ f_{\omega}$,  we obtain 
$$\mu(z)=\mu_{f_{\mu^{\sharp}}\circ f_{\omega}}(z)= \frac{{\omega}(z)+(\mu^{\sharp}\circ f_{\omega}(z)) \chi}{1+\overline{\omega(z)}(\mu^{\sharp}\circ f_{\omega}(z)) \chi},\,\, z\in \Delta^{*}.$$
This means that the mapping $\sharp: \mu(z) \mapsto \mu^{\sharp}(\zeta)$ is one-to-one from $M({\Delta}^{*})$ to $M({\mathbf{\Omega}}^{*})$ and,  for any $\nu \in M({\mathbf{\Omega}}^{*})$, 
$$\sharp^{-1}(\nu)(z)=\frac{{\omega}(z)+(\nu \circ f_{\omega}(z)) \chi}{1+\overline{\omega(z)}(\nu \circ f_{\omega}(z)) \chi},\,\,\chi=\frac{\overline{\partial f_{\omega}}}{\partial f_{\omega}},\,\, z\in \Delta^{*}.$$
We shall write $^{\sharp}\nu(z)=\sharp^{-1}(\nu)(z)$ for $\nu \in M({\mathbf{\Omega}}^{*}).$  Meanwhile, it is easy to see that  the mapping $\dagger: [\mu(z)]_{T} \mapsto [\mu^{\sharp}(\zeta)]_{T(\bf{\Omega})}$ is one-to-one from $T$ to $T({\mathbf{\Omega}})$. 

By a direct computation, for any $\mu, \nu \in M(\Delta^{*})$, we have
 \begin{equation}\label{sig-1}
\left|\sigma(\mu, \nu)(z)\right|=\left |\frac{\mu^{\sharp}-\nu^{\sharp}}{1-{\overline{\nu^{\sharp}}\mu^{\sharp}}}\circ f_{\omega}(z)\right |=\left|\sigma(\mu^{\sharp}, \nu^{\sharp})(\zeta)\right|, \, \zeta=f_{\omega}(z),\, z\in \Delta^{*}. 
\end{equation}
Similarly, for any $\mu, \nu \in M(\bf{\Omega}^{*})$, we have
 \begin{equation}\label{sig-2}
\left|\sigma(\mu, \nu)(\zeta)\right|=\left |\frac{^{\sharp}\mu-{^{\sharp}}\nu}{1-{\overline{^{\sharp}\nu}{^{\sharp}}\mu}}\circ f_{\omega}^{-1}(\zeta)\right |=\sigma({^{\sharp}}\mu, {^{\sharp}}\nu)(z), \, z\in \Delta^{*}, \, \zeta=f_{\omega}(z). 
\end{equation}
It follows that
\begin{eqnarray}
d_{T}([\mu]_{T}, [\nu]_T)&=& \inf\limits_{\mu_1\sim \mu, \nu_1\sim \nu}\Big \{\tanh^{-1}\|\sigma(\mu_1, \nu_1)\|_{\infty}\Big\}\nonumber \\
&=& \inf\limits_{\mu_1^{\sharp}\sim \mu^{\sharp}, \nu_1^{\sharp}\sim \nu^{\sharp}} \Big\{\tanh^{-1}\|\sigma(\mu_1^{\sharp}, \nu_1^{\sharp})\|_{\infty}\Big\}\nonumber \\
&=& d_{T}([\mu^{\sharp}]_{T(\mathbf{\Omega})}, [\nu^{\sharp}]_{T(\mathbf{\Omega})}). \nonumber
\end{eqnarray}

We have shown that
\begin{proposition}\label{lll}
The mapping $\dagger: [\mu(z)]_{T} \mapsto [\mu^{\sharp}(\zeta)]_{T(\mathbf{\Omega})}$ is an isometric isomorphism from $T$ to $T({\mathbf{\Omega}})$.
\end{proposition}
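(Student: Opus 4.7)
My plan is to verify three properties in turn: that $\dagger$ is well-defined, that it is bijective, and that it preserves the Teichm\"uller metric. Each follows by unwrapping the definitions and invoking the identities already developed in the paragraphs preceding the proposition; the content of the proposition is essentially to package those pieces.

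First I would establish that the composition $f_\mu \circ f_\omega^{-1}$ coincides with the normalized representative $f_{\mu^\sharp}$ appearing in the definition of $T(\mathbf{\Omega})$. Indeed, by construction $\mu^\sharp = \mu_{f_\mu \circ f_\omega^{-1}}$ vanishes on $\mathbf{\Omega} = f_\omega(\Delta)$; the composition fixes $0$ and $\infty$; and since $f_\omega(0)=0$ one computes $(f_\mu \circ f_\omega^{-1})'(0) = f'_\mu(0)/f'_\omega(0) = 1$. With this identification in hand, well-definedness is immediate: if $\mu \sim \nu$ in $M(\Delta^*)$, i.e.\ $f_\mu = f_\nu$ on $\Delta$, then for any $\zeta \in \mathbf{\Omega}$,
\[
f_{\mu^\sharp}(\zeta) = f_\mu(f_\omega^{-1}(\zeta)) = f_\nu(f_\omega^{-1}(\zeta)) = f_{\nu^\sharp}(\zeta),
\]
so $\mu^\sharp \sim \nu^\sharp$ in $M(\mathbf{\Omega}^*)$. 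Injectivity of $\dagger$ is the same identification read in reverse, and surjectivity is immediate from the explicit two-sided formula $\sharp^{-1}(\nu) = {}^\sharp\nu$ already recorded in the excerpt.

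For the isometry, the substantive identity has been supplied by the direct computation (\ref{sig-1}), which yields $\|\sigma(\mu,\nu)\|_\infty = \|\sigma(\mu^\sharp, \nu^\sharp)\|_\infty$ for all $\mu, \nu \in M(\Delta^*)$ (since $f_\omega$ is a bijection of $\Delta^*$ onto $\mathbf{\Omega}^*$ and supremums are taken on these sets). To turn this into an equality of Teichm\"uller distances, I would argue that the correspondence $\mu_1 \leftrightarrow \mu_1^\sharp$ identifies the classes $[\mu]_T$ and $[\mu^\sharp]_{T(\mathbf{\Omega})}$ as sets of representatives: by the preceding paragraph, $\mu_1 \sim \mu$ holds if and only if $\mu_1^\sharp \sim \mu^\sharp$. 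Consequently, taking the infimum over the matched sets of representatives on both sides of (\ref{sig-1}) produces
\[
d_T([\mu]_T, [\nu]_T) = d_T([\mu^\sharp]_{T(\mathbf{\Omega})}, [\nu^\sharp]_{T(\mathbf{\Omega})}),
\]
which is the claim.

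The only genuinely delicate point is verifying that $\sharp$ intertwines the two equivalence relations correctly, and this hinges entirely on the identification $f_\mu \circ f_\omega^{-1} = f_{\mu^\sharp}$ at the level of normalized representatives. Once that is pinned down using the normalization $f_\mu(0)=0$, $f'_\mu(0)=1$, $f_\mu(\infty)=\infty$ chosen in the definition of $M(\Delta^*)$, the remainder of the proof is mechanical bookkeeping; I do not anticipate any substantive obstacle beyond this careful check.
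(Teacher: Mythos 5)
Your proposal is correct and follows essentially the same route as the paper: both identify $f_{\mu^\sharp}$ with $f_\mu \circ f_\omega^{-1}$ to show that $\sharp$ intertwines the equivalence relations (hence $\dagger$ is a well-defined bijection), and both derive the isometry by combining the pointwise identity (\ref{sig-1}) with the matching of representative sets before taking infima. The only difference is that you spell out the normalization check $(f_\mu\circ f_\omega^{-1})(0)=0$, $(f_\mu\circ f_\omega^{-1})'(0)=1$, $(f_\mu\circ f_\omega^{-1})(\infty)=\infty$, which the paper leaves implicit.
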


It is known that
\begin{proposition}\label{ppp}
Let $\Omega$ be a bounded Jordan domain in $\mathbb{C}$ with $0\in \Omega$. Let $\mu, \nu \in M({\Omega}^{*})$. Then $\mu \approx \nu$ in $M({\Omega}^{*})$, if and only if there exist $\widetilde{\mu}$, $\widetilde{\nu}$ in $M({\Omega}^{*})$ such that $\widetilde{\mu}\sim \mu$, $\widetilde{\nu}\sim \nu$ and $\widetilde{\mu}(\zeta)-\widetilde{\nu}(\zeta) \rightarrow 0$ as $\textup{dist}(\zeta, \partial {\Omega}) \rightarrow 0$. 
\end{proposition}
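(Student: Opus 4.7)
The ``only if'' direction is immediate: if $\mu \approx \nu$, the definition of asymptotic equivalence supplies $\widetilde{\nu}\sim\nu$ with $\widetilde{\nu}-\mu\to 0$ near $\partial\Omega$, and one takes $\widetilde{\mu}=\mu$.

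For the nontrivial converse, assume $\widetilde{\mu}\sim\mu$, $\widetilde{\nu}\sim\nu$ with $\widetilde{\mu}(\zeta)-\widetilde{\nu}(\zeta)\to 0$ as $\textup{dist}(\zeta,\partial\Omega)\to 0^{+}$. The plan is to exhibit $\widehat{\nu}\in M(\Omega^{*})$ with $\widehat{\nu}\sim\nu$ and $\widehat{\nu}(\zeta)-\mu(\zeta)\to 0$. I would define $\widehat{\nu}$ as the Beltrami coefficient of
\begin{equation*}
f_{\widehat{\nu}}\;:=\;f_{\widetilde{\nu}}\circ f_{\widetilde{\mu}}^{-1}\circ f_{\mu},
\end{equation*}
a quasiconformal self-homeomorphism of $\widehat{\mathbb{C}}$. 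Because $\widetilde{\mu}\sim\mu$ gives $f_{\widetilde{\mu}}\big|_{\overline{\Omega}}=f_{\mu}\big|_{\overline{\Omega}}$, the inner composition $f_{\widetilde{\mu}}^{-1}\circ f_{\mu}$ is the identity on $\overline{\Omega}$; therefore $f_{\widehat{\nu}}\big|_{\Omega}=f_{\widetilde{\nu}}\big|_{\Omega}=f_{\nu}\big|_{\Omega}$, so $\widehat{\nu}\big|_{\Omega}=0$ and $\widehat{\nu}\sim\nu$. The normalizations $f_{\widehat{\nu}}(0)=0$, $f_{\widehat{\nu}}'(0)=1$, $f_{\widehat{\nu}}(\infty)=\infty$ follow from the analogous normalizations of $f_{\mu},f_{\widetilde{\mu}},f_{\widetilde{\nu}}$ by direct computation at $0$ and $\infty$.

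The heart of the argument is to estimate $|\widehat{\nu}-\mu|$ on $\Omega^{*}$. Apply the chain rule (\ref{dila}) with $f=f_{\widetilde{\mu}}$, $g=f_{\widetilde{\nu}}$ to the middle piece $F:=f_{\widetilde{\nu}}\circ f_{\widetilde{\mu}}^{-1}$; since $\chi$ is unimodular, the Beltrami coefficient of $F$ at $f_{\widetilde{\mu}}(z)$ has modulus $|\sigma(\widetilde{\nu},\widetilde{\mu})(z)|$. Feeding this into the standard composition formula for the Beltrami coefficient of $f_{\widehat{\nu}}=F\circ f_{\mu}$ gives
\begin{equation*}
|\widehat{\nu}(\zeta)-\mu(\zeta)|\;\leq\;\frac{|\sigma(\widetilde{\nu},\widetilde{\mu})(\zeta^{*})|\bigl(1-|\mu(\zeta)|^{2}\bigr)}{1-\|\mu\|_{\infty}\,|\sigma(\widetilde{\nu},\widetilde{\mu})(\zeta^{*})|},\qquad \zeta^{*}:=f_{\widetilde{\mu}}^{-1}\!\circ f_{\mu}(\zeta).
\end{equation*}

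The main obstacle is then to transfer the asymptotic decay of $\sigma(\widetilde{\nu},\widetilde{\mu})$ from $\zeta^{*}$ back to $\zeta$. Since $f_{\mu}$ and $f_{\widetilde{\mu}}$ share the same boundary correspondence, the map $\zeta\mapsto\zeta^{*}$ is a self-homeomorphism of $\widehat{\mathbb{C}}$ that fixes $\partial\Omega$ pointwise and preserves $\Omega^{*}$; hence $\textup{dist}(\zeta,\partial\Omega)\to 0^{+}$ forces $\textup{dist}(\zeta^{*},\partial\Omega)\to 0^{+}$. The hypothesis $\widetilde{\mu}-\widetilde{\nu}\to 0$ near $\partial\Omega$ yields $|\sigma(\widetilde{\nu},\widetilde{\mu})|\to 0$ there (the denominator $1-\overline{\widetilde{\mu}}\widetilde{\nu}$ is bounded below by $1-\|\widetilde{\mu}\|_{\infty}\|\widetilde{\nu}\|_{\infty}>0$), and the displayed estimate then gives $\widehat{\nu}(\zeta)-\mu(\zeta)\to 0$, completing the proof that $\mu\approx\nu$.
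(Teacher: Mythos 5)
The paper itself offers no proof of Proposition \ref{ppp}: it is introduced with ``It is known that'' and used as a black box from the asymptotic Teichm\"uller theory literature, so there is nothing in the source to compare your argument against line by line. On its own terms your proof is correct and complete. The ``only if'' direction is indeed immediate from the definition of $\approx$. For the converse, the composition $f_{\widetilde{\nu}}\circ f_{\widetilde{\mu}}^{-1}\circ f_{\mu}$ is the right object: it is conformal on $\Omega$ and agrees there with $f_{\nu}$, carries the stated normalizations, and so equals $f_{\widehat{\nu}}$ for its Beltrami coefficient $\widehat{\nu}\sim\nu$. Your displayed bound follows from two applications of the chain rule (\ref{dila}) exactly as you describe, since $1-|\mu(\zeta)|^{2}\leq 1$ and $|1+\overline{\mu}\,t\chi|\geq 1-\|\mu\|_{\infty}|t|$ with $t=\mu_{F}\circ f_{\mu}(\zeta)$ of modulus $|\sigma(\widetilde{\nu},\widetilde{\mu})(\zeta^{*})|$. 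The only step I would spell out more is the transfer $\textup{dist}(\zeta,\partial\Omega)\to 0\Rightarrow\textup{dist}(\zeta^{*},\partial\Omega)\to 0$: it is not a pointwise triviality but follows by a short compactness argument (the map $\zeta\mapsto\zeta^{*}$ is a homeomorphism of the compact sphere fixing $\overline{\Omega}$ pointwise, hence uniformly continuous, and $\partial\Omega$ is compact), or equivalently by extracting a convergent subsequence from any putative counterexample. With that sentence added, the argument stands as a self-contained proof of a fact the paper leaves uncited in detail.
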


If $\mu\approx \nu$ in $M(\Delta^{*})$, then, by Proposition \ref{ppp}, there exist $\widetilde{\mu}$, $\widetilde{\nu}$ in $M({\Delta}^{*})$ such that $\widetilde{\mu}\sim \mu$, $\widetilde{\nu}\sim \nu$ in $M(\Delta^{*})$ and $\widetilde{\mu}(z)-\widetilde{\nu}(z) \rightarrow 0$ as $|z| \rightarrow 1^{+}$.  It follows from (\ref{sig-2}) that $\widetilde{\mu}^{\sharp}(\zeta)-\widetilde{\nu}^{\sharp}(\zeta) \rightarrow 0$ as $\textup{dist}(\zeta, \partial {\bf{\Omega}})\rightarrow 0$.  From Proposition \ref{lll}, we see that $\widetilde{\mu}^{\sharp}\sim \mu^{\sharp}$, $\widetilde{\nu}^{\sharp}\sim \nu^{\sharp}$ in $M(\bf{\Omega}^{*})$. By Proposition \ref{ppp} again, we obtain that $\mu^{\sharp}\approx \nu^{\sharp}$ in $M(\bf{\Omega}^{*})$. Similarly, by using Proposition \ref{ppp}, Lemma \ref{lll} and (\ref{sig-1}), we can show that $\mu\approx \nu$ in $M(\bf{\Omega}^{*})$ implies that ${^{\sharp}}\mu\approx {^{\sharp}}\nu$ in $M(\Delta^{*})$.  Moreover, for any $\mu, \nu$ in $M(\Delta^{*})$, we have 
\begin{eqnarray}
d_{AT}([\mu]_{AT},[\nu]_{AT})&=& \inf\limits_{\mu_1\approx \mu, \nu_1\approx \nu}\Big \{\tanh^{-1}\|\sigma(\mu_1, \nu_1)\|_{\infty}\Big\}\nonumber \\
&=& \inf\limits_{\mu_1^{\sharp}\approx \mu^{\sharp}, \nu_1^{\sharp}\approx \nu^{\sharp}} \Big\{\tanh^{-1}\|\sigma(\mu_1^{\sharp}, \nu_1^{\sharp})\|_{\infty}\Big\}\nonumber \\
&=& d_{AT}([\mu^{\sharp}]_{T(\mathbf{\Omega})}, [\nu^{\sharp}]_{T(\mathbf{\Omega})}). \nonumber
\end{eqnarray}

We have shown that 
\begin{proposition}\label{assp}
The mapping $\ddag: [\mu(z)]_{AT} \mapsto [\mu^{\sharp}(\zeta)]_{AT(\mathbf{\Omega})}$ is an isometric isomorphism from $AT$ to $AT({\mathbf{\Omega}})$.
\end{proposition}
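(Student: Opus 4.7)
The plan is to establish Proposition \ref{assp} by proving four things in order: that $\ddag$ is well-defined on equivalence classes, that it is injective, that it is surjective, and finally that it is an isometry. The key mechanism throughout is the bijection $\sharp$ between $M(\Delta^*)$ and $M(\mathbf{\Omega}^*)$ established earlier, combined with the $\sigma$-preservation identities \eqref{sig-1} and \eqref{sig-2}, and the characterization of asymptotic equivalence given by Proposition \ref{ppp}.

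For well-definedness, I would assume $\mu \approx \nu$ in $M(\Delta^*)$ and use Proposition \ref{ppp} to replace $\mu, \nu$ by representatives $\widetilde{\mu} \sim \mu$, $\widetilde{\nu} \sim \nu$ with $\widetilde{\mu}(z) - \widetilde{\nu}(z) \to 0$ as $|z| \to 1^+$. Proposition \ref{lll} (the isometric Teichmüller-space isomorphism already established) gives $\widetilde{\mu}^{\sharp} \sim \mu^{\sharp}$ and $\widetilde{\nu}^{\sharp} \sim \nu^{\sharp}$ in $M(\mathbf{\Omega}^*)$, so it suffices to show $\widetilde{\mu}^{\sharp}(\zeta) - \widetilde{\nu}^{\sharp}(\zeta) \to 0$ as $\textup{dist}(\zeta, \partial \mathbf{\Omega}) \to 0$, and then invoke Proposition \ref{ppp} in $\mathbf{\Omega}^*$ to conclude $\mu^{\sharp} \approx \nu^{\sharp}$. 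Injectivity is the symmetric argument, starting from $\mu^{\sharp} \approx \nu^{\sharp}$ in $M(\mathbf{\Omega}^*)$, using the inverse map $^{\sharp}(\cdot)$ and identity \eqref{sig-2}. Surjectivity is essentially free: for any $[\alpha]_{AT(\mathbf{\Omega})}$, the class $[{^{\sharp}}\alpha]_{AT}$ maps to $[\alpha]_{AT(\mathbf{\Omega})}$ since $({^{\sharp}}\alpha)^{\sharp} = \alpha$.

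For the isometric property, after well-definedness and injectivity are established, the set $\{\mu_1 : \mu_1 \approx \mu\}$ is in bijection with $\{\mu_1^{\sharp} : \mu_1^{\sharp} \approx \mu^{\sharp}\}$, and similarly for $\nu$. Identity \eqref{sig-1} gives $\|\sigma(\mu_1,\nu_1)\|_{\infty} = \|\sigma(\mu_1^{\sharp},\nu_1^{\sharp})\|_{\infty}$, so the infima defining $d_{AT}$ on both sides agree term by term.

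The main obstacle is the transfer step inside the well-definedness argument: turning $\widetilde{\mu}(z) - \widetilde{\nu}(z) \to 0$ as $|z| \to 1^+$ into $\widetilde{\mu}^{\sharp}(\zeta) - \widetilde{\nu}^{\sharp}(\zeta) \to 0$ as $\zeta$ approaches $\partial \mathbf{\Omega}$. I would handle this by first noting that the $\sigma$-expression $\sigma(\widetilde{\mu},\widetilde{\nu}) = (\widetilde{\mu}-\widetilde{\nu})/(1-\overline{\widetilde{\nu}}\widetilde{\mu})$ has denominator bounded away from $0$ (by $1-\|\widetilde{\nu}\|_\infty\|\widetilde{\mu}\|_\infty > 0$), so $\widetilde{\mu}-\widetilde{\nu} \to 0$ at the boundary is equivalent to $|\sigma(\widetilde{\mu},\widetilde{\nu})| \to 0$ at the boundary. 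Identity \eqref{sig-1} then gives $|\sigma(\widetilde{\mu}^{\sharp},\widetilde{\nu}^{\sharp})(\zeta)| \to 0$ as $\zeta = f_\omega(z)$ with $|z| \to 1^+$. Since $f_\omega$ is a quasiconformal homeomorphism of $\widehat{\mathbb{C}}$, it is continuous as a map of $\widehat{\mathbb{C}}$ and $f_\omega(\mathbb{T}) = \partial\mathbf{\Omega}$, so $\textup{dist}(\zeta, \partial\mathbf{\Omega}) \to 0$ is equivalent to $|f_\omega^{-1}(\zeta)| \to 1^+$. Reversing the $\sigma$-vs-difference comparison in $M(\mathbf{\Omega}^*)$ yields $\widetilde{\mu}^{\sharp}(\zeta) - \widetilde{\nu}^{\sharp}(\zeta) \to 0$ at $\partial\mathbf{\Omega}$, as required.
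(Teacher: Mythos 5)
Your proposal is correct and follows essentially the same route as the paper: Proposition \ref{ppp} to choose representatives whose difference vanishes at the boundary, the identities (\ref{sig-1})--(\ref{sig-2}) together with Proposition \ref{lll} to transfer asymptotic equivalence in both directions, and the term-by-term comparison of the infima defining $d_{AT}$ for the isometry. The only difference is that you spell out the transfer step (comparing $\widetilde{\mu}-\widetilde{\nu}$ with $\sigma(\widetilde{\mu},\widetilde{\nu})$, whose denominator is bounded away from zero, and using that $f_{\omega}$ is a homeomorphism of $\widehat{\mathbb{C}}$ carrying $\mathbb{T}$ to $\partial\mathbf{\Omega}$), which the paper leaves implicit.
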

\begin{remark}\label{ass-l}
In particular, for $\nu \in M(\Delta^{*})$, we see that
$\nu \approx \omega$ in $M(\Delta^{*})$ if and only if ${\nu}^{\sharp} \approx 0$ in $M(\mathbf{\Omega}^{*})$, and 
$$
d_{AT}([\omega]_{AT},[\nu]_{AT})=d_{AT}([0]_{AT(\mathbf{\Omega})}, [\nu^{\sharp}]_{AT(\mathbf{\Omega})}). 
$$
\end{remark}

\subsection{Extremal theory of quasiconformal mappings} We recall some definitions and results on the extremal theory of quasiconformal mappings. For a bounded Jordan domain $\Omega$  in $\mathbb{C}$ with $0\in \Omega$, let $\mu\in M(\Omega)$. We define
$$k_0([\mu]_{T(\Omega)}) = \inf\{\|\nu\|_{\infty} : \nu \sim \mu\},$$
and
$$h_0([\mu]_{T(\Omega)})=\inf\{h^{*}(\nu): {\nu} \sim \mu\}.$$
Here $h^{*}(\nu)$ is defined as in (\ref{bn}).  It is obvious that $h_0([\mu]_{T(\Omega)})\leq k_0([\mu]_{T(\Omega)})$.  We say $[\mu]_T$ is a {\em Strebel point} if $h_0([\mu]_{T(\Omega)})<k_0([\mu]_{T(\Omega)})$, otherwise, it is called a {\em non-Strebel point}.
Let $$H_0([\mu]_{AT(\Omega)})=\inf\{h^{*}(\nu): {\nu} \approx \mu\}.$$
It is known that $H_0([\mu]_{AT(\Omega)})=h_0([\mu]_{T(\Omega)})$.  We say some $\widetilde{\mu}\in [\mu]_{AT({\Omega})}$ is {\em non-Strebel extremal} in $[\mu]_{AT({\Omega})}$ if it satisfies that $H_0([\mu]_{AT(\Omega)})=h_0([\widetilde{\mu}]_{T(\Omega)})=k_0([\widetilde{\mu}]_{T(\Omega)})=\|\widetilde{\mu}\|_{\infty}$.  It is easy to see that $H_0([\mu]_{AT(\Omega)})=h_0([\widetilde{\mu}]_{T(\Omega)})=k_0([\widetilde{\mu}]_{T(\Omega)})=h^{*}(\widetilde{\mu})=\|\widetilde{\mu}\|_{\infty}$, if $\widetilde{\mu}$ is non-Strebel extremal in $[\mu]_{AT({\Omega})}$. From \cite[Lemma 4.1]{Yao}, we know that
\begin{proposition}\label{Y}
For any $\mu\in M({{\Omega}}^{*})$, there always exists some $\widetilde{\mu}$ which is non-Strebel extremal in $[\mu]_{AT({{\Omega}})}$.
\end{proposition}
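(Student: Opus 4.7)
The plan is to exhibit some $\widetilde{\mu} \in [\mu]_{AT(\Omega)}$ with $\|\widetilde{\mu}\|_{\infty} = H_0([\mu])$. Any such $\widetilde{\mu}$ automatically satisfies
$$k_0([\widetilde{\mu}]) \leq \|\widetilde{\mu}\|_{\infty} = H_0([\mu]) = H_0([\widetilde{\mu}]) = h_0([\widetilde{\mu}]) \leq k_0([\widetilde{\mu}]),$$
forcing equality throughout and hence the non-Strebel extremality condition $H_0([\mu]) = h_0([\widetilde{\mu}]) = k_0([\widetilde{\mu}])$. (Here we used the equalities $H_0 = h_0$ on classes and $H_0([\widetilde{\mu}]) = H_0([\mu])$ because $H_0$ depends only on the asymptotic class.) So the task reduces to producing an asymptotic representative whose $L^\infty$-norm equals $H_0([\mu])$.

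Set $h := H_0([\mu]) = h_0([\mu])$. By the definition of $h_0$, choose $\mu_n \sim \mu$ with $h^*(\mu_n) < h + 1/n$; by the definition of $h^*$, choose compact sets $E_n \subset \Omega^*$ with $\|\mu_n|_{\Omega^* \setminus E_n}\|_{\infty} < h + 1/n$. Define $\nu_n := \mu_n \cdot \chi_{\Omega^* \setminus E_n}$. Since $\nu_n - \mu_n$ is supported on the compact set $E_n$, which sits at positive distance from $\partial\Omega$, one has $\nu_n(\zeta) - \mu_n(\zeta) \equiv 0$ in a neighborhood of $\partial\Omega$. Taking $\mu_n$ itself as the $T$-representative in the definition of $\approx$ gives $\nu_n \approx \mu_n$, hence $\nu_n \approx \mu$; moreover $\|\nu_n\|_{\infty} \leq h + 1/n$. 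By Banach--Alaoglu, extract a subsequence (still denoted $\nu_n$) converging weak-$*$ in $L^\infty(\Omega^*)$ to some $\widetilde{\mu}$, and weak-$*$ lower semicontinuity of the norm gives $\|\widetilde{\mu}\|_{\infty} \leq h$.

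The main obstacle is the verification $\widetilde{\mu} \approx \mu$, which is the only step where weak-$*$ convergence does not tautologically do the job: asymptotic equivalence in $M(\Omega^*)$ is a boundary condition on the Beltrami coefficient itself and is not preserved automatically in $L^\infty$-weak-$*$ limits. The plan is to invoke the weak-$*$-to-uniform continuity of the solution operator of the Beltrami equation, so that the normalized solutions satisfy $f_{\nu_n} \to f_{\widetilde{\mu}}$ uniformly on $\widehat{\mathbb{C}}$, and to combine this with Proposition \ref{ppp}: each $\nu_n \approx \mu$ provides $T$-representatives of $[\nu_n]$ and of $[\mu]$ whose pointwise difference tends to $0$ as $\textup{dist}(\cdot,\partial\Omega) \to 0$. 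Carrying this matching estimate through the weak-$*$ limit (with uniform control given by the fact that all $\|\nu_n\|_\infty$ are bounded away from $1$), and using the quasisymmetric stability of boundary values of quasiconformal maps, one extracts a representative of $\widetilde{\mu}$ whose boundary behavior coincides with that of $\mu$. This yields $\widetilde{\mu} \approx \mu$, completing the construction and proving the proposition.
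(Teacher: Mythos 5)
Your reduction is sound: once you produce some $\widetilde{\mu}\approx\mu$ with $\|\widetilde{\mu}\|_{\infty}=H_0([\mu])$, the chain of inequalities you display does force $H_0([\mu])=h_0([\widetilde{\mu}])=k_0([\widetilde{\mu}])$, and your minimizing sequence $\nu_n=\mu_n\chi_{\Omega^{*}\setminus E_n}$ with $\nu_n\approx\mu$ and $\|\nu_n\|_{\infty}<h+1/n$ is correctly constructed. (For the record, the paper itself gives no proof of this proposition; it is quoted from Yao, Lemma 4.1 of \cite{Yao}, so there is no internal argument to compare against.) The problem is that everything after the Banach--Alaoglu step is either false or missing. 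First, the ``weak-$*$-to-uniform continuity of the solution operator of the Beltrami equation'' that you invoke is not a true statement: if $\nu_n\to\widetilde{\mu}$ weak-$*$ in $L^{\infty}$ with a uniform bound $k<1$, the normalized solutions $f_{\nu_n}$ do converge locally uniformly along a subsequence (normal family), but the limit is $f_{\nu}$ for a coefficient $\nu$ that in general is \emph{not} the weak-$*$ limit $\widetilde{\mu}$; this is the homogenization/G-convergence phenomenon (already visible in the Neumann series $\mu+\mu T\mu+\cdots$, whose terms are quadratic and higher in $\mu$ and do not pass to weak-$*$ limits). So the identity $\lim f_{\nu_n}=f_{\widetilde{\mu}}$, on which your whole identification of the limit rests, fails.

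Second, and independently, the step you yourself flag as ``the main obstacle''--- verifying $\widetilde{\mu}\approx\mu$ --- is not an argument but a restatement of the difficulty. Asymptotic equivalence is determined by the behavior of the Beltrami coefficient on arbitrarily small neighborhoods of $\partial\Omega$ (modulo a Teichm\"uller-equivalent change of representative), and this information is invisible to weak-$*$ limits: one may alter $\nu_n$ on annuli shrinking to $\partial\Omega$ with measure tending to $0$, leaving the weak-$*$ limit unchanged while moving $[\nu_n]_{AT(\Omega)}$ arbitrarily. Hence $\{\nu:\nu\approx\mu\}$ is not weak-$*$ closed and the limit $\widetilde{\mu}$ has no reason to lie in $[\mu]_{AT(\Omega)}$. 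Note also that the classical compactness proof of the existence of a $k_0$-extremal representative does not transfer here: there one uses that all $f_{\mu_n}$ with $\mu_n\sim\mu$ agree on $\Omega$, which pins down the Teichm\"uller class of any locally uniform limit; your $\nu_n$ are only asymptotically equivalent to $\mu$, so $f_{\nu_n}|_{\Omega}$ varies with $n$ and even the Teichm\"uller class of a limit is not controlled. A correct proof requires a genuinely different construction (Yao's argument proceeds by an explicit choice of representative built from the minimizing sequence near the boundary, not by a weak-$*$ limit), so as it stands the proposal does not establish the proposition.
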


\subsection{A claim} Let $\Omega$ be a bounded Jordan domain in $\mathbb{C}$ with $0\in \Omega$. For any $\mu, \nu\in M(\Omega^{*})$, we let  
$${{d}}^{*}([\mu]_{T(\Omega)}, [\nu]_{T(\Omega)}):=\inf\limits_{{\mu_1\sim \mu}, \nu_1\sim\nu}\Big\{\tanh^{-1}\|h^{*}(\sigma(\mu_1, \nu_1))\|_{\infty}\Big\},$$  
$${\bf{d}}^{*}([\mu]_{AT(\Omega)}, [\nu]_{T(\Omega)}):=\inf\limits_{{\mu_1\approx\mu}, \nu_1\sim\nu}\Big\{\tanh^{-1}\|h^{*}(\sigma(\mu_1, \nu_1))\|_{\infty}\Big\}.$$  
We first note that $${{d}}^{*}([\mu]_{T(\Omega)}, [\nu]_{T(\Omega)})=\inf\limits_{{\mu_1\sim \mu}}\Big\{\tanh^{-1}\|h^{*}(\sigma(\mu_1, \nu))\|_{\infty}\Big\},$$
so that $${\bf{d}}^{*}([\mu]_{AT(\Omega)}, [\nu]_{T(\Omega)})=\inf\limits_{{\mu_1\approx\mu}}\Big\{\tanh^{-1}\|h^{*}(\sigma(\mu_1, \nu))\|_{\infty}\Big\}.$$ 

Also, for any $\nu_1, \nu_2$ with $\nu_1 \approx \nu_2 $ in $M(\Omega^{*})$, we observe that 
$${\bf{d}}^{*}([\mu]_{AT(\Omega)}, [\nu_1]_{T(\Omega)})={\bf{d}}^{*}([\mu]_{AT(\Omega)}, [\nu_2]_{T(\Omega)}).$$  
It follows that
\begin{eqnarray}\label{mu-2}d_{AT}([\mu]_{AT(\Omega)}, [\nu]_{AT(\Omega)})&=&\inf\limits_{\mu_1\approx \mu, \nu_1\approx \nu}\Big\{\tanh^{-1}\|h^{*}(\sigma(\mu_1, \nu_1))\|_{\infty}\Big\}\nonumber\\
&=& {\bf{d}}^{*}([\mu]_{AT(\Omega)}, [\nu]_{T(\Omega)})\nonumber\\
&=&\inf\limits_{\mu_1\approx\mu}\Big\{\tanh^{-1}\|h^{*}(\sigma(\mu_1, \nu))\|_{\infty}\Big\}
\nonumber\\
&=&\inf\limits_{\nu_1\approx\nu}\Big\{\tanh^{-1}\|h^{*}(\sigma(\mu, \nu_1))\|_{\infty}\Big\}.
\end{eqnarray} 

Now, given fixed $\omega\in M(\Delta^{*})$, let still $\mathbf{\Omega}=f_{\omega}(\Delta)$ and $\mathbf{\Omega}^{*}=\widehat{\mathbb{C}}-\overline{\mathbf{\Omega}}$. For any $\nu\in M(\Delta^{*})$, let $\widehat{{\nu}}$ be non-Strebel extremal in $[{\nu}^{\sharp}]_{AT(\mathbf{\Omega})}$, and let $d:=d_{AT}([\omega]_{AT}, [\nu]_{AT})$. Then from (\ref{mu-2}) and Proposition \ref{Y}, we have
$$
d_{AT}([0]_{AT(\bf {\Omega)}}, [\nu^{\sharp}]_{AT(\bf{\Omega})})=d_{\bf{\Omega}}(0,\widehat{{\nu}})=\tanh^{-1}\|\widehat{{\nu}}\|_{\infty}. 
$$
On the other hand, from Remark \ref{ass-l} of Proposition \ref{assp},  we know that 
$$
d=d_{AT}([\omega]_{AT}, [\nu]_{AT})=d_{AT}([0]_{AT(\bf {\Omega)}}, [\nu^{\sharp}]_{AT(\bf{\Omega})})=\tanh^{-1}\|\widehat{{\nu}}\|_{\infty}. 
$$
Consequently, from (\ref{sig-2}), we have $$d_{\Delta}(\omega, \widetilde{\nu})=d_{\bf{\Omega}}(0,\widehat{{\nu}})=\tanh^{-1}\|\widehat{{\nu}}\|_{\infty}=d,$$
so that   
$$d=d_{\Delta}(\omega, \widetilde{\nu})=\tanh^{-1}\|\sigma(\omega, \widetilde{\nu})\|_{\infty}.$$
Here $\widetilde{\nu}={^{\sharp}}\widehat{{\nu}}\in [\mu]_{AT}$, and $f_{\widetilde{\nu}}= f_{\widehat{{\nu}}}\circ f_{\omega}$. Replace $\omega$ by $\mu$ in the above arguments, we obtain that

\begin{claim}\label{cla-1}
For any $\mu, \nu \in M(\Delta^{*})$, let $d:=d_{AT}([\mu]_{AT},[\nu]_{AT})$.  Then there is a $\widetilde{\nu}\in [\nu]_{AT}$ such that $d=d_{\Delta}(\mu, \widetilde{\nu})=\tanh^{-1}\|\sigma({\mu}, \widetilde{\nu})\|_{\infty},$
and $b({\mathbf{h}})=\|\mu_{\mathbf{h}}\|_{\infty}$. Here, $\mathbf{h}=f_{\widetilde{\nu}}\circ f_{\mu}^{-1}$.
\end{claim}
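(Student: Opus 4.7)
The plan is to specialize the construction carried out in the paragraph immediately preceding the claim to the choice $\omega := \mu$. With this choice, $\mathbf{\Omega} = f_\mu(\Delta)$, and the sharp map satisfies $\mu^{\sharp} \equiv 0$ on $\mathbf{\Omega}^{*}$ (since $f_{\mu} \circ f_{\mu}^{-1}$ is the identity), so the argument reduces entirely to a statement about the asymptotic class of $\nu^{\sharp}$ measured from the basepoint $0$ in $AT(\mathbf{\Omega})$.

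First I would invoke Proposition \ref{Y} to obtain an element $\widetilde{\nu^{\sharp}}$ which is non-Strebel extremal in $[\nu^{\sharp}]_{AT(\mathbf{\Omega})}$; by definition this representative satisfies $h^{*}(\widetilde{\nu^{\sharp}}) = k_0([\widetilde{\nu^{\sharp}}]) = \|\widetilde{\nu^{\sharp}}\|_{\infty}$. Setting $\widetilde{\nu} := {^{\sharp}}\widetilde{\nu^{\sharp}}$, Proposition \ref{assp} together with its remark ensures $\widetilde{\nu} \in [\nu]_{AT}$, and the chain rule (\ref{dila}) gives the factorization $f_{\widetilde{\nu}} = f_{\mu} \circ f_{\widetilde{\nu^{\sharp}}}$, so that $\mathbf{h} := f_{\widetilde{\nu}} \circ f_{\mu}^{-1} = f_{\widetilde{\nu^{\sharp}}}$ and hence $\mu_{\mathbf{h}} = \widetilde{\nu^{\sharp}}$.

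Next I would combine Remark \ref{ass-l} (applied after interchanging the roles of $\mu$ and $\omega$) with the identity (\ref{sig-2}). On the asymptotic side,
\begin{equation*}
d = d_{AT}([\mu^{\sharp}]_{AT(\mathbf{\Omega})},[\nu^{\sharp}]_{AT(\mathbf{\Omega})}) = d_{AT}([0]_{AT(\mathbf{\Omega})},[\nu^{\sharp}]_{AT(\mathbf{\Omega})}) = \tanh^{-1}\|\widetilde{\nu^{\sharp}}\|_{\infty},
\end{equation*}
while (\ref{sig-2}) gives $\|\sigma(\mu,\widetilde{\nu})\|_{\infty} = \|\sigma(0,\widetilde{\nu^{\sharp}})\|_{\infty} = \|\widetilde{\nu^{\sharp}}\|_{\infty}$, which produces the required equality $d = d_{\Delta}(\mu,\widetilde{\nu}) = \tanh^{-1}\|\sigma(\mu,\widetilde{\nu})\|_{\infty}$. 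The identity $b(\mathbf{h}) = \|\mu_{\mathbf{h}}\|_{\infty}$ then follows immediately: by the remark preceding (\ref{bn}) we have $b(\mathbf{h}) = h^{*}(\mu_{\mathbf{h}}) = h^{*}(\widetilde{\nu^{\sharp}})$, and non-Strebel extremality yields $h^{*}(\widetilde{\nu^{\sharp}}) = \|\widetilde{\nu^{\sharp}}\|_{\infty} = \|\mu_{\mathbf{h}}\|_{\infty}$.

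The main conceptual point (and the only place where one must be careful) is the translation between the three notions $b(\mathbf{h})$, $h^{*}(\mu_{\mathbf{h}})$, and $\|\mu_{\mathbf{h}}\|_{\infty}$: the first two are equal by the remark after (\ref{bn}), and the last two are equal only because Proposition \ref{Y} provides a representative for which the infimum defining $h^{*}$ is attained and equals the essential supremum. Aside from this bookkeeping, the proof is purely a transfer of the existence result of Yao through the isometric isomorphism $\ddag$ of Proposition \ref{assp}, exactly as outlined (with the roles of $\mu$ and $\omega$ interchanged) in the paragraph immediately above the claim.
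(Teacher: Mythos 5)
Your proof is correct and follows essentially the same route as the paper: it transfers Yao's existence of a non-Strebel extremal representative (Proposition \ref{Y}) through the isometric isomorphism of Proposition \ref{assp} and the identities (\ref{sig-2}), (\ref{mu-2}), exactly as in the paragraph preceding the claim, merely with the basepoint of the $\sharp$-construction taken to be $\mu$ itself. The only blemish is the composition order in $f_{\widetilde{\nu}}=f_{\mu}\circ f_{\widetilde{\nu^{\sharp}}}$, which should read $f_{\widetilde{\nu}}=f_{\widetilde{\nu^{\sharp}}}\circ f_{\mu}$ so that $\mathbf{h}=f_{\widetilde{\nu}}\circ f_{\mu}^{-1}=f_{\widetilde{\nu^{\sharp}}}$ as you subsequently use (the paper's own text contains the same harmless slip).
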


\subsection{Proof of Theorem \ref{Em-2}} We shall first prove the following result, which implies that the IMS functional $I_{AT}$ is well-defined.   
\begin{proposition}\label{well}
Let $\mu, \nu \in {M}({\Delta}^{*})$. For each $\tau\in \mathbb{C}$,  if $\mu \approx \nu$ in ${M}({\Delta}^{*})$, then $\beta_{f_{\mu}}(\tau)=\beta_{f_{\nu}}(\tau)$.
\end{proposition}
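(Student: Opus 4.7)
The case $\tau = 0$ is trivial since $\beta_f(0) = 0$ for every $f \in \mathcal{S}$, so assume $\tau \neq 0$. The plan is to replace $\nu$ by a more convenient representative of $[\nu]_{AT}$ and then invoke Lemma \ref{key-lemma} after establishing that the Pre-Schwarzian derivatives of $f_\mu$ and this representative become arbitrarily close, in the weighted sense $|N_g - N_f|(1-|z|^2)$, as $|z| \to 1^-$.

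First, using Proposition \ref{ppp}, pick $\widetilde{\nu} \in M(\Delta^*)$ with $\widetilde{\nu} \sim \nu$ and $\widetilde{\nu}(z) - \mu(z) \to 0$ as $|z| \to 1^+$. Since $f_{\widetilde{\nu}}$ and $f_\nu$ agree on $\Delta$, it suffices to prove $\beta_{f_\mu}(\tau) = \beta_{f_{\widetilde{\nu}}}(\tau)$. Set $\mathbf{h} := f_{\widetilde{\nu}} \circ f_\mu^{-1}$, a quasiconformal map of $\widehat{\mathbb{C}}$ that is conformal on $f_\mu(\Delta)$, with the normalizations $\mathbf{h}(0) = 0$, $\mathbf{h}'(0) = 1$, $\mathbf{h}(\infty) = \infty$. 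By the Beltrami chain rule (\ref{dila}),
$$\mu_{\mathbf{h}} \circ f_\mu(z) = \frac{1}{\chi} \cdot \frac{\widetilde{\nu}(z) - \mu(z)}{1 - \overline{\mu(z)}\,\widetilde{\nu}(z)}, \quad z \in \Delta^*,$$
so $\mu_{\mathbf{h}}(\zeta) \to 0$ as $\textup{dist}(\zeta, f_\mu(\mathbb{T})) \to 0$, which means $b(\mathbf{h}) = 0$.

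Next, apply Lemma \ref{l--1} with $f = f_\mu$ and this $\mathbf{h}$: the hypotheses are satisfied because $f_\mu \in \mathcal{S}_q$ with $f_\mu(\infty) = \infty$, and $\mathbf{h}$ is a bounded univalent function on $f_\mu(\Delta)$ whose image $f_{\widetilde{\nu}}(\Delta)$ is bounded, with the required normalizations and $b(\mathbf{h}) = 0$. The lemma then yields $|N_{\mathbf{h}}(\zeta)| \, \textup{dist}(\zeta, f_\mu(\mathbb{T})) \to 0$ as $\textup{dist}(\zeta, f_\mu(\mathbb{T})) \to 0$. Combining the Pre-Schwarzian chain rule (\ref{chain}), $N_{f_{\widetilde{\nu}}}(z) - N_{f_\mu}(z) = N_{\mathbf{h}}(f_\mu(z)) \cdot f_\mu'(z)$, with the Koebe-type estimate $(1-|z|^2)|f_\mu'(z)| \leq 4 \, \textup{dist}(f_\mu(z), \partial f_\mu(\Delta))$ from Proposition \ref{pro-1}, one obtains $|N_{f_{\widetilde{\nu}}}(z) - N_{f_\mu}(z)|(1-|z|^2) \to 0$ as $|z| \to 1^-$.

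Finally, given any $\varepsilon > 0$, choose $r_0 \in (0,1)$ so that $\sup_{|z| \in (r_0,1)} |N_{f_{\widetilde{\nu}}}(z) - N_{f_\mu}(z)|(1-|z|^2) < \varepsilon/|\tau|$. Lemma \ref{key-lemma} then delivers $|\beta_{f_{\widetilde{\nu}}}(\tau) - \beta_{f_\mu}(\tau)| \leq \varepsilon$, handling separately the cases where $\beta_{f_\mu}(\tau)$ is positive or zero. Since $\varepsilon$ is arbitrary, the two spectra coincide, and hence $\beta_{f_\nu}(\tau) = \beta_{f_\mu}(\tau)$. The main conceptual hurdle is the transition from asymptotic equivalence of the Beltrami coefficients to the vanishing of $|N_{\mathbf{h}}|\,\textup{dist}$ near $\partial f_\mu(\Delta)$, which is exactly what Lemma \ref{l--1} supplies; once that is in hand, Proposition \ref{pro-1} converts the estimate into the hyperbolic-weighted Pre-Schwarzian comparison required by Lemma \ref{key-lemma}.
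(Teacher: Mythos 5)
Your proposal is correct and follows essentially the same route as the paper's proof: choose $\widetilde{\nu}\sim\nu$ with $\widetilde{\nu}-\mu\to 0$ near $\mathbb{T}$ via Proposition \ref{ppp}, deduce $b(\mathbf{h})=0$ for $\mathbf{h}=f_{\widetilde{\nu}}\circ f_\mu^{-1}$ from the chain rule (\ref{dila}), combine Lemma \ref{l--1} with Proposition \ref{pro-1} to get the vanishing of $|N_{f_{\widetilde{\nu}}}-N_{f_\mu}|(1-|z|^2)$, and conclude with the two cases of Lemma \ref{key-lemma}. The only (harmless) addition is your explicit verification of the hypotheses of Lemma \ref{l--1}, which the paper leaves implicit.
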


\begin{remark}
In particular, $\beta_{f_{\mu}}(\tau)=0$ for any $\tau\in\mathbb{C}$  if $\mu\approx 0$ in $M(\Delta^{*})$. We call $f_{\mu}$ is an asymptotically conformal mapping in $\Delta$ when $\mu\approx 0$ in $M(\Delta^{*})$. In general, we say a function $f\in \mathcal{S}_b$ is an {\em asymptotically conformal mapping} in $\Delta$ if it admits a quasiconformal extension (still denoted by $f$) to $\widehat{\mathbb{C}}$ with $\mu_f(z) \rightarrow 0$ as $|z|\rightarrow 1^{+}$, see \cite{GS}, \cite{Po-1}, \cite{Po-3}. From Proposition \ref{well}, we see that $$B_{b}(\tau)=\sup\limits_{[\mu]_{AT}\in AT}\beta_{f_{\mu}}(\tau)$$ for each $\tau\in \mathbb{C}$.\end{remark}

\begin{proof}[Proof of Proposition \ref{well}]
The case $\tau=0$ is obvious, we will only consider $\tau\neq 0$.  When $\beta_{f_{\mu}}(\tau)=\beta >0$.
Since $\mu \approx \nu$, then we  know there is a $\widetilde{\nu}$ such that $\widetilde{\nu} \sim \nu$ in ${M}({\Delta}^{*})$ and $\mu(z)-\widetilde{\nu}(z) \rightarrow 0$ as $|z| \rightarrow 1^{+}$. Let  $\mathbf{h}=f_{\widetilde{\nu}}\circ f_{\mu}^{-1}$. Then from (\ref{dila}), we see that $b(\mathbf{h})=0$. It follows from Lemma \ref{l--1} and Proposition \ref{pro-1} that 
\begin{eqnarray}
\|N_{f_{\mu}}(z)-N_{f_{\nu}}(z)\|_{E_1}&=&\|N_{f_{\mu}}(z)-N_{f_{\widetilde{\nu}}}(z)\|_{E_1} \nonumber \\
&=& |(N_{\mathbf{h}}\circ f_{\mu}(z))|\cdot |f'_{\mu}(z)|(1-|z|^2) \nonumber \\
&\leq&4|(N_{\mathbf{h}}\circ f_{\mu}(z))|\textup{dist}(f_{\mu}(z), f_{\mu}(\mathbb{T}))\rightarrow 0,\,\, {\text as}\,\,\, |z| \rightarrow 1^{-}. \nonumber
\end{eqnarray}
Hence,  for any $\varepsilon\in (0, \beta)$, there is an $r_0\in (0,1)$ such that  $$\sup\limits_{|z|\in (r_0, 1)}|N_{f_{\mu}}(z)-N_{f_{\nu}}(z)|(1-|z|^2) <\varepsilon/|\tau|.$$
From (1) of Lemma \ref{key-lemma}, we obtain that $|\beta_{f_{\mu}}(\tau)-\beta_{f_{\nu}}(\tau)|\leq \varepsilon.$ This means that $\beta_{f_{\mu}}(\tau)=\beta_{f_{\nu}}(\tau)$.
When $\beta_{f_{\mu}}(\tau)=0$. For any $\varepsilon>0$, by using (2) of Lemma \ref{key-lemma}, we can similarly prove that $\beta_{f_{\nu}}(\tau)\leq \varepsilon.$ This implies that $\beta_{f_{\nu}}(\tau)=0$.
The proposition is proved.
\end{proof}

We proceed to prove Theorem \ref{Em-2}. First, it is easy to see that Theorem \ref{Em-2} holds for $\tau=0$. We next will assume that $\tau\neq 0$.  For any $\mu \in M(\Delta^{*})$, we first consider the case when $\beta_{f_{\mu}}(\tau)=\beta>0$. To prove the functional $I_{AT}$ is continuous on $[\mu]_{AT}$, it suffices to show that, for any small $\varepsilon>0$, there is a positive constant $\delta>0$ such that  
$|\beta_{f_\mu}(\tau)-\beta_{f_{\nu}}(\tau)|\leq \varepsilon,$
for all $[\nu]_{AT}$ with $d_{AT}([\mu]_{AT}, [\nu]_{AT})<\delta.$ 

For given $\mu \in {M}(\Delta^{*})$, let $\nu\in M(\Delta^{*})$ with $d:=d_{AT}([\mu]_{AT},[\nu]_{AT})$. Then,
from Claim \ref{cla-1}, we know that there is a $\widetilde{\nu}\in [\nu]_{AT}$ such that $d=d_{\Delta}(\mu, \widetilde{\nu})=\tanh^{-1}\|\sigma({\mu}, \widetilde{\nu})\|_{\infty},$
and $b({\mathbf{h}})=\|\mu_{\mathbf{h}}\|_{\infty}$. Here, $\mathbf{h}=f_{\widetilde{\nu}}\circ f_{\mu}^{-1}$.  It follows that 
\begin{equation}b(\mathbf{h})=\|\mu_{\mathbf{h}}\|_{\infty}=\|\sigma({\mu}, \widetilde{\nu})\|_{\infty}=\tanh d\leq d,\nonumber \end{equation}
and 
\begin{equation}
N_{f_{\widetilde{\nu}}}(z)-N_{f_{\mu}}(z)=N_{\mathbf{h}}\circ f_{\mu}(z)\cdot f_{\mu}'(z), \, z\in \Delta.\nonumber \end{equation}
Then, by using Lemma \ref{l--2} with respect to $f_{\mu}$ and $\mathbf{h}$, we see that 
\begin{eqnarray}\lefteqn{\sup\limits_{z\in \Delta} |N_{f_{\widetilde{\nu}}}(z)-N_{f_{\mu}}(z)|(1-|z|^2)}\nonumber \\
&&=\sup\limits_{z\in \Delta} |N_{\mathbf{h}}\circ f_{\mu}(z)|\cdot |f_{\mu}'(z)|(1-|z|^2)\nonumber \\
&&=\sup\limits_{\zeta\in f_{\mu}(\Delta)}|N_{\mathbf{h}}(\zeta)|\rho_{f_{\mu}(\Delta)}^{-1}(\zeta)\leq 8\|\mu_{\mathbf{h}}\|_{\infty}.\nonumber 
\end{eqnarray}
Consequently, for any small $\varepsilon>0$, we take $\delta=\frac{\varepsilon}{8|\tau|}$, then,  when $$d=d_{AT}([\mu]_{AT},[\nu]_{AT})<\delta,$$ we obtain that $\|\mu_{\mathbf{h}}\|_{\infty}=b(\bf{h})<\delta$ and 
\begin{eqnarray}\sup\limits_{z\in \Delta}|N_{f_{\widetilde{\nu}}}(z)-N_{f_{\mu}}(z)|(1-|z|^2)\leq 8\|\mu_{\mathbf{h}}\|_{\infty}<\varepsilon/|\tau|.\nonumber 
\end{eqnarray} 
It follows from (1) of Lemma \ref{key-lemma} that 
$$|\beta_{f_\mu}(\tau)-\beta_{f_{\nu}}(\tau)|=|\beta_{f_\mu}(\tau)-\beta_{f_{\widetilde{\nu}}}(\tau)|\leq \varepsilon.$$
This proves the functional $I_{AT}$ is continuous on $[\mu]_{AT}$ when $\beta_{f_{\mu}}(\tau)>0$.   

We now consider the case when $\beta_{f_{\mu}}(\tau)=0$. For any small $\varepsilon>0$,  by using the same arguments above, we still take $\delta=\frac{\varepsilon}{8|\tau|}$, then, when $d=d_{AT}([\mu]_{AT},[\nu]_{AT})<\delta$, we can obtain that 
$\|\mu_{\mathbf{h}}\|_{\infty}<\delta$ and so that 
\begin{eqnarray}\sup\limits_{z\in \Delta}|N_{f_{\widetilde{\nu}}}(z)-N_{f_{\mu}}(z)|(1-|z|^2)<\varepsilon/|\tau|,\nonumber 
\end{eqnarray}
Hence from (2) of Lemma \ref{key-lemma}, we have $\beta_{f_{\nu}}(\tau)\leq \varepsilon$.  This implies that $I_{AT}$ is continuous on $[\mu]_{AT}$ in this case. The proof of Theorem \ref{Em-2} is complete.

\section{Proof of Theorem \ref{Em-3} and \ref{Em-4}}

\subsection{Proof of Theorem \ref{Em-3}} Since $\beta_{f_{\mu}}(\tau)=0$ for any $M(\Delta^{*})$ when $\tau=0$, then, obviously, Theorem \ref{Em-3} holds for the case $\tau=0$.  So we will assume that $\tau\neq0$.  For any $\phi\in \overline{\mathcal{T}}$, we will take $f_{\phi}$ as in (\ref{exa}). For given $\psi\in  \overline{\mathcal{T}}$. When $\beta_{f_{\psi}}(\tau)=\beta>0$,  to prove $I_{\overline{\mathcal{T}}}$ is continuous at the point $\psi \in  \overline{\mathcal{T}}$,  it suffices to prove that, for small $\varepsilon>0$, we can find a constant $\delta>0$ such that $|\beta_{f_{\phi}}(\tau)-\beta_{f_{\psi}}(\tau)|\leq \varepsilon,$
for any $\phi \in\overline{\mathcal{T}}$ with $\|\phi-\psi\|_{E_1}<\delta$.

Actually,  for any $\varepsilon\in (0, \beta)$, we can take $\delta=\varepsilon/|\tau|$. Now, we assume that $\phi \in\overline{\mathcal{T}}$ satisfy that $\|\phi-\psi\|_{E_1}<\delta=\varepsilon/|\tau|$. Then we can choose a constant $r_0\in (0,1)$ such that 
\begin{equation}
\sup\limits_{|z|\in (r_0,1)} |N_{f_{\phi}}(z)-N_{{f}_{\psi}}(z)|(1-|z|^2)<\varepsilon/|\tau|. \nonumber
\end{equation}
Consequently, by (1) of Lemma \ref{key-lemma}, we obtain that  $|\beta_{f_{\phi}}(\tau)-\beta_{f_{\psi}}(\tau)|\leq \varepsilon.$
This proves that $I_{\overline{\mathcal{T}}}$ is continuous at the point $\psi \in \overline{\mathcal{T}}$ when $\beta_{f_{\psi}}(\tau)>0$.

When $\beta_{f_{\psi}}(\tau)=0$, for any $\varepsilon>0$, we still take $\delta=\varepsilon/|\tau|$. Similarly, by using (2) of Lemma \ref{key-lemma}, we can obtain that $\beta_{f_{\phi}}(\tau)\leq \varepsilon$ for any $\phi \in\overline{\mathcal{T}}$ with  $\|\phi-\psi\|_{E_1}<\delta$.  This implies that $I_{\overline{\mathcal{T}}}$ is continuous at the point $\psi \in \overline{\mathcal{T}}$ when $\beta_{f_{\psi}}(\tau)=0$. Now, we finish the proof of Theorem \ref{Em-3}.

\subsection{Proof of Theorem \ref{Em-4}}

To prove Theorem \ref{Em-4}, we first show the following result. 
\begin{proposition}\label{m-l}
Let $f\in \mathcal{S}_q$. If $\beta_{f}(\tau_0)>0$ for some $\tau_0\in \mathbb{C}$,  then the function $\gamma(t):=\beta_{f}(\tau_0+t\tau_0)$, $t\in \mathbb{R}_{+}= [0, +\infty)$ is strictly increasing on $\mathbb{R}_{+}$. 
\end{proposition}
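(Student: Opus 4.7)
The plan is to compare the defining integrals at the two exponents by Hölder's inequality. Set $\tau_t := (1+t)\tau_0$ and, for $0 \leq t_1 < t_2$, put $\lambda := (1+t_1)/(1+t_2) \in (0,1)$ so that $\tau_{t_1} = \lambda\,\tau_{t_2}$. Since the complex power satisfies $|[f'(z)]^{\lambda\tau}| = |[f'(z)]^{\tau}|^{\lambda}$, Hölder's inequality yields, for every $r \in (0,1)$,
\begin{equation*}
\int_{-\pi}^{\pi}\bigl|[f'(re^{i\theta})]^{\tau_{t_1}}\bigr|\, d\theta
= \int_{-\pi}^{\pi}\bigl|[f'(re^{i\theta})]^{\tau_{t_2}}\bigr|^{\lambda}\, d\theta
\leq (2\pi)^{1-\lambda}\Bigl(\int_{-\pi}^{\pi}\bigl|[f'(re^{i\theta})]^{\tau_{t_2}}\bigr|\, d\theta\Bigr)^{\lambda}.
\end{equation*}
Taking logarithms, dividing by $|\log(1-r)|$, and passing to $\limsup$ as $r \to 1^-$, the $(1-\lambda)\log(2\pi)$ term drops out and leaves
\begin{equation*}
\gamma(t_1) \leq \lambda\, \gamma(t_2) = \frac{1+t_1}{1+t_2}\,\gamma(t_2),
\end{equation*}
equivalently, $t \mapsto \gamma(t)/(1+t)$ is non-decreasing on $\mathbb{R}_+$.

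Specializing $t_1 = 0$ and using the hypothesis $\gamma(0) = \beta_f(\tau_0) > 0$ then gives $\gamma(t) \geq (1+t)\,\beta_f(\tau_0) > 0$ for every $t \geq 0$. For arbitrary $0 \leq t_1 < t_2$, the same Hölder bound rearranges to $\gamma(t_2) \geq \frac{1+t_2}{1+t_1}\,\gamma(t_1)$, and since $(1+t_2)/(1+t_1) > 1$ while $\gamma(t_1) > 0$, one obtains
\begin{equation*}
\gamma(t_2) \geq \frac{1+t_2}{1+t_1}\,\gamma(t_1) > \gamma(t_1),
\end{equation*}
which is the desired strict monotonicity.

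The only subtle point, and hence the main thing to check, is that $\gamma(t) < \infty$ for every $t \geq 0$, so that the strict gap $(1+t_2)/(1+t_1) > 1$ genuinely produces a strict increment rather than comparing two infinities. This is where the hypothesis $f \in \mathcal{S}_q$ enters: the quasiconformal extension is globally Hölder continuous on $\widehat{\mathbb{C}}$, which combined with Proposition \ref{pro-1} yields a polynomial bound $|f'(z)| \leq C(1-|z|)^{-\alpha}$ with $\alpha < 1$, while the universal Pre-Schwarzian estimate $\|N_f\|_{E_1}\leq 6$ controls $|\arg f'(z)|$ logarithmically, so $|[f'(z)]^{\tau_t}|$ grows at most polynomially in $(1-|z|)^{-1}$ and $\gamma(t)<\infty$. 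Beyond this finiteness check the argument is essentially a one-line Hölder estimate, and no deeper obstacle is expected.
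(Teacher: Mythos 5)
Your proof is correct, but it follows a genuinely different route from the paper's. The paper proves only the lower bound $\gamma(t)>\gamma(0)$ directly: it picks a sequence $\mathbf{r}_n\to 1^{-}$ along which the integral at exponent $\tau_0$ nearly realizes $\beta_f(\tau_0)$, decomposes the circle into level sets where $|[f'(\mathbf{r}_ne^{i\theta})]^{\tau_0}|$ is above or below $\mathbf{D}_n/(2\pi)$ (with a further splitting in the second case), and in each case extracts the pointwise lower bound $|[f']^{\tau_0}|^{t}\gtrsim \mathbf{D}_n^{t}$ on a set carrying a definite fraction of the mass, arriving at $\gamma(t)\geq(1+\tfrac{t}{2})\beta_f(\tau_0)$ after optimizing $\varepsilon$. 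Your H\"older estimate runs in the opposite direction --- bounding the integral at the \emph{smaller} exponent by the one at the larger --- and yields the cleaner and quantitatively stronger statement that $t\mapsto\gamma(t)/(1+t)$ is non-decreasing, hence $\gamma(t)\geq(1+t)\beta_f(\tau_0)$; it also handles arbitrary $0\leq t_1<t_2$ in one stroke, whereas the paper's argument gives $\gamma(t_2)>\gamma(t_1)$ only after being re-based at $\tau_0+t_1\tau_0$. Your attention to finiteness is the right instinct (an infinite value would break the strict inequality), though it is cheaper than you make it: for any $f\in\mathcal{S}$ one has $|\log f'(z)|\leq\int_0^{|z|}|N_f|\,|d\zeta|\leq 3\log\frac{1+|z|}{1-|z|}$ from $\|N_f\|_{E_1}\leq 6$, whence $|[f'(z)]^{\tau}|\leq(2/(1-|z|))^{3|\tau|}$ and $\beta_f(\tau)\leq 3|\tau|<\infty$; neither the quasiconformal extension nor Proposition \ref{pro-1} is needed, and in fact both your proof and the paper's establish the proposition for all of $\mathcal{S}$, not just $\mathcal{S}_q$.
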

\begin{remark}\label{remark}
Let $f\in \mathcal{S}_q$. From Proposition \ref{m-l}, we easily see that, for any fixed $\tau \in \mathbb{C}$, the function $\gamma_{\tau}(t):=\beta_{f}(\tau+t\tau)$ 
is nondecreasing on $\mathbb{R}_{+}$. 
\end{remark}

\begin{proof}
We let $\beta_{f}(\tau_0)=\beta>0$.  Let $\varepsilon<\beta$ be a positive number, which will be fixed later. Then we see from the definition of integral means spectrum of $f$ that  
there is a sequence $\{{\bf r}_n\}_{n=1}^{\infty}$  with ${\bf r}_n<1$ and ${\bf r}_n \rightarrow 1$ as $n\rightarrow \infty$, and such that
\begin{equation}
\int_{-\pi}^{\pi} |[f'({\bf r}_ne^{i\theta})]^{\tau_0}| d\theta > (1-{\bf r}_n)^{-(\beta-\varepsilon)}.\nonumber
\end{equation}
We set

$$\mathbf{A}_n:=\int_{-\pi}^{\pi} |[f'({\bf r}_ne^{i\theta})]^{\tau_0}| d\theta, \, \,\mathbf{D}_n:= (1-{\bf r}_n)^{-(\beta-\varepsilon)},\,\, n\in \mathbb{N},$$
and define 

$${\mathbf{E}}_n:=\{\theta:  |[f'({\bf r}_ne^{i\theta})]^{\tau_0}|> \frac{\mathbf{D}_n}{2\pi}, \, \theta\in (-\pi, \pi]\}, $$
$$\mathbf{F}_n:=\{\theta:  |[f'({\bf r}_ne^{i\theta})]^{\tau_0}|\leq \frac{\mathbf{D}_n}{2\pi},\, \theta\in (-\pi, \pi]\}.$$
It is obvious that $\mathbf{E}_n \bigcup \mathbf{F}_n=(-\pi, \pi]$ and $\mathbf{E}_n \bigcap \mathbf{F}_n=\emptyset.$ We denote 

$$\mathcal{I}_{\mathbf{E}}=\int_{\mathbf{E}_n}  |[f'({\bf r}_ne^{i\theta})]^{\tau_0}| d\theta, \,\, \mathcal{I}_{\mathbf{F}}=\int_{\mathbf{F}_n} |[f'({\bf r}_ne^{i\theta})]^{\tau_0}| d\theta.$$
Then we see that $\mathcal{I}_{\mathbf{E}}\geq \frac{1}{2}\bf{A}_n$ or $\mathcal{I}_{\mathbf{F}}\geq \frac{1}{2}\bf{A}_n$. 

{\bf {Case I.}} If $\mathcal{I}_{\mathbf{E}}\geq \frac{1}{2}\bf{A}_n$, then, for any $t>0$,  we have 
\begin{eqnarray}
\int_{-\pi}^{\pi}|[f'({\bf r}_ne^{i\theta})]^{\tau_0+t\tau_0}| d\theta &=& \int_{-\pi}^{\pi}|[f'({\bf r}_ne^{i\theta})]^{\tau_0}|\cdot |[f'({\bf r}_n e^{i \theta})]^{\tau_0}|^t d\theta  \nonumber \\
& \geq &  \int_{\mathbf{E}_n}|[f'({\bf r}_ne^{i\theta})]^{\tau_0}|\cdot |[f'({\bf r}_n e^{i \theta})]^{\tau_0}|^t d\theta \nonumber \\
&>& \Big(\frac{\mathbf{D}_n}{2\pi}\Big)^{t} \int_{\mathbf{E}_n}|[f'({\bf r}_ne^{i\theta})]^{\tau_0}|d\theta \nonumber \\
&\geq &\frac{1}{2} \Big(\frac{\mathbf{D}_n}{2\pi}\Big)^{t}\mathbf{A}_n \nonumber \\
&>& 2^{-1-t}\pi^{-t} [\mathbf{D}_n]^{1+t}. \nonumber
\end{eqnarray}
That is 
\begin{eqnarray}\label{i}
\int_{-\pi}^{\pi}|[f'({\bf r}_ne^{i\theta})]^{\tau_0+t\tau_0}|d\theta&>& 2^{-1-t}\pi^{-t}{(1-{\bf r}_n)^{-(\beta-\varepsilon)(1+t)}}. 
\end{eqnarray}

{\bf {Case II.}} If $\mathcal{I}_{\mathbf{F}} \geq \frac{1}{2}\mathbf{A}_n$, we set 
$$\mathbf{G}_n:=\{\theta:  |[f'({\bf r}_ne^{i\theta})]^{\tau_0}|\leq \frac{1}{8}\frac{\mathbf{D}_n}{2\pi},\, \theta\in (-\pi, \pi]\},$$
$${\mathbf{H}_n}:=\{\theta:  \frac{1}{8}\frac{\mathbf{D}_n}{2\pi}<|[f'({\bf r}_ne^{i\theta})]^{\tau_0}|\leq \frac{\mathbf{D}_n}{2\pi},\,  \theta\in (-\pi, \pi]\}.$$
We easily see that $\mathbf{G}_n \bigcup \mathcal{H}_n=\mathbf{F}_n$ and $\mathbf{G}_n \bigcap \mathbf{H}_n=\emptyset.$ Then we have
\begin{eqnarray}
\int_{\mathbf{H}_n}|[f'({\bf r}_ne^{i\theta})]^{\tau_0}| d\theta&=&\int_{\mathbf{F}_n}|[f'({\bf r}_ne^{i\theta})]^{\tau_0}| d\theta-\int_{\mathbf{G}_n}|[f'({\bf r}_ne^{i\theta})]^{\tau_0}| d\theta \nonumber \\
&\geq & \frac{1}{2}\mathbf{A}_n- 2\pi \frac{1}{8}\frac{\mathbf{D}_n}{2\pi}\nonumber \\
&\geq &  \frac{3}{8}\mathbf{D}_n. \nonumber
\end{eqnarray}
Consequently,  we have 
\begin{eqnarray}
\int_{-\pi}^{\pi}|[f'({\bf r}_ne^{i\theta})]^{\tau_0+t\tau_0}| d\theta &=&\int_{-\pi}^{\pi}|[f'({\bf r}_ne^{i\theta})]^{\tau_0}|\cdot |[f'({\bf r}_n e^{i \theta})]^{\tau_0}|^t d\theta  \nonumber \\
& \geq &  \int_{\mathbf{H}_n}|[f'({\bf r}_ne^{i\theta})]^{\tau_0}|\cdot |[f'({\bf r}_n e^{i \theta})]^{\tau_0}|^t d\theta \nonumber \\
&>& \frac{3}{8}\mathbf{D}_n \left(\frac{\mathbf{D}_n}{16\pi}\right)^{t} \nonumber\\
&>& 16^{-1-t}\pi^{-t}[\mathbf{D}_n]^{1+t}. \nonumber 
\end{eqnarray} 
Therefore,
\begin{eqnarray}\label{ii}
\int_{-\pi}^{\pi}|[f'({\bf r}_ne^{i\theta})]^{\tau_0+t\tau_0}| d\theta&>& 16^{-1-t}\pi^{-t}{(1-{\bf r}_n)^{-(\beta-\varepsilon)(1+t)}}. 
\end{eqnarray}
Thus, it follows from (\ref{i}) and (\ref{ii}) that 
\begin{eqnarray}\label{iii}
\int_{-\pi}^{\pi}|[f'({\bf r}_ne^{i\theta})]^{\tau_0+t\tau_0}| d\theta&>& 16^{-1-t}\pi^{-t}{(1-{\bf r}_n)^{-(\beta-\varepsilon)(1+t)}}.
\end{eqnarray}

Now, for any $t>0$, we take $\varepsilon=\frac{1}{2}\frac{\beta t}{1+t}\in (0, \beta)$, so that
$(\beta-\varepsilon)(1+t)=\beta+\frac{1}{2}\beta t.$
Hence, we see from (\ref{iii}) that 
\begin{eqnarray}
\int_{-\pi}^{\pi}|[f'({\bf r}_ne^{i\theta})]^{\tau_0+t\tau_0}| d\theta&>& 16^{-1-t}\pi^{-t}{(1-{\bf r}_n)^{-(\beta+\frac{1}{2}\beta t)}}. \nonumber 
\end{eqnarray}
This implies that 
$$\beta_{f}(\tau_0+t\tau_0)\geq \beta+\frac{1}{2}\beta t>\beta=\beta_{f}(\tau_0).$$
Proposition \ref{m-l} is proved.  
\end{proof}

We next finish the proof of Theorem \ref{Em-4}.  For $f\in \mathcal{S}_q$, $\tau\in \mathbb{C}$. First, it is easy to see that  Theorem \ref{Em-4} obviously holds if $\beta_{f}(\tau)=0$.  Hence we next assume that $\beta_{f}(\tau)>0$ for some $\tau\neq 0$. 
We define a locally univalent function $\mathbf{h}$ on the domain $\Omega:=f(\Delta)$ as 
\begin{equation}
\mathbf{h}(w)=\int_{0}^{w} [f'(\mathbf{g}(\zeta))]^{\varepsilon}d\zeta,\,\, w\in f(\Delta). \nonumber
\end{equation}
Here $\mathbf{g}=f^{-1}$ and $\varepsilon>0$ is a small number. Then,  ${\bf{h}}(0)={\bf{h}}'(0)-1=0$ and 
\begin{equation}\label{f-d}
\mathbf{h}'\circ f(z)=[f'(z)]^{\varepsilon}, z\in \Delta,
\end{equation}
and 
\begin{equation}\label{s-d}
\mathbf{h}''\circ f(z) \cdot f'(z)=\varepsilon[f'(z)]^{\varepsilon-1}f''(z), z\in \Delta. 
\end{equation}
It follows from (\ref{f-d}) and (\ref{s-d}) that
$$\frac{\mathbf{h}''\circ f(z)}{\mathbf{h}'\circ f(z)}\cdot f'(z)=\varepsilon N_f(z),\, z\in \Delta,$$ so that 
\begin{eqnarray}
|N_{\mathbf{h}}(w)|\rho_{\Omega}^{-1}(w)&=&\Big|\frac{\mathbf{h}''(w)}{\mathbf{h}'(w)} \Big|\frac{1-|\mathbf{g}(w)|^2}{|\mathbf{g}'(w)|}\nonumber \\
&=&\Big|\frac{\mathbf{h}''\circ f(z)}{\mathbf{h}'\circ f(z)}\Big| \cdot |f'(z)|(1-|z|^2)=\varepsilon |N_{f}(z)|\rho_{\Delta}^{-1}(z) \leq 6\varepsilon.  \nonumber
\end{eqnarray}
Set $\mathbf{F}(z)=\mathbf{h}\circ f(z), z\in \Delta$. We see that 
$$\|N_{\bf F}(z)-N_{f}(z)\|_{E_1}=|N_{\mathbf{h}}(w)|\rho_{\Omega}^{-1}(w)\leq 6\varepsilon.$$
Hence we see from \cite{Bec}, \cite{AG1} or \cite{Zhur} that  $\mathbf{F}$ is bounded univalent in $\Delta$ and admits a quasiconformal extension to $\widehat{\mathbb{C}}$ when $\varepsilon$ small enough. Now, we let $\varepsilon$ small enough so that $\mathbf{F}$ belongs to $\mathcal{S}_q$. On the other hand, we note that, for any $r\in (0, 1), \theta\in (-\pi, \pi]$, 
\begin{eqnarray}
[\mathbf{F}'(re^{i\theta})]^{\tau}&=&
 [\mathbf{h}'\circ f(re^{i\theta})]^{\tau} \cdot [f'(re^{i\theta})]^{\tau} \nonumber \\
&=& [f'(re^{i\theta})]^{\varepsilon\tau} \cdot [f'(re^{i\theta})]^{\tau}=[f'(re^{i\theta})]^{\tau+\varepsilon\tau}. \nonumber 
\end{eqnarray}
Thus we have $\beta_{\mathbf{F}}(\tau)$=$\beta_{f}(\tau+\varepsilon\tau)$. It follows from Proposition \ref{m-l} that $\beta_{\mathbf{F}}(\tau)>\beta_{f}(\tau)$, so that $\beta_{f}(\tau)<B_b(\tau)$ for all $f\in \mathcal{S}_q$ when $\beta_{f}(\tau)>0$.  The proof of Theorem \ref{Em-4} is finished.  
 
 \begin{remark}Note that for any $f\in \mathcal{S}_q$, there is always an $F\in \mathcal{S}$ such that $N_{F}=\phi\in \partial\mathcal{T}$ and $\phi=aN_f, a>1$.  We let $\mathbf{h}=F\circ f^{-1}$. Then $\mathbf{h}$ is univalent in $f(\Delta)$ and
 $$(a-1)N_f=N_{F}(z)-N_{f}(z)=\frac{\mathbf{h}''\circ f(z)}{\mathbf{h}'\circ f(z)}\cdot f'(z).$$
Integrate both sides of the above equation from $0$ to $z$, we obtain that
$$(a-1)\log f'(z)=\log \mathbf{h}'\circ f(z).$$
Hence $\mathbf{h}'\circ f(z))=[f'(z)]^{a-1}$. It follows that for any $r\in (0,1), \theta\in (-\pi, \pi]$,
\begin{eqnarray}
[F'(re^{i\theta})]^{\tau}&=&
 [\mathbf{h}'\circ f(re^{i\theta})]^{\tau} \cdot [f'(re^{i\theta})]^{\tau}=[f'(re^{i\theta})]^{a\tau}, \tau\in\mathbb{C}. \nonumber 
\end{eqnarray}
From Remark \ref{remark}, we have $\beta_{F}(\tau)=\beta_{f}(a\tau)\geq \beta_f(\tau)$. This implies that 
$$\sup_{\phi \in \partial\mathcal{T}} \beta_{f_{\phi}}(\tau)\geq \sup_{\phi \in \mathcal{T}} \beta_{f_{\phi}}(\tau)=B_b(\tau).$$ On the other hand, from Theorem \ref{Binder}, we know that $B(\tau)=B_b(\tau)$ if ${\text{Re}}(\tau)\leq 0$. Hence we have $$B_b(\tau)\geq \sup_{\phi \in \partial\mathcal{T}}\beta_{f_{\phi}}(\tau)$$ when ${\text{Re}}(\tau)\leq 0$. We have checked that 
 \begin{corollary}
If ${\text{Re}}{(\tau)}\leq 0$, then $B_b(\tau)=\sup\limits_{\phi\in \partial{\mathcal{T}}}\beta_{f_{\phi}}(\tau).$
 \end{corollary}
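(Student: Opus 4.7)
The plan is to prove the two inequalities $B_b(\tau)\leq \sup_{\phi\in \partial\mathcal{T}}\beta_{f_\phi}(\tau)$ and $\sup_{\phi\in \partial\mathcal{T}}\beta_{f_\phi}(\tau)\leq B_b(\tau)$ separately. The second of these is the easier direction: any $\phi\in\partial\mathcal{T}\subset\overline{\mathcal{T}}\subset \mathbf{N}$ is the Pre-Schwarzian derivative of a univalent function $f_\phi\in\mathcal{S}$ given by formula \eqref{exa}, so $\beta_{f_\phi}(\tau)\leq B(\tau)$. The hypothesis $\textup{Re}(\tau)\leq 0$ then lets us invoke part (2) of Theorem \ref{Binder} to replace $B(\tau)$ by $B_b(\tau)$, and taking the supremum over $\phi\in\partial\mathcal{T}$ gives the inequality.

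For the reverse inequality, I would use Theorem \ref{app} to reduce the problem to showing that for each $f\in\mathcal{S}_q$ there is some $\phi\in\partial\mathcal{T}$ with $\beta_{f_\phi}(\tau)\geq \beta_f(\tau)$. The case $N_f\equiv 0$ is trivial since then $f$ is the identity and $\beta_f(\tau)=0$. For $N_f\not\equiv 0$, I would follow the strategy of the remark preceding the corollary and examine the ray $\{aN_f\}_{a\geq 1}\subset E_1$. Since $\mathcal{T}$ is open in $E_1$ and contains $N_f$, the set $P:=\{a\in\mathbb{R}_+: aN_f\in\mathcal{T}\}$ is open in $\mathbb{R}$ and contains $1$. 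Using that $\mathcal{T}\subset\mathbf{N}$ is uniformly bounded in $E_1$ (by the classical estimate $\|N_g\|_{E_1}\leq 6$ for $g\in\mathcal{U}$) while $\|aN_f\|_{E_1}=a\|N_f\|_{E_1}\to\infty$ as $a\to\infty$, the connected component of $P$ containing $1$ is a bounded open interval $(a_-,a_+)$. Setting $\phi:=a_+ N_f$ one then obtains $\phi\in\overline{\mathcal{T}}\setminus\mathcal{T}=\partial\mathcal{T}$.

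To compare $\beta_{f_\phi}(\tau)$ with $\beta_f(\tau)$, formula \eqref{exa} gives $f_\phi'(z)=[f'(z)]^{a_+}$, hence $[f_\phi'(z)]^\tau=[f'(z)]^{a_+\tau}$ and therefore $\beta_{f_\phi}(\tau)=\beta_f(a_+\tau)$. Since $f\in\mathcal{S}_q$ and $a_+>1$, Proposition \ref{m-l} (together with the remark following it) yields $\beta_f(a_+\tau)\geq \beta_f(\tau)$, closing the argument. The main obstacle, as I see it, is ensuring that $a_+$ is finite so that a genuine boundary point is reached; this is precisely where the uniform $E_1$-bound on $\mathbf{N}$ enters. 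The inequality $a_+>1$ is automatic from the openness of $\mathcal{T}$ at $N_f$, and the rest is a direct chain linking \eqref{exa}, Theorem \ref{app}, Theorem \ref{Binder}, and Proposition \ref{m-l}.
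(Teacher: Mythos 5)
Your proposal is correct and follows essentially the same route as the paper's remark preceding the corollary: the easy inequality via Theorem \ref{Binder}(2), and the reverse inequality by pushing $N_f$ along the ray $aN_f$ to a first exit point of the open set $\mathcal{T}$ (finite because $\|N_g\|_{E_1}\leq 6$ on $\mathbf{N}$), then using $[f_\phi']^\tau=[f']^{a_+\tau}$ and the monotonicity of $t\mapsto\beta_f(\tau+t\tau)$ from Proposition \ref{m-l}. You merely spell out the topological existence of $a_+\in(1,\infty)$ with $a_+N_f\in\partial\mathcal{T}$ in more detail than the paper does.
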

\end{remark}

\section{Integral means spectra of the univalent rational functions}

In this section, we study the integral means spectra of univalent rational functions in the unit disk $\Delta$. Since directly studying the integral means spectrum of univalent rational functions in $\Delta$ is inconvenient, we enlarge the scope to the general class of locally univalent functions in $\Delta$. The locally univalent functions allow for the definition of a generalized integral means spectrum, see (\ref{defi-n}) below. In particular, this generalized spectrum coincides with the classical one for the univalent rational functions. We then completely determine the generalized integral means spectra for a certain class of locally functions, thereby proving that univalent rational functions in the unit disk satisfy the Brennan conjecture. We also check that the Brennan conjecture is true for any univalent function from the unit disk onto the interior of a bounded polygon. 

In the rest of the paper, the notation $P_n(z)$ denotes a polynomial of degree $n$, $n\in \mathbb{N}\cup \{0\}$. In particular, $P_n$ is a complex constant when $n=0$. We denote by $\mathcal{R}$ the class of all rational functions $R$ in $\Delta$ with $R(0)=R'(0)-1=0$ and which have no poles in $\Delta$. Let $\Omega$ be a subset of the complex plane. If $R$ is analytic in $\Omega$, we say $R$ has no {\em zeros}, or one zero in $\Omega$ if the equation $R(z)=0$ has no roots, or one root in $\Omega$, respectively. If $R$ is analytic in $\Omega \setminus \Omega_1$, $\Omega_1$ is a finite (or empty) subset of $\Omega$, we will say $R$ has no {\em critical points}, or one critical point in $\Omega$, if there is no point $z\in \Omega$, or one point $z\in \Omega$ such that $R$ is analytic in $z$ and $R'(z)=0$, respectively.  A rational function $R\in \mathcal{R}$ can be uniquely represented as 
$$R(z)=\frac{P_n(z)}{P_m(z)}=\frac{z+a_2z^2+\cdots+a_nz^n}{1+b_1z+\cdots+b_m z^m}.$$
Here, $P_n$ and $P_m$ have no common factors except 1 and $P_m(z)$ has no zeros in $\Delta$.

\subsection{Certain locally univalent functions and the main result of this section}We use $\mathcal{U}_{R}$ to denote the class of all univalent rational functions $R$ belonging to $\mathcal{R}$. 
We denote by $\mathcal{U}_{P}$  the class which consists of all univalent polynomials contained in $\mathcal{U}_{R}$. 
It is easy to see that $\mathcal{U}_P$ is contained in $\mathcal{S}_b$. It should be pointed out that the univalent polynomials are dense in $\mathcal{S}$, see \cite{Du} or \cite{AR}. This fact is one of the reasons for our investigation of the classes  $\mathcal{U}_P$ and $\mathcal{U}_R$. When $R(z)$ belongs to $\mathcal{U}_R$, we know from the Koebe distortion theorem that
$$|R(z)|\leq \frac{|z|}{(1-|z|)^2}.$$
In view of this fact, we shall consider the following three classes of rational functions contained in $\mathcal{R}$.    
\begin{definition}
${\bullet}$ Class ${\mathcal{L}_{I}}$. The class  ${\mathcal{L}_{I}}$ consists of all functions $R\in \mathcal{R}$ that can be written as
\begin{equation}\label{I-1}
 R(z)=\frac{P_n(z)}{P_m(z)}.
\end{equation}
Here, $P_n$ and $P_m$ have no common factors except $1$, and $P_{m}(z)$ has no zeros in $\overline{\Delta}=\Delta \cup \mathbb{T}$.  It is easy to see that $\mathcal{U}_P \subset \mathcal{L}_{I}.$

${\bullet}$ Class ${\mathcal{L}_{II}}$. The class  ${\mathcal{L}_{II}}$ consists of all functions $R\in \mathcal{R}$ that can be written as
\begin{equation}\label{II-1}
R(z)=\frac{P_n(z)}{\prod\limits_{j=1}^{l}(z-e^{i\theta_j})P_m(z)}.  
\end{equation}
Here, $P_{m}$ has no zeros in $\overline{\Delta}$, $l\in \mathbb{N}, \theta_j \in (-\pi,\pi]$, $\theta_{j_1}\neq \theta_{j_2}$ whenever $j_1\neq j_2$, the numerator and denominator of $R$ have no common factors except $1$. 

${\bullet}$ Class ${\mathcal{L}_{III}}$. The class  ${\mathcal{L}_{III}}$ consists of all functions $R \in \mathcal{R}$ that can be written as
\begin{equation}\label{III-1}
R(z)=\frac{P_n(z)}{\prod\limits_{j=1}^{l}(z-e^{i\theta_j})^2\prod\limits_{k=1}^{t}(z-e^{i\widetilde{\theta}_k})P_m(z)}. 
\end{equation}
Here, $P_{m}$ has no zeros in $\overline{\Delta}$, $l\in \mathbb{N}, \theta_j \in (-\pi,\pi]$, $\theta_{j_1}\neq \theta_{j_2}$ whenever $j_1\neq j_2$, $t\in \mathbb{N}\cup\{0\}$, $\widetilde{\theta}_k \in (-\pi,\pi]$, $\widetilde{\theta}_{k_1}\neq \widetilde{\theta}_{k_2}$ whenever $k_1\neq k_2$, and ${\theta}_{j}\neq \widetilde{\theta}_{k}$ for all $1\leq j \leq l,$ $1\leq k \leq t$. The numerator and denominator of $R$ have no common factors except $1$. In particular, the case $t=0$ means that the denominator of $R$ is $\prod\limits_{j=1}^{l}(z-e^{i\theta_j})^2P_m(z).$
\end{definition}
\begin{remark}
We see that $\mathcal{U}_R \subset \mathcal{L}_{I} \cup  {\mathcal{L}_{II}} \cup  {\mathcal{L}_{III}}$.
\end{remark}

As we shall see later, the integral means spectra of functions $R\in\mathcal{U}_R$ that have no critical points on $\mathbb{T}$, are different from those of functions $R$ that have at least one critical point on $\mathbb{T}$. We will list some examples of univalent rational functions $R(z)$ which has no critical points, or has at least one critical point on $\mathbb{T}$. 
We first consider the univalent polynomials. 
\begin{example}
Take $P_1(z)=z$, then $P'_1(z)=1$ so that $P_1$ has no critical points on $\mathbb{T}$.  
\end{example}

\begin{example}
Take $P_2(z):=z-\frac{1}{2}z^{2}$, then $P'_2(z)=1-z$ so that $P_2$ has one critical point on $\mathbb{T}$.  
\end{example}

\begin{example}
Take $P_3(z):=z-\frac{1}{3}z^{3}$, then $P'_3(z)=(1-z)(1+z)$ so that $P_3$ has two critical points on $\mathbb{T}$.  
\end{example}

\begin{example}
Take $P_5(z)=z-\frac{1}{6}z^3-\frac{1}{10}z^5$, then $P'_5(z)=-\frac{1}{2}(z^2-1)(z^2+2)$ so that $P_5$ has two critical points on $\mathbb{T}$ and two other ones not on $\mathbb{T}$.   
\end{example}

\begin{remark}
$P_1, P_2, P_3, P_5$ are all contained in $\mathcal{U}_P \cap \mathcal{L}_{I}$.
\end{remark}

Next, we consider the univalent rational functions, which are unbounded in $\Delta$.
\begin{example}
Take $R_1(z)=\frac{z}{1-z},$ then $R_1'(z)=\frac{1}{(1-z)^2}$ so that $R_1$ has no critical points on $\mathbb{T}$.
\end{example}

\begin{example}
Take $R_2(z)=\frac{z^2+3z}{3(1-z)},$ then $R_2'(z)=-\frac{(z-3)(z+1)}{3(1-z)^2}$ so that $R_2$ has one critical point on $\mathbb{T}$. 
\end{example}

\begin{example}
Take $R_3(z)=\frac{z-\frac{1}{2}z^2}{(1-z)^2},$ then $R_3'(z)=\frac{1}{(1-z)^3}$ so that $R_3$ has no critical points on $\mathbb{T}$.
\end{example}

\begin{example}
Take $R_4(z)=\kappa(z):=\frac{z}{(1-z)^2},$ here $\kappa$ is the famous Koebe function, then $R_4'(z)=\frac{1+z}{(1-z)^3}$ so that $R_4$ has one critical point on $\mathbb{T}$.
\end{example}

\begin{remark}
Both $R_1$ and $R_2$ belong to $\mathcal{U}_R \cap \mathcal{L}_{II}$ while both $R_3$ and $R_4$ belong to $\mathcal{U}_R \cap \mathcal{L}_{III}$. 
\end{remark}

\begin{remark} When $R \in \mathcal{U}_R$. If $R$ has a critical point $z_1$ on $\mathbb{T}$, because $R$ is univalent, it follows that $$|R'(z)|\geq \frac{1-|z|}{(1+|z|)^3}.$$
Then $R'(z)$ must have the form
$$R'(z)=(z-z_1)\frac{P_{\bf n}(z)}{P_{\bf m}(z)}.$$
Here, $P_{\bf n}$ and $P_{\bf m}$ have no common factors except 1, $P_{\bf n}(z_1)\neq 0$,  $P_{\bf m}(z)=0$ has no zeros in $\Delta$.  

Generally, if $R$ has at least one critical point on $\mathbb{T}$, then $R'(z)$ can be written as the following unique form.
\begin{equation} R'(z)=\frac{[(z-z_1)(z-z_2)\cdots(z-z_s)]P_{\bf n}(z)}{P_{\bf m}(z)}.\nonumber \end{equation}
Here, $s\in \mathbb{N}$, and $z_j, j=1,2,\cdots, s$ are all distinct critical points of $R$ on $\mathbb{T}$. The numerator and denominator of $R'$ have no common factors except $1$. $P_{\bf n}(z)$ has no zeros on $\mathbb{T}$, and $P_{\bf m}(z)$ has no zeros in $\Delta$. \end{remark}

Now, we introduce another subclass of $\mathcal{R}$ as follows.
\begin{definition}For a function $R \in \mathcal{R}$, we say it belongs to the class $\mathcal{R}_O$, if $R'$ can be written uniquely in the following form:
 \begin{equation}\label{resp-1}R'(z)=\frac{[(z-z_1)(z-z_2)\cdots(z-z_s)]P_{\bf n}(z)}{P_{\bf m}(z)}:=\frac{\Pi(z)P_{\bf n}(z)}{P_{\bf m}(z)}.\end{equation}
Here $s\in \mathbb{N}\cup \{0\}$. When $s\geq 1$, $z_j, j=1,2,\cdots, s$ are all distinct critical points on $\mathbb{T}$ of $R$. The numerator and denominator of $R'$ have no common factors except $1$. $P_{\bf n}(z)$ has no zeros  on $\mathbb{T}$, and $P_{\bf m}(z)$ has no zeros in $\Delta$. The case $s=0$ means that $\Pi(z)\equiv1$ and $R$ has no critical points on $\mathbb{T}$. \end{definition}
\begin{remark}
It is easy to see that $\mathcal{U}_R$ is contained in $\mathcal{R}_O \cap ({\mathcal{L}_{I}} \cup  {\mathcal{L}_{II}} \cup  {\mathcal{L}_{III}}).$  
\end{remark}

\begin{remark}\label{rem-1}For $R\in \mathcal{R}_O$, we let, as in (\ref{resp-1}),
 \begin{equation}R'(z)=\frac{[(z-z_1)(z-z_2)\cdots(z-z_s)]P_{\bf n}(z)}{P_{\bf m}(z)}.\nonumber \end{equation}
Furthermore, 
({\bf a}) If $R$ belongs to the class $\mathcal{R}_{O}\cap\mathcal{L}_{I}$,  we let, as in (\ref{I-1}), 
$$R(z)=\frac{P_n(z)}{P_m(z)}.$$ 
Then
\begin{equation}\label{jj}R'(z)=\frac{P'_n(z)P_m(z)-P_n(z)P'_m(z)}{[P_m(z)]^2}=\frac{[(z-z_1)(z-z_2)\cdots(z-z_s)]P_{\bf n}(z)}{P_{\bf m}(z)}\end{equation}
Note that the numerator and denominator of the last rational function in (\ref{jj}) have no common factors except $1$ so that $P_{\bf m}(z)$ has no zeros in $\overline{\Delta}.$  

({\bf b}) If $R$ belongs to the class $\mathcal{R}_{O}\cap\mathcal{L}_{II}$, we let, as in (\ref{II-1}), 
$$R(z)=\frac{P_n(z)}{\prod\limits_{j=1}^{l}(z-e^{i\theta_j})P_m(z)}:=\frac{1}{\Pi_1}\frac{P_n(z)}{P_m(z)}.$$ 
Then 
\begin{eqnarray}\label{jj-1}R'(z)&=&\frac{\Pi_1[P'_n(z)P_m(z)-P_{n}(z)P'_m(z)]-\Pi_1'P_n(z)P_m(z)}{\Pi_1^2[P_m(z)]^2}\\
&=&\frac{[(z-z_1)(z-z_2)\cdots(z-z_s)]P_{\bf n}(z)}{\Pi_1^2 P_{b}(z)}. \nonumber \end{eqnarray}
Here, the numerator and denominator of the last rational function in (\ref{jj-1}) have no common factors except $1$ so that $P_{\bf m}(z)=\Pi_1^2 P_{b}(z)=\Pi_1^2P_{{\bf m}-2l}(z)$, and $P_b(z)=P_{{\bf m}-2l}(z)$ has no zeros in $\overline{\Delta}$. 

({\bf c}) If $R$ belongs to the class $\mathcal{R}_{O}\cap\mathcal{L}_{III}$,  we let, as in (\ref{III-1}),  
$$R(z)=\frac{P_n(z)}{\prod\limits_{j=1}^{l}(z-e^{i\theta_j})^2\prod\limits_{k=1}^{t}(z-e^{i\widetilde{\theta}_k})P_m(z)}:=\frac{1}{\Pi_2}\frac{P_n(z)}{P_m(z)}.$$ 
Then 
\begin{eqnarray}\label{jj-2}R'(z)&=&\frac{\Pi_2[P'_n(z)P_m(z)-P_{n}(z)P'_m(z)]-\Pi_2'P_n(z)P_m(z)}{\Pi_2^2[P_m(z)]^2}   \\
&=& \frac{[(z-z_1)(z-z_2)\cdots(z-z_s)]P_{\bf n}(z)}{\prod\limits_{j=1}^{l}(z-e^{i\theta_j})^3\prod\limits_{k=1}^{t}(z-e^{i\widetilde{\theta}_k})^2P_{b}(z)}.\nonumber\end{eqnarray}
Here, the numerator and denominator of the last rational function in (\ref{jj-2}) have no common factors except $1$ so that $$P_{\bf m}(z)=\prod\limits_{j=1}^{l}(z-e^{i\theta_j})^3\prod\limits_{k=1}^{t}(z-e^{i\widetilde{\theta}_k})^2P_{b}(z)=\prod\limits_{j=1}^{l}(z-e^{i\theta_j})^3\prod\limits_{k=1}^{t}(z-e^{i\widetilde{\theta}_k})^2P_{{\bf m}-3l-2t}(z),$$ and $P_b(z)=P_{{\bf m}-3l-2t}(z)$ has no zeros in $\overline{\Delta}$.
\end{remark}

Therefore, based on the above arguments, we next will consider the following subclasses $\mathcal{Q}_{I}$, $\mathcal{Q}_{II}$, $\mathcal{Q}_{III}$ of the class of all locally univalent functions in $\Delta$ for the purpose of the paper. 

\begin{definition}Let $R$ be a locally univalent function in $\Delta$, and it is analytic on $\mathbb{T}$ except finitely many singular points.

$\bullet$ We say $R$ belongs to the class $\mathcal{Q}_I$, if $R'$ can be written in the following form
 \begin{equation}\label{new-d-0}R'(z)=\frac{[(z-z_1)(z-z_2)\cdots(z-z_s)]P_{\bf n}(z)}{P_{\bf m}(z)}:=\frac{\Pi(z)P_{\bf n}(z)}{P_{\bf m}(z)},\,z\in \overline{\Delta}.\end{equation}
Here $s\in \mathbb{N}\cup \{0\}$. When $s\geq 1$, $z_j, j=1,2,\cdots, s$ are all distinct critical points on $\mathbb{T}$ of $R$. The numerator and denominator of $R'$ have no common factors except $1$. $P_{\bf n}(z)$ and $P_{\bf m}(z)$ have no zeros in $\overline{\Delta}$. The case $s=0$ means that $\Pi(z)\equiv1$ and $R$ has no critical points on $\mathbb{T}$.

$\bullet$ We say $R$ belongs to the class $\mathcal{Q}_{II}$, if $R'$ can be written in the following form
\begin{eqnarray}\label{new-d-1}
R'(z)=\frac{[(z-z_1)(z-z_2)\cdots(z-z_s)]P_{\bf n}(z)}{\prod\limits_{j=1}^{l}(z-e^{i\theta_j})^2 P_{{\bf m}}(z)},\,z\in \overline{\Delta}. \end{eqnarray}
Here $s\in \mathbb{N}\cup \{0\}, l\in \mathbb{N}$. When $s\geq 1$, $z_j, j=1,2,\cdots, s$ are all distinct critical points on $\mathbb{T}$ of $R$. The numerator and denominator of $R'$ in (\ref{new-d-1}) have no common factors except $1$. $P_{\bf n}(z)$ and $P_{\bf m}(z)$ have no zeros in $\overline{\Delta}$. In particular, $R'$ has no zeros on $\mathbb{T}$ when $s=0$. 

$\bullet$ We say $R$ belongs to the class $\mathcal{Q}_{III}$, if $R'$ can be written in the following form
\begin{eqnarray}\label{new-d-2}
R'(z)&=& \frac{[(z-z_1)(z-z_2)\cdots(z-z_s)]P_{\bf n}(z)}{\prod\limits_{j=1}^{l}(z-e^{i\theta_j})^3\prod\limits_{k=1}^{t}(z-e^{i\widetilde{\theta}_k})^2P_{\bf m}(z)},\,z\in \overline{\Delta}.\end{eqnarray}
Here $s, t\in \mathbb{N}\cup \{0\}, l\in \mathbb{N}$. When $s\geq 1$, $z_j, j=1,2,\cdots, s$ are all distinct critical points on $\mathbb{T}$ of $R$. The numerator and denominator of $R'$ in (\ref{new-d-2}) have no common factors except $1$. $P_{\bf n}(z)$ and $P_{\bf m}(z)$ have no zeros in $\overline{\Delta}$. In particular, $R'$ has no zeros on $\mathbb{T}$ when $s=0$ and $R'$ has no 
poles of order $2$ on $\mathbb{T}$ when $t=0$. 
\end{definition}
\begin{remark}
We see that $\mathcal{U}_R$ is contained in ${\mathcal{Q}_{I}} \cup  {\mathcal{Q}_{II}} \cup  {\mathcal{Q}_{III}}.$  
\end{remark}

We will show the Brennan conjecture $B(-2)=1$ is true for the class $\mathcal{U}_R$. Actually, we shall completely determine the {\em generalized} integral means spectra of functions belonging to the class $({\mathcal{Q}_{I}} \cup  {\mathcal{Q}_{II}} \cup  {\mathcal{Q}_{III}})$ for all real $\tau$. 
Let $\tau\in \mathbb{R}$ and $R$ be a locally univalent function in $\Delta$. We define the {\em generalized} integral means spectrum ${\widetilde{\beta}}_R$ of $R$ as 
\begin{equation}\label{defi-n}
{\widetilde{\beta}}_R(\tau):=\limsup\limits_{r\rightarrow 1^{-}}\frac{\log \int_{-\pi}^{\pi} |R'(re^{i\theta})|^{\tau} d\theta}{|\log(1-r)|}. 
\end{equation}
\begin{remark}
Note that, for any $\tau\in \mathbb{R}$, this definition is the same as in (\ref{defi}) when $R$ belongs to $\mathcal{U}_R$.
\end{remark}
We now present the main result of this section. 
\begin{theorem}\label{l-pro}
Let $\tau\in \mathbb{R}$ and let $R\in  ({\mathcal{Q}_{I}} \cup  {\mathcal{Q}_{II}} \cup  {\mathcal{Q}_{III}})$. 
Then

$(\bf A)$ when $\tau=0$, we have ${\widetilde{\beta}}_{R}(\tau)=0$ for all $R\in   ({\mathcal{Q}_{I}} \cup  {\mathcal{Q}_{II}} \cup  {\mathcal{Q}_{III}})$,

$(\bf B)$ when $\tau<0$, we have 

$({\bf b_1})$ if $R$ has no critical points on $\mathbb{T}$, then ${\widetilde{\beta}}_{R}(\tau)=0$,

$({\bf b_2})$ if $R$ has at least one critical point on $\mathbb{T}$, then  
\begin{equation}{\widetilde{\beta}}_{R}(\tau)=
\begin{cases}
0, \; \;\;\quad\,\,\,\, \text{for} \,\; \tau\in (-1, 0), \\
|\tau|-1, \; \text{for}\, \; \tau\leq -1,
\end{cases} \nonumber
\end{equation}

$(\bf C)$ when $\tau>0$, we have 

$({\bf c_1})$ if $R$ belongs to the class $ \mathcal{Q}_{I}$, then ${\widetilde{\beta}}_{R}(\tau)=0$,

$({\bf c_2})$ if $R$ belongs to the class $\mathcal{Q}_{II}$, then  
\begin{equation}{\widetilde{\beta}}_{R}(\tau)=
\begin{cases}
2\tau-1, \;  \text{for} \,\; \tau> \frac{1}{2}, \\
0, \;\quad\quad\,  \text{for}\, \; \tau\in (0, \frac{1}{2}],
\end{cases} \nonumber
\end{equation}

$({\bf c_3})$ if $R$ belongs to the class $\mathcal{Q}_{III}$, then  
\begin{equation}{\widetilde{\beta}}_{R}(\tau)=
\begin{cases}
3\tau-1, \;  \text{for} \,\; \tau>\frac{1}{3}, \\
0, \;\quad\quad\,  \text{for}\, \; \tau\in (0, \frac{1}{3}],
\end{cases} \nonumber
\end{equation}
\end{theorem}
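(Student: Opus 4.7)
The plan is to exploit the explicit factorizations of $R'$ recorded in Remark \ref{rem-1}. On a sufficiently thin annulus $\{1-\eta<|z|<1\}$ near $\mathbb{T}$, $R'(z)$ decomposes as a product of factors $(z-\zeta)^{\pm k}$ with $\zeta\in\mathbb{T}$ --- coming from the critical points of $R$ on $\mathbb{T}$ and, in the classes $\mathcal{L}_{II}$, $\mathcal{L}_{III}$, from the poles of $R'$ on $\mathbb{T}$ --- times a nonvanishing analytic factor $P_{\mathbf{n}}/P_b$ that is bounded above and below there (since $P_{\mathbf{n}}$ has no zeros on $\mathbb{T}$ and $P_b$ has no zeros in $\overline{\Delta}$). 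Raising to the power $\tau\in\mathbb{R}$, estimating $\int_{-\pi}^{\pi}|R'(re^{i\theta})|^{\tau}\,d\theta$ reduces to analyzing integrals
$$\int_{-\pi}^{\pi}\prod_j |re^{i\theta}-\zeta_j|^{\alpha_j}\,d\theta,\qquad \zeta_j\in\mathbb{T},$$
as $r\to 1^{-}$.

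The analytic engine is the classical one-point estimate, derived from $|re^{i\theta}-e^{i\phi}|\asymp(1-r)+|\theta-\phi|$:
$$\int_{-\pi}^{\pi}|re^{i\theta}-e^{i\phi}|^{-\alpha}\,d\theta\;\asymp\;\begin{cases}1,&\alpha<1,\\[2pt]\log\tfrac{1}{1-r},&\alpha=1,\\[2pt](1-r)^{1-\alpha},&\alpha>1.\end{cases}$$
Since the $\zeta_j\in\mathbb{T}$ are finite in number and pairwise distinct, the singular behavior localizes: up to a bounded remainder, the product integral equals the sum of contributions from pairwise disjoint short arcs around each $\zeta_j$, and nonnegative exponents $\alpha_j$ contribute only bounded factors on such arcs. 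With this setup, case $(\mathbf{A})$ is trivial; cases $(\mathbf{b_1})$ and $(\mathbf{c_1})$ follow because $|R'|^\tau$ is bounded on the annulus: in $(\mathbf{b_1})$ the pole factors appear with nonnegative exponent $-k\tau>0$ and there are no critical-point factors, while in $(\mathbf{c_1})$ there are no poles and the critical-point factors carry the nonnegative exponent $\tau$.

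The substantive cases $(\mathbf{b_2})$, $(\mathbf{c_2})$, $(\mathbf{c_3})$ are governed by the dominant negative exponent produced by the factorization: $|\tau|$ from a critical-point factor $|z-z_j|^{\tau}$ when $\tau<0$, $2\tau$ from an order-two pole $|z-e^{i\theta_j}|^{-2\tau}$ in $\mathcal{L}_{II}$, and $3\tau$ from an order-three pole in $\mathcal{L}_{III}$ (these triple poles always exist in $\mathcal{L}_{III}$ since $l\geq 1$, and they dominate any double ones present). Plugging these into the classical estimate yields the upper bound on $\widetilde{\beta}_R(\tau)$, with the transition thresholds $|\tau|=1$, $2\tau=1$, $3\tau=1$ appearing exactly as stated. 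The matching lower bound is obtained by isolating a single dominant singular point $\zeta_0$, restricting the integral to a short arc around $\arg\zeta_0$ on which the remaining factors of $|R'|^{\tau}$ are bounded below by a positive constant, and invoking the lower half of the classical estimate along the sequence $r\to 1^-$. The main technical point is handling the pairwise interaction of several singularities on $\mathbb{T}$: by shrinking the arcs to be disjoint and by using the uniform positivity of $|P_{\mathbf{n}}/P_b|$ on the annulus, the argument reduces cleanly to the single-point estimate. Beyond this bookkeeping I do not anticipate any substantive obstruction.
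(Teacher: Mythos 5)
Your proposal is correct and follows essentially the same route as the paper: factor $R'$ near $\mathbb{T}$ as in Remark \ref{rem-1} into distinct zero/pole factors times a term bounded above and below on a thin annulus, reduce to the single-point estimate of Lemma \ref{ll-1}, and localize to disjoint arcs for both bounds. The only (minor) divergence is in the upper bound when several distinct singular points carry negative exponents, where the paper expands $\prod_j(z-z_j)^{-1}$ into partial fractions and applies $(\sum_k|a_k|)^p\leq n^{p-1}\sum_k|a_k|^p$ (Lemma \ref{ll-2}), while you bound the integrand directly on and off the arcs; both are valid.
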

 
\begin{remark}Since ${\widetilde{\beta}}_R(\tau)=\beta_{R}(\tau)$ for all $R\in \mathcal{U}_R$, $\tau \in \mathbb{R}$, as a direct consequence of Theorem \ref{l-pro}, we have

\begin{corollary}\label{coro-n}
 Let $\tau\in \mathbb{R}$ and $R\in \mathcal{U}_R$.  Then all the statements $(\bf A)$, $(\bf B)$, and $(\bf C)$ in Theorem \ref{l-pro} hold. \end{corollary}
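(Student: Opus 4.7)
The plan is to reduce $\widetilde{\beta}_R(\tau)$ to a collection of model integrals localized at each zero of $R'$ on $\mathbb{T}$ and each pole of $R$ on $\mathbb{T}$, using the explicit factorization of $R'$ supplied by Remark~\ref{rem-1}. Fix $R\in\mathcal{R}_O\cap(\mathcal{L}_I\cup\mathcal{L}_{II}\cup\mathcal{L}_{III})$. On some annular neighborhood $\{1-\eta<|z|<1+\eta\}$ of $\mathbb{T}$, Remark~\ref{rem-1} yields the multiplicative comparison
\[
|R'(re^{i\theta})|\;\asymp\;\prod_{j=1}^{s}|re^{i\theta}-z_j|\;\cdot\;\prod_{j}|re^{i\theta}-e^{i\theta_j}|^{-k_j}\;\cdot\;G(re^{i\theta}),
\]
where the $z_j\in\mathbb{T}$ are the simple boundary zeros of $R'$ (i.e., the critical points of $R$ lying in $\mathbb{T}$), the $e^{i\theta_j}\in\mathbb{T}$ are the poles of $R$ lying in $\mathbb{T}$ with $k_j$ the induced order of the pole of $R'$ (no such factors in $\mathcal{L}_I$; all $k_j=2$ in $\mathcal{L}_{II}$; $k_j=3$ for the $\theta_j$-factors and $k_j=2$ for the $\widetilde\theta_k$-factors in $\mathcal{L}_{III}$), and $G$ is continuous and bounded above and below by positive constants on that annulus.

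The key local estimate is this: for any $\theta_0\in(-\pi,\pi]$, any sufficiently small $\delta>0$, and any $\beta\in\mathbb{R}$, the standard expansion $|re^{i\theta}-e^{i\theta_0}|^2\asymp(1-r)^2+(\theta-\theta_0)^2$ together with the substitution $u=(\theta-\theta_0)/(1-r)$ gives, as $r\to 1^-$,
\[
\int_{|\theta-\theta_0|<\delta}|re^{i\theta}-e^{i\theta_0}|^{\beta}\,d\theta\;\asymp\;\begin{cases}1,&\beta>-1,\\[2pt]\log\tfrac{1}{1-r},&\beta=-1,\\[2pt](1-r)^{\beta+1},&\beta<-1.\end{cases}
\]
Consequently, the contribution of such a local piece to a limsup of the form $\log(\cdot)/|\log(1-r)|$ equals $\max(0,-\beta-1)$.

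Now split $\int_{-\pi}^{\pi}|R'(re^{i\theta})|^\tau\,d\theta$ into short arcs of length $2\delta$ around each $z_j$ and each $e^{i\theta_j}$, plus a bulk term on the complement. The bulk term is $O(1)$ because $|R'|^{\tau}$ is bounded above and below there. Inside the arc around a critical point $z_j$, the factor $|re^{i\theta}-z_j|^{\tau}$ dominates (take $\beta=\tau$); inside the arc around a pole of $R'$ of order $k_j$, the factor $|re^{i\theta}-e^{i\theta_j}|^{-k_j\tau}$ dominates (take $\beta=-k_j\tau$); all remaining factors in the product (including $G$) remain bounded on the arc. Assembling via the model estimate,
\[
\widetilde{\beta}_R(\tau)\;=\;\max\Big\{\,0,\;\max_{1\le j\le s}(-\tau-1),\;\max_{j}(k_j\tau-1)\,\Big\},
\]
where the middle maximum is taken to be absent if $s=0$ and the last is absent if $R$ has no poles on $\mathbb{T}$.

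Case analysis completes the proof. Part $(A)$ is immediate since the integrand equals $1$. For $\tau<0$, only $(-\tau-1)_+=(|\tau|-1)_+$ among the inner maxima can be positive and only when $s\ge 1$, yielding the two sub-cases of $(B)$. For $\tau>0$, only $(k_j\tau-1)_+$ can be positive, and the largest available $k_j$ governs the spectrum: none in $\mathcal{L}_I$, $k_j=2$ in $\mathcal{L}_{II}$, $k_j=3$ in $\mathcal{L}_{III}$, which matches the three sub-cases of $(C)$. The main obstacle is establishing the multiplicative comparison $|R'|\asymp\prod|re^{i\theta}-z_j|\prod|re^{i\theta}-e^{i\theta_j}|^{-k_j}$ uniformly on an entire annular neighborhood of $\mathbb{T}$ (rather than just on $\mathbb{T}$ itself) and pairwise disjointly from the other arcs, so that the limsup in the definition of $\widetilde{\beta}_R(\tau)$ sees the correct local exponents without interference. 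This is handled by extracting the boundary-vanishing factors explicitly via Remark~\ref{rem-1} and exploiting continuity and positivity of the residual factor $G$ on that annulus.
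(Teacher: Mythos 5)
Your proposal is correct and follows essentially the same route as the paper: the paper obtains this corollary immediately from Theorem~\ref{l-pro} (via $\widetilde{\beta}_R(\tau)=\beta_R(\tau)$ on $\mathcal{U}_R\subset\mathcal{R}_O\cap(\mathcal{L}_I\cup\mathcal{L}_{II}\cup\mathcal{L}_{III})$), and the proof of that theorem is exactly your argument --- the factorization of $R'$ from Remark~\ref{rem-1}, a decomposition into disjoint arcs around the boundary zeros and poles of $R'$ plus a bounded bulk term, and the model asymptotics of $\int|1-re^{i\theta}|^{-\varkappa}\,d\theta$ (Lemmas~\ref{ll-1} and~\ref{ll-2}). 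Your closed-form expression $\widetilde{\beta}_R(\tau)=\max\{0,\max_j(-\tau-1),\max_j(k_j\tau-1)\}$ reproduces all the cases $(\mathbf{A})$, $(\mathbf{B})$, $(\mathbf{C})$ correctly.
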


In particular, we see from Corollary \ref{coro-n} that $\beta_{R}(\tau)={\widetilde{\beta}}_{R}(\tau)\leq |\tau|-1$ for all $R\in \mathcal{U}_R$ when $\tau\leq-2$, which implies that 
\begin{corollary}\label{coro-l}
The Brennan conjecture $B(-2)=1$ is true for the class $\mathcal{U}_R$.
\end{corollary}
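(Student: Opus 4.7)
The plan is to derive Corollary \ref{coro-l} as an immediate specialization of Corollary \ref{coro-n} (equivalently Theorem \ref{l-pro}) at the exponent $\tau=-2$. First I would record the two background facts that make the deduction mechanical: every $R \in \mathcal{U}_R$ lies in $\mathcal{R}_O \cap (\mathcal{L}_I \cup \mathcal{L}_{II} \cup \mathcal{L}_{III})$, and on this class the generalized spectrum $\widetilde{\beta}_R(\tau)$ coincides with the classical integral means spectrum $\beta_R(\tau)$. Therefore Theorem \ref{l-pro} (via Corollary \ref{coro-n}) assigns an explicit value to $\beta_R(-2)$ for each $R \in \mathcal{U}_R$.

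Next I would invoke case $({\bf B})$ of Corollary \ref{coro-n} at $\tau = -2 < -1$. Since a rational function has only finitely many critical points, there is a clean dichotomy: either $R$ has no critical point on $\mathbb{T}$, in which case subcase $({\bf b_1})$ yields $\beta_R(-2) = 0$; or $R$ has at least one critical point on $\mathbb{T}$, in which case subcase $({\bf b_2})$ yields $\beta_R(-2) = |-2| - 1 = 1$. In either case $\beta_R(-2) \leq 1$ uniformly over $R \in \mathcal{U}_R$, so $\sup_{R \in \mathcal{U}_R} \beta_R(-2) \leq 1$. Combined with Makarov's Theorem \ref{mak}, which at $\tau=-2$ reduces $B(\tau)$ to $B_b(\tau)$ because $3(-2)-1=-7$ is dominated by any nonnegative spectrum, this is exactly the Brennan assertion restricted to $\mathcal{U}_R$.

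Finally, to confirm that the bound $1$ is sharp within the class rather than merely an upper estimate, I would point to the Koebe function $\kappa(z) = z/(1-z)^2$, which is listed as Example $R_4$ in the paper and belongs to $\mathcal{U}_R \cap \mathcal{L}_{III}$ with the critical point $z=-1 \in \mathbb{T}$. By subcase $({\bf b_2})$ again, $\beta_\kappa(-2) = 1$, so the supremum is attained. There is essentially no obstacle at this stage: the entire analytic content is packed into Theorem \ref{l-pro}, and the only conceptual point is noticing that the value $|\tau|-1$ in subcase $({\bf b_2})$ crosses the Brennan threshold $1$ precisely at $|\tau|=2$, which is why the substitution $\tau=-2$ produces the desired conclusion without any additional work.
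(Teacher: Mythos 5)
Your proposal is correct and follows essentially the same route as the paper, which deduces the corollary in one line from Corollary \ref{coro-n}: for $\tau\le -2$ every $R\in\mathcal{U}_R$ satisfies $\beta_R(\tau)\le|\tau|-1$, so at $\tau=-2$ the spectrum is at most $1$. Your added remarks on the dichotomy between subcases $({\bf b_1})$ and $({\bf b_2})$, on Makarov's Theorem \ref{mak}, and on sharpness via the Koebe function are consistent with the paper but not needed for the stated conclusion.
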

\end{remark}
\subsection{Proof of Theorem \ref{l-pro}}
To prove Theorem \ref{l-pro}, we need the following lemmas.
\begin{lemma}\label{ll-1}
 Let $\varkappa\in \mathbb{R}, r\in (0, 1)$. For any fixed $\alpha\in (0,{\pi}]$, we have
  
(1) when $\varkappa>1$,  \begin{eqnarray}\label{eqn-2}
\int_{-\alpha}^{\alpha}\frac{d\theta}{|1-re^{i\theta}|^{\varkappa}}\asymp \frac{1}{(1-r)^{\varkappa-1}}, \, \text{as}\,\, r\rightarrow 1^{-},  
\end{eqnarray}

(2) when $\varkappa=1$,  \begin{eqnarray}\label{eqn-2-2}
\int_{-\alpha}^{\alpha}\frac{d\theta}{|1-re^{i\theta}|^{\varkappa}}\asymp \log \frac{1}{1-r}, \, \text{as}\,\, r\rightarrow 1^{-},  
\end{eqnarray}

(3) when $\varkappa<1$,  \begin{eqnarray}\label{eqn-2-3}
\int_{-\alpha}^{\alpha}\frac{d\theta}{|1-re^{i\theta}|^{\varkappa}}\asymp 1, \, \text{as}\,\, r\rightarrow 1^{-}.
\end{eqnarray}
\end{lemma}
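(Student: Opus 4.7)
\textbf{Proof plan for Lemma \ref{ll-1}.} The key observation is the elementary identity
\begin{equation*}
|1-re^{i\theta}|^{2}=(1-r)^{2}+4r\sin^{2}(\theta/2),
\end{equation*}
which, combined with the comparison $|\sin(\theta/2)|\asymp |\theta|$ for $|\theta|\leq\pi$, yields
\begin{equation*}
|1-re^{i\theta}|\asymp (1-r)+|\theta|,\qquad |\theta|\leq\pi,\ r\in(1/2,1).
\end{equation*}
I would first record this two-sided bound (with absolute constants depending only on $\alpha$), so that for any $\varkappa\in\mathbb{R}$
\begin{equation*}
\int_{-\alpha}^{\alpha}\frac{d\theta}{|1-re^{i\theta}|^{\varkappa}}\asymp \int_{0}^{\alpha}\frac{d\theta}{\bigl((1-r)+\theta\bigr)^{\varkappa}},
\end{equation*}
uniformly as $r\to 1^{-}$; the case $\varkappa\leq 0$ is trivial (the integrand is bounded), so one may then assume $\varkappa>0$.

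Next, substitute $u=(1-r)+\theta$ to reduce the problem to evaluating $\int_{1-r}^{(1-r)+\alpha} u^{-\varkappa}\,du$. A direct computation splits into the three declared cases: for $\varkappa>1$ the antiderivative $u^{1-\varkappa}/(1-\varkappa)$ is dominated by its value at the lower endpoint $u=1-r$, giving the order $(1-r)^{1-\varkappa}$; for $\varkappa=1$ the antiderivative is $\log u$, which produces $\log\bigl(((1-r)+\alpha)/(1-r)\bigr)\asymp \log(1/(1-r))$; for $0<\varkappa<1$ the upper endpoint $(1-r)+\alpha$ dominates, yielding a finite positive constant $\alpha^{1-\varkappa}/(1-\varkappa)$ in the limit $r\to 1^{-}$. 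This establishes \eqref{eqn-2}, \eqref{eqn-2-2} and \eqref{eqn-2-3}, respectively.

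There is no real obstacle here; this lemma is a standard one-variable asymptotic estimate. The only care needed is to make the constants in the relation $|1-re^{i\theta}|\asymp (1-r)+|\theta|$ explicit enough (and independent of $r$ in a left neighborhood of $1$) so that the reduction to the one-dimensional integral in $u$ is clean, and to handle the boundary between the three regimes $\varkappa\lessgtr 1$ correctly when reading off the asymptotic order.
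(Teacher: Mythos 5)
Your proposal is correct and follows essentially the same route as the paper: both reduce the integral to the one-dimensional estimate $\int_0^\alpha\bigl((1-r)+\theta\bigr)^{-\varkappa}d\theta$ via a two-sided comparison $|1-re^{i\theta}|\asymp(1-r)+|\theta|$ and then evaluate the power integral in the three regimes. The only (minor) difference is that your use of the exact identity $|1-re^{i\theta}|^{2}=(1-r)^{2}+4r\sin^{2}(\theta/2)$ is valid on all of $|\theta|\leq\pi$, which lets you skip the separate treatment of $\alpha\in(\tfrac{\pi}{2},\pi]$ that the paper needs because its bound $1-\tfrac{\theta^{2}}{2}\leq\cos\theta\leq 1-\tfrac{\theta^{2}}{4}$ only holds on $[0,\tfrac{\pi}{2}]$.
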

Here and later, the notation $A\asymp B$ for $A>0, B>0$ means that there are two constants $C_1>0, C_2>0$ such that $C_1 A \leq B \leq C_2 A$.
\begin{remark}This lemma seems to be known in the literature. For completeness, we will include a detailed proof for it.  \end{remark}
\begin{proof}[Proof of Lemma \ref{ll-1}] We only prove case (1) when $\varkappa>1$, the other two cases can be shown similarly.  
We first prove the case $\alpha\in (0, \frac{\pi}{2}]$. Recall that, for $\theta\in [0,\frac{\pi}{2}]$, 
\begin{equation}\label{cos-e}
1-\frac{\theta^2}{2}\leq \cos\theta\leq 1-\frac{\theta^2}{4}.
\end{equation}
For $\alpha\in (0, \frac{\pi}{2}]$, we have
\begin{eqnarray}
\int_{-\alpha}^{\alpha}\frac{d\theta}{|1-re^{i\theta}|^{\varkappa}}=\int_{-\alpha}^{\alpha}\frac{d\theta}{|1+r^2-2r\cos\theta|^{\frac{\varkappa}{2}}}.\nonumber 
\end{eqnarray}
By (\ref{cos-e}), we obtain that
\begin{eqnarray}\label{ll-eq-1}
\lefteqn{\int_{-\alpha}^{\alpha}\frac{d\theta}{|1-re^{i\theta}|^{\varkappa}}\leq \int_{-\alpha}^{\alpha}\frac{d\theta}{[1+r^2-2r(1-\frac{\theta^2}{4})]^{\frac{\varkappa}{2}}}}\\ \nonumber 
&& =2 \int_{0}^{\alpha}\frac{d\theta}{[(1-r)^2+\frac{r}{2}\theta^2]^{\frac{\varkappa}{2}}} \leq  2\int_{0}^{\alpha}\Big[\frac{(1-r+\sqrt{\frac{r}{2}}\theta)^{2}}{2}\Big]^{-\frac{\varkappa}{2}} d\theta\\ \nonumber 
&& =\frac{2^{1+\varkappa}}{r^{\frac{\varkappa}{2}}}\int_{0}^{\alpha}\Big[\theta+\frac{\sqrt{2}(1-r)}{\sqrt{r}}\Big]^{-\varkappa} d\theta
\asymp \frac{1}{(1-r)^{\varkappa-1}}, \, \text{as}\,\, r\rightarrow 1^{-}, \nonumber
\end{eqnarray}
and \begin{eqnarray}\label{ll-eq-2}
\lefteqn{\int_{-\alpha}^{\alpha}\frac{d\theta}{|1-re^{i\theta}|^{\varkappa}}\geq \int_{-\alpha}^{\alpha}\frac{d\theta}{[1+r^2-2r(1-\frac{\theta^2}{2})]^{\frac{\varkappa}{2}}}}\\
\nonumber 
&& = 2 \int_{0}^{\alpha}\frac{d\theta}{[(1-r)^2+r\theta^2]^{\frac{\varkappa}{2}}} \geq 2\int_{0}^{\alpha}[(1-r)+\sqrt{r}\theta]^{-\varkappa}d\theta\\ \nonumber 
&& =\frac{2}{r^{\frac{\varkappa}{2}}}\int_{0}^{\alpha}(\theta+\frac{1-r}{\sqrt{r}})^{-\varkappa}d\theta\asymp \frac{1}{(1-r)^{\varkappa-1}}, \, \text{as}\,\, r\rightarrow 1^{-}.\nonumber
\end{eqnarray}
Combining (\ref{ll-eq-1}) and (\ref{ll-eq-2}), we get that, when $\varkappa >1$, \begin{eqnarray}\label{fangc-0}
\int_{-\alpha}^{\alpha}\frac{d\theta}{|1-re^{i\theta}|^{\varkappa}}\asymp \frac{1}{(1-r)^{\varkappa-1}}, \, \text{as}\,\, r\rightarrow 1^{-},  
\end{eqnarray}
for any $\alpha\in (0, \frac{\pi}{2}]$. 

We now consider the case $\alpha\in (\frac{\pi}{2}, \pi]$. On the one hand, we have 
\begin{eqnarray}\label{fangc-1}
\int_{-\alpha}^{\alpha}\frac{d\theta}{|1-re^{i\theta}|^{\varkappa}}>\int_{-\frac{\pi}{2}}^{\frac{\pi}{2}}\frac{d\theta}{|1-re^{i\theta}|^{\varkappa}}. 
\end{eqnarray}
On the other hand, when $\theta \in (\frac{\pi}{2}, \alpha]\cup [-\alpha, -\frac{\pi}{2})$, for any $r\in (\frac{1}{2},1)$, we have
\begin{equation}
|1-re^{i\theta}|^2=1+r^2-2r\cos\theta \geq \frac{5}{4}.\nonumber 
\end{equation} 
It follows that, when $r\in (\frac{1}{2},1)$,
\begin{eqnarray}\label{fangc-2}
\int_{-\alpha}^{\alpha}\frac{d\theta}{|1-re^{i\theta}|^{\varkappa}}&=&\int_{-\frac{\pi}{2}}^{\frac{\pi}{2}}\frac{d\theta}{|1-re^{i\theta}|^{\varkappa}}+\int_{\frac{\pi}{2}}^{\alpha}\frac{d\theta}{|1-re^{i\theta}|^{\varkappa}}
+\int_{-\alpha}^{-\frac{\pi}{2}}\frac{d\theta}{|1-re^{i\theta}|^{\varkappa}}\nonumber \\ 
&\leq & \int_{-\frac{\pi}{2}}^{\frac{\pi}{2}}\frac{d\theta}{|1-re^{i\theta}|^{\varkappa}}+(\frac{4}{5})^{\frac{\varkappa}{2}}(2\alpha-\pi).
\end{eqnarray}
Consequently, we see from  (\ref{fangc-1}), (\ref{fangc-2}), and (\ref{fangc-0}) that (\ref{eqn-2}) still hold for any $\alpha\in (\frac{\pi}{2}, \pi]$.
This proves the lemma. \end{proof}

\begin{lemma}\label{ll-2}
Let $\varkappa>0$, $r\in (0,1)$, $n\in \mathbb{N}$ and $z_j=e^{i\arg z_j}$, $j=1,2,\cdots, n$ be distinct points on $\mathbb{T}$. We assume that $g$ is an analytic function in $\Delta$ and satisfies that, 

{\bf(1)} there are two constants $c_0>0$, $r_0\in (0,1)$ such that $|g^{-1}(z)|\leq c_0$ for all $|z|\in (r_0, 1)$, and

{\bf(2)} for each $z_j$, $j=1,2,\cdots, n$, there are constants $r_j\in (0,1)$, $\vartheta_j>0$, $C_j>0$ such that
$|g(z)|\leq C_j$ for all $$z\in \Omega_j:=\{z=re^{i\theta}: r_j<r<1, \arg z_j-\vartheta_j<\theta<\arg z_j+\vartheta_j\},$$
and $\Omega_{j_1} \cap \Omega_{j_2}=\emptyset$ whenever $j_1\neq j_2$, $1\leq j_1, j_2 \leq n$.

We define
$$f(z)=g(z)\prod_{j=1}^{n}(z-z_j), z\in \Delta.$$
Then, as $r\rightarrow 1^{-}$, we have 
\begin{equation}\label{est} \int_{-\pi}^{\pi} \frac{d\theta}{|f(re^{i\theta})|^{\varkappa}}\asymp
\begin{cases}
\frac{1}{(1-r)^{\varkappa-1}}, \; \text{if} \; \varkappa>1, \\
\log \frac{1}{1-r}, \;\;\;  \text{if} \; \varkappa =1,\\
1, \quad\quad\quad\;\, \text{if} \; \varkappa\in (0,1).
\end{cases}
\end{equation}
\end{lemma}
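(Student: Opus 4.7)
\textbf{Proof proposal for Lemma \ref{ll-2}.} The plan is to reduce everything to Lemma \ref{ll-1} by separating the integral into contributions near each singular point $z_j$ and a contribution away from all $z_j$. The multiplicative structure $f = \prod_j (z-z_j) \cdot g$, together with the two-sided control of $g$ near the boundary, makes the singularities at $z_1,\dots,z_n$ behave independently.

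\textbf{Upper bound.} Fix $r\in(\max\{r_0,r_1,\ldots,r_n\},1)$. By hypothesis~(1), $|g(re^{i\theta})|\ge c_0$ uniformly in $\theta$, so
\[
\int_{-\pi}^{\pi}\frac{d\theta}{|f(re^{i\theta})|^{\varkappa}}\ \le\ \frac{1}{c_0^{\varkappa}}\int_{-\pi}^{\pi}\frac{d\theta}{\prod_{j=1}^{n}|re^{i\theta}-z_j|^{\varkappa}}.
\]
I would partition $(-\pi,\pi]$ into $n$ small arcs $I_j$ centered at $\arg z_j$ (of length $\min_k\vartheta_k$, or small enough that $I_j$ contains no other $z_k$) together with a complement $I_0$. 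On $I_0$ every factor $|re^{i\theta}-z_k|$ is bounded below by a positive constant uniformly in $r$ close to $1$, so $\int_{I_0}$ is $O(1)$. On $I_j$, each factor $|re^{i\theta}-z_k|$ with $k\ne j$ is bounded below by a positive constant, so that piece reduces to $\int_{I_j}\frac{d\theta}{|re^{i\theta}-z_j|^{\varkappa}}$, and after the change of variable $\varphi=\theta-\arg z_j$ (using $|re^{i\theta}-z_j|=|re^{i\varphi}-1|$) Lemma \ref{ll-1} yields precisely the three regimes in (\ref{est}).

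\textbf{Lower bound.} For the matching lower bound it suffices to integrate over a single sector $\Omega_{j_0}\cap\{|z|=r\}$. Choose $j_0$ and work with the arc $J=(\arg z_{j_0}-\vartheta_{j_0},\arg z_{j_0}+\vartheta_{j_0})$; for $r$ close to $1$, $\{re^{i\theta}:\theta\in J\}\subset\Omega_{j_0}$, so hypothesis (2) gives $|g(re^{i\theta})|\le C_{j_0}$ there. On this arc the factors $|re^{i\theta}-z_k|$ with $k\ne j_0$ are bounded above by a positive constant (at most $2$), hence
\[
|f(re^{i\theta})|\ \le\ C_{j_0}\cdot 2^{n-1}\cdot|re^{i\theta}-z_{j_0}|.
\]
Therefore
\[
\int_{-\pi}^{\pi}\frac{d\theta}{|f(re^{i\theta})|^{\varkappa}}\ \ge\ \frac{1}{(2^{n-1}C_{j_0})^{\varkappa}}\int_{J}\frac{d\theta}{|re^{i\theta}-z_{j_0}|^{\varkappa}},
\]
and again the translation $\varphi=\theta-\arg z_{j_0}$ together with Lemma \ref{ll-1} produces the desired lower bound of the same order as the upper bound. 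Combining the two bounds proves (\ref{est}) in all three regimes $\varkappa>1$, $\varkappa=1$, $\varkappa\in(0,1)$.

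\textbf{Main technical point.} There is no deep obstruction here; the argument is essentially a bookkeeping exercise once the circle is split near and away from the $z_j$. The only genuinely delicate step is ensuring that in $I_j$ (respectively $J$) the other factors $|re^{i\theta}-z_k|$ are controlled \emph{uniformly} in $r$ by positive constants independent of $r$; this is where we use that the $z_k$ are distinct points on $\mathbb{T}$ and that $r$ is taken close enough to $1$. Everything else is a direct application of Lemma \ref{ll-1}, and in particular the three cases in (\ref{est}) come from the three cases of Lemma \ref{ll-1}, not from any separate analysis.
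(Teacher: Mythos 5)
Your proof is correct, and your lower bound is essentially identical to the paper's: both restrict to small arcs around the $z_j$ where hypothesis (2) bounds $|g|$ above and the remaining factors $|re^{i\theta}-z_k|$, $k\neq j$, are bounded by $2$, then invoke Lemma \ref{ll-1}. Where you genuinely diverge is the upper bound. The paper expands $\prod_{j}(z-z_j)^{-1}=\sum_j A_j/(z-z_j)$ by partial fractions, uses $|g|\geq c_0$, and then applies the inequality $\bigl(\sum_k |a_k|\bigr)^{p}\leq n^{p-1}\sum_k|a_k|^{p}$ to separate the singularities before applying Lemma \ref{ll-1} termwise. You instead cut the circle into disjoint arcs $I_j$ around each $\arg z_j$ plus a complement $I_0$, observe that on $I_j$ the factors with $k\neq j$ are bounded below uniformly in $r$ (since the $z_k$ are distinct and $r$ is near $1$), and that on $I_0$ the whole integrand is bounded. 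Your route is more elementary (no partial fractions, no power-sum inequality) and treats the three regimes $\varkappa>1$, $\varkappa=1$, $\varkappa\in(0,1)$ uniformly, whereas the paper's power-sum step as written is for $p>1$ and requires the trivial variant $\bigl(\sum|a_k|\bigr)^{p}\leq\sum|a_k|^{p}$ when $\varkappa\leq 1$; the paper's version, on the other hand, packages the separation of singularities into one algebraic identity rather than a case-by-case geometric estimate. Both are complete proofs, and your identification of the one delicate point (uniformity in $r$ of the lower bounds on the off-diagonal factors) is exactly right.
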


\begin{proof}
We only prove (\ref{est}) for the case when $\varkappa>1$. Other two cases can be proved by the similar way. For $\varkappa >1$. First we note that, for any $0<r<1$,
\begin{eqnarray}\label{eqn--1} 
 \int_{-\pi}^{\pi} \frac{d\theta}{|f(re^{i\theta})|^{\varkappa}}\geq \sum_{j=1}^{n} \int_{-{\alpha}_j+\arg z_j}^{\alpha_j+\arg z_j} \frac{d\theta}{|f(re^{i\theta})|^{\varkappa}}.
\end{eqnarray}
Meanwhile, in view of the conditions satisfied by $g$, we see that, for any $1\leq j \leq n$, it holds that $|g(z)|\leq C^*$ for all  $z\in \widehat{\Omega}_j$. Here $C^*=\max\limits_{1\leq j\leq n}\{C_j\}$ and $r^{*}=\max\limits_{1\leq j\leq n}\{r_j\}$, and
$$\widehat{\Omega}_j=\{z=re^{i\theta}: r^*<r<1, \arg z_j-\vartheta_j<\theta<\arg z_j+\vartheta_j\}.$$
Hence, for each $1\leq j \leq n$, we have that, for any $z=re^{i\theta} \in \widehat{\Omega}_j$, 
\begin{eqnarray}\label{eqn--2} |f(z)|=|f(re^{i\theta})|&=& \prod_{j=1}^{n}|re^{i\theta}-e^{i\arg z_j}||g(re^{i\theta})|\\ \nonumber
&\leq&  C^* \prod_{j=1}^{n}|re^{i\theta}-e^{i\arg z_j}| \\ \nonumber &\leq& 2^{n-1}C^* |re^{i\theta}-e^{i\arg z_j}|.\end{eqnarray}
Then it follows from (\ref{eqn--1}), (\ref{eqn--2}), and (\ref{eqn-2}) that, when $r>r^*$,
\begin{eqnarray}\label{e--1} 
 \int_{-\pi}^{\pi} \frac{d\theta}{|f(re^{i\theta})|^{\varkappa}}&\geq& [2^{n-1}C^*]^{-\varkappa}\sum_{j=1}^{n} \int_{-{\alpha}_j+\arg z_j}^{\alpha_j+\arg z_j} \frac{d\theta}{|re^{i\theta}-e^{i\arg z_j}|^{\varkappa}}\\ 
 &=& [2^{n-1}C^*]^{-\varkappa} \sum_{j=1}^{n} \int_{-{\alpha}_j}^{\alpha_j} \frac{d\theta}{|1-re^{i\theta}|^{\varkappa}} \nonumber \\
 &\asymp& \frac{1}{(1-r)^{\varkappa-1}}, \, \text{as}\,\, r\rightarrow 1^{-}. \nonumber 
\end{eqnarray}

On the other hand, we can write  
\begin{equation}
\prod_{j=1}^{n}(z-z_j)^{-1}=\sum_{j=1}^{n}\frac{A_j}{z-z_j}. \nonumber
\end{equation}
When $r>r_0$, we have  
\begin{eqnarray}\label{e--2} \int_{-\pi}^{\pi} \frac{d\theta}{|f(re^{i\theta})|^{\varkappa}} &=& \int_{-\pi}^{\pi} \Big|\sum_{j=1}^{n}\frac{A_j}{re^{i\theta}-z_j}\Big|^{\varkappa}\frac{d\theta}{|g(z)|^{\varkappa}}\\ \nonumber 
&\leq& {c_0}^{\varkappa}\int_{-\pi}^{\pi}  \Big[\sum_{j=1}^{n}\frac{|A_j|}{|re^{i\theta}-e^{i\arg z_j}|}\Big]^{\varkappa} d\theta \nonumber \\ 
&\leq & {c_0}^{\varkappa} n^{\varkappa-1} {\mathbb{A}}^{\varkappa} \int_{-\pi}^{\pi}\sum_{j=1}^n \frac{d\theta}{|re^{i\theta}-e^{i\arg z_j}|^{\varkappa}}\nonumber \\ 
&\asymp& \frac{1}{(1-r)^{\varkappa-1}}, \, \text{as}\,\, r\rightarrow 1^{-}.\nonumber
\end{eqnarray}
Here ${\mathbb{A}}=\max\limits_{1\leq j\leq n}\{|A_j|\}$. In (\ref{e--2}), we have used the following inequality \cite[Exercises 1.1.4, Page 11]{Gr}, 
$$\Big(\sum_{k=1}^{n}|a_k|\Big)^{p}\leq n^{p-1}\sum_{k=1}^{n}|a_k|^p, a_k\in \mathbb{C}, p>1. $$
Thus (\ref{est}) for the case when $\varkappa>1$ follows from (\ref{e--1}) and (\ref{e--2}). Finally, using the similar arguments above, and by (\ref{eqn-2-2}), (\ref{eqn-2-3}), we can obtain the remaining two cases of (\ref{est}). The lemma is proved.
\end{proof}
We now present the proof of Theorem \ref{l-pro}.
\begin{proof}[Proof of Theorem \ref{l-pro}] 
Let $R\in ({\mathcal{Q}_{I}} \cup  {\mathcal{Q}_{II}} \cup  {\mathcal{Q}_{III}})$. 

{({\bf A})} It is easy to see that Theorem \ref{l-pro} holds for $\tau=0$. 

{({\bf B})} We now consider the case $\tau<0$. 

$({\bf b_1})$ If $R$ has no critical points on $\mathbb{T}$. Then, we see that $|R'|^{\tau}$ is bounded above in a neighborhood of $\mathbb{T}$. Hence we obtain from the definition (\ref{defi-n}) of the generalized integral means spectrum that ${\widetilde{\beta}}_{R}(\tau)=0$ for any $\tau<0$. 

$({\bf b_2})$ We only prove the case for $R\in \mathcal{Q}_{III}$, other two cases can be proved similarly. If $R\in \mathcal{Q}_{III}$ has at east one critical point on $\mathbb{T}$.
Recall that, as in (\ref{new-d-2}), $R'$ has the from as 
\begin{eqnarray} 
R'(z)&=& \frac{[(z-z_1)(z-z_2)\cdots(z-z_s)]P_{\bf n}(z)}{\prod\limits_{j=1}^{l}(z-e^{i\theta_j})^3\prod\limits_{k=1}^{t}(z-e^{i\widetilde{\theta}_k})^2P_{\bf m}(z)}.\nonumber\end{eqnarray}
We set $$g(z)=\frac{P_{\bf n}(z)}{\prod\limits_{j=1}^{l}(z-e^{i\theta_j})^3\prod\limits_{k=1}^{t}(z-e^{i\widetilde{\theta}_k})^2P_{\bf m}(z)},$$
so that $R'(z)=g(z)\prod\limits_{n=1}^{s}(z-z_n)$. Then we see from the definition of $R$ that 

({\bf 1.1}) there are two constants $c_0>0$, $r_0\in (0,1)$ such that $|g^{-1}(z)|\leq c_0$ for all $|z|\in (r_0, 1)$,
and

({\bf 1.2}) for each $z_n=e^{i\arg z_n}$, $n=1,2,\cdots, s$, we can always find constants $r_n\in (0,1)$, $\vartheta_n>0$, $C_n>0$ such that
$|g(z)|\leq C_n$ for all $$z\in \Omega_n=\{z=re^{i\theta}: r_n \leq r \leq 1, \arg z_n-\vartheta_n\leq \theta \leq \arg z_n+\vartheta_n\},$$
and $\Omega_{n_1}\cap \Omega_{n_2}=\emptyset$, whenever $n_1\neq n_2$, $1\leq n_1, n_2\leq s$.  
Then by using Lemma \ref{ll-2}, we obtain that, as $r\rightarrow 1^{-}$, 
\begin{equation}\label{jia-1}\int_{-\pi}^{\pi} \frac{d\theta}{|R'(re^{i\theta})|^{-\tau}}\asymp
\begin{cases}
\frac{1}{(1-r)^{-\tau-1}}, \;\; \text{if} \; \tau<-1, \\
\log \frac{1}{1-r}, \;\;\; \;\; \text{if} \; \tau=-1,\\
1, \quad\quad\quad\;\,\;\; \text{if} \; \tau\in (-1,0).
\end{cases}  
\end{equation}
Note that for any small $\epsilon>0$,  $$\log \frac{1}{1-r}/\frac{1}{(1-r)^{\epsilon}}=(1-r)^{\epsilon}\log \frac{1}{1-r}\rightarrow 0,\,\, {\text {as}}\,\, r\rightarrow 1^{-}.$$
This implies that ${\widetilde{\beta}}_{R}(-1)=0$. Then it follows from (\ref{jia-1}) that 
\begin{equation}{\widetilde{\beta}}_{R}(\tau)=
\begin{cases}
0, \;\quad\,\,\,\;\, \text{for} \; \tau\in (-1,0) , \\
|\tau|-1, \text{for} \;\tau\leq -1.
\end{cases}\nonumber 
\end{equation}

{({\bf C})} We next consider the case $\tau> 0$. 

$({\bf c_1})$ When $R$ belongs to the class $\mathcal{Q}_I$, we easily see that ${\widetilde{\beta}}_{R}(\tau)=0$ for any $\tau>0$. 

$({\bf c_2})$  When $R$ belongs to the class $\mathcal{Q}_{II}$. Recall that, as in (\ref{new-d-1}), $R'$ has the form as
$$R'(z)=\frac{[(z-z_1)(z-z_2)\cdots(z-z_s)]P_{\bf n}(z)}{\prod\limits_{j=1}^{l}(z-e^{i\theta_j})^2 P_{{\bf m}}(z)}.$$ 
We set $$Q_1(z)=\frac{[(z-z_1)(z-z_2)\cdots(z-z_s)]P_{\bf n}(z)}{P_{{\bf m}}(z)}.$$ 
Then, from the definition of $R$, we can find constants $M>0$, $r_0\in (0,1)$,  $\eta_j\in (0,\frac{\pi}{2l})$, $M_{j,1}>0, M_{j,2}>0$, $j=1,2,\cdots, l$, such that

$(\bf 2.1)$ $\Omega_{j_1}\cap \Omega_{j_2}=\emptyset$ whenever $j_1\neq j_2$, $1\leq j_1, j_2 \leq l$, and $M_{j,2}\leq |Q_1(z)|\leq M_{j,1}$ for all $z\in \Omega_j$, here $\Omega_j=\{re^{\theta}: r_0 \leq r\leq1, \theta_j-\eta_j\leq \theta\leq \theta_j+\eta_j\},$ $j=1,2,\cdots, l$, and $R$ has no critical points in $\bigcup\limits_{1\leq j\leq l}\Omega_j$, and

$(\bf 2.2)$ $|R'(z)|\leq M$ for all $z \in \Omega_0-\bigcup\limits_{1\leq j\leq l}\Omega_j$, here $\Omega_0=\{re^{\theta}: r_0\leq r\leq 1\}.$

For $1\leq j \leq l$, we take
$$D^j:=\max\limits_{z\in \Omega_j}\{\prod\limits_{1\leq a \leq l, a\neq j} |z-e^{i\theta_{a}}|\}, \,\, D_j:=\min\limits_{z\in \Omega_j}\{\prod\limits_{1\leq a \leq l, a\neq j} |z-e^{i\theta_{a}}|\}.$$ In particular, when $l=1$, we take $D^1=D_1=1$. We define $$\mathbf{D}^{\ast}=\max\limits_{1\leq j\leq l}\{D^j\}, \,\,\, \mathbf{D}_{\ast}=\min\limits_{1\leq j\leq l}\{D_j\}.$$
It is easy to see that $0<\mathbf{D}_{\ast}<\mathbf{D}^{\ast}<+\infty$ since $\Omega_j, j=1,2,\cdots, l$ are all compact. Then, when $r>r_0$, we have
\begin{eqnarray}
\int_{-\pi}^{\pi}|R'(re^{i\theta})|^{\tau}d\theta &\geq & \sum_{j=1}^{l}\int_{\theta_j-\eta_j}^{\theta_j+\eta_j}\frac{|Q_1(re^{i\theta})|^{\tau}}{\prod\limits_{j=1}^{l}|re^{i\theta}-e^{i\theta_j}|^{2\tau}}d\theta\nonumber \\
&\geq& M_2^{\tau} [\mathbf{D}^{*}]^{-2\tau}\sum_{j=1}^{l}\int_{\theta_j-\eta_j}^{\theta_j+\eta_j}\frac{d\theta}{|re^{i\theta}-e^{i\theta_j}|^{2\tau}}\nonumber \\
&=& M_2^{\tau}[\mathbf{D}^{*}]^{-2\tau}\sum_{j=1}^{l}\int_{-\eta_j}^{\eta_j}\frac{d\theta}{|1-re^{i\theta}|^{2\tau}},\nonumber
\end{eqnarray}here $M_2=\min\limits_{1\leq j \leq l}\{M_{j,2}\}$, and 
\begin{eqnarray}
\int_{-\pi}^{\pi}|R'(re^{i\theta})|^{\tau}d\theta&\leq& \sum\limits_{j=1}^l \int_{\theta_j-\eta_j}^{\theta_j+\eta_j}|R'(re^{i\theta})|^{\tau}d\theta+2\pi M^{\tau} 
\nonumber \\
&\leq & M_1^{\tau}[{\bf{D}}_{*}]^{-2\tau}\sum\limits_{j=1}^l \int_{\theta_j-\eta_j}^{\theta_j+\eta_j}\frac{d\theta}{|re^{i\theta}-e^{i\theta_j}|^{2\tau}}+2\pi M^{\tau} \nonumber  \\
&=& M_1^{\tau}[{\bf{D}}_{*}]^{-2\tau} \sum\limits_{j=1}^l \int_{-\eta_j}^{\eta_j}\frac{d\theta}{|1-re^{i\theta}|^{2\tau}}+2\pi M^{\tau} ,\nonumber
\end{eqnarray}
here, $M_1=\max\limits_{1\leq j \leq l}\{M_{j,1}\}$.
Consequently, from Lemma \ref{ll-1}, when $r>r_0$, we obtain that,  for $\tau>\frac{1}{2}$, 
 $$\int_{-\pi}^{\pi}|R'(re^{i\theta})|^{\tau}d\theta \asymp \sum\limits_{j=1}^l \int_{-\eta_j}^{\eta_j}\frac{d\theta}{|1-re^{i\theta}|^{2\tau}}\asymp \frac{1}{(1-r)^{2\tau-1}}, \text{as}\,\, r\rightarrow 1^{-}, $$
and for $\tau\in (0,\frac{1}{2}]$, $ \text{as}\,\, r\rightarrow 1^{-},$
\begin{equation}\int_{-\pi}^{\pi}|R'(re^{i\theta})|^{\tau}d\theta\asymp
\begin{cases}
\log \frac{1}{1-r}, \;\;  \text{if} \; \tau=\frac{1}{2},\\
1, \quad\quad\quad\, \text{if} \; \tau\in (0,\frac{1}{2}).
\end{cases}\nonumber 
\end{equation} It follows that \begin{equation}{\widetilde{\beta}}_{R}(\tau)=
\begin{cases}
2\tau-1, \, \text{for} \; \tau>\frac{1}{2}, \\
0, \quad\,\,\;\;\;\; \text{for} \;\tau\in (0, \frac{1}{2}].
\end{cases}\nonumber 
\end{equation} 

$({\bf c_3})$ When $R$ belongs to the class $\mathcal{Q}_{III}$. The main idea of the proof of this case is similar to the case $({\bf c_2})$.  Recall that, as in (\ref{new-d-2}), 
\begin{eqnarray} 
R'(z)&=& \frac{[(z-z_1)(z-z_2)\cdots(z-z_s)]P_{\bf n}(z)}{\prod\limits_{j=1}^{l}(z-e^{i\theta_j})^3\prod\limits_{k=1}^{t}(z-e^{i\widetilde{\theta}_k})^2P_{\bf m}(z)}.\end{eqnarray} 
We let $$Q_2(z)=\frac{[(z-z_1)(z-z_2)\cdots(z-z_s)]P_{\bf n}(z)}{P_{\bf m}(z)}.$$
Then, when $t\in \mathbb{N}$, we see from the definition of $R$ that there are constants $M>0$, $r_0\in (0,1)$, and $\eta_j\in (0,\frac{\pi}{2(l+t)})$, $M_{j,1}>0, M_{j,2}>0$, $j=1,2,\cdots, l$, and $\widetilde{\eta}_k\in (0,\frac{\pi}{2(l+t)})$, $\widetilde{M}_{k,1}>0, \widetilde{M}_{k,2}>0$, $k=1,2,\cdots t$, such that 

$(\bf 3.1)$ $\Omega_{j_1}\cap \Omega_{j_2}=\emptyset$ whenever $j_1\neq j_2$ and $M_{j,2}\leq |Q_2(z)|\leq M_{j,1}$ for all $z\in \Omega_j$, here $\Omega_j=\{re^{\theta}: r_0\leq r \leq 1, \theta_j-\eta_j\leq\theta\leq\theta_j+\eta_j\},$ $1\leq j_1, j_2\leq l,$ and

$(\bf 3.2)$ $\widetilde{\Omega}_{k_1}\cap \widetilde{\Omega}_{k_2}=\emptyset$ whenever $k_1\neq k_2$, $1\leq k_1, k_2 \leq t$, and ${\Omega}_{j}\cap \widetilde{\Omega}_{k}=\emptyset$ for all $1\leq j \leq l, 1\leq k\leq t$, and $\widetilde{M}_{k,2}\leq |Q_2(z)|\leq \widetilde{M}_{k,1}$ for all $z\in \widetilde{\Omega}_k$, here $\widetilde{\Omega}_k=\{re^{\theta}: {r}_0\leq r \leq 1, \widetilde{\theta}_k-\widetilde{\eta}_k\leq\theta\leq\widetilde{\theta}_k+\widetilde{\eta}_k\},$ and $R$ has no critical points in $\bigcup\limits_{1\leq j\leq l, 1\leq k\leq t}(\Omega_j \bigcup\widetilde{\Omega}_k)$, and

$(\bf 3.3)$ $|R'(z)|\leq M$ for all $z \in \Omega_0-\bigcup\limits_{1\leq j\leq l, 1\leq k\leq t}(\Omega_j \bigcup\widetilde{\Omega}_k)$, here 
$\Omega_0=\{re^{\theta}: r_0\leq r\leq 1\}.$

For $1\leq j \leq l$, we let $\widetilde{\pi}(z):=\prod\limits_{1\leq k \leq t}|z-e^{i\widetilde{\theta}_{k}}|^{2}\}$ and take
$$\mathcal{D}^j:=\max\limits_{z\in \Omega_j}\{\widetilde{\pi}(z)\prod\limits_{1\leq a \leq l, a\neq j} |z-e^{i\theta_{a}}|^{3}\}, \,\, \mathcal{D}_j:=\min\limits_{z\in \Omega_j}\{\widetilde{\pi}(z)\prod\limits_{1\leq a \leq l, a\neq j} |z-e^{i\theta_{a}}|^{3}\}.$$
In particular, when $l=1$, we take $\mathcal{D}^1=\max\limits_{z\in \Omega_1}\widetilde{\pi}(z), \,\, \mathcal{D}_1=\min\limits_{z\in \Omega_1}\widetilde{\pi}(z).$

For $1\leq k \leq t$, we let $\pi(z):=\prod\limits_{1\leq j \leq l}|z-e^{i{\theta}_{j}}|^{3}$ and take
$$\widetilde{D}^k:=\max\limits_{z\in \widetilde{\Omega}_k}\{\pi(z)\prod\limits_{1\leq b \leq t, b\neq k} |z-e^{i\widetilde{\theta}_{b}}|^{2}\},\,\, \widetilde{D}_k:=\min\limits_{z\in  \widetilde{\Omega}_k}\{\pi(z)\prod\limits_{1\leq b \leq t, b\neq k} |z-e^{i\widetilde{\theta}_{b}}|^{2}\}.$$
In particular, when $t=1$, we take $\widetilde{D}^1=\max\limits_{z\in \widetilde{\Omega}_1}\pi(z),\,\, \widetilde{D}_1=\min\limits_{z\in \widetilde{\Omega}_1}\pi(z).$
We define $$\mathbf{D}^{\star}=\max\limits_{1\leq j\leq l, 1\leq k\leq t}\{\mathcal{D}^j, \widetilde{D}^k\},\,\, \mathbf{D}_{\star}=\min\limits_{1\leq j\leq l, 1\leq k\leq t}\{\mathcal{D}_j, \widetilde{D}_k\}.$$
We easily see that $0<\mathbf{D}_{\star}<\mathbf{D}^{\star}<+\infty$ from the compactness of $\Omega_j, j=1,2,\cdots, l$, and $\widetilde{\Omega}_k, k=1,2,\cdots, t$. Then, when $r>r_0$, we have
\begin{eqnarray}
 \lefteqn{\int_{-\pi}^{\pi}|R'(re^{i\theta})|^{\tau}d\theta\geq \sum_{j=1}^{l}\int_{\theta_j-\eta_j}^{\theta_j+\eta_j}|R'(re^{i\theta})|^{\tau}d\theta+\sum_{k=1}^{t}\int_{\widetilde{\theta}_k-\widetilde{\eta}_k}^{\widetilde{\theta}_k+
 \widetilde{\eta}_k}|R'(re^{i\theta})|^{\tau}d\theta} \nonumber \\
 &&\geq  M_2^{\tau}[\mathbf{D}^{\star}]^{-\tau}\Big[\sum_{j=1}^{l}\int_{\theta_j-\eta_j}^{\theta_j+\eta_j}\frac{d\theta}{|re^{i\theta}-e^{i\theta_j}|^{3\tau}}+\sum_{k=1}^{t}\int_{\widetilde{\theta}_k-\widetilde{\eta}_k}^{\widetilde{\theta}_k+
 \widetilde{\eta}_k}\frac{d\theta}{|re^{i\theta}-e^{i\widetilde{\theta}_k}|^{2\tau}}\Big]
\nonumber \\
&&= M_2^{\tau}[\mathbf{D}^{\star}]^{-\tau}\Big[\sum_{j=1}^{l}\int_{-\eta_j}^{\eta_j}\frac{d\theta}{|1-re^{i\theta}|^{3\tau}}+\sum_{k=1}^{t}\int_{-\widetilde{\eta}_k}^{\widetilde{\eta}_k}\frac{d\theta}{|1-re^{i\theta}|^{2\tau}}\Big],\nonumber
\end{eqnarray}
here $M_2=\min\limits_{1\leq j \leq l, 1\leq k \leq t}\{M_{j,2}, \widetilde{M}_{k,2}\}$, and 
\begin{eqnarray}
\lefteqn{\int_{-\pi}^{\pi}|R'(re^{i\theta})|^{\tau}d\theta\leq\sum\limits_{j=1}^l \int_{\theta_j-\eta_j}^{\theta_j+\eta_j}|R'(re^{i\theta})|^{\tau}d\theta+\sum\limits_{k=1}^t \int_{\widetilde{\theta}_k-\widetilde{\eta}_k}^{\widetilde{\theta}_k+\widetilde{\eta}_k}|R'(re^{i\theta})|^{\tau}d\theta+2\pi M^{\tau} } 
\nonumber \\
&&\leq M_1^{\tau}[\mathbf{D}_{\star}]^{-\tau}\Big[\sum\limits_{j=1}^l \int_{\theta_j-\eta_j}^{\theta_j+\eta_j}\frac{d\theta}{|re^{i\theta}-e^{i\theta_j}|^{3\tau}}+\sum\limits_{k=1}^t\int_{\widetilde{\theta}_k-\widetilde{\eta}_k}^{\widetilde{\theta}_k+
\widetilde{\eta}_k}\frac{d\theta}{|re^{i\theta}-e^{i\widetilde{\theta}_k}|^{2\tau}}\Big]+2\pi M^{\tau} \nonumber  \\
&&= M_1^{\tau}[\mathbf{D}_{\star}]^{-\tau}\Big[\sum_{j=1}^{l}\int_{-\eta_j}^{\eta_j}\frac{d\theta}{|1-re^{i\theta}|^{3\tau}}+\sum_{k=1}^{t}\int_{-\widetilde{\eta}_k}^{\widetilde{\eta}_k}\frac{d\theta}{|1-re^{i\theta}|^{2\tau}}\Big]+2\pi M^{\tau} ,\nonumber
\end{eqnarray}
here, $M_1=\max\limits_{1\leq j \leq l, 1\leq k \leq t}\{M_{j,1}, \widetilde{M}_{k,1}\}$.
Consequently, from Lemma \ref{ll-1}, when $r>r_0$, we obtain that, for $\tau>\frac{1}{3}$, \begin{eqnarray}\int_{-\pi}^{\pi}|R'(re^{i\theta})|^{\tau}d\theta &\asymp&  \sum_{j=1}^{l}\int_{-\eta_j}^{\eta_j}\frac{d\theta}{|1-re^{i\theta}|^{3\tau}}+\sum_{k=1}^{t}\int_{-\widetilde{\eta}_k}^{\widetilde{\eta}_k}\frac{d\theta}{|1-re^{i\theta}|^{2\tau}}\nonumber \\
&\asymp& \frac{1}{(1-r)^{3\tau-1}},\,\, \text{as}\,\, r\rightarrow 1^{-},\nonumber \end{eqnarray}
and for $\tau\in (0,\frac{1}{3}]$, $ \text{as}\,\, r\rightarrow 1^{-},$
\begin{equation}\int_{-\pi}^{\pi}|R'(re^{i\theta})|^{\tau}d\theta\asymp
\begin{cases}
\log \frac{1}{1-r}, \;\;  \text{if} \; \tau=\frac{1}{3},\\
1, \quad\quad\quad \text{if} \; \tau\in (0,\frac{1}{3}).
\end{cases}\nonumber 
\end{equation} 
It follows that \begin{equation}\label{xg-1}{\widetilde{\beta}}_{R}(\tau)=
\begin{cases}
3\tau-1, \, \text{for} \; \tau>\frac{1}{3}, \\
0, \quad\,\,\;\;\;\; \text{for} \;\tau\in (0, \frac{1}{3}].
\end{cases}
\end{equation} 
For the remaining case when $t=0$, by the similar way as in the case $({\bf c_2})$, we still can prove that (\ref{xg-1}) holds. Finally, combining all above arguments, we conclude that Theorem \ref{l-pro} is true. This finishes‌ the proof of Theorem \ref{l-pro}. 
\end{proof}

\begin{remark}
We recall the Schwarz-Christoffel formula for the unit disk, see \cite[Page 236]{Ahlfors}. Let $f$ be a univalent function (conformal map) from the unit disk $\Delta$ onto the interior of a bounded polygon $\mathcal{P}$ with vertices $z_1, \dots, z_n, n\geq 3$, and corresponding interior angles $\alpha_1 \pi, \dots, \alpha_n \pi$. Then the derivative of $f$ is given by
\begin{equation} 
f'(z) = C \prod_{k=1}^{n} \left(z-{a_k}\right)^{\alpha_k - 1}, \quad z \in \Delta, \nonumber
\end{equation}
where $a_1, a_2, \dots, a_n$ are points on the unit circle $\mathbb{T}$ and $C$ is a non-zero complex constant.

Note that for $r\in (0,1)$, 
$$\int_{-\pi}^{\pi}\frac{d\theta}{|f'(re^{i\theta})|^2}=|C|^{-2}\int_{-\pi}^{\pi} \frac{d\theta}{\prod\limits_{k=1}^{n} \left|re^{i\theta}-{e^{i\arg a_k}}\right|^{2\alpha_k-2}}.$$
Since $\alpha_k < 2$ for each $k$, a direct estimate using Lemma \ref{ll-1} and similar arguments in the proof of part $(\bf c_3)$ of Theorem \ref{l-pro} shows that $\beta_f(-2)<1$. Actually, let $${\bf {\pi}}(z)={\bf {\pi}}(re^{i\theta})=\frac{1}{\prod\limits_{k=1}^{n} \left|re^{i\theta}-{e^{i\arg a_k}}\right|^{2\alpha_k-2}}.$$
For a given $r_0\in (0,1)$, we can find constants $M>0$, $\eta_k\in (0,\frac{\pi}{2n})$, $k=1,2,\cdots, n$ such that $\Omega_{k_1}\cap \Omega_{k_2}=\emptyset$ whenever $k_1\neq k_2$, $1\leq k_1, k_2\leq n,$ here $\Omega_k=\{re^{\theta}: r_0\leq r \leq 1, \arg a_k-\eta_k\leq\theta\leq\arg a_k+\eta_k\},$ and $|{\bf {\pi}}(z)|\leq M$ for all $z\in \Omega-\bigcup\limits_{1\leq k\leq n}\Omega_k$, here $\Omega=\{re^{i\theta}:r_0\leq r\leq 1\}$.

For $1\leq k \leq n$, we take
$$D^k:=\max\limits_{z\in \Omega_k}\{\prod\limits_{1\leq b \leq n, b\neq k} |z-e^{i\arg a_k}|^{2-2\alpha_k}\}.$$ We define ${\bf \mathfrak{D}}^{\ast}=\max\limits_{1\leq k\leq n}\{D_k\}.$ Then we easily see that $0<\mathfrak{D}^{\ast}<+\infty$ since $\Omega_k, k=1,2,\cdots, n$ are all compact. Hence, when $r>r_0$, we have
\begin{eqnarray}
\int_{-\pi}^{\pi}\pi(re^{i\theta})d\theta&\leq& \sum\limits_{k=1}^n \int_{\arg a_k-\eta_k}^{\arg a_k+\eta_k}\pi(re^{i\theta})d\theta+2\pi M 
\nonumber \\
&\leq & [ \mathfrak{D}^{\ast}]^{2}\sum\limits_{j=1}^n \int_{\arg a_k-\eta_k}^{\arg a_k+\eta_k}\frac{d\theta}{|re^{i\theta}-e^{i\arg a_k}|^{2\alpha_k-2}}+2\pi M \nonumber  \\
&=& [ \mathfrak{D}^{\ast}]^{2} \sum\limits_{k=1}^n \int_{-\eta_k}^{\eta_k}\frac{d\theta}{|1-re^{i\theta}|^{2\alpha_k-2}}+2\pi M.\nonumber
\end{eqnarray}
Consequently, from Lemma \ref{ll-1} and the fact that $\alpha_k<2$ for every $k$, we conclude that $\beta_f(-2)<1$. That is to say, the Brennan conjecture holds for all univalent functions from $\Delta$ onto the interior of a bounded polygon $\mathcal{P}$.
\end{remark}

\section{Remarks}
\subsection{The first part of remarks}
We know that 
\begin{proposition}[\cite{Jin}, {\bf Proposition 3.2, 3.3}]\label{bers} The mapping $\Lambda_1: [\mu]_T \mapsto  N_{f_{\mu}}$ from $(T, d_{T})$ to its image $T_1$ in $E_1$  is a homeomorphism, and the mapping $\Lambda_2: [\mu]_T \mapsto  S_{f_\mu}$ from $(T, d_T)$  to its image $T_2$ in $E_2$ is a homeomorphism.
\end{proposition}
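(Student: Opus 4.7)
First I would check well-definedness and injectivity of both $\Lambda_1$ and $\Lambda_2$. Well-definedness is immediate: $\mu \sim \nu$ forces $f_\mu|_\Delta = f_\nu|_\Delta$, so $N_{f_\mu} \equiv N_{f_\nu}$ and $S_{f_\mu} \equiv S_{f_\nu}$. For injectivity of $\Lambda_1$, if $N_{f_\mu} = N_{f_\nu}$ on $\Delta$, then $(\log f'_\mu / f'_\nu)' \equiv 0$, so $f'_\mu = c\, f'_\nu$; the normalization $f'_\mu(0) = f'_\nu(0) = 1$ gives $c = 1$, and $f_\mu(0) = f_\nu(0) = 0$ then forces $f_\mu \equiv f_\nu$ on $\Delta$, so $[\mu]_T = [\nu]_T$. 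Injectivity of $\Lambda_2$ is the classical fact that $S_{f_\mu} = S_{f_\nu}$ implies $f_\mu = \sigma \circ f_\nu$ for some M\"obius $\sigma$, which the joint normalizations $f(0) = 0$, $f'(0) = 1$, $f(\infty) = \infty$ pin down to the identity.

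Continuity of $\Lambda_1$ and $\Lambda_2$ would then follow by combining the chain rules with bounds from Section 3. Given $[\mu_n]_T \to [\mu]_T$ in $d_T$, pick representatives so that $\|\sigma(\mu, \mu_n)\|_\infty \to 0$, and set $\mathbf{h}_n := f_{\mu_n} \circ f_\mu^{-1}$ on $\Omega := f_\mu(\Delta)$. The chain rule (\ref{dila}) gives $\|\mu_{\mathbf{h}_n}\|_\infty = \|\sigma(\mu, \mu_n)\|_\infty \to 0$. Using (\ref{chain}) and Lemma \ref{l--2},
\[
\|N_{f_{\mu_n}} - N_{f_\mu}\|_{E_1} \;=\; \sup_{\zeta \in \Omega}|N_{\mathbf{h}_n}(\zeta)|\,\rho_\Omega^{-1}(\zeta) \;\leq\; 8\,\|\mu_{\mathbf{h}_n}\|_\infty \;\longrightarrow\; 0,
\]
which gives continuity of $\Lambda_1$. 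For $\Lambda_2$ the same scheme applies once one invokes the Schwarzian analogue $|S_{\mathbf{h}}(\zeta)|\rho_\Omega^{-2}(\zeta) \leq C\|\mu_{\mathbf{h}}\|_\infty$ (a Kraus--Nehari-type bound for quasiconformally extendable univalent maps, paired with the chain rule (\ref{chain-1})).

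The main obstacle will be continuity of the inverses $\Lambda_1^{-1}$ and $\Lambda_2^{-1}$, i.e.\ showing that closeness in the $E_j$-norm of the (Pre-)Schwarzian forces closeness in $d_T$. My strategy is to produce an explicit continuous local section of each projection near any fixed $[\mu]_T$. By the isometric isomorphism $\dagger$ of Proposition \ref{lll}, it is enough to construct such a section at the base point of $T(\mathbf{\Omega})$ with $\mathbf{\Omega} = f_\mu(\Delta)$. For $\Lambda_2$ the classical Ahlfors--Weill section does the job: for $\psi \in E_2$ with $\|\psi\|_{E_2}$ sufficiently small, an explicit (essentially linear) formula produces $\nu \in M(\mathbf{\Omega}^*)$ with $S_{f_\nu} = \psi$ and $\|\nu\|_\infty \leq c\|\psi\|_{E_2}$. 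For $\Lambda_1$ one uses the analogous Becker-type section built from the Pre-Schwarzian: for $\phi \in E_1$ with $\|\phi\|_{E_1}$ within Becker's univalence radius, an explicit formula returns $\nu \in M(\mathbf{\Omega}^*)$ with $N_{f_\nu} = \phi$ and $\|\nu\|_\infty \leq c\|\phi\|_{E_1}$. Since $d_T([\mu_1]_T, [\mu_2]_T) \leq \tanh^{-1}\|\sigma(\mu_1, \mu_2)\|_\infty$, these sections supply the required upper bound on $d_T$ in terms of the $E_j$-norm. Combined with the fact (Proposition \ref{pro-2}) that $T_1 \subset \mathbf{N}$ and $T_2 \subset \mathbf{S}$ sit inside closed subspaces of $E_1$ and $E_2$, this identifies $(T, d_T)$ with each of $T_1$ and $T_2$ as a homeomorphism.
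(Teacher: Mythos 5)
First, a point of reference: the paper does not prove this proposition at all --- it is imported verbatim from \cite{Jin} (Propositions 3.2 and 3.3), so there is no in-text proof to compare against. Your outline is the natural one from the standard toolkit, and the forward-continuity half is essentially the computation the paper does carry out elsewhere: the identity $\|N_{f_{\mu_n}}-N_{f_\mu}\|_{E_1}=\sup_{\zeta\in\Omega}|N_{\mathbf{h}_n}(\zeta)|\rho_\Omega^{-1}(\zeta)\le 8\|\mu_{\mathbf{h}_n}\|_\infty$ is exactly the chain of estimates used in the proof of Theorem 2 (Section 4.4), and the transport of the problem to the base point of $T(\mathbf{\Omega})$ via $\dagger$ is Proposition \ref{lll}. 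The Ahlfors--Weill/Becker local sections are also the standard route to continuity of the inverses. So the architecture is right.

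There is, however, one genuine gap: your injectivity argument for $\Lambda_2$. From $S_{f_\mu}=S_{f_\nu}$ on $\Delta$ you correctly get $f_\mu|_\Delta=\sigma\circ f_\nu|_\Delta$ for a M\"obius $\sigma$, and the conditions $f(0)=0$, $f'(0)=1$ force $\sigma(0)=0$, $\sigma'(0)=1$. But that leaves the one-parameter family $\sigma_c(w)=w/(1-cw)$, and the third normalization $f_\mu(\infty)=f_\nu(\infty)=\infty$ does \emph{not} eliminate it: $\infty$ lies in $\Delta^*$, where $f_\mu$ and $\sigma\circ f_\nu$ are only quasiconformal and need not agree, so the condition at $\infty$ constrains the extensions rather than the M\"obius transformation acting on $f_\nu(\Delta)$. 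Concretely, for small $c\neq 0$ the map $\sigma_c\circ f_\nu|_\Delta$ is a bounded conformal map of $\Delta$ admitting a quasiconformal extension to $\widehat{\mathbb{C}}$ that can be renormalized to fix $\infty$, producing a class $[\mu]_T\neq[\nu]_T$ with the same Schwarzian. So ``the normalizations pin $\sigma$ down to the identity'' is not a one-line step under the paper's normalization; an additional argument (or a normalization such as $f''(0)=0$ in place of $f(\infty)=\infty$, which is what makes the classical Bers-embedding injectivity immediate) is needed, and this same ambiguity resurfaces when you use the Ahlfors--Weill section to control $d_T$ by $\|S_{f_{\mu_n}}-S_{f_\mu}\|_{E_2}$, since you must know the section lands in the correct equivalence class. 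By contrast, your treatment of $\Lambda_1$ is clean: the pre-Schwarzian together with $f(0)=0$, $f'(0)=1$ recovers $f|_\Delta$ exactly, with no residual M\"obius freedom, so both injectivity and the section-based inverse continuity go through for $\Lambda_1$ as you describe.
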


\begin{remark} In view of Propposition \ref{bers}, we can identify the universal Teichm\"uller space with $T_1$ or $T_2$.
We let $\mathcal{S}_q^{\infty}$ be the class of all functions $f\in \mathcal{S}_q$ with $f(\infty)=\infty$. We set
$$\mathbf{N}_q: =\{\phi: \phi=N_f(z), f\in \mathcal{S}_q^{\infty}\}, \,\,\, \mathbf{S}_q:=\{\phi: \phi=S_f(z), f\in \mathcal{S}_q^{\infty}\}.$$
It is easy to see that $T_1=\mathbf{N}_q$ and $T_2= \mathbf{S}_q$. We now identify the closure of universal Teichm\"uller space with $\overline{T}_1=\overline{\mathbf{N}}_q$ in $\mathbf{N}$ or $\overline{T}_2=\overline{\mathbf{S}}_q$ in $\bf{S}$.  On the other hand, we know that $\overline{\mathbf{N}}_q$ is contained in $\overline{\mathcal{T}}$, and for any $\phi \in {\mathbf{S}}_q$, we can take a unique univalent function $f_{\phi}(z)\in \mathcal{S}_q^{\infty} $ with $\phi(z)=S_{f_{\phi}}(z)$. Hence, the IMS functional $I_{{T}_2}: \phi  \mapsto  \beta_{f_{\phi}}(\tau), \phi \in {{\mathbf{S}}}_q$ is well-defined. It follows from Theorem \ref{Em-1} and \ref{Em-3} and Proposition \ref{bers} that 
\begin{corollary}
For each $\tau\in \mathbb{C}$, the IMS functional $I_{\overline{T}_1}: \phi  \mapsto  \beta_{f_{\phi}}(\tau)$ is continuous on $\overline{{T}}_1=\overline{\mathbf{N}}_q$.
\end{corollary}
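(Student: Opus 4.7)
The plan is to deduce this as a direct restriction of Theorem~\ref{Em-3}. The excerpt has already observed that $\overline{T}_1 = \overline{\mathbf{N}}_q \subset \overline{\mathcal{T}}$, since every element of $\mathbf{N}_q$ is the Pre-Schwarzian of a function in $\mathcal{S}_q^\infty \subset \mathcal{S}_q$, so $\mathbf{N}_q \subset \mathcal{T}$ and then closures in $E_1$ respect this inclusion. Moreover, for every $\phi \in \overline{\mathcal{T}}$ (hence for every $\phi \in \overline{T}_1$), the assignment $\phi \mapsto f_\phi$ is unambiguous via the explicit formula~(\ref{exa}); in particular, the two definitions of ``the IMS functional at $\phi$'' coming from Theorem~\ref{Em-3} and from this corollary agree. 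Consequently, $I_{\overline{T}_1}$ is nothing but the restriction $I_{\overline{\mathcal{T}}}\big|_{\overline{T}_1}$.

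First, I would endow $\overline{T}_1$ with the subspace topology inherited from $(E_1, \|\cdot\|_{E_1})$; this is forced because $\overline{T}_1$ is defined as a closure in $E_1$, and the same $E_1$-topology is used on $\overline{\mathcal{T}}$ in Theorem~\ref{Em-3}. Second, I would invoke Theorem~\ref{Em-3}, which asserts that $I_{\overline{\mathcal{T}}}$ is continuous on $\overline{\mathcal{T}}$ for every $\tau \in \mathbb{C}$. Third, I would apply the elementary fact that the restriction of a continuous function to a subspace remains continuous, which yields the continuity of $I_{\overline{T}_1}$ on $\overline{T}_1 = \overline{\mathbf{N}}_q$. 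There is no genuine obstacle here: all of the analytic content (the estimates from Lemma~\ref{key-lemma} applied through the bound $\|N_{f_\phi}-N_{f_\psi}\|_{E_1}<\delta$) is already absorbed into the proof of Theorem~\ref{Em-3}, and the corollary is purely a statement about restricting to a closed subset of $\overline{\mathcal{T}}$.
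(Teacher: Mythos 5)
Your proposal is correct and matches the paper's intended derivation: the paper notes $\overline{T}_1=\overline{\mathbf{N}}_q\subset\overline{\mathcal{T}}$ and deduces the corollary from Theorem 3 (continuity of $I_{\overline{\mathcal{T}}}$ in the $E_1$-norm), which is exactly your restriction argument. Your added remark that the normalization forces $f_\phi$ from (\ref{exa}) to agree with the function in $\mathcal{S}_q^{\infty}$ defining $\phi$, so the two functionals coincide on $\overline{T}_1$, is a correct and worthwhile detail that the paper leaves implicit.
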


\begin{corollary}
For each $\tau\in \mathbb{C}$, the IMS functional $I_{T_2}: \phi  \mapsto  \beta_{f_{\phi}}(\tau)$ is continuous on $T_2={\mathbf{S}}_q$.
\end{corollary}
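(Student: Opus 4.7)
The plan is to deduce continuity of $I_{T_2}$ by factoring it through the universal Teichm\"uller space. By Proposition~\ref{bers}, the map $\Lambda_2 \colon (T, d_T) \to T_2 \subset E_2$ is a homeomorphism onto its image, so in particular the inverse $\Lambda_2^{-1} \colon T_2 \to T$ is continuous. I will then show that $I_{T_2} = I_T \circ \Lambda_2^{-1}$ as functions on $T_2$, after which continuity of $I_{T_2}$ is immediate from Theorem~\ref{Em-1} as a composition of two continuous maps.

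The nontrivial step is to verify the identity $I_{T_2}(\phi) = I_T(\Lambda_2^{-1}(\phi))$ for every $\phi \in T_2 = \mathbf{S}_q$. Let $[\mu]_T := \Lambda_2^{-1}(\phi)$, so by definition of $\Lambda_2$ the canonical representative $f_\mu \in \mathcal{S}_q^{\infty}$ satisfies $S_{f_\mu} = \phi$. The function $f_\phi$ appearing in the definition of $I_{T_2}$ is, by the convention introduced in the preceding remark (interpreted with $S$ in place of $N$, consistent with $\phi \in \mathbf{S}_q$), the unique element of $\mathcal{S}_q^{\infty}$ with $S_{f_\phi} = \phi$. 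Since the Schwarzian determines a locally univalent function up to post-composition with a M\"obius transformation, one has $f_\phi = M \circ f_\mu$ for some such $M$. The common normalizations $f(0) = 0$, $f'(0) = 1$, $f(\infty) = \infty$ satisfied by both $f_\phi$ and $f_\mu$ then force $M$ to fix $0$ and $\infty$ and to have $M'(0) = 1$, hence $M$ is the identity. Therefore $f_\phi = f_\mu$ and $\beta_{f_\phi}(\tau) = \beta_{f_\mu}(\tau) = I_T([\mu]_T)$, yielding the desired factorization.

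I do not expect any serious obstacle beyond this uniqueness bookkeeping for the Schwarzian normalization; the rest is formal. The argument is parallel to the proof one would give for the preceding corollary about $\overline{T}_1 = \overline{\mathbf{N}}_q$, where the factorization $I_{\overline{T}_1} = I_{\overline{\mathcal{T}}} \circ \Lambda_1^{-1}|_{\overline{T}_1}$ would be used together with the inclusion $\overline{\mathbf{N}}_q \subset \overline{\mathcal{T}}$ and Theorem~\ref{Em-3}, with the uniqueness of $f_\phi$ from its Pre-Schwarzian $\phi = N_{f_\phi}$ being even more straightforward (integrating $N_{f_\phi}$ twice with the two normalizations $f_\phi(0)=0$, $f_\phi'(0)=1$ recovers $f_\phi$ outright, with no M\"obius ambiguity to resolve).
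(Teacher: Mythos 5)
Your proposal is correct and follows essentially the same route as the paper: the paper derives this corollary directly from Proposition \ref{bers} (the homeomorphism $\Lambda_2$ between $(T,d_T)$ and $T_2={\mathbf{S}}_q$) together with Theorem \ref{Em-1}, i.e.\ precisely the factorization $I_{T_2}=I_T\circ\Lambda_2^{-1}$ that you spell out. Your extra bookkeeping (resolving the M\"obius ambiguity of the Schwarzian via the normalizations, and reading the remark's ``$\phi=N_{f_\phi}$'' as the evident typo for $\phi=S_{f_\phi}$) is exactly the implicit content the paper leaves to the reader.
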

\end{remark}

\begin{remark}\label{r-1}
We let $\mathcal{S}_{Q}$ be the class of all univalent functions $f$ which belong to $\mathcal{S}$ and admit a quasiconformal extension to $\widehat{\mathbb{C}}$. It is easy to see that $\mathcal{S}_q$ is a proper subset of $\mathcal{S}_{Q}$.
 For each $\theta\in (-\pi, \pi]$, let $\mathcal{S}_{\theta}$ be the subclass of $\mathcal{S}_{Q}$  which consists of all the functions $f$ satisfying that $\lim\limits_{\Delta \ni z\rightarrow e^{i\theta}}f(z)=\infty$. For each $\theta\in (-\pi, \pi]$,  $\mathcal{S}_{\theta}$ is a copy of the universal Teichm\"uller space.
By using \cite[Lemma 1]{Zhur} and repeating the arguments in the proof of Theorem \ref{Em-4}, we can similarly prove that
 \begin{proposition}\label{add-p}
Let $\tau\in \mathbb{C}$ with $\tau\neq 0$. For each $\theta\in (-\pi, \pi]$, we have $\beta_{f}(\tau)<B(\tau)$ for any $f\in \mathcal{S}_{\theta}$.
  \end{proposition}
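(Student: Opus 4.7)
The plan is to mirror the proof of Theorem \ref{Em-4} almost verbatim, replacing the appeal to Becker's quasiconformal extension criterion by the pole-preserving version provided by \cite[Lemma 1]{Zhur}. If $\beta_f(\tau)=0$, the inequality is immediate since $B(\tau)>0$ for $\tau\neq 0$, so we may assume $\beta_f(\tau)=\beta>0$ throughout.

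Following the template of Theorem \ref{Em-4}, set $\Omega:=f(\Delta)$, $\mathbf{g}:=f^{-1}$, and define, for a small $\varepsilon>0$ to be chosen later,
\[
\mathbf{h}(w):=\int_0^w \bigl[f'(\mathbf{g}(\zeta))\bigr]^{\varepsilon}\,d\zeta,\qquad w\in\Omega,
\]
and $\mathbf{F}:=\mathbf{h}\circ f$. The exact same calculation as in the proof of Theorem \ref{Em-4} yields $\mathbf{F}(0)=\mathbf{F}'(0)-1=0$, the identity $\mathbf{F}'(z)=[f'(z)]^{1+\varepsilon}$, and the chain-rule formula $N_{\mathbf{F}}=(1+\varepsilon)N_f$. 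In particular $\|N_{\mathbf{F}}-N_f\|_{E_1}=\varepsilon\|N_f\|_{E_1}\le 6\varepsilon$.

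The crucial step is to show $\mathbf{F}\in\mathcal{S}_\theta$ once $\varepsilon$ is small enough; this is where the class $\mathcal{S}_q$ of Theorem \ref{Em-4} must be replaced by $\mathcal{S}_\theta$. Since $f\in\mathcal{S}_\theta$, applying \cite[Lemma 1]{Zhur} to the perturbation $N_{\mathbf{F}}=(1+\varepsilon)N_f$ guarantees, for $\varepsilon$ sufficiently small, that $\mathbf{F}$ is univalent in $\Delta$ and admits a quasiconformal extension to $\widehat{\mathbb{C}}$ which still satisfies $\lim_{\Delta\ni z\to e^{i\theta}}\mathbf{F}(z)=\infty$; thus $\mathbf{F}\in\mathcal{S}_\theta$. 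This is the main obstacle of the argument and the only point at which the present setting differs substantively from Theorem \ref{Em-4}: we need the perturbation of the Pre-Schwarzian to preserve both the quasiconformal character and the specified pole, which is exactly the content of Zhuravlev's lemma.

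To conclude, from $\mathbf{F}'(z)=[f'(z)]^{1+\varepsilon}$ we obtain $\beta_{\mathbf{F}}(\tau)=\beta_f(\tau+\varepsilon\tau)$. Proposition \ref{m-l} (whose proof uses only the integral means definition and therefore applies unchanged to any univalent function in $\Delta$ with positive integral means spectrum, in particular to elements of $\mathcal{S}_\theta$) then gives $\beta_f(\tau+\varepsilon\tau)>\beta_f(\tau)$. Hence
\[
\beta_f(\tau)<\beta_{\mathbf{F}}(\tau)\le\sup_{g\in\mathcal{S}_\theta}\beta_g(\tau)\le B(\tau),
\]
which proves the proposition.
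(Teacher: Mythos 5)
Your proposal is correct and follows exactly the route the paper intends: the paper's own justification is the one-line remark that the result follows ``by using \cite[Lemma 1]{Zhur} and repeating the arguments in the proof of Theorem \ref{Em-4},'' and your write-up faithfully fills in that outline, including the two points that actually need checking (that Zhuravlev's lemma replaces Becker's criterion to keep $\mathbf{F}$ univalent with a quasiconformal extension and a pole on $\mathbb{T}$, and that Proposition \ref{m-l} applies because its proof never uses the quasiconformal extension). No discrepancies with the paper's argument.
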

\begin{remark}For $f, g \in \mathcal{S}$, by checking the proof of Proposition \ref{well}, we see that $\beta_f(\tau)=\beta_g(\tau)$ for each $\tau \in \mathbb{C}$ if $N_f-N_g\in E_{1,0}$. Then, combining Theorem \ref{Em-4}, Proposition \ref{add-p}, we obtain that, if $f$ satisfies that $N_{f}$ is equivalent to some $N_{f_q}$ in $E_1$, here $f_q \in \mathcal{S}_Q$, then $\beta_{f}(\tau)<B(\tau)$ for any $\tau\neq 0$ so that $f$ can not be an extremal function for any $B(\tau)$ with $\tau\neq 0$. \end{remark}
  We let $$\mathbf{S}_{\theta}:=\{\phi, \phi=S_f(z), \, f\in \mathcal{S}_{\theta}\}.$$
If $\phi\in \mathbf{S}_{\theta}$, then there is a unique univalent function $f_{\phi}$ with $f_{\phi}\in \mathcal{S}_{\theta}$ and such that $\phi(z)=S_{f_{\phi}}(z).$ If $\phi\in \partial\mathbf{S}_{\theta}$, we know that there is a sequence $\{f_n\}_{n=1}^{\infty}$, $f_{n}\in \mathcal{S}_{\theta}$, such that $\lim_{n\rightarrow\infty}\|S_{f_n}-\phi\|_{E_2}=0,$ and the sequence $\{f_n\}_{n=1}^{\infty}$ converges for every $z\in \Delta$. Then, taking $f_\phi(z)=\lim_{n\rightarrow\infty}f_n(z), z\in \Delta$, we see that $f_{\phi}\in \mathcal{S}$ with $\phi(z)=S_{f_{\phi}}(z),$ and $f_{\phi}$ is unique by the normalization. Here the statement  $f_{\phi}$ is unique means that, if there is another sequence $\{\widehat{f}_n(z)\}_{n=1}^{\infty}$, $\widehat{f}_{n}\in \mathcal{S}_{\theta}$, such that $\lim_{n\rightarrow\infty}\|S_{\widehat{f}_n}-\phi\|_{E_2}=0,$ and the sequence $\{\widehat{f}_n(z)\}_{n=1}^{\infty}$ converges for every $z\in \Delta$, then, take $\widehat{f}_\phi(z)=\lim_{n\rightarrow\infty}\widehat{f}_n(z)$, we have $\widehat{f}_{\phi}(z)=f_{\phi}(z)$ for all $z\in \Delta$.  From these, for any $\phi \in \overline{\mathbf{S}}_{\theta}$, we can take a unique univalent function $f_{\phi}(z)$ with $f_{\phi}\in \mathcal{S}$ and such that $\phi(z)=S_{f_{\phi}}(z).$ Thus, the IMS functional $I_{\overline{T}_{\theta}}: \phi \mapsto  \beta_{f_{\phi}}(\tau), \phi \in \overline{\mathbf{S}}_{\theta}$ is well-defined. From \cite{AG2}, we know that the boundary $\partial\mathbf{S}_{\theta}$ of $\mathbf{S}_{\theta}$ is larger than the one of $\mathbf{N}_q$. It is interesting to study
\begin{problem}
For each $\theta\in (-\pi, \pi]$, is the IMS functional $I_{\overline{T}_{\theta}}: \phi  \mapsto  \beta_{f_{\phi}}(\tau)$ continuous on $\overline{T}_2=\overline{{\mathbf{S}}}_{\theta}$ when $\tau\neq 0$?
\end{problem}
\end{remark}

\begin{remark}
Let $\tau\in \mathbb{C}$. We know from \cite{Jin} that $\mathcal{E}_1(z)=-\log(1-z)$ is an extremal function for $B_b(\tau)$ when $\tau \geq 2$, and $\mathcal{E}_2(z)=z-\frac{1}{2}z^2$ is an extremal function for $B_b(\tau)$ when $\tau \leq \tau_{*}$. Here $\tau_{*}$ is the same as in Theorem \ref{CM}.  Also, both $N_{\mathcal{E}_1}(z)$ and $N_{\mathcal{E}_2}$ lie on the boundary of $\mathcal{T}$. In view of these facts and Theorem \ref{Em-4}, we will say an extremal function for $B_b(\tau)$ is {\em regular} if its Pre-Schwarzian derivative lies on the boundary of $\mathcal{T}$.  We then raise the following general 
\begin{conjecture}\label{con-add}
For each $\tau\in \mathbb{C}$, $B_{b}(\tau)$ has at least one regular extremal function.
\end{conjecture} 
If Conjecture \ref{con-add} is true, then the following conjecture will follow.
\begin{conjecture}
For each $\tau\in \mathbb{C}$, $B_{b}(\tau)$ has  at least one extremal function whose Schwarzian derivative lies on the boundary $\partial \mathbf{S}_{Q}$ of $\mathbf{S}_{Q}$.
\end{conjecture}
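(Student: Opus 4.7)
The plan is to take the extremal function $f^{*}$ furnished by Conjecture \ref{con-add} and show directly that $S_{f^{*}}\in \partial \mathbf{S}_Q$. The argument splits into an approximation step (showing $S_{f^{*}}\in \overline{\mathbf{S}_Q}$) and a rigidity step (showing $S_{f^{*}}\notin \mathbf{S}_Q$); since $\mathbf{S}_Q$ is an open subset of $E_2$ (a classical consequence of the Bers embedding of the universal Teichm\"uller space in its Schwarzian model), the combination yields $S_{f^{*}}\in \overline{\mathbf{S}_Q}\setminus \mathbf{S}_Q=\partial\mathbf{S}_Q$.

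For the approximation step, the hypothesis $N_{f^{*}}\in \partial\mathcal{T}$ provides a sequence $f_n \in \mathcal{S}_q$ with $\|N_{f_n}-N_{f^{*}}\|_{E_1}\to 0$. I would first verify that the nonlinear map $\Phi:\phi \mapsto \phi'-\tfrac{1}{2}\phi^2$ is continuous from $E_1 \cap \mathcal{A}(\Delta)$ to $E_2$ on bounded subsets: a Cauchy-integral estimate on the disk of radius $(1-|z|)/2$ centered at $z$ gives $\|\phi_n'-\phi'\|_{E_2}=O(\|\phi_n-\phi\|_{E_1})$, while the uniform bound $\|N_f\|_{E_1}\le 6$ for $f\in \mathcal{S}$ together with the factorization $N_{f_n}^2-N_{f^{*}}^2=(N_{f_n}+N_{f^{*}})(N_{f_n}-N_{f^{*}})$ yields $\|N_{f_n}^2-N_{f^{*}}^2\|_{E_2}=O(\|N_{f_n}-N_{f^{*}}\|_{E_1})$. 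Hence $S_{f_n}=\Phi(N_{f_n})\to \Phi(N_{f^{*}})=S_{f^{*}}$ in $E_2$ with each $S_{f_n}\in \mathbf{S}_q\subset \mathbf{S}_Q$, so $S_{f^{*}}\in \overline{\mathbf{S}_Q}$.

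For the rigidity step, the examples $\mathcal{E}_1,\mathcal{E}_2$ cited by the author suggest reading Conjecture \ref{con-add} with $f^{*}\in \mathcal{S}_b$; then Theorem \ref{Em-4} forces $f^{*}\in \mathcal{S}_b\setminus \mathcal{S}_q = \mathcal{S}_b \setminus (\mathcal{S}_b\cap \mathcal{S}_Q)$, so $f^{*}\notin \mathcal{S}_Q$. If we had $S_{f^{*}}\in \mathbf{S}_Q$, there would be $g\in \mathcal{S}_Q$ with $S_g=S_{f^{*}}$; M\"obius invariance of the Schwarzian together with the normalization $g\in \mathcal{S}$ would force $g(z)=f^{*}(z)/(1+cf^{*}(z))$ for some $c\in \mathbb{C}$, so $f^{*}=g/(1-cg)$ would inherit a quasiconformal extension to $\widehat{\mathbb{C}}$ from $g$ via post-composition with the M\"obius map $w\mapsto w/(1-cw)$, contradicting $f^{*}\notin \mathcal{S}_Q$. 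The main obstacle is precisely the boundedness hypothesis on the extremal: if the extremal supplied by Conjecture \ref{con-add} is unbounded, one must separately check that $f^{*}$ does not admit a quasiconformal extension to $\widehat{\mathbb{C}}$ (for $\mathcal{E}_1=-\log(1-z)$ this is true because $\mathcal{E}_1(\mathbb{T})$ fails the quasicircle condition at $z=1$), so the implication is cleanest when either the boundedness of the extremal is built into Conjecture \ref{con-add} or the non-quasiconformal-extendibility is verified separately.
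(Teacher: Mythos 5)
Note first that the paper does not prove this statement: it is a conjecture, and the paper's entire justification is the single sentence that it ``will follow'' from Conjecture \ref{con-add}, so what you are really doing is supplying that missing implication. Your approximation half does this correctly: the local Lipschitz continuity of $\phi\mapsto\phi'-\tfrac{1}{2}\phi^2$ from $E_1$ to $E_2$ (a Cauchy estimate for the derivative term, the uniform bound $\|N_f\|_{E_1}\le 6$ for the quadratic term) turns $N_{f_n}\to N_{f^*}$ in $E_1$ into $S_{f_n}\to S_{f^*}$ in $E_2$, and since $\mathbf{S}_Q$ is open in $E_2$ (Ahlfors--Gehring), $\partial\mathbf{S}_Q=\overline{\mathbf{S}_Q}\setminus\mathbf{S}_Q$, so only the exclusion $S_{f^*}\notin\mathbf{S}_Q$ remains. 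Your M\"obius reduction of that exclusion to $f^*\notin\mathcal{S}_Q$ is also sound.

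The genuine gap is the one you flag but do not close: when the extremal $f^*$ furnished by Conjecture \ref{con-add} is unbounded, extremality for $B_b(\tau)$ does not rule out $f^*\in\mathcal{S}_Q$. Theorem \ref{Em-4} only excludes membership in $\mathcal{S}_q$, i.e.\ the \emph{bounded} quasiconformally extendable functions; an unbounded $f^*\in\mathcal{S}_Q$ lies in some $\mathcal{S}_\theta$, and the relevant statement there, Proposition \ref{add-p}, gives $\beta_f(\tau)<B(\tau)$, not $<B_b(\tau)$. By Theorem \ref{Binder} these universal spectra coincide only for $\textup{Re}(\tau)\le 0$; for $\textup{Re}(\tau)>0$ one may have $B(\tau)>B_b(\tau)$, and then nothing in your argument prevents a member of some $\mathcal{S}_\theta$ from attaining $B_b(\tau)$. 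The case-by-case verification you propose (as for $\mathcal{E}_1$) is not a proof of the implication. The natural repair is to use the hypothesis $N_{f^*}\in\partial\mathcal{T}$ itself rather than extremality: $\mathcal{T}$ is open in $E_1$, and by the structure of the pre-Schwarzian model (cf.\ \cite{Zhur}, \cite{AG2} and Remark \ref{r-1}) the sets $\{N_f: f\in\mathcal{S}_\theta\}$ are likewise open and disjoint from $\mathcal{T}$, so $\partial\mathcal{T}$ meets no component of the pre-Schwarzian image of $\mathcal{S}_Q$ and $f^*\notin\mathcal{S}_Q$ follows uniformly, with no boundedness assumption and for every $\tau$; this is also the mechanism the paper invokes in Remark \ref{add-1} for the asymptotic analogue.
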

Here, ${\mathbf{S}}_{Q}$ is defined as  \begin{equation}\label{kongs}{\mathbf{S}}_{Q}:=\{\phi:\phi=S_f(z), f\in \mathcal{S}_{Q}\}.\end{equation}
We easily see that ${\mathbf{S}}_{Q}$ coincides with $\mathbf{S}_q$ by using (\ref{chain-1}).
\end{remark}

\subsection{The second part of remarks}When $\tau\leq -2$, we find that there are some univalent functions $f$ satisfying that $\beta_f(\tau)=|\tau|-1$ and $\|S_f\|_{E_2}=6$.  For instance, $$\kappa(z)=\frac{z}{(1-z)^2}, \,\,\mathcal{E}_2(z)=P_2(z)=z-\frac{1}{2}z^2, \,\, P_3(z)=z-\frac{1}{3}z^3, z\in \Delta,$$
which have been mentioned before. We have known that $\beta_{\kappa}(\tau)=\beta_{\mathcal{E}_2}(\tau)=\beta_{P_3}(\tau)=|\tau|-1$ for $\tau\leq -2$. On the other hand, by a simply computation, we obtain that
 $$S_{\kappa}(z)=-\frac{6}{(1-z^2)^2},\,S_{\mathcal{E}_2}(z)=-\frac{3}{2}\frac{1}{(1-z)^2},\,S_{P_3}(z)=\frac{-2-4z^2}{(1-z^2)^2},\, z\in \Delta.$$
 It follows that $\|S_{\kappa}\|_{E_2}=\|S_{\mathcal{E}_2}\|_{E_2}=\|S_{P_3}\|_{E_2}=6$. We will provide more functions that have these characteristics.
\begin{example}\label{exam-1}
 We consider the $T$-symmetric Koebe functions $\kappa_{T}$, see \cite{jmaa}, which are defined as
$$\kappa_{T}(z):=\frac{z}{(1-z^{T})^{\frac{2}{T}}}, z\in \Delta, T\in \mathbb{N}.$$
We see that $\kappa_{1}(z)=\kappa(z)$ and 
\begin{equation}\label{kapp}
 \kappa'_{T}(z)=\frac{(1+z^T)}{(1-z^T)^{1+\frac{2}{T}}}, z\in \Delta, 
\end{equation}
 $$S_{\kappa_{T}}(z)=\frac{2(T^2-1)z^{3T-2}-2(T^2+2)z^{2T-2}+2(T^2-1)z^{T-2}}{(1-z^{2T})^2}, z\in \Delta.$$ 
In particular, $$S_{\kappa_2}(z)=\frac{6}{(1+z^2)^2}.$$ 
In view of Lemma \ref{ll-2}, we obtain from (\ref{kapp}) that $\beta_{\kappa_{T}}(\tau)=|\tau|-1$ for each $T\in \mathbb{N}$ when $\tau\leq -2$. Also, it is not hard to check that, for each $T\in \mathbb{N}$,
$$(1-|z|^2)^2|S_{\kappa_{T}}(z)| \rightarrow 6,\,\, {\text{as}}\,\, z\rightarrow e^{\frac{k\pi}{T}i}\,\text{radially}.$$ Here $k=1, 3, \cdots, 2T-1$. Hence $\|S_{\kappa_{T}}\|_{E_2}=6$ for each $T\in \mathbb{N}$.
\end{example}

\begin{example}
Now, we consider the so-called $\gamma$-spiral Koebe functions $f_{\gamma}$, which are defined as
 $$f_{\gamma}(z):={z}(1-z)^{-2e^{i\gamma}cos\gamma}, z\in \Delta,$$
for $\gamma \in (-\frac{\pi}{2}, \frac{\pi}{2})$, see \cite{Ok}. We see that $f_0$ is the Koebe function. A direct computation yields that   
$$f'_{\gamma}(z)=\frac{1+ze^{2i\gamma}}{(1-z)^{1+2e^{i\gamma}cos\gamma}}, z\in \Delta.$$
It follows from Lemma \ref{ll-2} again that $\beta_{f_{\gamma}}(\tau)=|\tau|-1$ for each $\gamma \in (-\frac{\pi}{2}, \frac{\pi}{2})$ when $\tau\leq -2$, and it has been proved in \cite{Ok} that $\|S_{f_{\gamma}}\|_{E_2}=6$. 
\end{example}

Moreover, we observe that
\begin{proposition}
Let $\tau\leq -2$, $R\in \mathcal{U}_R$. If $R$ has at least one critical point on $\mathbb{T}$, then $\beta_{R}(\tau)=|\tau|-1$ and $\|S_{R}\|_{E_2}=6.$  
\end{proposition}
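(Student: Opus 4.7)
The first assertion is a direct specialization of Theorem \ref{l-pro}. Since $R \in \mathcal{U}_R$ lies in $\mathcal{R}_O \cap (\mathcal{L}_I \cup \mathcal{L}_{II} \cup \mathcal{L}_{III})$, has at least one critical point on $\mathbb{T}$, and $\tau \leq -2 < -1$, case $(\bf B)(b_2)$ of that theorem gives $\widetilde{\beta}_R(\tau) = |\tau|-1$. On $\mathcal{U}_R$ one has $\widetilde{\beta}_R = \beta_R$, so the first conclusion follows at once.

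For the second assertion, the universal bound $\|S_f\|_{E_2} \leq 6$ valid for every $f \in \mathcal{U}$ (recalled just after Proposition \ref{pro-1}) supplies the $\leq 6$ direction. My plan for the matching lower bound is to exhibit a radial limit
\begin{equation}
\lim_{r \to 1^-}(1-r^2)^2 |S_R(r z_1)| = 6
\end{equation}
at a critical point $z_1 \in \mathbb{T}$ of $R$, which immediately forces $\|S_R\|_{E_2} \geq 6$.

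The key structural observation I need is that every critical point $z_1 \in \mathbb{T}$ of a univalent $R \in \mathcal{U}_R$ is necessarily a \emph{simple} zero of $R'$. This follows from the classical Koebe lower distortion estimate $|R'(z)| \geq (1-|z|)/(1+|z|)^3$: along the radius $z = r z_1$ it yields $|R'(r z_1)| \geq (1-r)/8$ for $r$ near $1$, whereas a zero of order $k \geq 2$ at $z_1$ would force $|R'(rz_1)| \asymp (1-r)^k$ as $r \to 1^-$, a contradiction. In particular $z_1$ is not a pole of $R$ (otherwise $R'(z_1)=\infty$), so in a neighborhood of $z_1$ we may write $R'(z) = (z-z_1)H(z)$ with $H$ analytic and $H(z_1) \neq 0$.

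With this in hand the remaining computation is mechanical: from
\begin{equation}
N_R(z) = \frac{1}{z-z_1} + \frac{H'(z)}{H(z)}
\end{equation}
one obtains, after differentiating and subtracting $\tfrac{1}{2}N_R^2$, the local expansion $S_R(z) = -\tfrac{3}{2}(z-z_1)^{-2} + O((z-z_1)^{-1})$ near $z_1$. Substituting $z = rz_1$, so that $|z-z_1| = 1-r$ and $(1-|z|^2)^2 = (1-r)^2(1+r)^2$, produces the limit $\tfrac{3}{2}\cdot 4 = 6$, as required. I expect no genuine obstacle here: the only non-routine input is the simplicity of the boundary critical point, and the coefficient $-3/2$ is corroborated by the explicit Schwarzians of $\kappa$, $\mathcal{E}_2$ and $P_3$ displayed just before the proposition (for instance, $S_\kappa(z) = -6/(1-z^2)^2 \sim -\tfrac{3}{2}(z+1)^{-2}$ as $z \to -1$, matching the general formula).
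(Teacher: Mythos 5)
Your proposal is correct and follows essentially the same route as the paper: the first claim is quoted from Theorem \ref{l-pro}, and the lower bound $\|S_R\|_{E_2}\geq 6$ is obtained by writing $R'(z)=(z-z_1)\varphi(z)$ with $\varphi$ analytic and nonvanishing at the boundary critical point (simplicity of the zero being forced by the Koebe distortion bound $|R'(z)|\geq(1-|z|)/(1+|z|)^3$, exactly as in the paper's Remark preceding the definition of $\mathcal{R}_O$) and extracting the leading term $-\tfrac{3}{2}(z-z_1)^{-2}$ of $S_R$ along the radius, giving the limit $\tfrac32\cdot 4=6$. The only cosmetic difference is that you pass through $N_R$ and $S_R=N_R'-\tfrac12 N_R^2$ while the paper expands $R'''/R'-\tfrac32(R''/R')^2$ directly.
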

\begin{proof}
 Note that $\beta_{R}(\tau)=|\tau|-1$ follows from Theorem \ref{l-pro}. We only need to prove $\|S_{R}\|_{E_2}=6.$   If $R$ has at least one critical point on $\mathbb{T}$, we write $$R'(z)=\frac{(z-e^{i\theta})P_{\bf n}(z)}{P_{\bf m}(z)},$$
here $\theta\in (-\pi,\pi]$, the polynomials $(z-e^{i\theta})P_{\bf n}$ and $P_{\bf m}$ have no common factors except $1$, and $P_{\bf n}(e^{i\theta})\neq 0$. We let 
$$\varphi(z)=\frac{P_{\bf n}(z)}{P_{\bf m}(z)}.$$ 
Then $\varphi$ is analytic in $\Delta$, and is analytic at the point $e^{i\theta}$ with $\varphi(e^{i\theta})\neq 0$. Consequently, from $$R''(z)=\varphi(z)+(z-e^{i\theta})\varphi'(z),$$
and 
$$R'''(z)=2\varphi'(z)+(z-e^{i\theta})\varphi''(z),$$
we obtain that 
\begin{eqnarray}|S_{R}(z)|&=&\Big|\frac{R'''(z)}{R'(z)}-\frac{3}{2}\Big[\frac{R''(z)}{R'(z)}\Big]^2\Big|\nonumber 
\\
&=&\frac{|[\varphi'(z)+(z-e^{i\theta})\varphi''(z)](z-e^{i\theta})\varphi(z)-\frac{3}{2}[\varphi(z)+(z-e^{i\theta})
\varphi'(z)]^2|}{|1-ze^{-i\theta}|^2|\varphi(z)|^2}.\nonumber 
\end{eqnarray}
Hence, we see that $|S_{R}(z)|(1-|z|^2)^2 \rightarrow 6$, as $z\rightarrow e^{i\theta}$ radially. The proposition is proved.
\end{proof}

Meanwhile, if Conjecture \ref{conj} is true, then $\kappa_{T}$, $f_{\gamma}$ and any $R\in \mathcal{U}_{R}$ that have at least one critical point on $\mathbb{T}$, are all the extremal functions for $B_b(\tau)$ when $\tau \leq -2$. From these, it is reasonable to guess that 
\begin{conjecture}When $\tau \leq -2$, the extremal function $f$ for $B_b(\tau)$ should satisfy that $\|S_f\|_{E_2}=6$.
\end{conjecture}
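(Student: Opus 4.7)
The plan is to argue by contraposition: suppose $f \in \mathcal{S}$ is an extremal function for $B_b(\tau)$ with $\tau \leq -2$ and that $\|S_f\|_{E_2} = c < 6$; we seek a competitor $g \in \mathcal{S}$ with $\beta_g(\tau) > \beta_f(\tau)$, contradicting extremality. The natural candidate is the perturbation already used in the proof of Theorem \ref{Em-4}: set $\mathbf{F}(z) = \mathbf{h}(f(z))$ with $\mathbf{h}'(w) = [f'(f^{-1}(w))]^{\varepsilon}$ for a small $\varepsilon > 0$. On $\Delta$ we then have $[\mathbf{F}'(z)]^{\tau} = [f'(z)]^{(1+\varepsilon)\tau}$, so $\beta_{\mathbf{F}}(\tau) = \beta_f((1+\varepsilon)\tau)$, which strictly exceeds $\beta_f(\tau)$ by Proposition \ref{m-l}. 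Hence the whole question reduces to verifying $\mathbf{F} \in \mathcal{S}$ (and ideally $\mathbf{F} \in \mathcal{S}_b$).

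A direct computation using (\ref{chain}) and (\ref{chain-1}) gives
\[
S_{\mathbf{F}}(z) \;=\; (1+\varepsilon)\, S_f(z) \;-\; \tfrac{\varepsilon(1+\varepsilon)}{2}\,[N_f(z)]^2,
\]
from which $\|S_{\mathbf{F}}\|_{E_2} \leq (1+\varepsilon)\,c + 18\,\varepsilon(1+\varepsilon)$, after using $\|[N_f]^2\|_{E_2} = \|N_f\|_{E_1}^2 \leq 36$. Since $c < 6$, this stays strictly below $6$ for all sufficiently small $\varepsilon$; likewise $\|N_{\mathbf{F}} - N_f\|_{E_1} = O(\varepsilon)$. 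So the Schwarzian of $\mathbf{F}$ remains in the Kraus--Nehari ball, a necessary condition for univalence.

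The main obstacle is precisely to upgrade this necessary condition into a sufficient one. In the proof of Theorem \ref{Em-4}, the hypothesis $f \in \mathcal{S}_q$ ensured $N_f \in \mathbf{N}_q$, and $\mathbf{N}_q$ is open in $E_1$ via the Bers-type embedding (Proposition \ref{bers}); a small $E_1$-perturbation therefore stayed inside $\mathbf{N}_q$ and hence $\mathbf{F} \in \mathcal{S}_q$. Under the conjecture's hypothesis the extremal $f$ cannot lie in $\mathcal{S}_q$ (by Theorem \ref{Em-4} if $f$ is bounded, and trivially otherwise), so $N_f$ lies outside $\mathbf{N}_q$ and no openness is available. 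The only known Schwarzian criterion for univalence (Nehari's) requires $\|S_{\mathbf{F}}\|_{E_2} \leq 2$, far stronger than what our computation delivers. A successful proof thus seems to require either a refined Schwarzian univalence criterion exploiting strict inequality in Kraus--Nehari, or a genuinely geometric argument identifying the cusped boundary behavior that realizes $\beta_f(\tau) = |\tau|-1$ with the $E_2$-extremality $\|S_f\|_{E_2}=6$ — consistent with the observation, noted for the examples $\kappa_T$, $f_\gamma$ and univalent rationals with critical points on $\mathbb{T}$, that they all attain $(1-|z|^2)^2|S_f(z)| \to 6$ radially at the relevant boundary points. The absence of any direct functional identity between $\beta_f(\tau)$ and $\|S_f\|_{E_2}$ is what keeps this statement conjectural.
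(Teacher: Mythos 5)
The statement you were asked to prove is left as a conjecture in the paper: the author offers no proof, only supporting evidence (the $T$-symmetric Koebe functions $\kappa_T$, the spiral Koebe functions $f_\gamma$, and the proposition that every $R\in\mathcal{U}_R$ with a critical point on $\mathbb{T}$ satisfies $\beta_R(\tau)=|\tau|-1$ and $\|S_R\|_{E_2}=6$). So there is no paper proof to match, and your proposal, as you yourself acknowledge in its closing sentences, does not close the argument either. Your partial computations are correct: $N_{\mathbf F}=(1+\varepsilon)N_f$ gives $S_{\mathbf F}=(1+\varepsilon)S_f-\tfrac{\varepsilon(1+\varepsilon)}{2}[N_f]^2$, and the bound $\|S_{\mathbf F}\|_{E_2}\le(1+\varepsilon)c+18\varepsilon(1+\varepsilon)<6$ for small $\varepsilon$ follows from $\|N_f\|_{E_1}\le 6$. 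The identity $[\mathbf F'(z)]^{\tau}=[f'(z)]^{(1+\varepsilon)\tau}$ and the strict monotonicity of $t\mapsto\beta_f(\tau+t\tau)$ are also correctly deployed; note that Proposition \ref{m-l} is stated for $f\in\mathcal S_q$, but its proof uses nothing beyond the definition of $\beta_f$, so invoking it for a general extremal $f\in\mathcal S$ with $\beta_f(\tau)\ge|\tau|-1>0$ is legitimate, though you should say so explicitly.

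The genuine gap is exactly the one you name: nothing forces $\mathbf F=\mathbf h\circ f$ to be univalent. In the paper's proof of Theorem \ref{Em-4} this is secured by $\|N_{\mathbf F}-N_f\|_{E_1}\le 6\varepsilon$ together with the openness of $\mathcal T$ in $E_1$, and that mechanism is unavailable here precisely because an extremal $f$ for $B_b(\tau)$ with $\tau\le-2$ cannot lie in $\mathcal S_q$ (by Theorem \ref{Em-4} and Theorem \ref{CM}). Keeping $\|S_{\mathbf F}\|_{E_2}$ strictly below $6$ is only the Kraus--Nehari necessary condition; the sufficient Nehari bound is $2$, and no intermediate criterion is available that would convert ``strictly inside the Nehari ball'' into univalence. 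Without univalence of $\mathbf F$ there is no competitor in $\mathcal S$, hence no contradiction, and the contrapositive collapses. (Even granting univalence, one must still relate the competitor to the supremum defining $B_b(\tau)$; for real $\tau\le-2$ this is harmless since Theorem \ref{Binder} gives $B(\tau)=B_b(\tau)$, but it deserves a sentence.) In short, your proposal is a correct and well-organized diagnosis of why the statement resists proof, not a proof of it, and it is consistent with the paper's decision to record the statement only as a conjecture.
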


 \subsection{The third part of remarks} 
\begin{remark}\label{pn}
Let $R\in \mathcal{U}_P$. If $R$ has at least one critical point on $\mathbb{T}$, then, from Theorem \ref{Em-4}, Proposition \ref{add-p}, Theorem \ref{CM}, and the fact that $\beta_{R}(\tau)=|\tau|-1$ for $\tau\leq -2$, we know that $R$ does not belong to $\mathcal{S}_{Q}$ (defined as in Remark \ref{r-1}), and the Schwarzian derivative $S_R$ of $R$ may lie on $\partial \mathbf{S}_{Q}$ (here $\mathbf{S}_{Q}$ is defined as in (\ref{kongs})). 
 For example, the function $\mathcal{E}_2(z)=z-\frac{1}{2}z^2$ has a critical point on $\mathbb{T}$, and we know from \cite[Remark 5.9]{Jin} that $S_{\mathcal{E}_2}$ lies on  $\partial \mathbf{S}_{Q}$. Hence we believe that $S_R$ should lie on the boundary of $\mathbf{S}_{Q}$ if $R\in \mathcal{U}_R$ has at least one critical point on $\mathbb{T}$, although we have not found a proof for this claim. We leave it as the following 
\begin{conjecture}\label{conj-2}
Let $R\in \mathcal{U}_R$. Then $S_R$ lies on $\partial \mathbf{S}_{Q}$ if $R$ has at least one critical point on $\mathbb{T}$.
\end{conjecture}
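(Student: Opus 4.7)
The plan is to verify the two conditions $S_R \notin \mathbf{S}_Q$ and $S_R \in \overline{\mathbf{S}_Q}$ (closure in $E_2$) separately. For the first, I would use the classical characterization that $f \in \mathcal{S}_Q$ if and only if $f(\Delta)$ is a quasidisk in $\widehat{\mathbb{C}}$. Since the Schwarzian determines a holomorphic function up to postcomposition by a Möbius transformation, $S_R = S_f$ forces $f = M_b \circ R$ for some Möbius $M_b(w) = w/(1+bw)$. Then $f(\Delta) = M_b(R(\Delta))$, and since quasicircles on $\widehat{\mathbb{C}}$ are Möbius-invariant, $f \in \mathcal{S}_Q$ would force $R(\Delta)$ itself to be a quasidisk. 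But at the critical point $z_0 \in \mathbb{T}$ of $R$, the Taylor expansion $R(z) - R(z_0) = c(z-z_0)^{k+1} + O((z-z_0)^{k+2})$ with $k \geq 1$ makes the two boundary arcs through $R(z_0)$ tangent, producing a cusp (or a higher-order vertex of zero interior angle) that violates Ahlfors' three-point condition. Hence $R(\Delta)$ is not a quasidisk, and $S_R \notin \mathbf{S}_Q$.

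For $S_R \in \overline{\mathbf{S}_Q}$, I would look for $f_\epsilon \in \mathcal{S}_Q$ with $\|S_{f_\epsilon} - S_R\|_{E_2} \to 0$ as $\epsilon \to 0^+$. The naive candidate is the dilation $R_r(z) := R(rz)/r$, which lies in $\mathcal{S}_Q$ because $R$ is univalent on $\Delta(1/r) \supset \overline{\Delta}$. This, however, fails: for $R = \mathcal{E}_2$ a direct computation gives
\[
S_{\mathcal{E}_2}(z) - S_{R_r}(z) = -\frac{3(1-r)\bigl[(1+r) - 2rz\bigr]}{2(1-z)^2(1-rz)^2},
\]
and restricting to the radial segment $z = 1-\delta$ and letting $\delta \to 0^+$ for each fixed $r<1$ shows that $|S_{\mathcal{E}_2}(z) - S_{R_r}(z)|(1-|z|^2)^2 \to 6$, so $\|S_{\mathcal{E}_2} - S_{R_r}\|_{E_2} = 6$ uniformly in $r$. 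The obstruction is intrinsic: $|S_R(z)|(1-|z|^2)^2 \to 6$ as $z \to z_0$ radially, whereas any $g \in \mathcal{S}_Q$ whose Schwarzian is regular past $\mathbb{T}$ has $|S_g(z)|(1-|z|^2)^2 \to 0$ at every boundary point. A successful approximation must therefore preserve a Schwarzian singularity of the right strength near $z_0$. A natural attempt is to take $f_\epsilon$ as the normalized Riemann map from $\Delta$ onto $D_\epsilon$, the Jordan domain obtained from $R(\Delta)$ by smoothing each boundary cusp at scale $\epsilon$ so that $D_\epsilon$ is a quasidisk; then $f_\epsilon \in \mathcal{S}_Q$, and one hopes that its Schwarzian develops a $(1-z)^{-2}$-type near-singularity at the preimage of each smoothed cusp that mimics $S_R$ up to scale $\epsilon$.

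The main obstacle is the $E_2$-convergence in step two. Controlling $S_{f_\epsilon}$ uniformly in $\epsilon$ would require sharp conformal welding estimates on $\partial D_\epsilon$ and transferring them into pointwise bounds on $|S_{f_\epsilon}(z) - S_R(z)|(1-|z|^2)^2$ near the critical-point preimages, presumably in the spirit of Lehto's majorant principle. The paper explicitly leaves this as an open conjecture, and the analogous claim is established in \cite{Jin} only for the specific polynomial $\mathcal{E}_2$ by an ad hoc computation, so a general proof will likely demand techniques beyond the Teichmüller-theoretic framework developed elsewhere in this work.
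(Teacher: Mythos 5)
The statement you are asked to prove is not proved in the paper: it is stated explicitly as an open conjecture (Conjecture \ref{conj-2}), and the surrounding remark only records the single special case $R=\mathcal{E}_2$, for which the boundary membership is imported from \cite[Remark 5.9]{Jin}. So there is no "paper proof" to match your argument against, and your closing assessment of the situation is accurate.

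That said, your proposal splits cleanly into a part that works and a part that does not. The first half, $S_R\notin\mathbf{S}_Q$, is correct and is in fact a more direct route than the one the paper sketches in its remarks (the paper deduces only $R\notin\mathcal{S}_Q$ from the integral means spectrum $\beta_R(\tau)=|\tau|-1$ for $\tau\le-2$ together with Theorem 4; your quasidisk argument handles the full Möbius orbit at once, since $S_f=S_R$ forces $f=M\circ R$ and quasidisks are Möbius-invariant in $\widehat{\mathbb{C}}$). One small imprecision: at a simple boundary critical point the local model $R(z)-R(z_0)\sim c(z-z_0)^2$ gives interior angle $2\pi$, i.e.\ an \emph{outward} cusp with zero \emph{exterior} angle, not zero interior angle; the conclusion (failure of the Ahlfors three-point condition, hence not a quasicircle) is unaffected. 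Your explicit computation showing $\|S_{\mathcal{E}_2}-S_{R_r}\|_{E_2}=6$ for every dilation $R_r$ is also correct and is a genuinely useful observation about why the naive approximation fails.

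The genuine gap is the second half, $S_R\in\overline{\mathbf{S}}_Q$, and you have not closed it. The cusp-smoothing proposal produces maps $f_\epsilon\in\mathcal{S}_Q$, but nothing in your sketch controls $\sup_z|S_{f_\epsilon}(z)-S_R(z)|(1-|z|^2)^2$ near the preimages of the smoothed cusps; as your own dilation computation shows, the $E_2$-norm is exactly sensitive to the coefficient of the $(1-z/z_0)^{-2}$ singularity, and a smoothing at scale $\epsilon$ could just as easily keep this discrepancy of size $6$ as kill it. The one case where convergence is known, $\mathcal{E}_2$, is handled in \cite{Jin} by the explicit family $g_\gamma(z)=[(1-z)^{1+\gamma}-1]/(-\gamma-1)$, which deforms the \emph{exponent} of the singular factor of $R'$ rather than truncating the cusp geometrically; the natural generalization (replacing each factor $(z-z_j)$ of $R'$ by $(z-z_j)^{\gamma}$ and integrating) founders on proving univalence of the resulting function. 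So your proposal should be read as a correct proof of the "not in $\mathbf{S}_Q$" half plus an honest, and accurate, diagnosis of why the "in the closure" half remains open — which is also the status of the statement in the paper.
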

\end{remark}
We define the closed subspace $E_{1,0}$ of $E_1$ as
$$E_{1,0}:=\{\phi\in E_1: \lim\limits_{|z|\rightarrow 1^{-}}\phi(z)(1-|z|^2)=0\}.$$
We say two elements $\phi_1, \phi_2 \in E_1$ are equivalent, if $\phi_1-\phi_2\in E_{1,0}$. The equivalence class of $\phi\in E_1$  is denoted by $[\phi]_{E_1}$.  The set of all equivalence classes $[\phi]_{E_1}$ will be denote by $E_1/E_{1,0}$. $E_1/E_{1,0}$ is a Banach space with the quotient norm
$$\|[\phi]_{E_1}\|:=\inf\limits_{\psi \in [\phi]_{E_1}} \|\psi\|_{E_1}=\inf\limits_{\psi \in E_{1,0}} \|\phi+\psi\|_{E_1}.$$
We let $\widetilde{\mathcal{B}}: [\mu]_{AT}  \mapsto  [N_{f_{\mu}}]_{E_1}.$
The mapping $\widetilde{\mathcal{B}}$ is called {\em asymptotic Pre-Bers map}. It has been proved in \cite{Jin} that
\begin{proposition}\label{abers-1}
The mapping $\widetilde{\mathcal{B}}$ from $(AT, d_{AT})$ to $\widetilde{\mathbf{N}}_q$ in $E_1/E_{1,0}$ is a homeomorphism.
Here, $\widetilde{\mathbf{N}}_q:=\{[\phi]_{E_1}: \phi=N_f(z),\, f\in \mathcal{S}_q^{\infty}\}$ is an open subset of $E_1/E_{1,0}$.
\end{proposition}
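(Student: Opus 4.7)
Plan: I will verify that $\widetilde{\mathcal{B}}$ descends from the full Pre-Bers homeomorphism $\Lambda_1: T\to\mathbf{N}_q$ of Proposition \ref{bers} to a homeomorphism between the asymptotic quotients. This amounts to four tasks: well-definedness of $\widetilde{\mathcal{B}}$ as a map on $AT$; injectivity; two-sided continuity between $d_{AT}$ and the quotient norm on $E_1/E_{1,0}$; and openness of the image $\widetilde{\mathbf{N}}_q$.

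For well-definedness, suppose $\mu\approx\nu$ in $M(\Delta^*)$. By Proposition \ref{ppp}, one can replace $\mu,\nu$ by equivalent representatives (without altering $f_\mu|_\Delta$, $f_\nu|_\Delta$ or their Pre-Schwarzian derivatives) so that $\mu(z)-\nu(z)\to 0$ as $|z|\to 1^+$. The chain rule (\ref{dila}) then gives $b(\mathbf{h})=0$ for $\mathbf{h}=f_\nu\circ f_\mu^{-1}$, Lemma \ref{l--1} yields $|N_{\mathbf{h}}(\zeta)|\,\textup{dist}(\zeta,f_\mu(\mathbb{T}))\to 0$, and (\ref{chain}) combined with Proposition \ref{pro-1} upgrades this to $|N_{f_\mu}(z)-N_{f_\nu}(z)|(1-|z|^2)\to 0$, i.e.\ $N_{f_\mu}-N_{f_\nu}\in E_{1,0}$. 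Continuity of $\widetilde{\mathcal{B}}$ is then the estimate already run in the proof of Theorem \ref{Em-2}: Claim \ref{cla-1} produces $\widetilde{\nu}\in[\nu]_{AT}$ with $\|\mu_{\mathbf{h}}\|_\infty=\tanh d_{AT}([\mu]_{AT},[\nu]_{AT})$ for $\mathbf{h}=f_{\widetilde{\nu}}\circ f_\mu^{-1}$, and Lemma \ref{l--2} gives $\|N_{f_{\widetilde{\nu}}}-N_{f_\mu}\|_{E_1}\leq 8\|\mu_{\mathbf{h}}\|_\infty$; hence the quotient norm of $[N_{f_\nu}]_{E_1}-[N_{f_\mu}]_{E_1}$ is bounded by $8\tanh d_{AT}$. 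Openness of $\widetilde{\mathbf{N}}_q$ follows because $\mathbf{N}_q$ is open in $E_1$ (it equals $\Lambda_1(T)$ where $T$ carries its natural open structure, and $\Lambda_1$ is a homeomorphism onto its image) and the canonical projection $E_1\to E_1/E_{1,0}$ is an open map.

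The main obstacle is the remaining pair of statements, injectivity and inverse continuity, which together demand an asymptotic converse of Lemma \ref{l--2}: a Becker-type univalence criterion saying that if a univalent $\mathbf{h}$ on $\Omega$ admitting a quasiconformal extension satisfies $|N_{\mathbf{h}}(\zeta)|\rho_{\Omega}^{-1}(\zeta)\to 0$ as $\zeta\to\partial\Omega$, then its extension may be arranged so that $b(\mathbf{h})=0$, with a quantitative bound on the dilatation in terms of the weighted sup-norm of $N_{\mathbf{h}}$ on neighborhoods of $\partial\Omega$. To prove injectivity, given $N_{f_\mu}-N_{f_\nu}\in E_{1,0}$, the chain rule (\ref{chain}) with Proposition \ref{pro-1} delivers $|N_{\mathbf{h}}(\zeta)|\rho_{f_\mu(\Delta)}^{-1}(\zeta)\to 0$ at $\partial f_\mu(\Delta)$ for $\mathbf{h}=f_\nu\circ f_\mu^{-1}$; I would then invoke Becker's extension theorem on a family of slightly enlarged Jordan domains where the $E_1$-norm of $N_{\mathbf{h}}$ becomes arbitrarily small, produce extensions whose Beltrami coefficients vanish outside compact subsets of $\Delta^*$, use uniqueness to force $b(\mathbf{h})=0$, and conclude $\mu\approx\nu$ from (\ref{dila}). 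A quantitative version of the same machinery yields inverse continuity: if $\|[N_{f_{\mu_n}}]_{E_1}-[N_{f_\mu}]_{E_1}\|\to 0$, picking representatives realizing the quotient norm to within $1/n$ and feeding them into the local Becker extension gives $\widetilde{\mu}_n\in[\mu_n]_{AT}$ with $\|\sigma(\widetilde{\mu}_n,\mu)\|_\infty$ controlled by the quotient norm, and therefore $d_{AT}([\mu_n]_{AT},[\mu]_{AT})\to 0$.
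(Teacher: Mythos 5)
First, a point of reference: this paper does not actually prove Proposition \ref{abers-1} --- it is quoted from the author's earlier work \cite{Jin} (``It has been proved in \cite{Jin} that\dots''), so there is no in-paper proof to match your argument against; I can only assess your proposal on its own terms. The parts you do carry out in detail are fine and correctly reuse the paper's machinery: well-definedness is exactly the computation inside Proposition \ref{well} (Proposition \ref{ppp}, the chain rules (\ref{dila}) and (\ref{chain}), Lemma \ref{l--1}, Proposition \ref{pro-1}), and forward continuity is exactly the estimate from the proof of Theorem \ref{Em-2} (Claim \ref{cla-1} plus Lemma \ref{l--2}).

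There are, however, two genuine gaps. The smaller one is openness: ``$\Lambda_1$ is a homeomorphism onto its image'' does not imply the image is open in $E_1$, so as written this step is a non sequitur. It can be repaired --- one should use that $\mathcal{T}$ is open in $E_1$ (the paper cites \cite{Zhur} for this), check that every $N_f$ with $f\in\mathcal{S}_q$ agrees with some $N_{f_\nu}$, $f_\nu\in\mathcal{S}_q^{\infty}$, modulo $E_{1,0}$ (so that the projection of $\mathbf{N}_q$ equals the projection of $\mathcal{T}$), and then invoke openness of the quotient map $E_1\to E_1/E_{1,0}$ --- but the repair is not what you wrote. The serious gap is the one you yourself flag as ``the main obstacle'': injectivity and inverse continuity rest entirely on an asserted quantitative, asymptotic Becker/Ahlfors--Weill extension theorem on a general quasidisk $\Omega=f_\mu(\Delta)$, namely that $\limsup_{\zeta\to\partial\Omega}|N_{\mathbf{h}}(\zeta)|\rho_\Omega^{-1}(\zeta)$ small forces $\mathbf{h}$ to admit a quasiconformal extension with correspondingly small boundary dilatation $b(\mathbf{h})$. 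This converse to Lemmas \ref{l--1} and \ref{l--2} is precisely the content of the proposition (it is what makes the asymptotic pre-Bers map injective with continuous inverse), and your sketch --- ``invoke Becker's extension theorem on slightly enlarged Jordan domains \dots use uniqueness to force $b(\mathbf{h})=0$'' --- does not supply it: Becker's criterion in the literature is formulated on the disk with a global bound $\|N\|_{E_1}\le 1$, the quasiconformal extension of $\mathbf{h}$ is far from unique (so ``uniqueness'' forces nothing), and no construction is given of an extension whose dilatation is controlled near $\partial\Omega^{*}$ by the boundary behaviour of $N_{\mathbf{h}}$. Until that lemma is stated precisely and proved (or properly cited), the proposal establishes only that $\widetilde{\mathcal{B}}$ is a well-defined continuous surjection onto $\widetilde{\mathbf{N}}_q$, not that it is a homeomorphism.
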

\begin{remark}\label{above}
In view of Proposition \ref{abers-1}, we can identify the universal asymptotic Teichm\"uller space $AT$ with $\widetilde{\mathbf{N}}_q$.  \end{remark}
\begin{remark}\label{rem-2}
Let $P_n\in \mathcal{U}_R$. If $P_n$ has no critical points on $\mathbb{T}$, then it is easy to check that $N_{P_n}$ is contained in $E_{1,0}$ so that $N_{P_n}$ belongs to the class $[0]_{E_1}$ in $E_1/E_{1,0}$. On the other hand, if $P_n$ has at least one critical point on $\mathbb{T}$, then we see that there is no function $f\in \mathcal{S}_q$ such that $N_{P_n}-N_{f}\in E_{1,0}$. Actually, if there is a function $f_0\in \mathcal{S}_q$ such that $N_{P_n}-N_{f_0}\in E_{1,0}$, then, by using the same arguments as in the proof of Proposition \ref{well}, we obtain that $\beta_{f_0}(\tau)=\beta_{P_n}(\tau)=|\tau|-1$ for $\tau \leq -2$, which contradicts Theorem \ref{CM} and \ref{Em-4}. Hence, we further raise the following {\em dichotomy conjecture for univalent polynomials}. 
\begin{conjecture}\label{dich}
Let $P_n \in {\mathcal{U}}_P$. Then (1) $[N_{P_n}]_{E_1}$ lies on the origin, i.e., $[0]_{E_1}$, of the universal asymptotic Teichm\"uller space $\widetilde{\mathbf{N}}_q$, if $P_n$ has no critical points on $\mathbb{T}$; (2) 
$[N_{P_n}]_{E_1}$ lies on the boundary of $\widetilde{\mathbf{N}}_q$, if $P_n$ has at least one critical point on $\mathbb{T}$.
\end{conjecture}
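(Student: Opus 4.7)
The plan is to handle Part~(1) by a direct computation, and to separate Part~(2) into the non-membership (which follows from results already in the paper) and the closure-membership (which requires a new construction).

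For Part~(1), since $P_n\in\mathcal{U}_P$ is univalent in $\Delta$ we have $P_n'\neq 0$ in $\Delta$, and the hypothesis that $P_n$ has no critical points in $\mathbb{T}$ gives $P_n'\neq 0$ on all of $\overline{\Delta}$. Hence $N_{P_n}=P_n''/P_n'$ extends analytically to a neighborhood of $\overline{\Delta}$ and is bounded there, so $|N_{P_n}(z)|(1-|z|^2)\to 0$ as $|z|\to 1^-$, i.e., $N_{P_n}\in E_{1,0}$, giving $[N_{P_n}]_{E_1}=[0]_{E_1}\in\widetilde{\mathbf{N}}_q$ (take $f(z)=z$). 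For Part~(2), the non-membership $[N_{P_n}]_{E_1}\notin\widetilde{\mathbf{N}}_q$ is already contained in Remark~\ref{rem-2}: any $f\in\mathcal{S}_q^{\infty}$ with $N_f-N_{P_n}\in E_{1,0}$ would, by the argument in the proof of Proposition~\ref{well}, satisfy $\beta_f(\tau)=\beta_{P_n}(\tau)=|\tau|-1$ for $\tau\leq -2$ by Theorem~\ref{l-pro}$(b_2)$, contradicting Theorem~\ref{Em-4} combined with Theorem~\ref{CM}.

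What remains is closure-membership $[N_{P_n}]_{E_1}\in\overline{\widetilde{\mathbf{N}}_q}$. I would appeal to the $E_1$-analogue of the Anderson-Clunie-Pommerenke theorem,
\[
\|[\phi]_{E_1}\|_{E_1/E_{1,0}} \;=\; \limsup_{|z|\to 1^-}|\phi(z)|(1-|z|^2),
\]
reducing the task to constructing $\{f_j\}\subset\mathcal{S}_q^{\infty}$ with $\limsup_{|z|\to 1^-}|N_{f_j}(z)-N_{P_n}(z)|(1-|z|^2)\to 0$. Note that the naive rescaling $f_j(z)=P_n(r_j z)/r_j$ does \emph{not} work: the Pre-Schwarzian pole at $z_k/r_j$ lies strictly outside $\overline{\Delta}$, so $N_{f_j}(z)(1-|z|^2)\to 0$ along the radius $z=(1-t)z_k$ while $N_{P_n}(z)(1-|z|^2)\to -2\bar z_k$, forcing $\limsup\geq 2$ for every~$j$.

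The construction I propose is geometric. At each critical point $z_k\in\mathbb{T}$, the factorization $P_n'(z)=(z-z_k)Q_k(z)$ with $Q_k(z_k)\neq 0$ gives $N_{P_n}(z)=1/(z-z_k)+O(1)$ near $z_k$, and $P_n(\Delta)$ is locally modeled on the complement of a straight slit issuing from $w_k:=P_n(z_k)$ (because $P_n(z)-w_k\sim\tfrac{1}{2}P_n''(z_k)(z-z_k)^2$). For $\delta_j\to 0^+$, thicken each such slit to a symmetric wedge of opening angle $2\delta_j$, and let $D_j$ be the resulting quasidisk whose boundary agrees with $\partial P_n(\Delta)$ outside the cusp neighborhoods. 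Let $f_j\colon\Delta\to D_j$ be the Riemann map normalized in $\mathcal{S}$, further pre-composed with a M\"obius self-map of $\Delta$ so that each wedge tip is the image of the corresponding $z_k$; since $D_j$ is a quasidisk, $f_j$ extends to a quasiconformal homeomorphism of $\widehat{\mathbb{C}}$ fixing $\infty$, giving $f_j\in\mathcal{S}_q^{\infty}$. Standard corner asymptotics at an interior angle $2\pi-2\delta_j$ yield
\[
N_{f_j}(z) \;=\; \frac{1-2\delta_j/\pi}{z-z_k}+O(1)\qquad \text{as } z\to z_k,
\]
so $N_{f_j}$ and $N_{P_n}$ share the same pole location $z_k$ with residues differing by $2\delta_j/\pi$. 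The estimate $(1-|z|^2)\leq 2|z-z_k|$ then gives $|N_{f_j}(z)-N_{P_n}(z)|(1-|z|^2)\leq 4\delta_j/\pi+o(1)$ near each $z_k$, and Carath\'eodory convergence $D_j\to P_n(\Delta)$ combined with the factor $(1-|z|^2)$ drives the difference to $0$ on the rest of $\mathbb{T}$; thus $\limsup|N_{f_j}-N_{P_n}|(1-|z|^2)\leq 4\delta_j/\pi\to 0$.

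The principal obstacle is the simultaneous alignment requirement $f_j(z_k)=$ the $k$th wedge tip for all critical points $z_k$ while preserving $f_j\in\mathcal{S}$: the M\"obius self-group of $\Delta$ has only three real parameters, so direct M\"obius pre-composition suffices when $P_n$ has at most three critical points on $\mathbb{T}$ or enjoys sufficient symmetry (as for $P_n(z)=z-\tfrac{1}{n}z^n$), but the general case requires an implicit-function or continuity argument in the multi-parameter family of quasidisks parametrized by wedge openings and positions. A secondary obstacle is the uniform-in-$j$ control of the $O(1)$ subleading terms in the corner expansion of $N_{f_j}$, which should follow from quantitative quasiconformal welding estimates at a corner.
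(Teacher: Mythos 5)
The statement you are proving is stated in the paper as a \emph{conjecture}, and the paper offers no proof of it: Remark~\ref{rem-2} observes that part~(1) is easy and establishes only the non-membership half of part~(2) (there is no $f\in\mathcal{S}_q$ with $N_{P_n}-N_f\in E_{1,0}$), while Remark~\ref{add-1} verifies closure-membership for the single example $\mathcal{E}_2(z)=z-\tfrac12 z^2$ via the explicit family $g_\gamma(z)=[(1-z)^{1+\gamma}-1]/(-\gamma-1)$, the author stating explicitly that the general case ``seems not easy.'' Your treatment of part~(1) and of the non-membership half of part~(2) is correct and coincides with the paper's remarks, and your reduction of closure-membership to $\limsup_{|z|\to1^-}|N_{f_j}(z)-N_{P_n}(z)|(1-|z|^2)\to0$ via the distance formula to $E_{1,0}$ is sound (that formula is a standard fact for growth spaces, though you should cite it). Your wedge construction is also the right heuristic --- it is exactly the geometric content of the paper's example, where replacing the slit exponent $2$ by $1+\gamma$ changes the residue of the pre-Schwarzian pole from $1$ to $\gamma$.

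However, the closure-membership half is precisely where the conjecture is open, and your proposal does not close it; the obstacles you flag yourself are genuine gaps, not technicalities. Concretely: (i) for a domain-first construction you must control \emph{where on $\mathbb{T}$} the Riemann map of the wedged quasidisk places each corner prevertex, and the three-parameter M\"obius group cannot pin down more than three critical points, so the ``implicit-function or continuity argument'' you invoke is the missing proof; (ii) the corner expansion $N_{f_j}(z)=(1-2\delta_j/\pi)/(z-z_k)+O(1)$ needs the $O(1)$ term bounded \emph{uniformly in $j$}, which does not follow from qualitative corner asymptotics; (iii) Carath\'eodory kernel convergence $D_j\to P_n(\Delta)$ gives only locally uniform convergence of $f_j$ inside $\Delta$ and does not by itself control $|N_{f_j}-N_{P_n}|(1-|z|^2)$ near $\mathbb{T}$ away from the $z_k$. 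A cleaner route, suggested by the paper's own example, is map-first: set $f_\gamma'(z)=\prod_k(z-z_k)^{\gamma-1}\,P_n'(z)$, so that $N_{f_\gamma}-N_{P_n}=(\gamma-1)\sum_k (z-z_k)^{-1}$ has $E_1$-norm at most $2n(1-\gamma)\to0$; but then the entire difficulty is transferred to proving that $f_\gamma$ is univalent with a quasiconformal extension for $\gamma$ near $1$, which a perturbation argument cannot supply since $N_{P_n}$ lies on $\partial\mathcal{T}$ rather than in the open set $\mathcal{T}$. Until one of these gaps is filled, you have a plausible strategy consistent with the paper's evidence, not a proof.
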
  \end{remark}
\begin{remark}\label{add-1}
As remarked above, the proof of part (1) of Conjecture \ref{dich} is easy. However, checking the validity of part (2) of Conjecture \ref{dich} seems not easy. We will give an example for part (2) of  Conjecture \ref{dich}. We consider again the function $\mathcal{E}_2(z)=z-\frac{1}{2}z^2$ which has one critical point on $\mathbb{T}$ and belongs to $\partial\mathcal{T}$. 
It is known from \cite[Remark 5.9 $({\bf II})$]{Jin} that \begin{equation}\label{add-0}
\lim\limits_{\gamma\rightarrow 1^{-}} \|N_{g_{\gamma}}-N_{\mathcal{E}_2}\|_{E_1}=0.\end{equation} Here, $g_{\gamma}(z):=[(1-z)^{1+\gamma}-1]/(-\gamma-1), \gamma \in (0,1)$ are univalent functions in $\Delta$ which admit a quasiconformal extension to $\widehat{\mathbb{C}}$. Consequently, note that $g_{\gamma}, \gamma\in (0,1)$ are all bounded in $\Delta$, we can find a family of functions $f_{\gamma}\in \mathcal{S}_q^{\infty}$ such that $\varsigma_{\gamma} \circ f_{\gamma}=g_{\gamma}, z\in \widehat{\mathbb{C}}$ so that $N_{f_{\gamma}}-N_{g_{\gamma}}\in E_{1,0}$ for each $\gamma \in (0,1)$. Here $\varsigma_{\gamma}$ are the M\"obius transformations with the form $\varsigma_{\gamma}(\eta)=\frac{\eta}{1+\eta a_{\gamma}}$, $a_{\gamma}\in \widehat{\mathbb{C}}, a_{\gamma}^{-1}\neq \overline{g_{\gamma}(\Delta)}$. Then, from the fact $$\|[N_{g_{\gamma}}]_{E_1}-[N_{\mathcal{E}_2}]_{E_1}\|\leq \|N_{g_{\gamma}}-N_{\mathcal{E}_2}\|_{E_1},$$
and (\ref{add-0}), we obtain that 
$$\lim\limits_{\gamma\rightarrow 1^{-}} \|[N_{f_{\gamma}}]_{E_1}-[N_{\mathcal{E}_2}]_{E_1}\|=\lim\limits_{\gamma\rightarrow 1^{-}} \|[N_{g_{\gamma}}]_{E_1}-[N_{\mathcal{E}_2}]_{E_1}\|=0.$$
On the other hand, from Remark \ref{above}, we know that there is no functions $f\in \mathcal{S}_q$ such that $N_{\mathcal{E}_2}-N_{f}\in E_{1,0}.$ Combining these facts, we have checked that $[N_{\mathcal{E}_2}]_{E_1}$ lies on the boundary of $\widetilde{\mathbf{N}}_q$.  
\end{remark}
\begin{remark}
When $P_n \in \mathcal{U}_P$ has no critical points on $\mathbb{T}$, we have pointed out that $N_{P_n}$ belongs to ${E_{1,0}}$. On the other hand, for a univalent function (conformal mapping) $f$ from $\Delta$ to a bounded Jordan domain in $\mathbb{C}$, we know that $f$ is an asymptotically conformal mapping if and only if $N_f$ belongs to $E_{1,0}$, see \cite{GS}, \cite{Po-1} or \cite{Po-2}. Hence it is natural to ask whether $P_n$ is an asymptotically conformal mapping in $\Delta$ when $P_n \in \mathcal{U}_P$ has no critical points on $\mathbb{T}$. However, we find that there exists $P_n\in \mathcal{U}_P$ with no critical points on $\mathbb{T}$ for which $P_n(\mathbb{T})$ is not a bounded Jordan curve so that $P_n$ cannot be asymptotically conformal. As an example, we consider ${\bf{P}}(z):=z+\frac{\sqrt{3}}{2}z^2+\frac{1}{4}z^3, z\in \Delta.$ Note that ${\bf{P}}(z)=\pi_2 \circ \overline{{\bf{P}}} \circ \pi_1(z), z\in \Delta$. Here, $w=\pi_1(z)=\frac{\sqrt{3}}{2}z$, which  maps $\Delta$ conformally to the disk $\Delta(\frac{\sqrt{3}}{2})=\{w\in \mathbb{C}: |w|<\sin(\frac{\pi}{3})=\frac{\sqrt{3}}{2}\}$,  $\zeta=\overline{{\bf{P}}}(w)=(1+w)^3-1$, $\pi_2(\zeta)=\frac{2}{3\sqrt{3}}\zeta$. Then, by Corollary 5.7.3 in \cite[Page 197]{Sheil}, we obtain that ${\bf{P}}$ belongs to $\mathcal{U}_P$, or we can check this fact by using Dieudonné's univalence criterion, see \cite[Page 75]{Du}. Meanwhile, we see from ${\bf{P}}'(z)=(\frac{\sqrt{3}}{2}z+1)^2$ that ${\bf{P}}'\neq 0$ for any $|z|<2/\sqrt{3}$. Also, we easily check that ${\bf{P}}(-\frac{\sqrt{3}}{2}+\frac{1}{2}i)={\bf{P}}(-\frac{\sqrt{3}}{2}-\frac{1}{2}i)$, which means that the curve ${\bf P}(\mathbb{T})$ is not a bounded Jordan curve in $\mathbb{C}$. 
\end{remark}
\begin{remark}
If Conjecture \ref{con-add} holds, namely, $B_{b}(\tau)$ has an extremal function $f$ whose Pre-Schwarzian derivative lies on $\partial\mathcal{T}$, then, by using the same arguments as in Remark \ref{add-1}, we can obtain that $[N_{f}]_{E_1}$ lies on the boundary of $\widetilde{\mathbf{N}}_q$. This means that Conjecture \ref{con-add} implies that
\begin{conjecture}
For each $\tau\in \mathbb{C}$, $B_{b}(\tau)$ has  at least one extremal function $f\in \mathcal{S}$ such that $[N_f]_{E_1}$ lies on the boundary of the universal asymptotic Teichm\"uller space $\widetilde{\mathbf{N}}_q$. 
\end{conjecture}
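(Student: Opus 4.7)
The plan is to derive this final conjecture as a direct consequence of Conjecture \ref{con-add}, adapting verbatim the line of reasoning used in Remark \ref{add-1} to pass from an extremal function with Pre-Schwarzian in $\partial \mathcal{T}$ to an equivalence class on the boundary of $\widetilde{\mathbf{N}}_q$. So the proof proposal is conditional on Conjecture \ref{con-add}.

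First I would fix $\tau \in \mathbb{C}$ and invoke Conjecture \ref{con-add} to pick an extremal function $f \in \mathcal{S}$ for $B_b(\tau)$ with $N_f \in \partial \mathcal{T}$. By definition of $\partial \mathcal{T}$, there is a sequence $\{g_n\} \subset \mathcal{S}_q$ with $\|N_{g_n} - N_f\|_{E_1} \to 0$. Each $g_n$ is bounded in $\Delta$, and its quasiconformal extension sends $\infty$ to some finite point $a_n^{-1} \notin \overline{g_n(\Delta)}$. Following Remark \ref{add-1} exactly, I would introduce the M\"obius transformations $\varsigma_n(\eta) = \eta/(1 + a_n \eta)$ and set $f_n := \varsigma_n^{-1} \circ g_n$, so that $f_n \in \mathcal{S}_q^{\infty}$. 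Using the chain rule (\ref{chain}) one gets $N_{g_n}(z) - N_{f_n}(z) = N_{\varsigma_n}(f_n(z)) f_n'(z)$, and since $N_{\varsigma_n}(\eta) = -2a_n/(1+a_n \eta) \to 0$ as $\eta \to \infty$ while $f_n$ maps the boundary to $\infty$, one checks that $N_{g_n} - N_{f_n} \in E_{1,0}$. Consequently $[N_{g_n}]_{E_1} = [N_{f_n}]_{E_1} \in \widetilde{\mathbf{N}}_q$, and since the quotient norm satisfies $\|[N_{g_n}]_{E_1} - [N_f]_{E_1}\| \le \|N_{g_n} - N_f\|_{E_1} \to 0$, this places $[N_f]_{E_1}$ in the closure $\overline{\widetilde{\mathbf{N}}_q}$.

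Next I would rule out $[N_f]_{E_1} \in \widetilde{\mathbf{N}}_q$. If it were, there would exist $\widetilde{f} \in \mathcal{S}_q^{\infty}$ with $N_f - N_{\widetilde{f}} \in E_{1,0}$. By the observation recorded in Remark \ref{r-1} (which follows by applying Lemma \ref{key-lemma} with $r_0$ chosen so that $\sup_{|z|\in (r_0,1)} |N_f - N_{\widetilde{f}}|(1-|z|^2) < \varepsilon/|\tau|$ for arbitrarily small $\varepsilon$), the condition $N_f - N_{\widetilde{f}} \in E_{1,0}$ forces $\beta_f(\tau) = \beta_{\widetilde{f}}(\tau)$. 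Hence $\widetilde{f} \in \mathcal{S}_q$ would itself be extremal for $B_b(\tau)$, in direct contradiction with Theorem \ref{Em-4}. Combining both steps gives $[N_f]_{E_1} \in \partial \widetilde{\mathbf{N}}_q$, which is what is claimed.

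The main obstacle is in the first step, specifically the verification that $N_{g_n} - N_{f_n} \in E_{1,0}$ uniformly enough to survive the passage to the quotient. One needs reasonable control of $(1 - |z|^2) |N_{\varsigma_n}(f_n(z))| |f_n'(z)|$ as $|z| \to 1^-$; this reduces to showing that $|1 + a_n f_n(z)|$ stays bounded away from zero on a neighborhood of $\mathbb{T}$ of size independent of $z$, which follows from $a_n^{-1} \notin \overline{g_n(\Delta)}$ together with the quasiconformal structure of $g_n$ near $\partial \Delta$. A secondary issue is the dependence on $n$: while Remark \ref{add-1} deals with a single function $\mathcal{E}_2$, here one applies the same procedure to each member of an approximating sequence; uniform estimates are not needed since only pointwise (in $n$) convergence in the quotient is invoked, but this point must be made explicit. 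Everything else is a transcription of the arguments already present in Proposition \ref{well}, Theorem \ref{Em-4}, and Remark \ref{add-1}.
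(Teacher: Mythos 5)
Your proposal coincides with the paper's own treatment: the statement is left there as a conjecture, and the only justification the paper offers is exactly your conditional derivation --- Conjecture \ref{con-add} plus the approximation argument of Remark \ref{add-1} places $[N_f]_{E_1}$ in $\overline{\widetilde{\mathbf{N}}_q}$ via the quotient-norm inequality, while Proposition \ref{well} combined with Theorem \ref{Em-4} excludes $[N_f]_{E_1}\in\widetilde{\mathbf{N}}_q$. One small slip: $f_n=\varsigma_n^{-1}\circ g_n$ lies in $\mathcal{S}_b$ and is therefore bounded on $\Delta$, so it does not ``map the boundary to $\infty$''; the correct reason that $N_{g_n}-N_{f_n}=N_{\varsigma_n}(f_n(z))\,f_n'(z)$ belongs to $E_{1,0}$ is the one you supply afterwards, namely that $|1+a_nf_n(z)|=|1-a_ng_n(z)|^{-1}$ is bounded above and below on $\Delta$ because $a_n^{-1}\notin\overline{g_n(\Delta)}$, while $(1-|z|^2)|f_n'(z)|\leq 4\,\textup{dist}(f_n(z),\partial f_n(\Delta))\rightarrow 0$ by Proposition \ref{pro-1}.
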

\end{remark}
 
 \subsection{The final part of remarks} 
In the paper \cite{SS}, Shimorin introduced and studied the following multiplication operator $\mathcal{M}_f$, induced by the Schwarzian derivative of univalent function $f\in \mathcal{S}$,
 $$\mathcal{M}_f(\phi)(z):=S_f(z)\phi(z),\, \phi\in \mathcal{A}(\Delta).$$
There is a close connection between the norm of multiplication operator $\mathcal{M}_f$ and the Brennan conjecture. It has been proved in \cite{SS} that 
 \begin{proposition}\label{last-p}
 If some univalent function $f\in \mathcal{S}$ satisfies that
\begin{equation}\label{mo}\|\mathcal{M}_f\|^2\leq \frac{36(\alpha+3)(\alpha+5)}{(\alpha+2)(\alpha+4)},\end{equation}
 for any $\alpha>0$. Then $\beta_f(-2)\leq 1$, that is to say, the Brennan conjecture is true for the function $f$. 
 Here,
 $$\|\mathcal{M}_f\|=\sup_{\|\phi\|_{\alpha}\neq0, \phi\in \mathcal{H}_{\alpha}^{2}(\Delta)}\frac{\|S_f(z)\phi(z)\|_{\alpha+4}}{\|\phi(z)\|_{\alpha}}:=\mathbb{M}_f(\alpha).$$
 \end{proposition}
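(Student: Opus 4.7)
The plan is to apply Lemma \ref{cri} to convert the assertion $\beta_f(-2)\leq 1$ into showing that the analytic function $\psi:=(f')^{-1}$ lies in $\mathcal{H}_\alpha^{2}(\Delta)$ for every $\alpha>0$. Setting $h:=(f')^{-1/2}$, one has $\psi=h^{2}$ and the classical Schwarzian identity $2h''+S_{f}h=0$. This converts the multiplier hypothesis into a control of the second derivative of $h$:
\begin{equation*}
\|h''\|_{\alpha+4}^{2}=\tfrac{1}{4}\|S_{f}h\|_{\alpha+4}^{2}\leq \tfrac{1}{4}\|\mathcal{M}_{f}\|^{2}\|h\|_{\alpha}^{2}.
\end{equation*}

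Next I would establish a sharp weighted Hardy-type inequality in the Bergman setting. Expanding an analytic $\phi(z)=\sum a_{n}z^{n}$ and comparing $\|z^{n}\|_{\alpha}^{2}=n!/(\alpha+2)_{n}$ with $\|z^{n-2}\|_{\alpha+4}^{2}$ produces an estimate of the form
\begin{equation*}
\|\phi\|_{\alpha}^{2}\leq C(\alpha)\|\phi''\|_{\alpha+4}^{2}+|\phi(0)|^{2}+\frac{|\phi'(0)|^{2}}{\alpha+2},
\end{equation*}
whose sharp constant involves precisely the factor $(\alpha+2)(\alpha+4)/[(\alpha+3)(\alpha+5)]$ appearing in the hypothesis. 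Applied to $\phi=h$, noting $h(0)=1$ and $|h'(0)|=|f''(0)|/2\leq 2$ via the de Branges coefficient bound, the inequality combined with the assumption $\|\mathcal{M}_f\|^2 \leq 36(\alpha+3)(\alpha+5)/[(\alpha+2)(\alpha+4)]$ is calibrated exactly so that the coefficient of $\|h\|_{\alpha}^{2}$ arising on the right is strictly less than $1$. Absorbing this term gives $\|h\|_{\alpha}^{2}<\infty$ for every $\alpha>-1$; equivalently, by Lemma \ref{cri}, $\beta_f(-1)\leq 0$.

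To pass from $h\in\mathcal{H}_\alpha^{2}$ for all $\alpha>-1$ to $\psi=h^{2}\in\mathcal{H}_\beta^{2}$ for all $\beta>0$, I would invoke the classical embedding of weighted Bergman spaces $A_\alpha^{2}(\Delta)\hookrightarrow A_{2\alpha+2}^{4}(\Delta)$, which yields $\int_\Delta|h|^{4}(1-|z|^{2})^{2\alpha+2}\,dA<\infty$, i.e., $\psi=h^{2}\in\mathcal{H}_{2\alpha+2}^{2}(\Delta)$. Letting $\alpha\downarrow -1$ gives $\psi\in\mathcal{H}_\beta^{2}(\Delta)$ for every $\beta>0$, and Lemma \ref{cri} then delivers $\beta_f(-2)\leq 1$.

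The main obstacle is calibrating the sharp Hardy constant in the weighted Bergman space so that the numerical factor in the hypothesis is exactly what is needed to absorb the $\|h\|_\alpha^{2}$ term arising from the multiplier bound. A naive one-step Hardy estimate produces only a constant of order $1/[2(\alpha+2)(\alpha+3)]$, which is insufficient for small $\alpha$; the closing step therefore requires a finer computation exploiting the precise structure of the Schwarzian identity $2h''+S_{f}h=0$ together with the reproducing kernels of $\mathcal{H}_\alpha^{2}$ and $\mathcal{H}_{\alpha+4}^{2}$. This calibration is the technical heart of the argument, and once it is in place the passage from $h$ to $h^{2}$ via the Bergman embedding is a routine final step.
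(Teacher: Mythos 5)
The paper offers no proof of this statement: it is quoted verbatim from Shimorin \cite{SS} (Proposition 8 there), so there is nothing internal to compare against, and your proposal must stand on its own. It does not. The decisive step — which you yourself defer as ``the technical heart of the argument'' — is exactly where the plan breaks. With the paper's normalization $\|z^n\|_\alpha^2=n!\,\Gamma(\alpha+2)/\Gamma(n+\alpha+2)$, the best constant in $\|\phi\|_\alpha^2\leq C(\alpha)\|\phi''\|_{\alpha+4}^2+|\phi(0)|^2+|\phi'(0)|^2/(\alpha+2)$ is obtained by maximizing $\|z^n\|_\alpha^2/\|(z^n)''\|_{\alpha+4}^2=\frac{(n+\alpha+2)(n+\alpha+3)}{n(n-1)(\alpha+2)(\alpha+3)(\alpha+4)(\alpha+5)}$ over $n\geq 2$; the maximum sits at $n=2$ and gives $C(\alpha)=\frac{1}{2(\alpha+2)(\alpha+3)}$. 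This is the \emph{sharp} constant, not a ``naive'' one, and it does not contain the factor $(\alpha+2)(\alpha+4)/[(\alpha+3)(\alpha+5)]$ you claim it does. Feeding in $\|h''\|_{\alpha+4}^2\leq\frac14\mathbb{M}_f(\alpha)^2\|h\|_\alpha^2$ with the hypothesized bound, the absorption coefficient is $\frac{C(\alpha)}{4}\cdot\frac{36(\alpha+3)(\alpha+5)}{(\alpha+2)(\alpha+4)}=\frac{9(\alpha+5)}{2(\alpha+2)^2(\alpha+4)}$, which equals $45/32>1$ at $\alpha=0$ and tends to $6$ as $\alpha\downarrow-1$. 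Since both constants entering the product are individually attained (the Hardy constant on $z^2$, the multiplier bound for the Koebe function), no ``finer calibration'' of the same two inequalities can rescue the single-weight absorption; one must change the scheme.

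There is a second, independent gap: the hypothesis (7.1) is assumed only for $\alpha>0$, but your route requires $h=(f')^{-1/2}\in\mathcal{H}_\alpha^2$ for \emph{all} $\alpha>-1$, because the (correct) critical embedding $A_\alpha^2\hookrightarrow A_{2\alpha+2}^4$ produces $(f')^{-1}\in\mathcal{H}_\beta^2$ only for $\beta=2\alpha+2$, and $\beta\downarrow 0$ forces $\alpha\downarrow-1$ — a range in which you have no multiplier control at all. (There is also the usual circularity in the absorption, fixable by dilations $f(rz)/r$, but that is minor.) Shimorin's actual argument avoids both problems by working with $u=(f')^{-1}$ directly: then $u''=2(h')^2-S_fu$, and controlling the extra term $2(h')^2$ brings in the multiplier norm at a second weight; this is why the paper's final remark records that the proof really uses the combined quantity $\mathbb{M}_f(\alpha)+\mathbb{M}_f(\alpha+2)$ for small $\alpha>0$. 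Your outline reproduces the correct opening moves (Lemma \ref{cri}, the identity $2h''+S_fh=0$, the coefficient computation) but does not contain the idea that makes the proposition true.
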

\begin{remark}It is easy to see that, if for any $\alpha>0$, (\ref{mo}) holds for all $f\in \mathcal{S}_q$, then the Brennan conjecture is true. On the other hand, it has been pointed out in \cite[Page 1633]{SS} that there exists univalent function $f\in \mathcal{S}$ such that 
$\mathbb{M}_f(\alpha)>\mathbb{M}_{\kappa}(\alpha)$ when $\alpha>-1$. Meanwhile, for any $\alpha>-1$, it has been shown in \cite{SS} that, 
\begin{equation}\label{last-e}\mathbb{M}_{\kappa_{\Theta}}^2(\alpha)=\frac{36(\alpha+3)(\alpha+5)}{(\alpha+2)(\alpha+4)}.\end{equation}
 Here, $\kappa_{\Theta}$ are the Koebe function $\kappa$ and its rotations, which are defined as
\begin{equation}\label{mob}\kappa_{\Theta}(z):=\frac{z}{(1-e^{i\Theta} z)^2},\, \Theta \in (-\pi, \pi],\, z\in \Delta. \end{equation}
Let $\varrho$ be a M\"obius transformation.  We conclude that $\varrho \circ \kappa_{\Theta} \in \mathcal{S}$,  if and only if  $\varrho$ has the form as 
$$\varrho(\zeta):=\varrho_{c}(\zeta)=\frac{\zeta}{1+ce^{i\Theta}\zeta}, 0\leq c\leq 4.$$
Since the Schwarzian derivative is invariant under the M\"obius transformation, we see that, for any $\alpha>-1$, it holds that $\mathbb{M}_{\varrho_c \circ \kappa_{\Theta}}(\alpha)=\mathbb{M}_{\kappa}(\alpha),$ for any $(c, \Theta)\in [0,4]\times(-\pi, \pi]$. Because the Koebe function and its rotations are the unique class of extremal functions in $\mathcal{S}$ for some extremal problems in the theory of univalent functions, thus, it is natural to guess that the functions $\varrho_c \circ \kappa_{\Theta}$, $(c, \Theta)\in [0,4]\times(-\pi, \pi]$,  may be the unique class that induce the biggest norm of the operator $\mathcal{M}_f$ in the class $\mathcal{S}$ for any $\alpha>0$. However, we find that there are other univalent functions $f\in \mathcal{S}$ such that (\ref{last-e}) still holds for any $\alpha>-1$. 

We consider again the functions $\mathcal{E}_2$ and $P_3$. We will check that 

\begin{proposition}\label{p2}For each $\alpha>-1$, $$\mathbb{M}_{\mathcal{E}_2}^2(\alpha)=\mathbb{M}_{P_3}^2(\alpha)=\frac{36(\alpha+3)(\alpha+5)}{(\alpha+2)(\alpha+4)}.$$
  \end{proposition}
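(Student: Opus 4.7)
The approach is to sandwich $\mathbb{M}_{\mathcal{E}_2}^2(\alpha)$ and $\mathbb{M}_{P_3}^2(\alpha)$ between $\mathbb{M}_\kappa^2(\alpha)$, which is known from (\ref{last-e}), and a matching lower bound produced by a one-parameter family of test functions concentrated at the boundary point $z=1$. I would first record the explicit Schwarzians $S_{\mathcal{E}_2}(z)=-3/[2(1-z)^2]$, $S_{P_3}(z)=-(2+4z^2)/(1-z^2)^2$, and $S_\kappa(z)=-6/(1-z^2)^2$ that already appear in the excerpt.

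\textbf{Upper bound.} The pointwise comparisons
$$\frac{|S_{\mathcal{E}_2}(z)|}{|S_\kappa(z)|}=\frac{|1+z|^2}{4}<1,\qquad \frac{|S_{P_3}(z)|}{|S_\kappa(z)|}=\frac{|1+2z^2|}{3}<1 \qquad (z\in\Delta),$$
the second following from $|1+2z^2|\leq 1+2|z|^2<3$, force $|\mathcal{M}_{\mathcal{E}_2}\phi(z)|\leq |\mathcal{M}_\kappa\phi(z)|$ and $|\mathcal{M}_{P_3}\phi(z)|\leq |\mathcal{M}_\kappa\phi(z)|$ pointwise, hence integrated against $(1-|z|^2)^{\alpha+4}$ they give $\mathbb{M}_{\mathcal{E}_2}^2(\alpha),\mathbb{M}_{P_3}^2(\alpha)\leq \mathbb{M}_\kappa^2(\alpha)=\frac{36(\alpha+3)(\alpha+5)}{(\alpha+2)(\alpha+4)}$.

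\textbf{Lower bound for $\mathcal{E}_2$.} I would test with $\phi_\beta(z)=(1-z)^{-\beta}$ (principal branch) for $\beta<(\alpha+2)/2$, and invoke the classical Euler/Gauss evaluation
$$\iint_\Delta \frac{(1-|z|^2)^c}{|1-z|^{2b}}\,\frac{dA(z)}{\pi}=\frac{\Gamma(c+1)\,\Gamma(c+2-2b)}{\Gamma(c+2-b)^2},\qquad c>-1,\ 2b<c+2,$$
(obtained by expanding $(1-2r\cos\theta+r^2)^{-b}$ as ${}_2F_1(b,b;1;r^2)$ and a beta integral in the radial variable) to compute $\|\phi_\beta\|_\alpha^2$ and $\|\mathcal{M}_{\mathcal{E}_2}\phi_\beta\|_{\alpha+4}^2$ in closed form. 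Gamma-function cancellation reduces the Rayleigh quotient to
$$\frac{\|\mathcal{M}_{\mathcal{E}_2}\phi_\beta\|_{\alpha+4}^2}{\|\phi_\beta\|_\alpha^2}=\frac{9(\alpha+2)(\alpha+3)(\alpha+4)(\alpha+5)}{4\bigl[(\alpha+2-\beta)(\alpha+3-\beta)\bigr]^2},$$
which tends to $\frac{36(\alpha+3)(\alpha+5)}{(\alpha+2)(\alpha+4)}$ as $\beta\to \tfrac{\alpha+2}{2}^-$, matching the upper bound.

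\textbf{Lower bound for $P_3$ (the main technical step).} Recycling the family $\phi_\beta$, I would use the factorization $|S_{P_3}(z)|^2=(16/9)\,G(z)\,|S_{\mathcal{E}_2}(z)|^2$ with $G(z)=|1+2z^2|^2/|1+z|^4$, a function continuous on $\overline{\Delta}\setminus\{-1\}$ with $G(1)=9/16$. With $w_\beta(z)=(1-|z|^2)^{\alpha+4}|1-z|^{-4-2\beta}$, split $\iint_\Delta Gw_\beta\,dA$ into integrals over $D_\delta=\{|1-z|<\delta\}$ and $\Delta\setminus D_\delta$. The tail piece stays uniformly bounded as $\beta\to (\alpha+2)/2^-$ since $|1-z|\geq\delta$ there, while $\iint_{D_\delta}w_\beta\,dA$ diverges and dominates; choosing $\delta$ so small that $|G(z)-9/16|<\varepsilon$ on $D_\delta$ yields $\iint_\Delta G w_\beta\big/\iint_\Delta w_\beta \to 9/16$. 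Consequently $\|\mathcal{M}_{P_3}\phi_\beta\|_{\alpha+4}^2/\|\mathcal{M}_{\mathcal{E}_2}\phi_\beta\|_{\alpha+4}^2\to (16/9)(9/16)=1$, so combined with the preceding step $\mathbb{M}_{P_3}^2(\alpha)\geq \frac{36(\alpha+3)(\alpha+5)}{(\alpha+2)(\alpha+4)}$.

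The main obstacle is this concentration argument: one has to make sure that the second singularity of $G$ at $z=-1$ does not pollute the limit, which is handled by the fact that the test family $\phi_\beta$ concentrates only at $z=1$, so the contribution of a neighborhood of $-1$ (and of the whole complement of $D_\delta$) is swamped by the near-$z=1$ piece. The upper-bound half, by contrast, is essentially immediate from the pointwise comparisons with the Koebe Schwarzian and the sharp value of $\mathbb{M}_\kappa^2(\alpha)$ recorded in (\ref{last-e}).
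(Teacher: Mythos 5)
Your proof is correct, and while the overall strategy is the same as the paper's (pointwise domination $|S_{\mathcal{E}_2}|,|S_{P_3}|\leq|S_{\kappa}|$ for the upper bound, then test functions concentrating at $z=1$, where both Schwarzians saturate the Koebe bound, for the matching lower bound), the execution of the lower bounds is genuinely different. For $\mathcal{E}_2$ the paper works with the dilated family $(1-rz)^{-\lambda}$ at a fixed supercritical exponent $2\lambda>\alpha+2$, replaces $S_{\mathcal{E}_2}(z)$ by $S_{\mathcal{E}_2}(rz)$ (invoking the inequality $\mathbb{M}_f^2(\alpha)\geq\sup_{\phi}\|S_f(rz)\phi\|_{\alpha+4}^2/\|\phi\|_{\alpha}^2$ from \cite{SS}) so that the image is again a pure power of $(1-rz)$, and then extracts the growth rate as $r\to1^-$ from Taylor-coefficient asymptotics via Stirling's formula; you instead take $r=1$ from the start, let the exponent $\beta$ tend to the critical value $(\alpha+2)/2$, and evaluate both norms exactly by the Gauss hypergeometric identity, which removes the need for the dilation inequality and for the $o(1)$/$\hat{o}(1)$ bookkeeping. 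For $P_3$ the divergence is larger: the paper switches to the family $(1-rz^2)^{-\lambda}$, writes out the power series of $S_{P_3}(\sqrt{r}z)(1-rz^2)^{-\lambda}$ explicitly and estimates its coefficients, whereas you reuse the same family $\phi_{\beta}$ concentrated only at $z=1$ and run a localization argument on the continuous density $G=|1+2z^2|^2/|1+z|^4$; your verification that the singularity of $G$ at $z=-1$ is harmless (the factor $(1-|z|^2)^{\alpha+4}\lesssim|1+z|^{\alpha+4}$ tames $|1+z|^{-4}$ down to the locally integrable $|1+z|^{\alpha}$, $\alpha>-1$, and the whole complement of $D_{\delta}$ contributes $O(1)$ against a divergent main term) is the one point that must be, and is, addressed. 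The trade-off: your route is cleaner and more reusable (the same localization shows $\mathbb{M}_f^2(\alpha)$ attains the Koebe value whenever $|S_f/S_{\kappa}|\to1$ radially at some boundary point), while the paper's series computation is self-contained and also exhibits the second concentration point $z=-1$ of $P_3$, which your test family deliberately ignores.
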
  
 
 \begin{proof}[Proof of Proposition \ref{p2}]
Recall that
 $$S_{\mathcal{E}_2}(z)=-\frac{3}{2}\frac{1}{(1-z)^2},\,S_{P_3}(z)=\frac{-2-4z^2}{(1-z^2)^2},\, z\in \Delta.$$
Then, for any $z\in \Delta$,
  $$|S_{\mathcal{E}_2}(z)|\leq |S_{\kappa}(z)|,\, {\text{and}}\,\, |S_{P_3}(z)|\leq |S_{\kappa}(z)|=\frac{6}{|1-z^2|^2}.$$ 
Hence we see that $\mathbb{M}_{\mathcal{E}_2}(\alpha)\leq\mathbb{M}_{\kappa}(\alpha)$, and $\mathbb{M}_{P_3}(\alpha)\leq\mathbb{M}_{\kappa}(\alpha)$  for any $\alpha>-1$. 

Next, we first show that $\mathbb{M}_{\mathcal{E}_2}(\alpha)\geq \mathbb{M}_{\kappa}(\alpha)$. For $\gamma>0$, we have 
\begin{equation}\label{l-eq-1}
  \frac{1}{(1-z)^{\gamma}}=\sum_{n=0}^{\infty}\frac{\Gamma(n+\gamma)}{n!\Gamma(\gamma)}z^n, z\in \Delta.\end{equation}
Here $\Gamma$ is the usual Gamma function, see \cite{AAR}. For any $\phi=\sum_{n=0}^{\infty}a_nz^n\in \mathcal{H}_{\alpha}^2(\Delta)$, we have
\begin{equation}\label{l-eq-2}\|\phi\|_{\alpha}^2=\sum_{n=0}^{\infty}\frac{n!\Gamma(\alpha+2)}{\Gamma(n+\alpha+2)}|a_n|^{2}.
\end{equation}
Let $r\in (0,1)$ and $2\lambda>\alpha+2$. Then, from (\ref{l-eq-1}) and (\ref{l-eq-2}), we obtain that
 \begin{equation}\label{l-eq-3}
 \|{(1-rz)^{-\lambda}}\|_{\alpha}^2=\sum_{n=0}^{\infty}\frac{n!\Gamma(\alpha+2)}{\Gamma(n+\alpha+2)}\Big|\frac{\Gamma(n+\lambda)}{n!\Gamma(\lambda)}\Big|^2 r^{2n}.
 \end{equation}
By Stirling's formula, we have
 \begin{equation}
 \frac{\Gamma(n+\lambda)}{n!}=(n+1)^{\lambda-1}[1+o(1)],\,\, {\textup{as}} \,\, n\rightarrow \infty.\nonumber 
 \end{equation}
 Here and later, we use $o(1)$ to denote the general term of a sequence with $o(1)\rightarrow 0, \, {\textup{as}}\,\, n \rightarrow \infty$, which may be different in different places. Consequently, it follows from (\ref{l-eq-3}) that 
 \begin{eqnarray}\label{l-eq-4}
 \|{(1-rz)^{-\lambda}}\|_{\alpha}^2&=&\sum_{n=0}^{\infty}\frac{\Gamma(\alpha+2)}{(n+1)^{\alpha+1}}[1+o(1)]\cdot\frac{(n+1)^{2\lambda-2}}{[\Gamma(\lambda)]^2}[1+o(1)]r^{2n}\nonumber \\
 &=&\sum_{n=0}^{\infty}\frac{\Gamma(\alpha+2)}{[\Gamma(\lambda)]^2}\cdot {(n+1)^{2\lambda-\alpha-3}}[1+o(1)]r^{2n} \nonumber \\
 &=&\frac{\Gamma(\alpha+2)\Gamma(2\lambda-\alpha-2)}{[\Gamma(\lambda)]^2}\sum_{n=0}^{\infty} \frac{\Gamma(n+2\lambda-\alpha-2)}{n!\Gamma(2\lambda-\alpha-2)}[1+o(1)]r^{2n}.
 \end{eqnarray}
 On the other hand, when $2\lambda>\alpha+2$, we have 
  \begin{eqnarray}\label{l-eq-5}
 \lefteqn{[{(1-r^2)^{2\lambda-\alpha-2}}]\sum_{n=0}^{\infty}\frac{\Gamma(n+2\lambda-\alpha-2)}{n!\Gamma(2\lambda-\alpha-2)}[1+o(1)]r^{2n}}\nonumber \\
 &&=1+ [{(1-r^2)^{2\lambda-\alpha-2}}]\sum_{n=0}^{\infty}\frac{\Gamma(n+2\lambda-\alpha-2)}{n!\Gamma(2\lambda-\alpha-2)}\cdot [o(1)]\cdot r^{2n}.
 \end{eqnarray}
Note that $o(1):=b_n\rightarrow 0, \, {\textup{as}}\,\, n \rightarrow \infty$ in (\ref{l-eq-5}). Then, for any small $\varepsilon>0$, there are two constants $n_0\in \mathbb{N}$ and $c_0>0$ such that $|b_n|\leq \varepsilon$ for $n>n_0$ and $|b_n|\leq c_0$ for all $n\in \mathbb{N}\cup \{0\}$. 
It follows that
\begin{eqnarray}\label{l-eq-6}
\lefteqn{[{(1-r^2)^{2\lambda-\alpha-2}}]\sum_{n=0}^{\infty}\frac{\Gamma(n+2\lambda-\alpha-2)}{n!\Gamma(2\lambda-\alpha-2)}\cdot [o(1)]\cdot r^{2n}} \nonumber \\
  && \leq [{(1-r^2)^{2\lambda-\alpha-2}}]c_0\sum_{n=0}^{n_0} \frac{\Gamma(n+2\lambda-\alpha-2)}{n!\Gamma(2\lambda-\alpha-2)}r^{2n}\nonumber \\&& \quad\quad\qquad+\varepsilon[{(1-r^2)^{2\lambda-\alpha-2}}]\sum_{n={{n_0}+1}}^{\infty}\frac{\Gamma(n+2\lambda-\alpha-2)}{n!\Gamma(2\lambda-\alpha-2)}r^{2n} \nonumber \\
  && \leq c_0[{(1-r^2)^{2\lambda-\alpha-2}}]\sum_{n=0}^{n_0} \frac{\Gamma(n+2\lambda-\alpha-2)}{n!\Gamma(2\lambda-\alpha-2)}+\varepsilon.
  \end{eqnarray}
  Meanwhile, \begin{eqnarray}\label{l-eq-7}
\lefteqn{[{(1-r^2)^{2\lambda-\alpha-2}}]\sum_{n=0}^{\infty}\frac{\Gamma(n+2\lambda-\alpha-2)}{n!\Gamma(2\lambda-\alpha-2)}\cdot [o(1)]\cdot r^{2n}} \nonumber \\
  && \geq-[{(1-r^2)^{2\lambda-\alpha-2}}]\sum_{n=0}^{n_0} \frac{\Gamma(n+2\lambda-\alpha-2)}{n!\Gamma(2\lambda-\alpha-2)}|b_n| r^{2n}\nonumber \\&& \quad\quad\qquad-\varepsilon[{(1-r^2)^{2\lambda-\alpha-2}}]\sum_{n={{n_0}+1}}^{\infty}\frac{\Gamma(n+2\lambda-\alpha-2)}{n!\Gamma(2\lambda-\alpha-2)}r^{2n} \nonumber \\
  && \geq -[{(1-r^2)^{2\lambda-\alpha-2}}]\sum_{n=0}^{n_0}\frac{\Gamma(n+2\lambda-\alpha-2)}{n!\Gamma(2\lambda-\alpha-2)}|b_n|-\varepsilon.
  \end{eqnarray}
Combining (\ref{l-eq-6}) and (\ref{l-eq-7}), we obtain that  
    \begin{eqnarray}
[{(1-r^2)^{2\lambda-\alpha-2}}]\sum_{n=0}^{\infty}\frac{\Gamma(n+2\lambda-\alpha-2)}{n!\Gamma(2\lambda-\alpha-2)}\cdot[o(1)]\cdot r^{2n}\rightarrow 0, {\textup{as}}\,\, r\rightarrow 1^{-},\nonumber
 \end{eqnarray}
 so that  \begin{eqnarray}
[{(1-r^2)^{2\lambda-\alpha-2}}]\sum_{n=0}^{\infty}\frac{\Gamma(n+2\lambda-\alpha-2)}{n!\Gamma(2\lambda-\alpha-2)}[1+o(1)]r^{2n}\to 1,  {\textup{as}}\,\, r\rightarrow 1^{-}.\nonumber
 \end{eqnarray}
This means that 
 \begin{equation}
\sum_{n=0}^{\infty}\frac{\Gamma(n+2\lambda-\alpha-2)}{n!\Gamma(2\lambda-\alpha-2)}[1+o(1)]r^{2n}=\frac{1+\hat{o}(1)}{(1-r^2)^{2\lambda-\alpha-2}},\, {\textup{as}}\,\, r\rightarrow 1^{-}.\nonumber
 \end{equation}
 Here and later, $\hat{o}(1)$ denotes a function with respect to $r$ with $\hat{o}(1)\rightarrow 0, \,{\textup{as}}\,\, r\rightarrow 1^{-},$ which may be different in different places. Then, we obtain from (\ref{l-eq-4}) that, for $2\lambda>\alpha+2$, 
 \begin{equation}
   \|{(1-rz)^{-\lambda}}\|_{\alpha}^2=\frac{\Gamma(\alpha+2)\Gamma(2\lambda-\alpha-2)}{[\Gamma(\lambda)]^2}\cdot\frac{1+\hat{o}(1)}{(1-r^2)^{2\lambda-\alpha-2}},\, {\textup{as}}\,\, r\rightarrow 1^{-}.\nonumber 
 \end{equation} 
 Since 
 $$\mathbb{M}_{\mathcal{E}_2}^2(\alpha)\geq  \sup_{\|\phi\|_{\alpha}\neq0, \phi\in \mathcal{H}_{\alpha}^{2}(\Delta)}\frac{\|S_{\mathcal{E}_2}(rz)\phi(z)\|_{\alpha+4}^2}{\|\phi(z)\|_{\alpha}^2}$$ for any $r\in (0,1)$, see \cite[Page 1631]{SS}, hence, by taking $\phi(z)=(1-rz)^{-\lambda}$, we obtain that, for $2\lambda>\alpha+2$,
 \begin{eqnarray}
 \lefteqn{\mathbb{M}_{\mathcal{E}_2}^2(\alpha)\geq \sup_{r\in (0,1)} \frac{9}{4}\frac{\|(1-rz)^{-\lambda-2}\|_{\alpha+4}^2}{\|(1-rz)^{-\lambda}\|_{\alpha}^2}}\nonumber 
 \\
 && \geq
 \frac{9}{4}\frac{\Gamma(\alpha+6)}{[\Gamma(\lambda+2)]^2}\cdot\frac{[\Gamma(\lambda)]^2}{\Gamma(\alpha+2)}=\frac{9(\alpha+2)(\alpha+3)(\alpha+4)(\alpha+5)}{4[\lambda(\lambda+1)]^2}. \nonumber 
 \end{eqnarray}
Let $\lambda\rightarrow {(\frac{\alpha}{2}+1)}^{+}$, we get that
 \begin{equation}
\mathbb{M}_{\mathcal{E}_2}^2(\alpha) \geq \frac{36(\alpha+3)(\alpha+5)}{(\alpha+2)(\alpha+4)}.\nonumber
 \end{equation}

Now, we prove $\mathbb{M}_{P_3}(\alpha)\geq\mathbb{M}_{\kappa}(\alpha)$. First, for $r\in (0,1)$ and $2\lambda>\alpha+2$, similarly as above, we get that
  \begin{equation}\label{a-e-1}
 \|{(1-rz^2)^{-\lambda}}\|_{\alpha}^2=\frac{\Gamma(\alpha+2)\Gamma(2\lambda-\alpha-2)}{2^{\alpha+1}[\Gamma(\lambda)]^2}\cdot\frac{1+\hat{o}(1)}{(1-r^2)^{2\lambda-\alpha-2}},\, {\textup{as}}\,\, r\rightarrow 1^{-},\nonumber 
 \end{equation}
and
\begin{eqnarray}
\lefteqn{S_{P_3}(\sqrt{r}z)(1-rz^2)^{-\lambda}}\nonumber \\
&&=-2\sum_{n=0}^{\infty}\frac{\Gamma(n+\lambda+2)}{n!\Gamma(\lambda+2)}r^nz^{2n}-4\sum_{n=0}^{\infty}\frac{\Gamma(n+\lambda+2)}{n!\Gamma(\lambda+2)}r^{n+1}z^{2n+2} \nonumber \\
&&=-2-\sum_{n=0}^{\infty}\Big[2\frac{\Gamma(n+\lambda+3)}{(n+1)!\Gamma(\lambda+2)}+4\frac{\Gamma(n+\lambda+2)}{n!\Gamma(\lambda+2)}\Big]r^{n+1}z^{2n+2}.\nonumber
\end{eqnarray} 
Then, from (\ref{l-eq-2}) and by using similar arguments above, we obtain that
\begin{equation}
\|S_{P_3}(\sqrt{r}z)(1-rz^2)^{-\lambda}\|_{\alpha+4}^2=36\frac{\Gamma(\alpha+6)\Gamma(2\lambda-\alpha-2)}{2^{\alpha+5}[\Gamma(\lambda+2)]^2}\cdot\frac{1+\hat{o}(1)}{(1-r^2)^{2\lambda-\alpha-2}},\, {\textup{as}}\,\, r\rightarrow 1^{-}.\nonumber 
\end{equation}
Consequently,
\begin{eqnarray}
 \lefteqn{\mathbb{M}_{P_3}^2(\alpha)\geq \sup_{r\in (0,1)} \frac{\|S_{P_3}(\sqrt{r}z)(1-rz^2)^{-\lambda}\|_{\alpha+4}^2}{\|(1-rz^2)^{-\lambda}\|_{\alpha}^2}}\nonumber 
 \\
 && \geq
36\frac{\Gamma(\alpha+6)}{2^{4}[\Gamma(\lambda+2)]^2}\cdot\frac{[\Gamma(\lambda)]^2}{\Gamma(\alpha+2)}=\frac{9(\alpha+2)(\alpha+3)(\alpha+4)(\alpha+5)}{4[\lambda(\lambda+1)]^2}. \nonumber 
 \end{eqnarray} 
Let $\lambda\rightarrow {(\frac{\alpha}{2}+1)}^{+}$, we obtain that
 \begin{equation}
\mathbb{M}_{P_3}^2(\alpha)\geq \frac{36(\alpha+3)(\alpha+5)}{(\alpha+2)(\alpha+4)}.\nonumber
 \end{equation}
This finishes the proof of Proposition \ref{p2}. 
 \end{proof} 
\end{remark}

We continue to investigate the properties of $\|\mathcal{M}_f\|$. Let $\alpha>-1$, we define the functional $\Lambda_{\alpha}$ as
$$ \Lambda_{\alpha}: S_f  \mapsto \|\mathcal{M}_f\|=\mathbb{M}_f(\alpha),\,\, f\in \mathcal{S}.$$  
We obtain that
\begin{proposition}\label{pro-c}
For any fixed $\alpha>-1$, $\Lambda_{\alpha}$ is continuous on $\overline{\mathbf{S}}$, the closure of ${\mathbf{S}}$. 
\end{proposition}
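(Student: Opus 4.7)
The plan is to show that $\Lambda_\alpha$ is in fact Lipschitz continuous on $\overline{\mathbf{S}}$, with Lipschitz constant depending only on $\alpha$. First I would reduce $\overline{\mathbf{S}}$ to $\mathbf{S}$: by Proposition \ref{pro-2}, $\mathbf{S}$ is closed in $E_2$, hence $\overline{\mathbf{S}}=\mathbf{S}$, and every $\phi\in\overline{\mathbf{S}}$ is genuinely the Schwarzian $S_{f_\phi}$ of some $f_\phi\in\mathcal{S}$. In particular, $\Lambda_\alpha$ is well-defined on $\overline{\mathbf{S}}$, since $\|\mathcal{M}_{f_\phi}\|$ depends only on $\phi=S_{f_\phi}$ and not on the particular choice of $f_\phi$ (which is determined by $\phi$ up to post-composition with a M\"obius map, and this does not affect $\phi$).

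Next, for any $\phi\in E_2$, I would consider the linear multiplication operator $\mathcal{M}_\phi:\psi\mapsto\phi\psi$ and establish the pointwise estimate
\begin{equation*}
\|\mathcal{M}_\phi\|_{\mathcal{H}^2_\alpha\to\mathcal{H}^2_{\alpha+4}}\leq \sqrt{\frac{\alpha+5}{\alpha+1}}\,\|\phi\|_{E_2}.
\end{equation*}
The idea is straightforward: starting from the definition of the norms $\|\cdot\|_\alpha$ and $\|\cdot\|_{\alpha+4}$, I split the weight $(1-|z|^2)^{\alpha+4}=(1-|z|^2)^{4}\cdot(1-|z|^2)^{\alpha}$ and pull out $|\phi(z)|^2(1-|z|^2)^4\leq\|\phi\|_{E_2}^2$ from the integral. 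The surplus prefactor $(\alpha+5)/(\alpha+1)$ comes from the different normalization constants in the definitions of $\|\cdot\|_\alpha$ and $\|\cdot\|_{\alpha+4}$.

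To conclude, I would use linearity and the reverse triangle inequality for operator norms: for any $f_1,f_2\in\mathcal{S}$, we have $\mathcal{M}_{f_1}-\mathcal{M}_{f_2}=\mathcal{M}_{S_{f_1}-S_{f_2}}$, so
\begin{equation*}
\bigl|\Lambda_\alpha(S_{f_1})-\Lambda_\alpha(S_{f_2})\bigr|=\bigl|\|\mathcal{M}_{f_1}\|-\|\mathcal{M}_{f_2}\|\bigr|\leq\|\mathcal{M}_{S_{f_1}-S_{f_2}}\|\leq \sqrt{\tfrac{\alpha+5}{\alpha+1}}\,\|S_{f_1}-S_{f_2}\|_{E_2},
\end{equation*}
which yields the desired continuity (in fact, Lipschitz continuity with constant $\sqrt{(\alpha+5)/(\alpha+1)}$). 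I do not anticipate any real obstacle here: the argument is essentially the standard fact that a multiplier between weighted Bergman-type spaces is controlled by a Bloch-type norm of the symbol, combined with the linearity of the multiplier in its symbol. The only mild point is to justify that the linear difference $S_{f_1}-S_{f_2}$, although not itself a Schwarzian of a univalent function, is still in $E_2$, which is immediate since each $S_{f_j}\in\mathbf{S}\subset E_2$.
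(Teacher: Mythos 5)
Your proposal is correct, and it reaches the conclusion by a more streamlined route than the paper. Both arguments ultimately rest on the same elementary multiplier estimate $\|\mathcal{M}_{\psi}\|_{\mathcal{H}^2_{\alpha}\to\mathcal{H}^2_{\alpha+4}}\leq\sqrt{(\alpha+5)/(\alpha+1)}\,\|\psi\|_{E_2}$, obtained by pulling $|\psi(z)|^2(1-|z|^2)^4$ out of the integral; your computation of the prefactor as the ratio of the normalizing constants $(\alpha+5)/(\alpha+1)$ is exactly right. The difference is in how that estimate is deployed. The paper writes $S_{f_1}=(S_{f_1}-S_{f_2})+S_{f_2}$, expands the \emph{squared} norm $\|S_{f_1}\phi\|_{\alpha+4}^2$, and must then control a cross term $2\iint|S_{f_1}-S_{f_2}||S_{f_2}||\phi|^2(1-|z|^2)^{\alpha+4}\,dxdy/\pi$ using the universal bound $\|S_{f_2}\|_{E_2}\leq 6$; this yields $|\mathbb{M}_{f_1}^2(\alpha)-\mathbb{M}_{f_2}^2(\alpha)|\leq(12+\varepsilon)\frac{\alpha+5}{\alpha+1}\|S_{f_1}-S_{f_2}\|_{E_2}$ and hence continuity of $\Lambda_{\alpha}$ via continuity of the square root. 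You instead exploit that $\psi\mapsto\mathcal{M}_{\psi}$ is linear in the symbol and invoke the reverse triangle inequality for operator norms, which eliminates the cross term entirely and gives Lipschitz continuity of $\Lambda_{\alpha}$ itself with constant $\sqrt{(\alpha+5)/(\alpha+1)}$. This buys you slightly more: your constant does not use the bound $\|S_f\|_{E_2}\leq 6$ at all, so your argument shows the symbol-to-norm map $\psi\mapsto\|\mathcal{M}_{\psi}\|$ is Lipschitz on all of $E_2$, not merely on $\overline{\mathbf{S}}$. Your preliminary observations — that $\overline{\mathbf{S}}=\mathbf{S}$ by Proposition \ref{pro-2} and that $\Lambda_{\alpha}$ is well defined because $\mathcal{M}_f$ depends only on the symbol $S_f$ — are also correct and are points the paper passes over silently.
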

\begin{proof}
Let $\varepsilon>0$ be small. For $f_1, f_2 \in \mathcal{S}$ with $S_{f_1}, S_{f_2} \in \overline{\mathbf{S}}$, we assume that $\|S_{f_1}-S_{f_2}\|_{E_2}=\varepsilon$ so that
\begin{equation}
|S_{f_1}(z)-S_{f_2}(z)|(1-|z|^2)^2\leq \varepsilon,  \nonumber 
\end{equation}
for all $z\in \Delta$. Then,
\begin{eqnarray}
\lefteqn{\mathbb{M}_{f_1}^2(\alpha)=\sup_{\|\phi\|_{\alpha}=1}\|S_{f_1}(z)\phi(z)\|_{\alpha+4}^2}\nonumber  \\
&& =\sup_{\|\phi\|_{\alpha}=1}\|[S_{f_1}(z)-S_{f_2}(z)+S_{f_2}(z)]\phi(z)\|_{\alpha+4}^2\nonumber  \\
&& \leq \sup_{\|\phi\|_{\alpha}=1}\||[S_{f_1}(z)-S_{f_2}(z)]\phi(z)|+|S_{f_2}(z)\phi(z)|\|_{\alpha+4}^2 \nonumber  \\
&& = \sup_{\|\phi\|_{\alpha}=1}\|[S_{f_1}(z)-S_{f_2}(z)]\phi(z)\|_{\alpha+4}^2+\|S_{f_2}(z)\phi(z)\|_{\alpha+4}^2 \nonumber \\
&&\quad\qquad\qquad +(\alpha+5)\iint_{\Delta}2|S_{f_1}(z)-S_{f_2}(z)||S_{f_2}(z)||\phi(z)|^2 (1-|z|^2)^{\alpha+4}\frac{dxdy}{\pi}.  \nonumber  \end{eqnarray} 
Note that, for $\|\phi\|_{\alpha}=1$,
\begin{equation}
 \|[S_{f_1}(z)-S_{f_2}(z)]\phi(z)\|_{\alpha+4}^2\leq {\varepsilon}^2\frac{\alpha+5}{\alpha+1}, \nonumber 
\end{equation}
and 
\begin{equation}
(\alpha+5)\iint_{\Delta}2|S_{f_1}(z)-S_{f_2}(z)||S_{f_2}(z)||\phi(z)|^2 (1-|z|^2)^{\alpha+4}\frac{dxdy}{\pi}\leq 12\varepsilon \frac{\alpha+5}{\alpha+1}.  \nonumber 
\end{equation}
It follows that
\begin{equation}
\mathbb{M}_{f_1}^2(\alpha) \leq \mathbb{M}_{f_2}^2(\alpha)+{\varepsilon}^2\frac{\alpha+5}{\alpha+1}+12\varepsilon \frac{\alpha+5}{\alpha+1}. \nonumber 
\end{equation}
Similarly, we have 
\begin{equation}
\mathbb{M}_{f_2}^2(\alpha) \leq \mathbb{M}_{f_1}^2(\alpha)+{\varepsilon}^2\frac{\alpha+5}{\alpha+1}+12\varepsilon \frac{\alpha+5}{\alpha+1}. \nonumber 
\end{equation}
Consequently, we obtain that
\begin{equation}
\Big|\mathbb{M}_{f_1}^2(\alpha)-\mathbb{M}_{f_2}^2(\alpha) \Big|\leq(12+\varepsilon)\frac{\alpha+5}{\alpha+1}\|S_{f_1}-S_{f_2}\|_{E_2}. \nonumber 
\end{equation}
This implies that $\Lambda_{\alpha}$ is continuous on $\overline{\mathbf{S}}$. The proposition is proved.
\end{proof}
For $\alpha>-1$, we define $${\mathbb{M}}(\alpha):=\sup_{f\in \mathcal{S}_{Q}} \|\mathcal{M}_f\|=\sup_{f\in \mathcal{S}_{Q}}\mathbb{M}_f(\alpha).$$
Then we have 
\begin{proposition}\label{pro-c-1}
For each $\alpha>-1$, we have $\mathbb{M}_f(\alpha)< \mathbb{M}(\alpha)$ for any $f\in \mathcal{S}_{Q}$.  
\end{proposition}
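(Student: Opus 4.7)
The plan rests on two observations. First, the functional
$$N_\alpha(\phi) \;:=\; \sup_{\psi\in \mathcal{H}^2_\alpha(\Delta),\,\|\psi\|_\alpha=1} \|\phi\psi\|_{\alpha+4}$$
is a seminorm on the whole Banach space $E_2$: positive homogeneity and subadditivity are immediate from the definition, and finiteness on $E_2$ is a consequence of the pointwise bound $|\phi(z)|\leq \|\phi\|_{E_2}(1-|z|^2)^{-2}$, which yields
$$\|\phi\psi\|_{\alpha+4}^2 \;\leq\; \frac{\alpha+5}{\alpha+1}\,\|\phi\|_{E_2}^2\,\|\psi\|_\alpha^2$$
for all $\phi\in E_2$ and $\psi\in \mathcal{H}^2_\alpha(\Delta)$. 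Second, the set $\mathbf{S}_Q$ is open in $E_2$: indeed, $\mathbf{S}_Q$ coincides with $\mathbf{S}_q=T_2$ by the identification recorded at the end of the excerpt, and $T_2$ is the image of the classical Schwarzian Bers embedding of the universal Teichm\"uller space, which is known to be open in $E_2$ (parallel to the openness of $\mathcal{T}$ in $E_1$ cited earlier).

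Granted these two ingredients, I would first dispose of the trivial case $S_f\equiv 0$. The normalizations $f(0)=0$, $f'(0)=1$ then force $f(z)\equiv z$, so $\mathbb{M}_f(\alpha)=0$. On the other hand $\mathbb{M}(\alpha)>0$, because for any $g\in \mathcal{S}_Q$ with $S_g\not\equiv 0$ the constant function $1$ belongs to $\mathcal{H}^2_\alpha(\Delta)$, and $\mathcal{M}_g(1)=S_g$ is a nonzero element of $\mathcal{H}^2_{\alpha+4}(\Delta)$ (the membership follows from $|S_g(z)|\leq \|S_g\|_{E_2}(1-|z|^2)^{-2}$). Hence $\mathbb{M}_f(\alpha)=0<\mathbb{M}(\alpha)$ in this case.

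For the main case $S_f\not\equiv 0$, I would use the openness of $\mathbf{S}_Q$ to pick $\delta>0$ with the $E_2$-ball $B(S_f,\delta)\subset \mathbf{S}_Q$, and then for any $t\in\bigl(0,\delta/\|S_f\|_{E_2}\bigr)$ observe that $(1+t)S_f$ still lies in $B(S_f,\delta)\subset \mathbf{S}_Q$. Therefore $(1+t)S_f=S_{\mathbf{F}}$ for some $\mathbf{F}\in \mathcal{S}_Q$, and the seminorm property of $N_\alpha$ gives
$$\mathbb{M}_{\mathbf{F}}(\alpha) \;=\; N_\alpha\bigl((1+t)S_f\bigr) \;=\; (1+t)\,\mathbb{M}_f(\alpha) \;>\; \mathbb{M}_f(\alpha),$$
so $\mathbb{M}(\alpha)\geq \mathbb{M}_{\mathbf{F}}(\alpha)>\mathbb{M}_f(\alpha)$, which is the desired strict inequality.

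The only nontrivial point is the appeal to openness of $\mathbf{S}_Q=\mathbf{S}_q=T_2$ in $E_2$, but this is standard Bers theory on the Schwarzian model of the universal Teichm\"uller space and requires no new work. Conceptually, the present argument is much softer than the proof of Theorem \ref{Em-4}: because the multiplier operator $\mathcal{M}_f$ depends \emph{linearly} on $S_f$, the corresponding norm is a seminorm, and strict monotonicity under the rescaling $S_f\mapsto (1+t)S_f$ is automatic; one never needs to invoke an analogue of Proposition \ref{m-l} or construct an explicit conformal deformation $\mathbf{h}\circ f$.
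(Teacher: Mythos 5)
Your proposal is correct and is essentially the paper's own argument: both handle the case $S_f\equiv 0$ separately, then use the Ahlfors--Gehring openness of $\mathbf{S}_{Q}$ in $E_2$ to realize $(1+t)S_f$ as the Schwarzian of some $f_0\in\mathcal{S}_{Q}$ for small $t>0$, and conclude by the homogeneity $\mathbb{M}_{f_0}(\alpha)=(1+t)\mathbb{M}_f(\alpha)>\mathbb{M}_f(\alpha)$. One inconsequential slip: $S_f\equiv 0$ forces $f$ to be a M\"obius map $z/(1+cz)$, not necessarily $f(z)\equiv z$, but $\mathbb{M}_f(\alpha)=0$ holds regardless, so the argument is unaffected.
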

\begin{proof}
Let $f\in \mathcal{S}_{Q}$.  If $\|S_f\|_{E_2}=0$, then $\mathbb{M}_f(\alpha)=0$ and the proposition holds. Next we will assume that $\|S_f\|_{E_2}>0$ so that $\mathbb{M}_f(\alpha)>0$. Then we know from a classical result of the theory of universal Teichm\"uller space (see \cite{Ahl}, \cite{Gehr}) that there is a constant $\delta>0$ such that $\mathcal{O}_f:=\{\varphi\in \mathcal{A}(\Delta): \|\varphi-S_f\|_{E_2}<\delta\}$ is contained in ${\mathbf{S}}_{Q}$, here, ${\mathbf{S}}_{Q}$ is defined as in (\ref{kongs}). It follows that there is a function $f_0 \in \mathcal{S}_{Q}$ such that $S_{f_0}=(1+\frac{\delta}{2}\|S_f\|_{E_2}^{-1})S_f$ since $(1+\frac{\delta}{2}\|S_f\|_{E_2}^{-1})S_f$ belongs to $\mathcal{O}_f$. Then $$\mathbb{M}_{f_{0}}(\alpha)=(1+\frac{\delta}{2}\|S_f\|_{E_2}^{-1})\mathbb{M}_f(\alpha)>\mathbb{M}_f(\alpha).$$ 
The proof of Proposition \ref{pro-c-1} is complete. 
\end{proof}

\begin{remark}
Proposition \ref{pro-c} and \ref{pro-c-1} suggest us that, for each $\alpha>-1$, if the biggest norm $\mathbb{M}(\alpha)$ can be attained at some point $S_f$ in $\overline{\mathbf{S}}_{Q}$, the closure of $\mathbf{S}_Q$, then $S_f$ must lie on $\partial \mathbf{S}_{Q}$. However, we do not know the answer to the following problem. 
\begin{problem}
Can the functional $\Lambda_{\alpha}$ attain at least one maximum in $\overline{\mathbf{S}}_{Q}$ for each $\alpha>-1$?
\end{problem}

\end{remark}

\begin{remark}
In light of Proposition \ref{last-p}, to find more functions satisfying the Brennan conjecture, it is natural to study the problem that in which subset of $\mathcal{S}$ the Koebe function and its rotations provide the biggest norm for the multiplication operator when $\alpha>0$. That is
\begin{problem}\label{p-1}
For each $\alpha>0$, which functions $f\in \mathcal{S}$ satisfy $\mathbb{M}_f(\alpha)\leq \mathbb{M}_{\kappa}(\alpha)$? 
\end{problem} 
It should be pointed out that it has been checked in \cite{Jin-1} that for a large class of univalent functions $f$ belonging to $\mathcal{S}_Q$, $\mathbb{M}_f(\alpha)\leq \mathbb{M}_{\kappa}(\alpha)$ holds for each $\alpha>0$.  
\end{remark}

\begin{remark}Let $\alpha>0, f\in \mathcal{S}$. By checking carefully the arguments in the proof of Proposition $8$ in \cite[Page 1632]{SS}, we see that, if there is a constant $\varepsilon>0$ such that $$\mathbb{M}_f(\alpha)+\mathbb{M}_f(\alpha+2)\leq \mathbb{M}_{\kappa}(\alpha)+\mathbb{M}_{\kappa}(\alpha+2),$$
for $\alpha\in (0,\varepsilon)$, then $\beta_f({-2})\leq 1$. Moreover, for $R\in \mathcal{U}_R$ that have at least one critical point on $\mathbb{T}$, if there are two small constants $\epsilon>0, \eta>0$ such that $\mathbb{M}_R(\alpha)<\mathbb{M}_{\kappa}(\alpha)-\eta$
for $\alpha \in (-\epsilon, \epsilon)\cup (2-\epsilon,2+\epsilon)$, then we will obtain that $\beta_{R}(-2)<1$, which is a contradiction. This leads us to conjecture that the following claim holds. Indeed, this is the case, and a proof of this claim can be found in \cite{Jin-p}.
\begin{claim}\label{p-2}
For each $\alpha>-1$, we have $\mathbb{M}_R(\alpha)\geq \mathbb{M}_{\kappa}(\alpha)$ for all $R\in \mathcal{U}_R$ that have at least one critical point on $\mathbb{T}$. 
\end{claim}\end{remark}

\begin{remark}
We have proved that,  for each $\alpha>-1$, 
$$\mathbb{M}_{\mathcal{E}_2}^2(\alpha)=\mathbb{M}_{P_3}^2(\alpha)=\mathbb{M}_{\kappa}^2(\alpha)=\frac{36(\alpha+3)(\alpha+5)}{(\alpha+2)(\alpha+4)}.$$ It is interesting to consider the following
\begin{problem}
For each $\alpha>-1$, which functions $f\in \mathcal{U}_{R}$ satisfy $\mathbb{M}_f(\alpha)=\mathbb{M}_{\kappa}(\alpha)$? 
\end{problem} 
\end{remark}

\begin{remark}If our guess that Problem~\ref{p-1} has an affirmative answer for every $f$ in $\mathcal{S}$ is true, i.e., $\mathbb{M}_f(\alpha) \leq \mathbb{M}_{\kappa}(\alpha)$ holds for every $\alpha>0$ and all $f\in\mathcal{S}$, then the Brennan conjecture follows. Furthermore, since Claim~\ref{p-2} holds, it follows that $\mathbb{M}_R(\alpha)=\mathbb{M}_{\kappa}(\alpha)$ for every $\alpha>0$ and all $R\in\mathcal{U}_R$ that have at least one critical point on $\mathbb{T}$. Although these predictions may seem surprising, no counterexamples are known to the author. Finally, based on the above findings, we further conjecture that
\begin{conjecture}
For every $\alpha>0$, if the functional $\Lambda_{\alpha}$ attains the maximum on $S_f$ in $\overline{\mathbf{S}}$ for some $f\in \mathcal{S}$, then $\|S_f\|_{E_2}=6$.
\end{conjecture}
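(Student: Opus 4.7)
The plan is to argue by contradiction. Suppose $f \in \mathcal{S}$ realizes the maximum of $\Lambda_\alpha$ on $\overline{\mathbf{S}}$ with $\|S_f\|_{E_2} = M < 6$; the goal is to produce $g$ such that $S_g \in \overline{\mathbf{S}}$ and $\mathbb{M}_g(\alpha) > \mathbb{M}_f(\alpha)$, contradicting maximality. The intuition, supported by Proposition \ref{p2} together with the identity $\mathbb{M}_{\kappa}^{2}(\alpha) = 36(\alpha+3)(\alpha+5)/[(\alpha+2)(\alpha+4)]$ from \cite{SS}, is that the multiplier norm is driven by the radial boundary behavior of $|S_f(z)|(1-|z|^2)^2$, with an amplification factor depending on $\alpha$. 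If $f$ does not saturate the universal bound $\|S_f\|_{E_2} \leq 6$, one should be able to raise $|S_f(z)|(1-|z|^2)^2$ toward $6$ near a suitable boundary point without compensating decrease elsewhere.

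First, fix a near-optimal sequence $\phi_n \in \mathcal{H}_\alpha^2(\Delta)$ with $\|\phi_n\|_\alpha=1$ and $\|S_f \phi_n\|_{\alpha+4}^2 \to \mathbb{M}_f(\alpha)^2$; the Gamma-function asymptotics in the proof of Proposition \ref{p2} suggest replacing these by test functions of the form $\phi_r(z)=(1-\overline{\zeta_0} rz)^{-\lambda}$ with $2\lambda$ slightly exceeding $\alpha+2$, which concentrate on a chosen boundary point $\zeta_0 \in \mathbb{T}$ as $r \to 1^-$. Next, since $\|S_f\|_{E_2}<6$, pick $\zeta_1 \in \mathbb{T}$ with
\begin{equation*}
\limsup_{\Delta\ni z\to \zeta_1} |S_f(z)|(1-|z|^2)^2 < 6.
\end{equation*}
Using the density of $\mathcal{S}_q$ in $\mathcal{S}$ and a quasiconformal surgery, construct $g \in \mathcal{S}_Q$ that agrees with $f$ outside a small neighborhood of $\zeta_1$ but satisfies $|S_g(z)|(1-|z|^2)^2 \to 6$ radially at $\zeta_1$; then set $\zeta_0 = \zeta_1$ in the test functions. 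The estimate
\begin{equation*}
\mathbb{M}_g(\alpha)^2 - \mathbb{M}_f(\alpha)^2 \geq (\alpha+5)\iint_\Delta \bigl(|S_g|^2-|S_f|^2\bigr)\,|\phi_r|^2 (1-|z|^2)^{\alpha+4}\,\frac{dxdy}{\pi}
\end{equation*}
should then yield the desired strict inequality for $r$ sufficiently close to $1$, since the integrand is positive exactly where $|\phi_r|^2$ concentrates its mass.

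The main obstacle is realizing the surgery step concretely: one needs $g \in \mathcal{S}$ whose Schwarzian is strictly increased only near $\zeta_1$ and remains inside $\overline{\mathbf{S}}$. The map $f \mapsto S_f$ is nonlinear and $\mathbf{S}$ has no convex or additive structure, so neither $S_f + \eta$ nor $(1-t)S_f + tS_h$ is automatically a Schwarzian of an element of $\mathcal{S}$. A plausible route is to work instead with the Pre-Schwarzian via Proposition \ref{bers}, perturbing $N_f$ in a direction concentrated near $\zeta_1$ that is realizable in the closed universal Teichm\"uller curve, and then transferring the modification back to $S_f$ through the chain rule \eqref{chain-1}. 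Controlling quasiconformal extendability, the asymptotic vanishing condition defining $E_{1,0}$, and the $E_2$-norm of the modified Schwarzian simultaneously is exactly the sort of delicate interplay with which the paper's preceding conjectures grapple, and it is where a complete proof would have to do the substantive work.
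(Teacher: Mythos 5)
First, a point of order: the statement you are proving is one of the paper's \emph{conjectures}; the paper offers no proof of it, so there is no argument of the author's to compare yours against. Judged on its own terms, your proposal is a strategy sketch rather than a proof, and beyond the surgery step you yourself flag as open, it contains two gaps that look fatal rather than technical. The first is that the surgery, as literally stated, is impossible: you ask for $g\in\mathcal{S}_Q$ with $|S_g(z)|(1-|z|^2)^2\to 6$ radially at $\zeta_1$. That forces $\|S_g\|_{E_2}=6$, and since $\mathbf{S}_Q$ is open in $E_2$ (the very fact exploited in the proof of Proposition \ref{pro-c-1}), any $g\in\mathcal{S}_Q$ admits $(1+\epsilon)S_g\in\mathbf{S}$ for small $\epsilon>0$, which together with the universal bound $\|\cdot\|_{E_2}\leq 6$ on $\mathbf{S}$ forces $\|S_g\|_{E_2}<6$. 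So no such $g$ can lie in $\mathcal{S}_Q$; at best you could hope for $g\in\mathcal{S}$ with $S_g\in\mathbf{S}=\overline{\mathbf{S}}$, and then the entire construction of $g$ remains to be supplied.

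The second, deeper problem is the displayed lower bound for $\mathbb{M}_g(\alpha)^2-\mathbb{M}_f(\alpha)^2$. Testing against a family $\phi_r$ concentrating at $\zeta_1$ gives a lower bound for $\mathbb{M}_g(\alpha)^2$ only; you must then subtract the full supremum $\mathbb{M}_f(\alpha)^2$, not $\|S_f\phi_r\|_{\alpha+4}^2$, and there is no reason the point-concentrating family $\phi_r$ comes close to realizing that supremum for $f$. The inequality as written therefore does not follow. This is not a repairable bookkeeping issue: the quantity you can extract from concentration at a single boundary point is a local one, whereas the paper records (following \cite{SS}) that there exist $f\in\mathcal{S}$ with $\mathbb{M}_f(\alpha)>\mathbb{M}_{\kappa}(\alpha)$, so a hypothetical maximizer with $\|S_f\|_{E_2}<6$ could have multiplier norm strictly exceeding anything your localized competitor achieves on the test functions of Proposition \ref{p2}. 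To salvage the contradiction you would instead have to take $\phi$ near-extremal for $f$ itself and show that the modification increases $\|S_g\phi\|_{\alpha+4}$ for \emph{those} $\phi$, while simultaneously controlling the global (not merely local) change of the Schwarzian under the surgery; none of this is addressed, and it is precisely where the conjecture's difficulty lies.
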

 \end{remark}

\begin{spacing}{1.3} 

\end{spacing}


\begin{thebibliography}{99} 

\bibitem{Ahlfors}Ahlfors, L., {\em Complex analysis}, Third edition, Internat. Ser. Pure Appl. Math., McGraw-Hill Book Co., New York, 1978. xi+331 pp.
 
\bibitem{AR}Andrievskii V., Ruscheweyh S., {\em Maximal polynomial subordination to univalent functions in the unit disk}, Constr. Approx. 10 (1994), no. 1, pp. 131-144.

\bibitem{Ahl}Ahlfors, L., {\em Quasiconformal reflections}, Acta Math., 109(1963), pp. 291-301.


\bibitem{AAR}Andrews G., Askey R.,  Roy R.,  {\em Special functions},  Cambridge University Press, Cambridge, 1999.


\bibitem{AG1}Astala K., Gehring F., {\em Injectivity, the BMO norm and the universal Teichm\"uller space}, J. Analyse Math., 46(1986), pp. 16-57.

\bibitem{AG2}Astala K., Gehring F., {\em Crickets, zippers and the Bers universal Teichm\"uller space}, Proc. Amer. Math. Soc., 110(1990), no.3, pp. 675-687.

\bibitem{BH}Baranov A., Hedenmalm H., {\em Boundary properties of Green functions in the plane}, Duke Math. J., 145(2008), pp. 1-24.
 
\bibitem{BVZ}Barański K., Volberg A., Zdunik A., {\em Brennan's conjecture and the Mandelbrot set}, Internat. Math. Res. Notices, (1998), pp. 589-600. 
 
\bibitem{Bec}Becker J., {\em L\"ownersche Differentialgleichung und quasikonforme fortsetzbare schlichte Funktionen}, J. Reine Angew. Math.,  255 (1972), pp. 23-43.

\bibitem{BP-1} Becker J., Pommerenke C., {\em  On the Hausdorff dimension of quasicircles}, Ann. Acad. Sci. Fenn. Math., 12(1987), pp. 329-333.

\bibitem{Bers} Bers L., {\em Fiber spaces over Teichm\"uller spaces}, Acta Math., 130 (1973), pp. 89-126.

\bibitem{Bin-1} Binder I., {\em Harmonic measure and rotation of simply connected planar domains}, Preprint, 1998.

\bibitem{Bin-2} Binder I., {\em Phase transition for the universal bounds on the integral means spectrum}, Nonlinearity, 22(2009), no. 8, pp. 1857-1867.

\bibitem{Be}Bertilsson D., {\em On Brennan's conjecture in conformal mapping}, Dissertation, Royal Institute of Technology, 1999.

\bibitem{Bish}Bishop C., {\em Quasiconformal Lipschitz maps, Sullivan's convex hull theorem and Brennan's conjecture}, Ark. Mat. 40 (2002), no. 1, pp. 1-26.

\bibitem{Br} Brennan J., {\em The integrability of the derivative in conformal mapping},  J. London Math. Soc., 18 (1978), pp. 261-272.

\bibitem{CJ}Carleson L., Jones P., {\em On coefficient problems for univalent functions and conformal dimension},  Duke Math. J.,  66 (1992), no. 2, pp. 169-206.

\bibitem{CM}Carleson L., Makarov N.,  {\em Some results connected with Brennan's conjecture},  Ark. Mat., 32 (1994), no. 1, pp. 33-62.

\bibitem{jmaa}Dmitrishin D., Smorodin A., Stokolos A., and Tohaneanu M., {\em Symmetrization of Suffridge polynomials and approximation of $T$-symmetric Koebe functions}, J. Math. Anal. Appl., 503 (2021), no. 2, Paper No. 125350, 15 pp.

\bibitem{Du} Duren P., {\em Univalent functions}, Springer-Verlag, New York, 1983.

\bibitem{EGN-1}Earle C., Gardiner F., and Lakic N., {\em Asymptotic Teichm\"uler space I: The complex structure},  Contemp. Math.,  256 (2000), pp. 17-38.

\bibitem{EGN-2}Earle C., Gardiner F., and Lakic N., {\em Asymptotic Teichm\"uller space II: The metric structure}, Contemp. Math., 355 (2004), pp. 187-219.

\bibitem{EMS}Earle C., Markovic V., and Saric D., {\em Barycentric extension and the Bers embedding for asymptotic Teichm\"uller space},  Contemp. Math., 311(2002), pp. 87-105.

\bibitem{FM}Feng J., MacGregor T., {\em Estimates on the integral means of the derivatives of univalent functions}, J. Anal. Math., 29 (1976), pp. 203-231.

\bibitem{GN}Gardiner F., Lakic N., {\em Quasiconformal Teichm\"uller theory}, Mathematical Surveys and Monographs, 76,  American Mathematical Society, Providence, RI, 2000.

\bibitem{GS}Gardiner F., Sullivan D., {\em Symmetric structures on a closed curve}, Amer. J. Math., 114(1992), pp. 683-736.

\bibitem{GM}Garnett J., Marshall D., {\em Harmonic Measure}, New Math. Monogr., vol. 2, Cambridge Univ. Press, Cambridge, 2005.
    
\bibitem{Gehr}Gehring F., {\em Univalent functions and the Schwarzian derivative}, Comm. Math. Helv., 52(1977), pp. 561-572.

\bibitem{GU-1}Gol'dshtein V., Ukhlov A., {\em Sobolev homeomorphisms and Brennan's conjecture}, Comput. Methods Funct. Theory, 14 (2014), no. 2-3, pp. 247-256.
\bibitem{GU-2}Gol'dshtein V., Ukhlov A., {\em Brennan's conjecture for composition operators on Sobolev spaces}, Eurasian Math. J., 3 (2012), no. 4, pp. 35-43.
\bibitem{Gr}Grafakos L., {\em Classical Fourier Analysis}, Springer, New York, 2014.

\bibitem{HSo}Hedenmalm H., Sola A., {\em Spectral notions for conformal maps: a survey}, Comput. Methods Funct. Theory,  8 (2008), no. 1-2, pp. 447-474.

\bibitem{HS-1}Hedenmalm H., Shimorin S., {\em Weighted Bergman spaces and the integral means spectrum of conformal mappings},  Duke Math. J. 127 (2005), no. 2, pp. 341-393.

\bibitem{HS-2}Hedenmalm H., Shimorin S., {\em On the universal integral means spectrum of conformal mappings near the origin}, Proc. Amer. Math. Soc., 135 (2007), no. 7, pp. 2249-2255. 
\bibitem{Hed-1}Hedenmalm H., {\em Bloch functions and asymptotic tail variance}, Adv. Math., 313 (2017), pp. 947-990. 
\bibitem{Hed-2}Hedenmalm H., {\em Bloch functions, asymptotic variance, and geometric zero packing}, Amer. J. Math., 142 (2020), no. 1, pp. 267-321.  

\bibitem{Iv}Ivrii O., {\em Quasicircles of dimension $1+k^2$ do not exist}, arXiv: 1511.07240.

\bibitem{Jin-1}Jin J., {\em On a multiplier operator induced by the Schwarzian derivative of univalent functions}, Bull. Lond. Math. Soc., 56 (2024), no. 7, pp. 2296-2314.

\bibitem{Jin}Jin J., {\em Integral means spectrum functionals on Teichm\"uller spaces}, arXiv: 2405.07683. 

\bibitem{Jin-p}Jin J., {\em On the norms of the multiplication operators between weighted Bergman spaces}, arxiv: 2603.16170.

\bibitem{Kay}Kayumov I., {\em Lower estimates for integral means of univalent functions}, Ark. Mat., 44 (2006), pp. 104-110.

\bibitem{K}Kraetzer P., {\em Experimental bounds for the universal integral means spectrum of conformal maps}, Complex Variables Theory Appl., 31(1996), pp. 305-309.

\bibitem{L}Lehto O., {\em Univalent functions and Teichm\"uller spaces}, Springer-Verlag, 1987.

\bibitem{LV}Lehto O., Virtanen K., {\em Quasiconformal mappings in the plane}, Second edition, Springer-Verlag, New York-Heidelberg, 1973.

\bibitem{M}Makarov N., {\em Fine structure of harmonic measure},  St. Petersburg Math. J.,  10 (1999), no. 2, pp. 217-268.

\bibitem{Ok}Okuyama, Y., {\em The norm estimates of pre-Schwarzian derivatives of spiral-like functions}, Complex Variables Theory Appl., 42 (2000), no. 3, pp. 225-239.

\bibitem{Po}Pommerenke C., {\em Univalent functions}, Vandenhoeck and Ruprecht, G\"ottingen, 1975.

\bibitem{Po-1}Pommerenke C., \textit{On univalent functions, Bloch functions and VMOA}, Math. Ann., 236(1978), pp. 199-208.
    
\bibitem{Po-3}Pommerenke C., {\em Boundary behaviour of conformal maps}, Springer-Verlag, Berlin, 1992. x+300 pp.

\bibitem{Po-2}Pommerenke C., {\em The integral means spectrum of univalent functions},  J. Math. Sci. (New York), 95(1999), no.3, pp. 2249-2255.

\bibitem{PS}Prause I., Smirnov S., {\em Quasisymmetric distortion spectrum}, Bull. Lond. Math. Soc., 43 (2011), no. 2, pp. 267-277.
\bibitem{Pr}Prause I., {\em Quasidisks and twisting of the Riemann map}, Int. Math. Res. Not., IMRN 2019, no. 7, pp. 1936-1954.

\bibitem{Sheil} Sheil-Small T., {\em Complex polynomials}, Cambridge Stud. Adv. Math., 75, Cambridge University Press, Cambridge, 2002. xx+428 pp.
\bibitem{SS}Shimorin S., {\em A multiplier estimate of the Schwarzian derivative of univalent functions}, Int. Math. Res. Not., No. 30, 2003.

\bibitem{T}Teo L., {\em The Velling-Kirillov metric on the universal Teichm\"uller curve}, J. Anal. Math.,  93 (2004), pp. 271-308.

\bibitem{To}Tošić O., {\em Confirming Brennan's conjecture numerically on a counterexample to Thurston's $K=2$ conjecture}, Exp. Math., 33 (2024), no. 3, pp. 422-436.

\bibitem{Yao}Yao G., {\em Geodesic disks in the universal asymptotic Teichm\"uller space}, Trans. Amer. Math. Soc., 371(2019), no.11, pp. 7707-7724.

\bibitem{Zhur}Zhuravlev I., {\em Model of the universal Teichm\"uller space},  Sibirsk. Mat. Zh., 27(1986), pp. 75-82.\end{thebibliography}
\end{document}